\DeclareMathAlphabet{\mathpzc}{OT1}{pzc}{m}{it}
\newcommand\da{\!\downarrow\!}
\newcommand\la{\leftarrow}
\newcommand\id{\mathrm{id}}
\newcommand\ten{\otimes}
\newcommand\hten{\hat{\otimes}}
\newcommand\vareps{\varepsilon}
\newcommand\eps{\epsilon}
\renewcommand\H{\mathrm{H}}
\newcommand\z{\mathrm{Z}}
\renewcommand\b{\mathrm{B}}
\newcommand\N{\mathbb{N}}
\newcommand\Z{\mathbb{Z}}
\newcommand\R{\mathbb{R}}
\newcommand\Cx{\mathbb{C}}
\newcommand\vv{\mathbb{V}}
\newcommand\bA{\mathbb{A}}
\newcommand\bE{\mathbb{E}}
\newcommand\bG{\mathbb{G}}
\newcommand\bP{\mathbb{P}}
\newcommand\bR{\mathbb{R}}
\newcommand\bT{\mathbb{T}}
\newcommand\C{\mathcal{C}}
\newcommand\cH{\mathcal{H}}
\newcommand\cL{\mathcal{L}}
\newcommand\cR{\mathcal{R}}
\newcommand\cT{\mathcal{T}}
\renewcommand\O{\mathscr{O}}
\newcommand\sA{\mathscr{A}}
\newcommand\sB{\mathscr{B}}
\newcommand\sF{\mathscr{F}}
\newcommand\sH{\mathscr{H}}
\newcommand\sO{\mathscr{O}}
\newcommand\sP{\mathscr{P}}
\newcommand\sQ{\mathscr{Q}}
\newcommand\sR{\mathscr{R}}
\newcommand\sT{\mathscr{T}}
\newcommand\sV{\mathscr{V}}
\newcommand\sZ{\mathscr{Z}}
\newcommand\fu{\mathfrak{u}}
\newcommand\Alg{\mathrm{Alg}}
\newcommand\Hom{\mathrm{Hom}}
\newcommand\End{\mathrm{End}}
\newcommand\Aut{\mathrm{Aut}}
\newcommand\Gal{\mathrm{Gal}}
\newcommand\im{\mathrm{Im\,}}
\newcommand\ch{\mathrm{ch}}
\newcommand\Ind{\mathrm{Ind}}
\newcommand\Set{\mathrm{Set}}
\newcommand\Sing{\mathrm{Sing}}
\newcommand\FD{\mathrm{FD}}
\newcommand\ad{\mathrm{ad}}
\newcommand\<{\langle}
\renewcommand\>{\rangle}
\newcommand\Lim{\varprojlim}
\newcommand\LLim{\varinjlim}
\newcommand\into{\hookrightarrow}
\newcommand\xra{\xrightarrow}
\newcommand\pr{\mathrm{pr}}
\newcommand\alg{\mathrm{alg}}
\newcommand\dmd{\diamond}
\newcommand\bt{\bullet}
\newcommand\by{\times}
\newcommand\Vect{\mathrm{Vect}}
\newcommand\Ban{\mathrm{Ban}}
\newcommand\Rep{\mathrm{Rep}}
\newcommand\SL{\mathrm{SL}}
\newcommand\SO{\mathrm{SO}}
\newcommand\GL{\mathrm{GL}}
\newcommand\SU{\mathrm{SU}}
\newcommand\Mat{\mathrm{Mat}}
\newcommand\an{\mathrm{an}}
\newcommand\hol{\mathrm{hol}}
\newcommand\PN{\mathrm{PN}}
\renewcommand\ss{\mathrm{ss}}
\newcommand\Irr{\mathrm{Irr}}
\newcommand\sgn{\mathrm{sgn}}
\newcommand\oHilb{\mathrm{Hilb}}
\newcommand\ev{\mathrm{ev}}
\newcommand\pro{\mathrm{pro}}
\newcommand\pd{\partial}
\newcommand\dc{d^{\mathrm{c}}}
\newcommand\tD{\tilde{D}}
\newcommand\tDc{\tilde{D^c}}
\newcommand\half{\frac{1}{2}}
\newcommand\st{\mathrm{st}}
\newcommand\ab{\mathrm{ab}}
\newcommand\gp{\mathrm{Gp}}
\renewcommand\alg{\mathrm{alg}}
\newcommand\red{\mathrm{red}}
\newcommand\Fr{\mathrm{Fr}}
\newcommand\row{\mathrm{row}}
\newcommand\Dol{\mathrm{Dol}}
\newcommand\dR{\mathrm{dR}}
\newcommand\DR{\mathrm{DR}}
\newcommand\co{\colon\thinspace}
\newcommand\oR{\mathbf{R}}
\newcommand\oT{\mathbf{T}}
\newcommand\uleft\underleftarrow
\newcommand\uline\underline
\newcommand\uright\underrightarrow
\newcommand\RFD{\mathrm{RFD}}
\newtheorem{theorem}{Theorem}[section]
\newtheorem{proposition}[theorem]{Proposition}
\newtheorem{corollary}[theorem]{Corollary}
\newtheorem{lemma}[theorem]{Lemma}
\newtheorem*{theorem*}{Theorem}
\newtheorem*{proposition*}{Proposition}
\newtheorem*{corollary*}{Corollary}
\newtheorem*{lemma*}{Lemma}
\newtheorem*{conjecture*}{Conjecture}
\theoremstyle{definition}
\newtheorem{definition}[theorem]{Definition}
\newtheorem*{definition*}{Definition}
\theoremstyle{remark}
\newtheorem{example}[theorem]{Example}
\newtheorem{remark}[theorem]{Remark}
\newtheorem{remarks}[theorem]{Remarks}
\newtheorem*{example*}{Example}
\newtheorem*{examples*}{Examples}
\newtheorem*{remark*}{Remark}
\newtheorem*{remarks*}{Remarks}
\newtheorem*{exercise*}{Exercise}
\newtheorem*{property*}{Property}
\newtheorem*{properties*}{Properties}
\begin{document}

\begin{abstract}
The pro-algebraic fundamental group can be understood as a completion with respect to finite-dimensional non-commutative algebras. We introduce finer invariants by looking at completions with respect to Banach and $C^*$-algebras, from which we can recover analytic and topological representation spaces, respectively. 
For a compact K\"ahler manifold, the $C^*$-completion also gives the natural setting for non-abelian Hodge theory; it has a  pure  Hodge structure, in the form of a pro-$C^*$-dynamical system. Its representations are pluriharmonic local systems in Hilbert spaces, and we study their cohomology, giving a principle of two types, and splittings of the Hodge and twistor structures.
\end{abstract}

\title{Analytic non-abelian Hodge theory}

\author{J.P.Pridham}
\thanks{This work was supported by  the Engineering and Physical Sciences Research Council [grant numbers  EP/F043570/1 and EP/I004130/1].}

%msc: 32g20 (VHS, analytic); 32g13 (analytic moduli problems, moduli v.bundles) 

\maketitle

\section*{Introduction}

In \cite{Sim1} and \cite{Sim2}, Simpson defined the coarse  Betti, de Rham and Dolbeault  moduli spaces of a smooth projective complex variety. These  are all algebraic spaces, with a  complex analytic isomorphism between the Betti and de Rham moduli spaces.
The non-abelian Hodge Theorem of \cite[Theorem 7.18]{Sim2} is a homeomorphism between the de Rham and Dolbeault moduli spaces, the key to which was the correspondence between semisimple local systems, pluriharmonic bundles and Higgs bundles.

The reductive pro-algebraic fundamental group $\pi_1(X,x)^{\red}$ introduced in \cite{Simpson} encapsulates, in a single object, all the information about the category of semisimple local systems. When $X$ is a compact K\"ahler manifold, the group scheme  $\pi_1(X,x)^{\red}$ also has a pure Hodge structure in the form of a discrete circle action, and a description in terms of Higgs bundles.

However, it has long been realised that  the reductive pro-algebraic fundamental group is slightly inadequate. From it we can recover the  points of the Betti moduli space, and from the full pro-algebraic fundamental group we can   even recover their infinitesimal neighbourhoods, but in general these groups convey no information about how the neighbourhoods glue together. A further source of dissatisfaction is the discontinuity of the circle action on $\pi_1(X,x)^{\red}$, since it is continuous on moduli spaces.  

The key idea behind this paper is that we can produce finer and more satisfactory invariants by looking at representations with analytic structure, an approach previously considered by Sullivan in \cite[\S 9]{Sullivan}. The group scheme $\pi_1(X,x)^{\red}$ can be recovered from representations in finite-dimensional matrix algebras, but the Riemann--Hilbert correspondence between Betti and de Rham moduli holds with coefficients in any Banach algebra. We accordingly construct Betti, de Rham and Dolbeault moduli functors on Banach algebras, and recover the analytic moduli spaces from these functors. The framed Betti and de Rham functors are represented by a Fr\'echet algebra which we regard as the analytic completion of $\R[\pi_1(X,x)]$.

To understand the topological structure underlying these analytic spaces, we then restrict to $C^*$-algebras rather than Banach algebras. There are notions of unitary and pluriharmonic representations with coefficients in any $C^*$-algebra, and the homeomorphism of moduli spaces above extends to an isomorphism between the  semisimple de Rham functor and the polystable Dolbeault functor on polynormal $C^*$-algebras, via isomorphisms with the pluriharmonic functor. The $C^*$-algebra of bounded operators gives us a notion of pluriharmonic local systems in Hilbert spaces, and there is a form of Hodge decomposition for these local systems. 

Lurking behind these comparisons is the twistor moduli functor on multiplicatively convex Fr\'echet $\O_{\bP^1}^{\hol}$-algebras. Its fibres at $\pm i \in \bP^1$ are the Dolbeault functor and its conjugate, while all other fibres are isomorphic to the de Rham functor. The Deligne--Hitchin twistor space can be recovered as an analytic space from the twistor moduli functor, and pluriharmonic torsors give a splitting of the  twistor moduli functor on $C^*$-algebras over $C(\bP^1(\Cx))$. 
Twistor cochains then admit a Hodge decomposition on pulling back along the Hopf fibration $SU_2 \to \bP^1(\Cx)$, 
and a continuous circle action serves to promote twistor structures to Hodge structures. 

The structure of the paper is as follows.
In \S \ref{prorepsn}, we cover some background material on pro-representability.  Proposition \ref{PNprop} then establishes a topological analogue of Tannaka duality for polynormal $C^*$-algebras and unitary representations, while Lemma \ref{PNlemma2} gives a similar result for non-unitary representations.  
%completely bounded.

In \S \ref{derhamsn}, we introduce the framed  Betti and de Rham functors $\oR^{B}_{X,x}$, $\oR^{\dR}_{X,x}$ on Banach algebras for any manifold $X$.
In Proposition \ref{cfBettiDR}, we establish an isomorphism $\oR^{B}_{X,x}(A)\cong\oR^{\dR}_{X,x}(A)$ for any  pro-Banach algebra $A$. We can even recover the analytic structure of moduli spaces of $G$-bundles from these symmetric monoidal functors (Remark \ref{recoveranalyticG}).
Proposition \ref{fgrep} then shows that $\oR^{B}_{X,x}$ is represented by a  Fr\'echet algebra completion $E^B_{X,x}$ of $\R[\pi_1(X,x)]$. This is a Fr\'echet bialgebra from which $\pi_1(X,x)$ can be recovered (Lemma \ref{recovergroup}).

In Definition \ref{pluritorsor}, we introduce a symmetric monoidal functor $ \oR^J_{X,x} $ on $C^*$-algebras, parametrising pluriharmonic bundles on a compact K\"ahler manifold $X$. This is representable by a pro-$C^*$-bialgebra $E^J_{X,x}$ (Proposition \ref{pluriprorep} and Lemma \ref{tensorstr}). There are also a symmetric monoidal functor
$\oR^{\Dol}_{X,x}$ on Banach algebras associated to Dolbeault moduli, which is seldom representable, and a harmonic functor extending the definition of $\oR^J_{X,x} $ to all Riemannian manifolds $X$, but with substantial loss of functoriality.

In \S \ref{cfansn} we establish relations between the various functors. 
We can recover the topology on moduli spaces of semisimple representations from  $E^J_{X,x}$ (Theorem \ref{Hsstopthm}). Proposition \ref{PNssprop} then gives a Tannakian description of  the polynormal completion of $E^J_{X,x}$, while Corollary \ref{PNrepss} gives a simple characterisation of continuous morphisms from $E^J_{X,x} $ to polynormal $C^*$-algebras. \S \ref{Dolsn} then gives similar results for $\oR^{\Dol}_{X,x}$.
 Lemma \ref{EJabgp} shows that  grouplike elements $G((E^J_{X,x})^{\ab})$  of the  abelianisation of $E^J_{X,x}$ are just $\H_1(X,\Z\oplus \R)$, with consequences for complex tori. 
 
There is a continuous circle action on $E^J_{X,x}$, so it is  a pro-$C^*$ dynamical system (Proposition \ref{circleX}). This allows us to regard $E^J_{X,x}$ as an analytic non-abelian Hodge structure of weight $0$ (Remark \ref{pureMHSrk}). In  Example  \ref{circleEJabgp}, we see that the circle action on  $G((E^J_{X,x})^{\ab})$ is just given by the Hodge structure on $\H^1(X,\R)$. Proposition \ref{VHSsemidirect} then characterises pure Hilbert variations of Hodge structure as representations of $E^J_{X,x}\rtimes S^1$.

\S \ref{cohosn} is concerned with Hilbert space representations of $E^J_{X,x}$, which  correspond to pluriharmonic local systems $\vv$ in Hilbert spaces. We can identify reduced cohomology $\bar{\H}^*(X,\vv)$ with the space of smooth $\vv$-valued harmonic forms, as well as establishing the principle of two types and a formality results (\S \ref{redcohosn}). There are analogous, but weaker, results for non-reduced cohomology (\S \ref{nonredcohosn}). 
The same is true of direct limits of Hilbert space representations, and Corollary \ref{univlocsys} shows that the universal such is the continuous dual $(E^J_{X,x})'$ (which can be regarded as the predual of the $W^*$-envelope of $E^J_{X,x}$).

In  \S \ref{twistorHodgecohosn}, these results are extended to show that the natural twistor structure on  $\bar{\H}^n(X,\vv)$ is pure of weight $n$ (Corollary \ref{harmcoho2}), with a weaker result for non-reduced cohomology (Proposition \ref{finecoho}). If $\bE$ is the local system associated to the $\pi_1(X,x)$-representation $ E^J_{X,x}$, then Proposition \ref{redenrich} shows that this twistor structure can be enhanced to a form of analytic Hodge filtration on  the de Rham algebra  $A^{\bt}(X, \bE')$. In \S \ref{SU2sn}, we re-interpret splittings of twistor structures and Archimedean monodromy in terms of the Hopf fibration.

Finally, in \S \ref{twistorfamilysn}, we introduce a whole twistor family of framed moduli functors on multiplicatively convex Fr\'echet algebras over $\bP^1(\Cx)$, from which we can recover both de Rham and Dolbeault functors. The coarse quotient of the framed twistor space is just the Deligne--Hitchin twistor space (Remark \ref{cfdh}). The twistor family carries a natural involution $\sigma$, and we show in Proposition \ref{sigmasections} that $\sigma$-equivariant sections of the framed twistor space are just framed pluriharmonic bundles. Theorem \ref{HTtopthm}, Proposition \ref{PNTprop} and Corollary \ref{PNrepT} then give analogues of Theorem \ref{Hsstopthm}, Proposition \ref{PNssprop} and Corollary \ref{PNrepss} in the twistor setting, describing twistors with $C^*$-algebra coefficients and the topology of the twistor space. 

I would like to thank Carlos Simpson for originally posing the problem of finding finer invariants, and for helpful discussions. I would also like to thank the anonymous referee for spotting erroneous statements about Fr\'echet algebras.

\subsection*{Notation}

We will use $k$ to denote either of the fields $\R, \Cx$.

\begin{definition}
 Given a $k$-Hilbert space $H$, write $L(H)$ for the space of $k$-linear bounded operators on $H$, with the norm topology.
\end{definition}

\begin{definition}
  Given topological spaces $X,Y$, we write $C(X,Y)$ for the set of continuous maps from $X$ to $Y$.
\end{definition}

\begin{definition}
 Given a group $G$ acting on a set $X$, write $[X/G]$ for the groupoid with objects $X$ and morphisms $X \by G$, where the source of $(x,g)$ is $x$ and the target is $xg$. Composition of morphisms is given by $(xg,h) \circ (x,g)= (x, gh)$.        
\end{definition}

\begin{definition}
 Given a group $G$ acting on sets $S,T$, write $S\by_GT$ for the quotient of $S\by T$ by the  $G$-action $g(s,t)= (gs,g^{-1}t)$.        
\end{definition}

\tableofcontents

\section{Pro-representability of functors on unital Banach algebras and $C^*$-algebras}\label{prorepsn}

\begin{definition}
 Given a functor $F\co \C \to \Set$, an object $A \in \C$ and an element $\xi \in F(A)$, we follow \cite[\S A.3]{descent} in saying that the pair $(A, \xi)$  is \emph{minimal} if for any pair $(A', \xi')$ and any strict monomorphism $f\co A' \to A$ with $F(f)(\xi')= \xi$, $f$ must be an isomorphism.

We say that a pair  $(A'', \xi'')$ is \emph{dominated} by a minimal pair if there exists a minimal pair $(A, \xi)$ and a 
morphism $g\co A \to A''$ with $F(g)(\xi)= \xi''$.
\end{definition}

\begin{definition}
As in  \cite[\S A.3]{descent}, we say that a functor $F\co \C \to \Set$ on a category $\C$ containing all finite limits is \emph{left exact} if it preserves all finite limits. This is equivalent to preserving finite products and equalisers, or to preserving fibre products and the final object.
\end{definition}

\begin{lemma}\label{dominatelemma}
Let $\C$ be a category containing finite limits, and  take a left exact functor $F\co \C \to \Set$.
Assume that for any cofiltered inverse system $\{A_i\}_i$ of strict subobjects of any object  $A\in \C$, the limit $\Lim_i A_i$ exists, and that the map
\[
 F(\Lim_i A_i) \to \Lim_i F(A_i)
\]
is an isomorphism. 

Then every pair $(A, \xi\in F(A))$ is dominated by a minimal pair.
\end{lemma}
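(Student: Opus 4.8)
The plan is to realise the minimal pair as a minimal strict subobject of $A$ through which $\xi$ factors, obtained by a Zorn's lemma argument whose essential input is the stated limit hypothesis.

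First I would record a consequence of left exactness: since $F$ preserves equalisers it preserves strict monomorphisms, so $F(f)$ is injective whenever $f$ is a strict monomorphism. Consequently, for a strict subobject $i\co A'\into A$ there is at most one $\xi'\in F(A')$ with $F(i)(\xi')=\xi$, and if $A'\le A''$ are two such strict subobjects (with lifts $\xi',\xi''$), then the inclusion $A'\into A''$ carries $\xi'$ to $\xi''$. Let $\cS$ be the poset of strict subobjects $A'\into A$ admitting such a lift, ordered by inclusion; it is non-empty, since $(A,\xi)\in\cS$.

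Next I would apply Zorn's lemma to $\cS^{\op}$, i.e. show that every chain has a lower bound in $\cS$. A chain $\{A_i\}_i$ in $\cS$ is in particular a cofiltered inverse system of strict subobjects of $A$, so by hypothesis $\Lim_i A_i$ exists and $F(\Lim_i A_i)\xra{\sim}\Lim_i F(A_i)$. The lifts $\xi_i\in F(A_i)$ are compatible by the previous paragraph, hence define an element of $\Lim_i F(A_i)$ and thus a lift over $\xi$ in $F(\Lim_i A_i)$; since the canonical map $\Lim_i A_i\to A$ is again a monomorphism carrying this lift to $\xi$, the object $\Lim_i A_i$ lies in $\cS$ and is a lower bound for the chain. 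Zorn's lemma then yields a minimal element $(B,\eta)$ of $\cS$, and the structural inclusion $g\co B\into A$ satisfies $F(g)(\eta)=\xi$, so $(A,\xi)$ is dominated by $(B,\eta)$.

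It remains to verify that $(B,\eta)$ is itself a minimal pair. Given any pair $(A',\xi')$ and strict monomorphism $f\co A'\to B$ with $F(f)(\xi')=\eta$, the composite $A'\xra{f} B\into A$ should exhibit $A'$ as a strict subobject of $A$, and $F$ of this composite sends $\xi'$ to $\xi$; hence $A'\in\cS$ with $A'\le B$, whereupon minimality of $B$ forces $A'=B$ and $f$ to be an isomorphism. The one point that genuinely needs the ambient structure of $\C$ — and which I expect to be the main obstacle — is precisely the claim that this composite of strict monomorphisms is again strict (equivalently, that a strict subobject of a strict subobject of $A$ is a strict subobject of $A$), so that $A'$ may legitimately be compared with $B$ inside $\cS$; this, together with the well-poweredness required to run Zorn over the subobjects of $A$, is where one must invoke properties of $\C$ rather than pure left exactness. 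Everything else — compatibility of the lifts, that $\Lim_i A_i\to A$ is monic, and that the limit inherits a lift of $\xi$ — is formal from left exactness and the limit hypothesis.
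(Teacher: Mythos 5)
Your overall strategy -- realise the minimal pair as a smallest strict subobject of $A$ through which $\xi$ factors -- is the same as the paper's, but the implementation differs in a way that matters. The paper does not use Zorn's lemma: it forms the full subcategory $I \subset \C \da A$ of \emph{all} strict monomorphisms $B \to A$ to which $\xi$ lifts, and shows $I$ is itself cofiltered (morphisms of $I$ are automatically strict monos by cancellation; parallel arrows in $I$ coincide since all objects are monos into $A$; and $I$ is closed under $B \by_A B'$ because $F$ preserves fibre products, the intersection being again a strict subobject as it is the joint equaliser of the union of the two families of pairs). The displayed hypothesis is then applied to the whole of $I$ at once: $L = \Lim_{B \in I} B$ exists, is a strict subobject of $A$ (it is the joint equaliser of all pairs of maps out of $A$ equalised by some $B \in I$), and carries a lift of $\xi$ since $F(L) \cong \Lim_{B \in I} F(B)$; hence $L$ is an \emph{initial} object of $I$. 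This buys two things your route lacks: it needs no well-poweredness (your Zorn argument requires the strict subobjects of $A$ to form a set, which is not among the hypotheses, whereas the paper applies the limit hypothesis directly to the possibly large system $I$), and it needs no choice principle; it also produces the least lifting subobject rather than merely a minimal one. What your route buys in exchange is that you only ever need the hypothesis for chains, and you never need closure of $I$ under fibre products.

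Two gaps remain in your write-up. The smaller one: for $\Lim_i A_i$ to be a lower bound \emph{in} $\cS$ you must check it is a \emph{strict} subobject of $A$, not just a monomorphism into $A$; this is true -- it is the joint equaliser of all pairs of maps out of $A$ equalised by some member of the chain, which is the same observation the paper makes for $L$ -- but you only assert monicity. The larger one is the obstacle you flag yourself: that a strict subobject of a strict subobject of $A$ is a strict subobject of $A$ does not follow from left exactness or from the existence of finite limits, and it is genuinely what your final verification (promoting Zorn-minimality of $B$ in $\cS$ to minimality of the pair $(B,\eta)$) requires. You should note, however, that the paper's proof relies on exactly the same fact tacitly: initiality of $L$ in $I$ yields minimality of the pair $(L,\eta)$ only if a strict monomorphism $A' \to L$ carrying a lift of $\eta$ again defines an object of $I$. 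In the paper's applications (Propositions \ref{repbanalg} and \ref{repcstar}, where strict subobjects are closed subalgebras, visibly stable under composition) this is immediate. So you have correctly isolated the one point both arguments leave implicit, but as written your proposal neither resolves it nor can claim it follows from the stated hypotheses; combined with the unstated well-poweredness assumption, your proof establishes the lemma only under hypotheses strictly stronger than those in the statement.
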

\begin{proof}
Given the pair $(A, \xi)$, let $I$ be the full subcategory of the overcategory $\C \da A$ consisting of strict monomorphisms $B \to A$ for which $\xi$ lifts to $F(B)$. Note that this lift must be unique by the monomorphism property. 
If $f\circ g$ is a strict monomorphism, then so is $g$, which implies that all morphisms in $I$ must be strict monomorphisms in $\C$. Moreover, left-exactness of $F$ guarantees that $I$ is closed under the fibre product $\by_A$. the monomorphism properties imply that parallel arrows in $I$ are equal, so $I$ is a cofiltered category.

By hypothesis, the limit $L:= \Lim_{B \in I} B$ exists in $\C$. It is necessarily a strict subobject of $A$, since it is the limit of all parallel maps sourced at $A$ and  equalised by some $B \in I$. The unique lifts of $\xi$ to each $F(B)$ define an element of
\[
 \Lim_{B \in I} F(B),
\]
so by hypothesis we have a corresponding element $\eta \in F(L)$. Therefore $L$ is an object of $I$ and is in fact the initial object of $I$. The pair $(L, \eta)$ is therefore minimal, and dominates   $(A, \xi)$ as required.
\end{proof}

\begin{definition}
 Recall from \cite[\S A.2]{descent} that a pro-object $X \in \pro(\C)$ is said to be \emph{strict} if it is isomorphic to a pro-object of the form $\{X_i\}$, where each map $X_i \to X_j$ is an epimorphism.

A functor $F\co \C \to \Set$ is said to be \emph{strictly pro-representable} if there exists a strict pro-object $X$ with $F\cong \Hom_{\pro(\C)}(X,-)$.
\end{definition}

\begin{proposition}\label{prorepprop}
 Let $\C$ be a category containing finite limits and limits of cofiltered inverse systems of strict subobjects.
Then  a functor $F\co \C \to \Set$ is strictly pro-representable if and only if
\begin{enumerate}
 \item $F$ is left exact;
\item $F$ preserves limits of cofiltered inverse systems of strict subobjects.
\end{enumerate}
\end{proposition}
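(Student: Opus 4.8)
The plan is to prove the two implications separately, the substance of the converse resting on Lemma \ref{dominatelemma}.

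For the forward implication, suppose $F \cong \Hom_{\pro(\C)}(X,-)$ for a strict pro-object $X \cong \{X_i\}_i$, so that $F(B) = \LLim_i \Hom(X_i, B)$ is a filtered colimit of representable functors. Since each $\Hom(X_i,-)$ preserves limits and filtered colimits commute with finite limits in $\Set$, condition (1) is immediate. For condition (2), let $\{A_m\}_m$ be a cofiltered inverse system of strict subobjects of some $A$, with limit $L$. Strictness makes every transition map $X_i \to X_j$ an epimorphism, so each $\Hom(X_j, B) \to \Hom(X_i, B)$ is injective, $F(B)$ is the increasing union of the $\Hom(X_i, B)$, and $\Hom(X_i, A_m) \hookrightarrow \Hom(X_i, A)$. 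Given $(\eta_m) \in \Lim_m F(A_m)$, the $\eta_m$ have a common image $\beta \in F(A)$, represented by some $\phi\co X_i \to A$.

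The heart of the forward direction is to show $\phi$ factors through each $A_m$, hence through $L$. Unravelling the equality $[\iota_m \psi_m] = [\phi]$ in the filtered colimit $F(A)$ produces an epimorphism $q\co X_l \to X_i$ with $\phi q$ factoring through the strict monomorphism $\iota_m\co A_m \to A$; pulling $\iota_m$ back along $\phi$ gives a strict monomorphism $X_i \by_A A_m \to X_i$ through which $q$ factors, so this monomorphism is itself an epimorphism and hence an isomorphism. Thus $\phi$ factors through $A_m$, uniquely since $\iota_m$ is monic, and the factorisations assemble over $L$; this gives surjectivity of $F(L) \to \Lim_m F(A_m)$. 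Injectivity is an easier unravelling: two classes in $F(L)$ agreeing in $F(A)$ already agree after passing to a common stage $X_l$, using that $L \to A$ is monic.

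For the converse, assume (1) and (2), which are precisely the hypotheses of Lemma \ref{dominatelemma}, so every pair $(A, \xi)$ is dominated by a minimal pair. Let $\cM$ have objects the minimal pairs and morphisms $(A, \xi) \to (A', \xi')$ the maps $f\co A \to A'$ with $F(f)(\xi) = \xi'$. First I would check $\cM$ is cofiltered: given two objects, the product $(A_1 \by A_2, (\xi_1, \xi_2))$ — using $F(A_1 \by A_2) = F(A_1) \by F(A_2)$ from left-exactness — is dominated by a minimal pair mapping to both; given parallel arrows $f_1, f_2$, left-exactness identifies $F$ of their equaliser with the equaliser of $F(f_1), F(f_2)$, so $\xi$ lifts there and a dominating minimal pair yields an equalising arrow. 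Next, each morphism $f$ of $\cM$ is an epimorphism: if $g_1 f = g_2 f$ then $f$ factors through the equaliser $E \to A'$ of $g_1, g_2$, lifting $\xi'$ to $F(E)$, so minimality of $(A', \xi')$ forces $E \to A'$ to be an isomorphism and $g_1 = g_2$. Hence $X := \{A\}_{(A,\xi) \in \cM}$ is a strict pro-object. It remains to exhibit a natural isomorphism $\Hom_{\pro(\C)}(X, -) = \LLim_{(A,\xi)} \Hom(A, -) \to F$ sending $[f\co A \to B]$ to $F(f)(\xi)$: surjectivity is domination, and for injectivity I would place $(\xi, \xi')$ in $F(A \by_B A') = F(A) \by_{F(B)} F(A')$ by left-exactness, so that a minimal pair dominating $(A \by_B A', (\xi, \xi'))$ maps to both objects while equalising $f$ and $f'$, giving $[f] = [f']$; naturality in $B$ is clear.

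The main obstacle is condition (2) in the forward direction: unlike left-exactness it asks a filtered colimit to commute with a cofiltered limit, which fails in general. It works only through the combination of strictness of the pro-object (making $F(B)$ an increasing union indexed injectively by the $X_i$), the fact that the $A_m$ are strict subobjects (so strict monomorphisms pull back to strict monomorphisms and a strict monomorphism that is epic is an isomorphism), and the finiteness built into left-exactness; the delicate bookkeeping is tracking which stage $X_i$ represents a given element and checking that the factorisations through the various $A_m$ glue over $L$.
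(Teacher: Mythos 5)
Your proposal is correct, and both directions rest on the same two pillars as the paper's proof: Lemma \ref{dominatelemma} for passing from conditions (1)--(2) to minimal pairs, and the interplay between epimorphic transition maps and strict monomorphisms for the converse. Where you genuinely differ is in finishing the first direction: the paper stops at Lemma \ref{dominatelemma} and invokes \cite[Proposition A.3.1]{descent}, whereas you reprove that citation. Your reconstruction --- taking $\cM$ to be the category of minimal pairs, checking it is cofiltered via products and equalisers (both supplied by left exactness), showing every morphism of $\cM$ is an epimorphism by playing minimality against the equaliser of a parallel pair (legitimate, since equalisers are strict monomorphisms), and identifying $\LLim_{(A,\xi)\in\cM}\Hom(A,-)\cong F$ with domination giving surjectivity and left exactness applied to $A\by_B A'$ giving injectivity --- is sound, and buys self-containedness; the paper's citation buys brevity and delegates the set-theoretic point (essential smallness of the index category of minimal pairs) to the reference, a point your sketch likewise leaves implicit. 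In the direction starting from a strict pro-representation, your argument is the paper's up to repackaging: the paper fixes an index and lifts the representing map $y_i\co R_{\alpha_i}\to A_i$ through each strict monomorphism $A_j\to A_i$ by cancelling against the epimorphism $R_{\alpha_j}\to R_{\alpha_i}$, while you pull the strict monomorphism $A_m\to A$ back along $\phi\co X_i\to A$ and use that an epic strict monomorphism is an isomorphism; since strict monomorphisms are stable under the pullbacks used here, the two mechanisms are equivalent, and your gluing of the resulting factorisations over $L$ (together with the injectivity of $F(A_m)\to F(A)$ needed to see that the glued map really hits the given element) completes the argument just as in the paper.
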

\begin{proof}
If $F$ satisfies the conditions, then it is left-exact, and by Lemma \ref{dominatelemma} every pair is dominated by a minimal pair. It therefore satisfies the conditions of   \cite[Proposition A.3.1]{descent}, so is strictly pro-representable.

Conversely, every pro-representable functor $F$ is left-exact, so we need only show that the second condition holds. Write $F= \LLim_{\alpha} \Hom(R_{\alpha},-)$ for a strict inverse system $\{R_{\alpha}\}_{\alpha}$, and take  a cofiltered inverse system $\{A_i\}_i$ of strict subobjects of some object  $A\in \C$. 

Given  an element $x \in  \Lim_i F(A_i)$ with image $x_i$ in $F(A_i)$, by definition there exist objects $R_{\alpha_i}$ and   maps   $y_i \co R_{\alpha_i} \to A_i$  lifting $x_i$. Now fix $i$; the liftings are compatible in the sense that for $j>i$ (increasing $\alpha_j$ if necessary), there is a commutative diagram
\[
\begin{CD}
 R_{\alpha_j} @>{y_j}>> A_j \\
@VVV @VVV\\
  R_{\alpha_i} @>{y_i}>> A_i.
\end{CD}
\]
Since $A_j \to A_i$ is a strict monomorphism and $R_{\alpha_j}\to R_{\alpha_i}$ an epimorphism, $y_i$ must lift to a map $y_{ij}\co R_{\alpha_i} \to A_j$. Since $A_j \to A_i$ is a monomorphism,  the lifting $y_{ij}$ is unique. Considering all $j>i$ together, this gives us a unique map $\tilde{x}\co R_{\alpha_i} \to \Lim_j A_j$, which gives rise to a unique pre-image $\tilde{x} \in F(\Lim_j A_j)$, as required.
\end{proof}

\subsection{Banach algebras}

\begin{definition}
Write $\Ban\Alg_k$ for the category of unital (not necessarily commutative) Banach algebras over $k$, with bounded  morphisms. 
\end{definition}

\begin{proposition}\label{repbanalg}
Take a functor $F\co \Ban\Alg_k\to \Set$ such that
\begin{enumerate}
 \item $F$ preserves all finite limits (equivalently: preserves fibre products and the final object), 
\item $F$ preserves monomorphisms (i.e. maps closed subalgebras to subsets), and 
\item for all inverse systems $S$ of closed subalgebras, the map
\[
 F(\bigcap_{s \in S} A_s) \to \bigcap_{s \in S} F(A_s)
\]
is an isomorphism.
\end{enumerate}
Then  $F$ is strictly pro-representable.
\end{proposition}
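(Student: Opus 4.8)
The plan is to deduce the statement from Proposition \ref{prorepprop} applied to $\C = \Ban\Alg_k$, so the work consists of checking the two categorical hypotheses of that proposition and matching conditions (1)--(3) against its conditions (1) and (2). First I would confirm that $\Ban\Alg_k$ has all finite limits: finite products are computed as $A \by B$ with the norm $\|(a,b)\| = \max(\|a\|,\|b\|)$, the equaliser of a pair $f,g \co A \to B$ is the closed subalgebra $\{a \in A \s f(a) = g(a)\}$ with the subspace norm, and the terminal object is the zero algebra (the empty product). Finite products together with equalisers yield all finite limits, so this hypothesis holds, and condition (1) is then exactly the assertion that $F$ is left exact, which is condition (1) of Proposition \ref{prorepprop}.

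The crux, and the step I expect to be the main obstacle, is the identification of the strict subobjects of an object $A$ with its closed unital subalgebras. One direction is immediate: every equaliser has the form $\{a \in A \s f(a) = g(a)\}$, so any strict monomorphism is, up to isomorphism, an isometric inclusion of a closed subalgebra. Consequently a cofiltered inverse system of strict subobjects of $A$ is a system of closed subalgebras, whose limit in $\Ban\Alg_k$ is, by the universal property, the intersection $\bigcap_i A_i$ --- again a closed subalgebra --- so the limits of cofiltered systems of strict subobjects required by Proposition \ref{prorepprop} exist and are computed as intersections. The delicate converse, that every closed subalgebra (and hence every such intersection) is itself a strict subobject, is what is needed for the machinery behind Proposition \ref{prorepprop} and Lemma \ref{dominatelemma} to apply; I would establish it by realising a closed subalgebra $B \subseteq A$ as the equaliser of the two coprojections $A \rightrightarrows A \sqcup_B A$ of its cokernel pair, which rests on the existence and good behaviour of pushouts (amalgamated free products) of Banach algebras. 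This is the genuinely analytic input to the argument.

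It then remains to match conditions (2) and (3) with the preservation condition (2) of Proposition \ref{prorepprop}. Since a cofiltered system of strict subobjects $\{A_i\}$ is in particular a system of closed subalgebras, condition (2) makes each map $F(A_i) \to F(A)$ injective, so the sets $F(A_i)$ form a cofiltered family of subsets of $F(A)$ and $\Lim_i F(A_i) = \bigcap_i F(A_i)$. Condition (3) then states precisely that
\[
F(\Lim_i A_i) = F(\bigcap_i A_i) \xra{\sim} \bigcap_i F(A_i) = \Lim_i F(A_i),
\]
that is, that $F$ preserves limits of cofiltered inverse systems of strict subobjects. With both conditions of Proposition \ref{prorepprop} verified, I would conclude that $F$ is strictly pro-representable.
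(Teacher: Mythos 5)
Your strategy is the paper's own: reduce to Proposition \ref{prorepprop} by checking that $\Ban\Alg_k$ has finite limits, that strict subobjects of a Banach algebra $A$ are closed subalgebras, that cofiltered limits of such systems exist and are intersections, and that conditions (2)--(3) then say exactly that $F$ preserves those limits. Your first paragraph, the first half of your second paragraph, and your third paragraph carry this out correctly, and together they already constitute a complete proof --- essentially identical to the one in the paper.

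The problem is the step you single out as ``the crux''. The converse identification --- that every closed subalgebra is a strict subobject --- is not needed anywhere: Proposition \ref{prorepprop} (and Lemma \ref{dominatelemma} behind it) only ever quantifies over \emph{strict} subobjects, so once you know every strict subobject is a closed subalgebra, your hypotheses (2) and (3), which are stated for \emph{all} closed subalgebras, apply a fortiori to the systems that the proposition requires, and the only limits that need to exist are those of systems of strict subobjects, which you have already computed as intersections. Worse, the converse is false, so the cokernel-pair argument you propose cannot be carried out. Take $A=C(S^1,\Cx)$ and let $B\subset A$ be the disc algebra, i.e.\ the closed unital subalgebra generated by the coordinate function $z$. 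Since $z$ is invertible in $A$ and unital homomorphisms preserve inverses, any two bounded unital homomorphisms out of $A$ agreeing on $B$ agree on $z$ and on $\bar z=z^{-1}$, hence on all of $A$ by Stone--Weierstrass; thus $B\into A$ is an epimorphism in $\Ban\Alg_{\Cx}$, while $B\neq A$ (elements of $B$ have vanishing negative Fourier coefficients). But an epimorphism that is a strict monomorphism is an isomorphism: when $u$ is epic the only pairs $f,g$ with $fu=gu$ have $f=g$, so strictness forces every morphism into $A$, in particular $\id_A$, to factor through $u$, making $u$ a split epimorphism and hence invertible. So $B$ is a closed subalgebra that is not a strict subobject, and in particular not the equaliser of its cokernel pair. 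Delete that step (and note, as a minor point, that a strict mono is a priori only a \emph{joint} equaliser of the family of all pairs it equalises, which still identifies it with a closed subalgebra); what remains of your argument is correct and is the paper's proof.
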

\begin{proof}
Given a Banach algebra $B$, every strict subobject of $B$ is a closed subalgebra. For any cofiltered inverse system $\{A_i\}_i$ of strict subobjects of any object  $B$, the limit $\Lim_i A_i$ exists in $\Ban\Alg_k$ and is given by $\bigcap_i A_i$. The result now follows from Proposition \ref{prorepprop}.
\end{proof}

\subsection{$C^*$-algebras}

\begin{definition}
Write $C^*\Alg_k$ for the category of unital (not necessarily commutative) $C^*$-algebras over $k$, with bounded involutive morphisms. 

Explicitly, a complex $C^*$-algebra is a complex Banach algebra equipped with an antilinear involution $*$ satisfying $(ab)^*=b^*a^*$ and $\|a^*a\|= \|a\|^2$ for all $a \in A$.

A real $C^*$-algebra is a real Banach algebra equipped with a linear involution $*$ satisfying the conditions above,  and having the additional property that $1+a^*a$ is invertible for all $a \in A$.
\end{definition}

A Banach $*$-algebra over $k$ is a $C^*$-algebra if and only if it is isometrically $*$-isomorphic to a self-adjoint norm-closed algebra of bounded operators on a Hilbert $k$-space; for $k=\R$, this is  Ingelstam's Theorem \cite[8.2 and 15.3]{goodearl}.

\begin{proposition}\label{repcstar}
Take a functor $F\co C^*\Alg_k\to \Set$ such that
\begin{enumerate}
 \item $F$ preserves all finite limits (equivalently: preserves fibre products and the final object), 
\item $F$ preserves monomorphisms (i.e. maps $C^*$-subalgebras to subsets), and 
\item for all inverse systems $S$ of nested $C^*$-subalgebras, the map
\[
 F(\bigcap_{s \in S} A_s) \to \bigcap_{s \in S} F(A_s)
\]
is an isomorphism.
\end{enumerate}
Then  $F$ is strictly pro-representable.
\end{proposition}
\begin{proof}
The proof of Proposition \ref{repbanalg} carries over.
\end{proof}

\begin{lemma}\label{consistentCstar}
 Every complex $C^*$-algebra becomes a real $C^*$-algebra by forgetting the multiplication by $\Cx$.       
\end{lemma}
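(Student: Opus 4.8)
The plan is to verify the axioms of a real $C^*$-algebra one at a time against the given complex $C^*$-structure on $A$, observing that all but one are purely formal consequences of restricting scalars from $\Cx$ to $\R$.

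First I would note that forgetting multiplication by $\Cx$ leaves the additive group, the norm, and the multiplication of $A$ untouched, so $A$ remains a (complete, submultiplicatively normed) Banach algebra over $\R$. The involution $*$ is $\Cx$-antilinear, meaning $(\lambda a)^* = \bar\lambda a^*$ for $\lambda \in \Cx$; restricting to $\lambda \in \R$ gives $(\lambda a)^* = \lambda a^*$, so $*$ becomes $\R$-linear, exactly as demanded for a real $C^*$-algebra. The remaining identities $a^{**}=a$, $(ab)^* = b^* a^*$, and $\|a^* a\| = \|a\|^2$ involve no scalars at all and hold verbatim. Hence $A$ is at once a real Banach $*$-algebra satisfying the $C^*$-norm identity.

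The only axiom that does not transfer formally is the requirement that $1 + a^* a$ be invertible for every $a \in A$. Here I would invoke the standard positivity theorem for complex $C^*$-algebras: for any $a$ the element $a^* a$ is positive, so its spectrum is contained in $[0,\infty)$. It follows that the spectrum of $1 + a^* a$ lies in $[1,\infty)$, which in particular excludes $0$, so $1 + a^* a$ is invertible in the complex algebra $A$. Since invertibility of an element means the existence of a two-sided inverse and is insensitive to the choice of ground field, $1 + a^* a$ is equally invertible in the underlying real algebra, completing the check.

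The main—and really only—obstacle is the positivity of $a^* a$, which is the single place where the complex $C^*$-structure is used essentially rather than cosmetically; everything else is bookkeeping about restriction of scalars. As this positivity is a classical fact in the spectral theory of complex $C^*$-algebras, I would simply cite it rather than reprove it.
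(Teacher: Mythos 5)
Your proof is correct and follows essentially the same route as the paper: both reduce the problem to the invertibility of $1+a^*a$ and settle it via the positivity of $a^*a$ in the complex $C^*$-algebra, the paper by applying the continuous functional calculus to $x \mapsto (1+|x|)^{-1}$ and you by the equivalent spectral observation that $\sigma(1+a^*a) \subset [1,\infty)$. The extra bookkeeping you include on the formal axioms (involution becoming $\R$-linear, field-insensitivity of invertibility) is fine and is simply left implicit in the paper.
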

\begin{proof}
We just need to show that  $1+a^*a$ is invertible for all $a \in A$. Now, $x \mapsto (1+|x|)^{-1}$ is a continuous function on $\R$ and $a^*a$ is positive self-adjoint, so the continuous functional calculus implies that $(1+a^*a)^{-1}   \in A$, as required. 
\end{proof}

\begin{lemma}\label{GalCstar}
The category $ C^*\Alg_{\R}$ is equivalent to the category of pairs $(A, \tau)$, for $A \in  C^*\Alg_{\Cx}$ and an involution $\tau\co A \to A$ 
  satisfying 
\begin{enumerate}        
\item $\tau(ab)= \tau(a)\tau(b)$, 
\item $\tau(a)^*= \tau(a^*)$, and 
 \item $\tau(\lambda)= \bar{\lambda}$ for $\lambda \in \Cx$.
\end{enumerate}
\end{lemma}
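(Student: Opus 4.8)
The plan is to implement Galois descent along $\R \subset \Cx$ by exhibiting a pair of mutually quasi-inverse functors. In one direction I send a pair $(A,\tau)$ to the $\tau$-fixed points $A^\tau := \{a \in A : \tau(a) = a\}$; in the other I send a real $C^*$-algebra $B$ to its complexification $B_\Cx := B\otimes_\R \Cx$, equipped with the conjugation $\tau = \id_B \otimes (\lambda \mapsto \bar\lambda)$. On morphisms, a $*$-homomorphism commuting with the involutions restricts to the fixed points, while a real $*$-homomorphism extends $\Cx$-linearly to the complexifications; both assignments are visibly functorial. Here I read ``involution'' as an $\R$-linear map of order two, so that condition (3) makes $\tau$ conjugate-linear, and conditions (1), (2) make it a multiplicative, $*$-preserving map.

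First I would check that $A^\tau$ is a real $C^*$-algebra. The conditions on $\tau$ show $A^\tau$ is a real $*$-subalgebra containing $1$; it is norm-closed because $\tau$ is isometric, the $C^*$-norm being determined by the spectral radius of $a^*a$, which is invariant under a conjugate-linear $*$-automorphism. The only substantive axiom to verify is invertibility of $1 + a^*a$ for $a \in A^\tau$: since $A$ is a complex $C^*$-algebra this element is invertible in $A$, and applying $\tau$ to $(1+a^*a)(1+a^*a)^{-1}=1$ shows by uniqueness of inverses that $(1+a^*a)^{-1}$ is again $\tau$-fixed, hence lies in $A^\tau$. This is precisely the functional-calculus argument of Lemma \ref{consistentCstar}.

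Next I would verify that $B_\Cx$ carries a complex $C^*$-norm, so that $\Psi(B) = (B_\Cx,\tau)$ lands in the target category: realising $B$ as a self-adjoint norm-closed real algebra of operators on a real Hilbert space $H$ via Ingelstam's theorem, $B_\Cx = B + iB$ becomes a self-adjoint norm-closed complex subalgebra of $L(H\otimes_\R \Cx)$, with $\tau$ the conjugation induced by the real structure $H \subset H\otimes_\R\Cx$, and conditions (1)--(3) are then immediate. Finally I would produce the two natural isomorphisms. The identity $(B_\Cx)^\tau = B$ is clear, since $\tau$ fixes $B$ and negates $iB$. For the other composite, the key is the eigenspace decomposition $A = A^\tau \oplus iA^\tau$: given $a \in A$ put $x = \frac12(a + \tau(a))$ and $y = \frac1{2i}(a - \tau(a))$, so that conditions (1)--(3) give $x,y \in A^\tau$ with $a = x + iy$, uniqueness following from $A^\tau \cap iA^\tau = 0$. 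Thus the multiplication map $(A^\tau)_\Cx \to A$ is a bijective $*$-homomorphism, hence automatically an isometric $*$-isomorphism, and it is $\tau$-equivariant by construction; naturality in both composites is evident from the formulas.

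The main obstacle is not the categorical bookkeeping but the analytic input concentrated in the complexification step: that $B_\Cx$ genuinely admits a complex $C^*$-norm and that the comparison maps are isometric. These facts rest on Ingelstam's theorem (already invoked above) and on the continuous functional calculus, and once they are in hand the equivalence follows formally.
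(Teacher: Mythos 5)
Your proof is correct and follows essentially the same route as the paper: complexification $B \mapsto B\otimes_{\R}\Cx$ with the conjugation involution, fixed points $(A,\tau)\mapsto A^{\tau}$ (made a real $C^*$-algebra via the argument of Lemma \ref{consistentCstar}), and the antilinear eigenspace decomposition $A = A^{\tau}\oplus iA^{\tau}$ to see the functors are quasi-inverse. You simply spell out details the paper leaves implicit, such as invoking Ingelstam's theorem to equip $B\otimes_{\R}\Cx$ with a $C^*$-norm and writing the decomposition formulas explicitly.
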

\begin{proof}
Given $B \in C^*\Alg_{\R}$, set $A:= B\ten_{\R}\Cx$; this is a complex $C^*$-algebra, with involution $(b\ten \lambda)^*= b^*\ten \bar{\lambda}$. The involution  $\tau$ is then given by complex conjugation, with $\tau( b\ten \lambda)= b\ten \bar{\lambda}$. 
For the quasi-inverse construction, we send a pair $(A, \tau)$ to the algebra $A^{\tau}$ of $\tau$-invariants. That this is a real $C^*$-algebra follows from Lemma \ref{consistentCstar}.

To see that these are quasi-inverse functors, first note that $(B\ten_{\R}\Cx)^{\tau}=B$. Next, observe that because $\tau$ is antilinear, we can write $A = A^{\tau} \oplus i A^{\tau} \cong A^{\tau}\ten_{\R}\Cx$ for all pairs $(A, \tau)$ as above.      
\end{proof}

\begin{definition}
For a complex (resp. real) $*$-algebra $A$, write $U(A)$  for the group of unitary (resp. orthogonal) elements 
\[
 \{a \in A\,:\, a^*a=aa^*=1\}.       
\]

Write $\fu(A)$  for the Lie algebra of anti-self-adjoint elements
\[
  \{a \in A\,:\, a^*+a=0\},      
\]
and write $S(A)$ for the self-adjoint elements of $A$, noting that $\fu(A)= iS(A)$ when $A$ is complex.
\end{definition}

%\begin{example}
%$C(\Cx^*)$ has real form consisting of functions with $\overline{f(z)} = f(\bar{z})$ (LHS is $f^*(z)$). Compact subgroup consists of %$\Gal(\Cx/\R)$-equivariant circle-valued functions
%
%\end{example}

\subsection{Representations and polynormal $C^*$-algebras}\label{PNsn}

\subsubsection{Representation spaces}

Fix a real unital $C^*$-algebra $A$.

\begin{definition}
 Write $\Rep_n^*(A)$  for the space  of  unital continuous $*$-homomorphisms $\rho \co A \to \Mat_n(\Cx)$  equipped with the topology of pointwise convergence.   Write $\Irr_n^*(A)\subset \Rep_n^*(A)$ for the subspace of irreducible representations. 
\end{definition}

\begin{definition}
Define $\FD\oHilb$ to be the groupoid of complex finite-dimensional Hilbert spaces and unitary isomorphisms. 
\end{definition}

\begin{definition}
 Write   $\FD^*\Rep(A)$ for the groupoid of pairs $(V, \rho)$ for $V\in \FD\oHilb$ and  $\rho \co A \to \End(V)$ a unital continuous $*$-homomorphism. Morphisms are given by unitary isomorphisms intertwining representations. The set of objects of $\FD^*\Rep(A)$ is given the topology of pointwise convergence.
\end{definition}

\begin{definition}\label{PNdef}
Define $A_{\PN}$ to be the ring of $\Gal(\Cx/\R)$-equivariant continuous additive endomorphisms of the fibre functor from $\FD^*\Rep(A)$ to the category of vector spaces. Explicitly, $a \in A_{\PN}$ associates to each pair $(V,\rho)$ an element $a(V, \rho) \in \End(V)$, subject to the conditions:
\begin{enumerate}
\item For any unitary isomorphism $u \co V \to W$, we have $a(W, u\rho u^{-1}) = u a(V,\rho) u^{-1}$. 
\item For any $(V_1,\rho_1), (V_2, \rho_2) \in \FD^*\Rep(A)$, we have $a(V_1\oplus V_2, \rho_1\oplus \rho_2) = a(V_1, \rho_1) \oplus a(V_2, \rho_2)$.        
\item The maps $a \co \Rep_n^*(A) \to \Mat_n(\Cx)$ are continuous and $\Gal(\Cx/\R)$-equivariant. 
\end{enumerate}
\end{definition}

\begin{lemma}\label{PNlemma}
The ring $A_{\PN}$ has the structure of a pro-$C^*$-algebra over $A$.  
\end{lemma}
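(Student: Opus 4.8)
The plan is to realise $A_{\PN}$ as an inverse limit of real $C^*$-algebras, one for each matrix size $n$, where the $n$-th term records the values of natural endomorphisms on $n$-dimensional representations. The whole argument rests on the observation that each representation space $\Rep_n^*(A)$ is compact, which lets me use sup-seminorms (uniform convergence) rather than pointwise seminorms.

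First I would check that $A_{\PN}$ is a real $*$-algebra under the pointwise operations $(\eta\eta')(V,\rho)=\eta(V,\rho)\eta'(V,\rho)$, $(\eta+\eta')(V,\rho)=\eta(V,\rho)+\eta'(V,\rho)$ and $\eta^*(V,\rho)=\eta(V,\rho)^*$. Each of conditions (1)--(3) of Definition \ref{PNdef} is preserved: the adjoint case of (1) uses $u^*=u^{-1}$, and of (3) uses that complex conjugation and the adjoint commute on $\Mat_n(\Cx)$ (both send $M$ to $M^{T}$). Only real scalars preserve condition (3), so $A_{\PN}$ is a \emph{real} algebra. I would then record the structure morphism $A\to A_{\PN}$ sending $a$ to the endomorphism $(V,\rho)\mapsto\rho(a)$: conditions (1), (2) are immediate, while (3) is exactly continuity in the topology of pointwise convergence together with $\bar\rho(a)=\overline{\rho(a)}$, and this morphism is a contractive $*$-homomorphism since $*$-homomorphisms of $C^*$-algebras are norm-decreasing.

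Next comes the compactness input. Because any $*$-homomorphism $\rho\co A\to\Mat_n(\Cx)$ is contractive, $\Rep_n^*(A)$ lies inside $\prod_{a\in A}\{M\in\Mat_n(\Cx):\|M\|\le\|a\|\}$, which is compact by Tychonoff; being a unital $*$-homomorphism is a pointwise-closed condition, so $\Rep_n^*(A)$ is a closed, hence compact, subspace. Thus for $\eta\in A_{\PN}$ the continuous map $\eta_n\co\Rep_n^*(A)\to\Mat_n(\Cx)$ is bounded, and I can define the $C^*$-seminorm $p_n(\eta)=\sup_{\rho\in\Rep_n^*(A)}\|\eta(\Cx^n,\rho)\|$, which by condition (1) is unchanged if one ranges over all $n$-dimensional $(V,\rho)$. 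The assignment $\eta\mapsto\eta_n$ is a $*$-homomorphism into the real $C^*$-algebra $C(\Rep_n^*(A),\Mat_n(\Cx))^{\Gal}$ of Galois-fixed points (real by Lemma \ref{GalCstar}, for the involution $f\mapsto(\rho\mapsto\overline{f(\bar\rho)})$), so $p_n$ satisfies the $C^*$-identity. Condition (2) makes the family $\{p_n\}_n$ directed under direct sums, and if $\eta\neq0$ then some $\eta_n(\rho)\neq0$, so the family separates points and the induced topology is Hausdorff.

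To conclude that $A_{\PN}$ is a pro-$C^*$-algebra it remains to prove completeness for the topology of $\{p_n\}$, i.e.\ uniform convergence on each $\Rep_n^*(A)$; this is the main obstacle and the point requiring care. Given a Cauchy net, for each $n$ the functions $\eta^{(\alpha)}_n$ converge \emph{uniformly} in $C(\Rep_n^*(A),\Mat_n(\Cx))$ to a limit $L_n$, and it is precisely uniform (not merely pointwise) convergence that keeps $L_n$ continuous and Galois-equivariant, so condition (3) survives the limit; without the compactness of $\Rep_n^*(A)$ the natural seminorms would give only pointwise convergence, under which continuity can fail. Conditions (1) and (2) pass to the limit as pointwise identities, and by (1) the family $(L_n)$ extends uniquely to a natural rule $L$ on all $(V,\rho)$, giving $L\in A_{\PN}$ with $\eta^{(\alpha)}\to L$. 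Hence $A_{\PN}$ is a complete Hausdorff real $*$-algebra whose topology comes from a directed family of $C^*$-seminorms, so it is a pro-$C^*$-algebra, realised as the inverse limit of the $C^*$-algebras $B_n=\overline{\im(\eta\mapsto\eta_n)}$; combined with the structure map above, this exhibits it as a pro-$C^*$-algebra over $A$.
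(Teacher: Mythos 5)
Your proof is correct and follows essentially the same route as the paper: its one-line proof realises $A_{\PN}$ as the categorical limit of a diagram of $*$-homomorphisms between the $C^*$-algebras $C(\Rep_n^*(A),\Mat_n(\Cx))$, which is exactly the inverse-limit presentation $\Lim_n B_n$ you arrive at via the equivalent characterisation by directed families of $C^*$-seminorms. The only difference is one of detail: you make explicit the compactness of $\Rep_n^*(A)$ (needed for these function algebras to be $C^*$-algebras under the sup-norm) and the completeness verification, both of which the paper leaves implicit.
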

\begin{proof}
We may describe $A_{\PN}$ as the categorical limit of a diagram of $*$-homomorphisms between the $C^*$-algebras $C(\Rep_n^*(A),\Mat_n(\R))$, thus making it into a pro-$C^*$-algebra. The $*$-homomorphism $A \to A_{\PN}$ is given by mapping $a$ to the transformation $a(V,\rho)= \rho(a)$.
\end{proof}

\subsubsection{Polynormal $C^*$-algebras}

\begin{definition}
Recall from \cite{pearcy} that an algebra $A$ is said to be $n$-\emph{normal} if for all $a_1, \ldots, a_{2n} \in A$, we have
\[
 \sum_{\sigma \in S_{2n}} \sgn(\sigma) a_{\sigma(1)} a_{\sigma(2)}\cdots  a_{\sigma(2n)}=0.      
\]
Call an algebra \emph{polynormal} if it is $n$-\emph{normal} for some $n$.
\end{definition}
By the Amitsur--Levitzki theorem, the  algebra  of $n \by n$-matrices over a commutative ring is $n$-normal. Also note  that an $n$-normal algebra is $k$-normal for all $k\ge n$.

Note that by restricting to $n$-dimensional representations, we get that for any real $C^*$-algebra $A$, the ring $A_{\PN}$ of Definition \ref{PNdef} is an inverse limit $A_{\PN} = \Lim_n A_{\PN,n}$ of $n$-normal $C^*$-algebras.

We now have a result combining aspects of Tannaka and Takesaki duality:

\begin{proposition}\label{PNprop}
If $A$ is a polynormal unital $C^*$-algebra, then the morphism $A \to A_{\PN}$ of Lemma \ref{PNlemma} is an isomorphism.
\end{proposition}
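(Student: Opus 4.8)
The plan is to establish separately that $A\to A_{\PN}$ is injective and surjective. Injectivity amounts to the assertion that finite-dimensional $*$-representations separate the points of $A$. Since $A$ is $n$-normal it satisfies the standard polynomial identity of degree $2n$, hence so does its complexification $A\ten_\R\Cx$; by Kaplansky's theorem on $C^*$-algebras satisfying a polynomial identity, every irreducible $*$-representation of $A\ten_\R\Cx$ is finite-dimensional of dimension at most $n$. By the Gelfand--Naimark theorem the irreducible representations of $A\ten_\R\Cx$ form a faithful family, so the finite-dimensional $\rho\co A\to\Mat_n(\Cx)$ separate the points of $A$ (using Lemma \ref{GalCstar} to pass between $A$ and its complexification). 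Thus $a\mapsto(\rho\mapsto\rho(a))$ is injective.

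For surjectivity, fix $\eta\in A_{\PN}$. The first step is a bicommutant argument. For a representation $(V,\rho)$, every unitary $u$ in the commutant $\rho(A)'\subseteq\End(V)$ is a unitary self-intertwiner, so condition (1) of Definition \ref{PNdef} forces $\eta(V,\rho)=u\,\eta(V,\rho)\,u^{-1}$. The unitaries span the finite-dimensional $*$-algebra $\rho(A)'$, so $\eta(V,\rho)$ commutes with $\rho(A)'$, i.e. $\eta(V,\rho)\in\rho(A)''$; and in finite dimensions the bicommutant theorem gives $\rho(A)''=\rho(A)$. Hence $\eta(V,\rho)\in\rho(A)$ for every $(V,\rho)$. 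Since $A$ is $n$-normal, every finite-dimensional $*$-representation is a unitary direct sum of irreducible ones of dimension at most $n$, so by the additivity condition (2) the transformation $\eta$ is determined by its values on irreducible representations, and it suffices to find $a\in A$ with $\rho(a)=\eta(V,\rho)$ for all irreducible $(V,\rho)$.

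To produce such an $a$ I would use the structure theory of subhomogeneous $C^*$-algebras. An $n$-normal $C^*$-algebra is $n$-subhomogeneous, so it admits a finite composition series by closed ideals whose successive quotients are $d$-homogeneous for various $d\le n$. By the Tomiyama--Takesaki and Fell description of homogeneous $C^*$-algebras, each $d$-homogeneous quotient $B$ is the algebra of continuous sections of a locally trivial bundle with fibre $\Mat_d(\Cx)$ over its Hausdorff spectrum $\hat B$. Condition (1) says precisely that $\rho\mapsto\eta(V,\rho)$ is constant on unitary equivalence classes and transforms by conjugation, so it descends to a section of the associated endomorphism bundle over $\hat B$, which for a matrix bundle is $B$ itself; condition (3), together with Lemma \ref{GalCstar}, guarantees this section is continuous and compatible with the real structure, hence is a genuine element of the real form. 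A descending induction over the composition series, at each stage lifting the section on the top homogeneous quotient back to $A$ and replacing $\eta$ by its difference with the corresponding $\eta_a$, then assembles these data into a single element of $A$ mapping to $\eta$.

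The formal part---separation of points, and the reduction to irreducibles via the bicommutant theorem and additivity---is routine; the main obstacle is the final assembly. Realizing the natural family $\{\eta(V,\rho)\}$ by one algebra element rests on the identification of (sub)homogeneous $C^*$-algebras with section algebras of matrix bundles and on the continuity of the reconstructed section across the strata of different dimension in the spectrum (equivalently, a non-commutative Stone--Weierstrass argument, available here because subhomogeneous algebras are of Type I), while the $\Gal(\Cx/\R)$-equivariance of condition (3) must be tracked throughout to ensure the reconstructed element is real.
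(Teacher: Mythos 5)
Your route is genuinely different from the paper's, and most of its formal steps are sound: injectivity via Kaplansky plus Gelfand--Naimark, the bicommutant argument showing $\eta(V,\rho)\in\rho(A)$, and the reduction to irreducibles via additivity are all correct. (The paper instead works inside the enveloping von Neumann algebra: it quotes Bichteler's characterisation of $A''$, identifies $A_{\PN}$ with the Akemann--Shultz subalgebra $A_c\subset A''$ of elements weak-$*$ continuous on pure states, and concludes via perfectness of $A$; it never passes through Fell bundles or a composition series.)

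However, the step you yourself flag as ``the main obstacle'' is a genuine gap, and the pointer you offer does not close it. The subquotients $I_k/I_{k+1}$ of the canonical composition series are non-unital in general, and the Fell--Tomiyama--Takesaki theorem identifies them with sections of a matrix bundle \emph{vanishing at infinity} on the locally compact stratum $\hat{A}_k$ (you wrote ``continuous sections,'' eliding exactly this point). Conditions (1) and (3) applied to irreducible representations only produce a bounded continuous equivariant section on each stratum; nothing in your argument shows it vanishes at infinity, and without that the section simply fails to define an element of $I_k/I_{k+1}$, so the induction dies at its first step. For instance, for $A=\{f\in C([0,1],\Mat_2(\Cx)):f(0)\text{ diagonal}\}$ (which is $2$-normal), the open stratum is $(0,1]$ and a bounded continuous family $t\mapsto\eta(\pi_t)$ lies in the ideal $I_2$ only if it tends to $0$ as $t\to 0$. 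The missing ingredient is continuity of $\eta$ at \emph{reducible} points of $\Rep_n^*(A)$, which your proof never uses: $\Rep_n^*(A)$ is compact in the pointwise topology (a pointwise-closed subset of $\prod_{a\in A}\{x:\|x\|\le\|a\|\}$, since unital $*$-homomorphisms are contractive), and if irreducibles escape every compact subset of $\hat{A}_n$ then any pointwise limit of lifts is a \emph{reducible} $n$-dimensional representation, hence a unitary direct sum of lower-dimensional irreducibles, on which $\eta$ vanishes by the inductive hypothesis and additivity; continuity then forces $\eta\to 0$ along the net, which is precisely vanishing at infinity. Your alternative suggestion---a non-commutative Stone--Weierstrass theorem for type I algebras---is not a substitute as stated: such theorems say a $C^*$-subalgebra separating pure states of a given type I algebra is everything, whereas the issue here is whether the natural family $\{\eta(V,\rho)\}$ defines an element of \emph{any} ambient $C^*$-algebra into which $A$ embeds appropriately; setting that up rigorously is essentially the Akemann--Shultz ``perfect algebra'' machinery the paper invokes, i.e.\ it is the content of the theorem rather than a quotable tool.
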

\begin{proof}
Since $A$ is $N$-normal for some integer $N$, \cite[\S 3]{pearcy} implies that $A$ is of type $I$, with all complex irreducible representations of $A$ having dimension at most $ N$. 

For a sufficiently large cardinal $\alpha$, \cite{bichteler} characterises the $W^*$-envelope $A''\ten \Cx$ of $A\ten \Cx$ as the ring defined analogously to $A_{\PN}$ by replacing ``continuous'' with ``bounded'' and ``finite-dimensional'' with ``of dimension at most $\alpha$''. Since all irreducible representations are at most $N$-dimensional,  the direct integral decomposition of $A$-representations gives us an injective map $A_{\PN} \to A''$, with boundedness following because any $a \in A_{\PN}$ is bounded on $\coprod_{k \le N}\Rep_k^*(A)$. 

Now, \cite{AkemannShultz} defines a ring $A_c \subset A''$ to consist of those $b$ for which the functions $b,b^*b,bb^*$ are weakly $*$-continuous on the space $P(A)$ of pure states of $A$. Since all irreducible representations arise as subrepresentations of $N$-dimensional representations, continuity on $ \Rep_N^*(A)$ suffices to give continuity on $P(A)$, so the inclusion $A_{\PN} \to A_c$ is an isomorphism.

By \cite{BunceDeddens}, the spectrum of $A$ is Hausdorff, and since $A$ is type $I$, \cite{AkemannShultz} then observes that $A$ is perfect, which means that the inclusion $A \to A_c$ is in fact an isomorphism. Thus the map $A \to A_{\PN}$ is an isomorphism.
\end{proof}

\subsection{The category of $C^*$-algebras with completely bounded morphisms}

\subsubsection{Basic properties}

\begin{lemma}\label{Cstarimage}
If $A$ is a $C^*$-algebra and $f\co A \to B$  a morphism of Banach algebras, then the image of $f$ has the natural structure of a $C^*$-algebra, with $f\co A \to \im(f)$ becoming a $C^*$-homomorphism.  
\end{lemma}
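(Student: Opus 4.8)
The plan is to factor $f$ through its kernel and transport structure. Since $f$ is a bounded homomorphism of Banach algebras, its kernel $I:=\ker(f)$ is a closed two-sided ideal of $A$. The crucial point, and the only place where the $C^*$-hypothesis on $A$ (as opposed to merely a Banach-algebra hypothesis) is used, is that \emph{every} closed two-sided ideal of a $C^*$-algebra is automatically self-adjoint: choosing a positive approximate identity $\{e_\lambda\}$ for $I$, any $a \in I$ satisfies $a^*=\lim_\lambda a^*e_\lambda$, and each $a^*e_\lambda$ lies in $I$ because $e_\lambda \in I$, so $a^* \in I$ by closedness. Hence $I$ is a $*$-ideal and the quotient $A/I$ carries its standard structure as a $C^*$-algebra, with quotient $*$-homomorphism $q\co A \to A/I$.

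Next I would note that $f$ factors as $A \xra{q} A/I \xra{\bar f} B$, where $\bar f(a+I)=f(a)$ is an \emph{injective} morphism of Banach algebras whose image is exactly $\im(f)$. Thus $\bar f$ restricts to an isomorphism $A/I \xra{\sim} \im(f)$ of abstract (unital) algebras. I would then transport the $C^*$-structure across this isomorphism, defining the involution and norm on $\im(f)$ by $\bar f(x)^* := \bar f(x^*)$ and $\|\bar f(x)\| := \|x\|_{A/I}$; these are well defined precisely because $\bar f$ is a bijection onto $\im(f)$. With respect to this data $\im(f)$ is, by construction, isometrically $*$-isomorphic to the $C^*$-algebra $A/I$, hence is itself a $C^*$-algebra. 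Finally, $f\co A \to \im(f)$ is the composite of the $*$-homomorphism $q$ with the $*$-isomorphism $\bar f$, hence a $C^*$-homomorphism, as required.

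The norm obtained this way need not agree with the norm $\im(f)$ inherits as a subset of $B$ (indeed $B$ carries no involution at all), but it is the unique $C^*$-norm compatible with the transported involution, so the resulting structure is canonical; this justifies the word ``natural'' in the statement. The main obstacle to highlight is the self-adjointness of $\ker(f)$: everything else is formal transport of structure along a bijection, but without knowing that the kernel is a $*$-ideal one could not even form the quotient $*$-algebra, and it is exactly here that the completeness and $C^*$-axioms of $A$ do the work.
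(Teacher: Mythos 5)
Your proof is correct and takes essentially the same approach as the paper: the paper observes that $\ker f$ is a closed two-sided ideal and simply cites Segal's theorem that the quotient of a $C^*$-algebra by such an ideal is again a $C^*$-algebra, leaving the identification of $A/\ker f$ with $\im(f)$ implicit. You merely supply the proof of the cited fact (self-adjointness of the ideal via an approximate identity) and spell out the transport of structure, which is a harmless elaboration of the same argument.
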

\begin{proof}
The kernel of $f$ is a closed two-sided ideal. Thus by \cite[Theorem 3]{segalIdeals},   
$A/\ker f$ is a $C^*$-algebra, as required.
\end{proof}

\begin{definition}\label{cbdef}
 Recall that a homomorphism $\pi\co A \to B$ of Banach algebras is said to be \emph{completely bounded} if 
\[
 \sup_{n \in \N} \|M_n(\pi)\|< \infty,
\]
where $M_n(\pi)\co M_n(A) \to M_n(B)$ is the morphism on $n \by n$ matrices given by $\pi$.

Given a pro-Banach algebra $A= \Lim_i A_i$ with completely bounded structure maps $A_j \to A_i$,  and a Banach algebra $B$, any morphism $\pi\co A \to B$ factors through some $A_i$, and we say that $\pi$ is \emph{completely bounded} if the map $A_j \to B$ is so for some  sufficiently large $j \ge i$. 

Write $\Hom(A,B)_{cb}$ for the set of completely bounded homomorphisms from $A$ to $B$.
\end{definition}

\begin{lemma}\label{KSPlemma}
 If $A$ is a $C^*$-algebra, then any      completely bounded homomorphism $f\co A \to L(H)$   is conjugate to a $*$-homomorphism of $C^*$-algebras. 
\end{lemma}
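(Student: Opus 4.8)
The plan is to reduce to the unital case and then invoke the representation theorem for completely bounded maps, extracting a similarity out of the factorization by exploiting multiplicativity. First I would dispose of the non‑unital case. Since $f$ is a homomorphism, $e:=f(1)$ is idempotent and $f(a)=ef(a)e$, so $f$ takes values in the corner $eL(H)e$ and is unital there. Every idempotent of $L(H)$ is conjugate to the orthogonal projection onto its range by an explicit invertible $s\in L(H)$; conjugation by a fixed invertible preserves complete boundedness, so after replacing $f$ by $s f(\cdot)s^{-1}$ we may assume $f(1)=p$ is a self‑adjoint projection and regard $f$ as a unital completely bounded homomorphism into $L(pH)$. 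A $*$‑representation on $pH$ extended by $0$ to $H$ is a $*$‑homomorphism, so it suffices to treat the unital case.

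Assume now $f\co A\to L(H)$ is unital. The key external input is the Wittstock--Paulsen representation theorem for completely bounded maps: there exist a Hilbert space $K$, a unital $*$‑representation $\pi\co A\to L(K)$, and bounded operators $V,W\co H\to K$ with $\|V\|\,\|W\|\le\|f\|_{cb}$ such that $f(a)=W^*\pi(a)V$ for all $a$; unitality forces $W^*V=f(1)=I_H$. I would then pass to the reducing subspace $K'':=\overline{\pi(A)WH}$, which is $\pi(A)$‑invariant and hence, $\pi$ being a $*$‑representation, reduces $\pi$. Writing $\pi''$ for the restriction of $\pi$ to $K''$, $P''$ for the orthogonal projection onto $K''$, and $V'':=P''V$, one checks $P''W=W$ and $P''$ commutes with $\pi(A)$, giving $f(a)=W^*\pi''(a)V''$ together with $W^*V''=I_H$ and the cyclicity $\overline{\pi''(A)WH}=K''$.

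With this cyclicity in hand, multiplicativity yields a similarity. Expanding $f(a)f(b)=f(ab)$ in the factorization gives
\[
W^*\pi''(a)\bigl(V''W^*-I_{K''}\bigr)\pi''(b)V''=0\qquad\text{for all }a,b\in A.
\]
Fixing $b$ and letting $Y:=(V''W^*-I_{K''})\pi''(b)V''\eta\in K''$, the vanishing of $W^*\pi''(a)Y$ for all $a$ says exactly that $Y\perp\overline{\pi''(A)WH}=K''$, whence $Y=0$. Thus $(V''W^*-I)\pi''(b)V''=0$, i.e. $\pi''(b)V''=V''f(b)$, so $V''$ intertwines $f$ with $\pi''$. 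Consequently $\overline{V''H}$ is $\pi''(A)$‑invariant, while $W^*V''=I_H$ forces $\|V''\xi\|\ge\|W\|^{-1}\|\xi\|$, so $V''$ is bounded below with dense range and is therefore a bounded invertible operator from $H$ onto the Hilbert space $\overline{V''H}$. Fixing a unitary $U\co H\to\overline{V''H}$ (legitimate since a bounded invertible operator preserves Hilbert dimension) and setting $\tilde\pi(a):=U^*\pi''(a)|_{\overline{V''H}}U$, a genuine $*$‑representation on $H$, and $s:=(V'')^{-1}U\in L(H)$, one obtains $f(a)=s\,\tilde\pi(a)\,s^{-1}$, exhibiting $f$ as conjugate to a $*$‑homomorphism.

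The only substantial ingredient is the completely bounded representation theorem $f(a)=W^*\pi(a)V$, which is the standard Wittstock/Paulsen dilation obtained from Stinespring's theorem applied to the completely positive map on $M_2(A)$ built from $f$; I would cite this rather than reprove it. The genuinely delicate point in the argument proper is the passage to $K''=\overline{\pi(A)WH}$: the raw (jointly minimal) dilation only gives $K=\overline{\pi(A)VH+\pi(A)WH}$, and it is the compression to the $W$‑cyclic reducing subspace that supplies exactly the cyclicity needed to force $Y=0$ and hence the intertwining. Everything after that is routine Hilbert‑space bookkeeping.
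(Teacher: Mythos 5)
Your proposal is correct, but it is doing substantially more than the paper does: the paper's entire proof is a citation, ``This is the main result of \cite{paulsen}'', i.e.\ it invokes Paulsen's similarity theorem as a black box. What you have written is, in effect, a reconstruction of the proof of that cited theorem: you take as input only the more primitive Wittstock--Paulsen factorization $f(a)=W^*\pi(a)V$ for completely bounded maps, and then extract the similarity by hand --- compressing to the $W$-cyclic reducing subspace $K''=\overline{\pi(A)WH}$, using multiplicativity to get the intertwining $\pi''(b)V''=V''f(b)$, and observing that $W^*V''=I_H$ makes $V''$ bounded below, hence an isomorphism onto a closed $\pi''$-invariant (and thus, since $\pi''(A)$ is a $*$-algebra, reducing) subspace. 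All the steps check out: the orthogonality argument $W^*\pi''(a)Y=0\;\forall a\Rightarrow Y\perp\overline{\pi''(A)WH}$ correctly uses $*$-closedness of $A$; the identity $P''W=W$ is justified because you took the dilation $\pi$ unital; and the non-unital reduction via similarity of the idempotent $f(1)$ to an orthogonal projection is sound. The trade-off is the expected one: the paper's citation is shorter and defers all content to the literature, while your argument is self-contained modulo a more standard input (the CB representation theorem, itself a consequence of Stinespring applied to the $M_2$ trick), makes explicit where the similarity comes from and how its norm is controlled by $\|f\|_{cb}$, and additionally handles the non-unital case, which the bare citation leaves implicit.
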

\begin{proof}
This is the main result of \cite{paulsen}.
\end{proof}

\begin{remark}\label{kadison}
Kadison's similarity problem asks whether all bounded (non-involutive) homomorphisms  between $C^*$-algebras are in fact completely bounded. The answer is affirmative in a wide range of cases, but the general problem remains open. 
Note that Gardner showed (\cite[Theorem A]{gardner}) that all Banach isomorphisms of $C^*$-algebras are conjugate to $C^*$-homomorphisms (and hence completely bounded). %%i.e. all  $C^*$-homomorphisms are completely bounded

Also note that by \cite[Theorem 3]{segalIdeals}, every closed two-sided ideal of a $C^*$ algebra is a $*$-ideal; combined with Gardner's result, this implies that 
any bounded surjective map $A \to B$  between $C^*$-algebras must be conjugate to a $C^*$-homomorphism, hence completely bounded. The same is true of any bounded map between $C^*$-algebras whose image is a $C^*$-subalgebra. 
\end{remark}

\begin{definition}\label{cbalgdef}
Let $C^*B\Alg_k$ be the category of unital $C^*$-algebras over $k$, with completely bounded morphisms (which need not preserve $*$). 
 \end{definition}

\begin{lemma}\label{equivartpluri}
For  complex $C^*$-algebras $A,B$, giving a $U(B)$-equivariant function $f\co \Hom_{C^*\Alg_{\Cx}}(A,B) \to B$  (for the adjoint action on $B$) is equivalent to giving a $B^{\by}$-equivariant function $\tilde{f}\co \Hom_{C^*B\Alg_{\Cx}}(A,B) \to B$.
 \end{lemma}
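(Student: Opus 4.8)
The plan is to realise the claimed correspondence as restriction along the inclusion $\Hom_{C^*\Alg_{\Cx}}(A,B)\hookrightarrow\Hom_{C^*B\Alg_{\Cx}}(A,B)$. Since $U(B)\subseteq B^{\by}$ and the adjoint action of a unitary is conjugation, any $B^{\by}$-equivariant $\tilde f$ restricts to a $U(B)$-equivariant $f$ on the $*$-homomorphisms; this direction is immediate, and the real task is to produce a two-sided inverse.

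The geometric input I would use is that every $\pi\in\Hom_{C^*B\Alg_{\Cx}}(A,B)$ is conjugate by some $g\in B^{\by}$ to a $*$-homomorphism $\rho=g\pi g^{-1}$: this is Paulsen's theorem (Lemma \ref{KSPlemma}), with Lemma \ref{Cstarimage} identifying $\im(\pi)$ as a $C^*$-algebra so that the similarity can be arranged inside $B^{\by}$. Given a $U(B)$-equivariant $f$, I would set $\tilde f(\pi):=g^{-1}f(\rho)g$ and verify independence of the choice of $g$. If $\rho$ and $\rho'=h\rho h^{-1}$ are both $*$-homomorphisms with $h\in B^{\by}$, then writing out the two $*$-homomorphism identities forces $p:=h^*h$ to commute with $\rho(A)$; the polar decomposition $h=up^{1/2}$ then has $u\in U(B)$ and $p^{1/2}\in\rho(A)'\cap B$, whence $\rho'=u\rho u^{-1}$ and $f(\rho')=uf(\rho)u^{-1}$ by $U(B)$-equivariance.

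The crux is to show the leftover positive factor $p^{1/2}$ is invisible to $f$, i.e.\ that $f(\rho)$ commutes with $p^{1/2}$. Here I would use that $\rho(A)'$ is a unital $C^*$-subalgebra of $B$ and that the stabiliser of $\rho$ in $U(B)$ is exactly its unitary group: $U(B)$-equivariance gives $wf(\rho)w^{-1}=f(\rho)$ for every unitary $w\in\rho(A)'$, and since a unital $C^*$-algebra is the linear span of its unitaries, $f(\rho)$ commutes with all of $\rho(A)'$, in particular with $p^{1/2}$. Thus $f(\rho')=uf(\rho)u^{-1}=up^{1/2}f(\rho)p^{-1/2}u^{-1}=hf(\rho)h^{-1}$, which is precisely the consistency needed for $\tilde f$ to be well defined; the same identity shows $\tilde f$ is $B^{\by}$-equivariant and extends $f$. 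That restriction and extension are mutually inverse follows because every $B^{\by}$-orbit meets the $*$-homomorphisms, so a $B^{\by}$-equivariant function is determined by its restriction.

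I expect the main obstacle to be this commutation step --- showing that $U(B)$-equivariance by itself already pins $f(\rho)$ into the commutant of $\rho(A)'$, which is what makes the non-unitary part of an arbitrary conjugator drop out. A secondary technical point is guaranteeing that Paulsen's similarity can be implemented within $B^{\by}$ itself rather than in an ambient $L(H)$; the reduction to the image $C^*$-algebra via Lemma \ref{Cstarimage} is what I would lean on there.
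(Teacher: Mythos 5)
Your proposal is correct and takes essentially the same route as the paper: one direction is restriction along the inclusion of $*$-homomorphisms, and the other is Paulsen's similarity theorem (Lemma \ref{KSPlemma}) combined with polar decomposition, the crux in both cases being that the positive factor of any conjugator between two $*$-homomorphisms lies in $\rho(A)'$ and is therefore invisible to $f$. The only difference is how that commutation is established --- the paper differentiates the one-parameter unitary group $\exp(i\xi t)\subset \rho(A)'\cap U(B)$, which fixes $\rho$ and hence fixes $f(\rho)$ by $U(B)$-equivariance, whereas you invoke the fact that a unital $C^*$-algebra is the linear span of its unitaries; both are standard, and the point you flag about implementing Paulsen's similarity inside $B^{\by}$ rather than an ambient $L(H)$ is passed over equally silently in the paper's own proof.
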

\begin{proof}
 There is a canonical inclusion $\iota\co \Hom_{C^*\Alg_{\Cx}}(A,B)\to  \Hom_{C^*B\Alg_{\Cx}}(A,B) $, so given $\tilde{f}$, we just set $f$ to be $\tilde{f} \circ \iota$. 

The polar decomposition allows us to write $B^{\by}= B_{++}U(B)$, where  $B_{++}\subset S(B)$ is the subset  of strictly positive self-adjoint elements. Given $f$, there is thus an associated $B^{\by}$-equivariant function $\tilde{f}\co \Hom_{C^*\Alg_{\Cx}}(A,B)\by B_{++} \to B$ given by $\tilde{f}(p,g)= g^{-1}f(p)g$. By Lemma \ref{KSPlemma}, the map $ \Hom_{C^*\Alg_{\Cx}}(A,B)\by B_{++}\to  \Hom_{C^*B\Alg_{\Cx}}(A,B)$ is surjective, and we need to check that $\tilde{f}$ descends.

Now, if $\xi \in S(B)$ has the property that $\exp(\xi)$ fixes $p(A)$ under conjugation, then   $\exp(i\xi t)$ commutes with $f(p)$ for all $t$, so $i\xi$ must also. Thus $\xi$ and hence $\exp(\xi)$ commute with $f(p)$, so $\tilde{f}$ does indeed descend.
\end{proof}

\subsubsection{Representations}

We now fix a real unital $C^*$-algebra $A$.

\begin{definition}
Define $\FD\Vect$ to be the category of complex finite-dimensional vector spaces and linear maps. 
\end{definition}

\begin{definition}
 Write   $\FD\Rep(A)$ for the category of pairs $(V, \rho)$ for $V\in \FD\Vect$ and  $\rho \co A \to \End(V)$ a unital continuous morphism of Banach algebras. Morphisms are given by linear maps intertwining representations. The set of objects of $\FD\Rep(A)$ is given the topology of pointwise convergence.
\end{definition}

Note that the objects of $\FD\Rep(A)$ decompose into direct sums of irreducibles.

\begin{lemma}\label{PNlemma2}
The ring $A_{\PN}$ of Lemma \ref{PNlemma} is isomorphic to the ring $A_{\PN'}$ of $\Gal(\Cx/\R)$-equivariant continuous  endomorphisms of the fibre functor $\eta\co \FD\Rep(A) \to \FD\Vect$. Explicitly, $A_{\PN'}$ consists of elements $a$ such that
\begin{enumerate}
\item For any linear map  $f \co (V, \rho_1) \to (W, \rho_2)$, we have $a(W, \rho_2)f = fa(V,\rho_1)$. 
\item The maps $a \co \Rep_n^*(A) \to \Mat_n(\Cx)$ are continuous and $\Gal(\Cx/\R)$-equivariant. 
\end{enumerate}
\end{lemma}
\begin{proof}
First note that condition (1) applied to the projections $V_1\oplus V_2 \to V_i$ ensures that  for any $(V_1,\rho_1), (V_2, \rho_2) \in \FD^*\Rep(A)$, we have $a(V_1\oplus V_2, \rho_1\oplus \rho_2) = a(V_1, \rho_1) \oplus a(V_2, \rho_2)$.  

Restriction to $*$-representations then gives us a map $\psi\co A_{\PN'} \to A_{\PN}$. For a commutative $C^*$-algebra $C$, the $C^*$-algebra $\Mat_k(C)$ is of type $I$. This means that any  bounded map $A \to \Mat_k(C)$ is completely bounded, so taking $B= \Mat_k(C)$  in Lemma \ref{equivartpluri} for all $k$ ensures that $\psi$ is an isomorphism. 
\end{proof}

%%%%would really like a version of this for banach algebras. The answer of course is equaliser of $k$-normal $C^*$-algebras with bounded maps, bt I'd like a cleaner characterisation. For instance, we get analyticity by looking at equaliser of $C(\Rep_1,M_1)\by C(T\Rep_2, T_2)\implies C(\Rep_1\by \Rep_1 \by T_2^{\by},T_2)$. 

\begin{definition}\label{FDrepbase}
Given a commutative unital real $C^*$-algebra $A$ and a $*$-homomorphism $A \to B$ of real $C^*$-algebras,
 write   $\FD\Rep_{\hat{A}}(B)$ for the category of triples $(f,V, \rho)$ for $f \in \hat{A}$ (the spectrum of $A$), $V\in \FD\Vect$ and  $\rho \co B \to \End(V)$ a unital continuous morphism of Banach algebras for which $\rho(a)=f(a)\id$  for all $a \in A$.
Morphisms are given by linear maps intertwining representations. The set of objects of $\FD\Rep_{\hat{A}}(B)$ is given the topology of pointwise convergence.

The category $\FD\Rep_{\hat{A}}(B) $ has an additive structure over $\hat{A}$, given by $(f, V_1, \rho_1)\oplus (f,V_2, \rho_2)=(f,V_1\oplus V_2, \rho_1\oplus\rho_2)$.
\end{definition}

\begin{lemma}\label{PNlemma2base}
Given a commutative unital $C^*$-algebra $A$ and a $*$-homomorphism $A \to B$ of real  $C^*$-algebras,
the ring $B_{\PN}$ of Lemma \ref{PNlemma} is isomorphic to the ring  of $\Gal(\Cx/\R)$-equivariant continuous  endomorphisms of the fibre functor $\eta\co \FD\Rep_{\hat{A}}(B) \to \FD\Vect$.
\end{lemma}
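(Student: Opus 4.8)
The plan is to reduce everything to Lemma \ref{PNlemma2}, which already identifies $B_{\PN}$ with the ring $\End(\eta)$ of $\Gal(\Cx/\R)$-equivariant continuous additive endomorphisms of the unbased fibre functor $\eta\co \FD\Rep(B) \to \FD\Vect$. The forgetful functor $U\co \FD\Rep_{\hat A}(B) \to \FD\Rep(B)$, $(f,V,\rho) \mapsto (V,\rho)$, is faithful, and the fibre functor on the based category is $U$ followed by $\eta$. Precomposition with $U$ therefore gives a restriction homomorphism $\Res$ from $\End(\eta_{\FD\Rep(B)})$ to the endomorphism ring of $\eta_{\FD\Rep_{\hat A}(B)}$, respecting the $\Gal$-equivariance, continuity and additivity conditions and compatible with the canonical maps from $B$. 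It then remains to prove that $\Res$ is a bijection.

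The key structural input is that the image of $A$ is central in $B$; this is the operative hypothesis here, since any $\rho$ with $\rho|_A$ scalar kills all commutators $[a,b]$ with $a\in A$, $b \in B$, and so the based category only sees the quotient of $B$ in which $A$ becomes central. Granting this, I would first record that every object of $\FD\Rep(B)$ decomposes canonically into $A$-isotypic pieces: by Lemma \ref{KSPlemma} together with the type-$I$-ness of $\Mat_n(\Cx)$ used in the proof of Lemma \ref{PNlemma2}, each $\rho\co B \to \End(V)$ is conjugate to a $*$-representation, so $\rho|_A$ is diagonalisable and $V = \bigoplus_{f \in \hat A} V_f$, where $V_f$ is the $f$-eigenspace of $\rho(A)$. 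Centrality makes each $V_f$ a $B$-subrepresentation, so $(f, V_f, \rho|_{V_f})$ is an object of $\FD\Rep_{\hat A}(B)$, and intertwiners in $\FD\Rep(B)$ preserve these components. Thus every object and morphism of $\FD\Rep(B)$ is assembled from objects in the image of $U$.

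Injectivity of $\Res$ is now immediate: an endomorphism of $\eta_{\FD\Rep(B)}$ is determined by additivity and naturality from its values on the $A$-isotypic summands, which are exactly the objects coming from $\FD\Rep_{\hat A}(B)$. For surjectivity, given $\zeta$ in the endomorphism ring of $\eta_{\FD\Rep_{\hat A}(B)}$, I define $\tilde\zeta(V,\rho) := \bigoplus_f \zeta(f, V_f, \rho|_{V_f})$ via the isotypic decomposition; naturality (intertwiners respect isotypic components), additivity, and $\Gal$-equivariance of $\tilde\zeta$ are then formal consequences of the corresponding properties of $\zeta$.

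The main obstacle is continuity of the reconstructed $\tilde\zeta$: as $\rho$ varies, distinct characters $f,f' \in \hat A$ can collide, and the eigenprojections $V \to V_f$ are not individually continuous across such a collision. This is precisely what the continuity hypothesis on the based category is designed to absorb, since it imposes continuity of $\zeta$ as the point $f \in \hat A$ varies, and $\zeta$ is additive within each fibre. Concretely, if along a convergent net the spaces $V_{f} , V_{f'}$ of dimensions $d,d'$ merge into the single eigenspace $W$ of the limit character $f_0$, the combined spectral projection $\pi_f + \pi_{f'}$ converges to $\pi_{f_0}$, while additivity at $f_0$ gives $\zeta(f_0, W, -) = \zeta(f_0, W_1, -) \oplus \zeta(f_0, W_2, -)$ for the limiting $d$- and $d'$-dimensional pieces, and continuity in the $\hat A$-variable identifies each summand with the limit of $\zeta(f, V_f, -)$ and $\zeta(f', V_{f'}, -)$. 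Hence $\tilde\zeta(\rho)$ varies continuously through the collision, and combined with the clear continuity away from collisions this shows $\tilde\zeta\co \Rep_n(B) \to \Mat_n(\Cx)$ is continuous for all $n$. Therefore $\tilde\zeta$ is a genuine endomorphism of $\eta_{\FD\Rep(B)}$ restricting to $\zeta$, so $\Res$ is an isomorphism and the lemma follows.
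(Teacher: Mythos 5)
Your overall framing is sound and close in spirit to the paper's: you correctly identify that centrality of the image of $A$ in $B$ is the operative (implicit) hypothesis, and your injectivity argument and the formal verification that the reconstructed $\tilde\zeta$ is natural, additive and $\Gal(\Cx/\R)$-equivariant are fine. The paper, however, proves the lemma by rerunning the \emph{proofs} of Lemma \ref{PNlemma2} and Proposition \ref{PNprop} (complete boundedness, conjugation to $*$-representations via Lemmas \ref{KSPlemma} and \ref{equivartpluri}, and the type-I/Akemann--Shultz machinery), adding only the observation that irreducible representations are based; you instead black-box the \emph{statement} of Lemma \ref{PNlemma2} and try to establish the based/unbased comparison by a direct limiting argument. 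The difference is not cosmetic: your last step has a genuine gap.

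The collision argument fails as written. First, only the cluster (Riesz) projection $\pi_f+\pi_{f'}$ converges; the individual isotypic subspaces $V_f,V_{f'}$ need not converge to anything (rotating-eigenvector phenomenon), so ``the limiting $d$- and $d'$-dimensional pieces'' to which you apply additivity and continuity of $\zeta$ need not exist. Second, and more seriously, you run the argument on all of $\Rep_n(B)$, i.e.\ on representations that are not $*$-homomorphisms; there the isotypic idempotents are non-orthogonal and not norm-bounded, so no compactness/subnet device can manufacture limiting pieces, and the individual summands of $\tilde\zeta(\rho_i)$ can genuinely diverge while their sum converges. Already for $B=A$ commutative one finds $\tilde\zeta(\rho)=\rho(c)$ for a fixed $c\in B$; taking $\rho_i$ with two merging characters whose eigen-idempotents $P_{i,1},P_{i,2}$ have norms tending to infinity, the terms $c(f_i)P_{i,1}$ and $c(f_i')P_{i,2}$ blow up although $\rho_i(c)$ converges, so any term-by-term analysis is structurally doomed for non-$*$ representations. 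The argument can be repaired for $*$-representations: there the isotypic projections are orthogonal, hence bounded, subnet limits $Q_1,Q_2$ exist, are complementary inside the cluster projection, and commute with the limit representation, after which a unitary straightening $u_i\to 1$ plus naturality gives exactly your conclusion. But then you must still transfer continuity from $*$-representations to arbitrary continuous representations, and that transfer is precisely the completely-bounded/polar-decomposition mechanism of Lemmas \ref{KSPlemma} and \ref{equivartpluri} on which the proofs of Lemma \ref{PNlemma2} and Proposition \ref{PNprop} rest --- which is why the paper combines those proofs rather than merely citing their statements. Without this reduction your surjectivity step does not close.
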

\begin{proof}
This just combines the proofs of Lemma \ref{PNlemma2} and Proposition \ref{PNprop}. The only modification is to observe that $A= C(\hat{A}, \Cx)^{\Gal(\Cx/\R)}$, and that for any irreducible representation $\rho\co B \to \End(V)$, we necessarily have $\rho|_{A}= f\id$, for some $f \in \hat{A}$. 
\end{proof}

\section{The Betti, de Rham and harmonic functors on Banach algebras}\label{derhamsn}

\subsection{The Riemann--Hilbert correspondence}

\begin{definition}
 Given a path-connected topological space $X$ with basepoint $x$ and a unital $\R$-algebra $B$,  define the Betti representation space $\oR^B_{X,x}(B)$ by
\[
\oR^B_{X,x}(B):=  \Hom_{\gp}(\pi_1(X,x), B^{\by}),
\]
where $B^{\by}$ is the multiplicative group of units in $B$.
 
Define the representation groupoid $\cR^B_X(B)$ by $ \cR^B_X(B):= [\oR^B_{X,x}(B)/B^{\by}]$, where $B^{\by}$ acts by conjugation. Note that this is independent of the choice of basepoint (being equivalent to the groupoid of  $B^{\by}$-torsors on $X$).
\end{definition}

\begin{definition}\label{DRtorsor}
Given a connected manifold $X$ with   basepoint $x$ and a Banach algebra $B$, define the de Rham groupoid $\cR^{\dR}_X(B)$ 
to be the groupoid of smooth $B^{\by}$-bundles with flat connections. Thus  $\cR^{\dR}_X(B)$ consists of  pairs $(\sT, D)$, where $\sT$ is a  right $\sA^0_X(B^{\by})$-torsor, and $D$ is a flat connection on $\sT$. 

Explicitly, write $\sA^n_X(\ad\sT):= \sT\by_{\sA^0_X(B^{\by})}\sA^n_X(B)$, 
for the adjoint action of $B^{\by}$ on $B$. 
Then a flat connection on $\sT$ is
\[
 D\co \sT \to \sA^1(\ad\sT)
\]
satisfying 
\begin{enumerate}
 \item $D$ is a $d$-connection: $D(pg)= \ad_gD(p) + g^{-1}dg $, for $g \in \sA^0_X(B^{\by})$;
\item $D$ is flat: $(\ad D)\circ D = 0$.
\end{enumerate}

Define $\oR^{\dR}_{X,x}(B)$ to be the groupoid of triples $(\sT, D,f)$,  where $(\sT, D) \in \cR^{\dR}_X(B)$ and    $f \in x^*\sT$ is a distinguished element. Since $ \oR^{\dR}_{X,x}(B)$ has no non-trivial automorphisms, we will regard it as a set-valued functor (given by its set of isomorphism classes).
\end{definition}

Note that $B^{\by}$ acts on $\oR^{\dR}_{X,x}(B)$ by changing the framing, and that  the quotient groupoid is then equivalent to $\cR^{\dR}_X(B)$.

\begin{proposition}\label{cfBettiDR}
 For any pointed connected manifold $(X,x)$, and any Banach algebra $B$, there are canonical equivalences
\[
 \cR^{\dR}_X(B) \simeq  \cR^B_X(B), \quad    \oR^{\dR}_{X,x}(B)\cong  \oR^B_{X,x}(B) 
\]
functorial in $X,x$ and $B$.
\end{proposition}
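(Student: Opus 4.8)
The plan is to first establish the framed isomorphism $\oR^{\dR}_{X,x}(B) \cong \oR^B_{X,x}(B)$ of set-valued functors, and then deduce the groupoid equivalence $\cR^{\dR}_X(B) \simeq \cR^B_X(B)$ by passing to the quotient by the $B^{\by}$-action: on the de Rham side this action changes the framing $f$, while on the Betti side it acts by conjugation, and both quotient groupoids are equivalent to the groupoid of $B^{\by}$-torsors on $X$. So the whole content is the framed statement, and naturality in the three variables.

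For the direction $\oR^{\dR}_{X,x}(B) \to \oR^B_{X,x}(B)$, I would use holonomy. Given a framed flat torsor $(\sT, D, f)$, pick a local trivialization near a loop $\gamma$ based at $x$ and let $A$ denote the local connection form $D(p) \in \sA^1_X(B)$. Parallel transport along $\gamma$ is the value $U(1)$ of the fundamental solution of the linear ODE $\dot{U}(t) = -A(\dot\gamma(t))\,U(t)$, $U(0)=1$, in the Banach algebra $B$; this solution exists and is unique by the Picard--Lindel\"of theorem for Banach spaces, and lies in $B^{\by}$. The flatness condition $(\ad D)\circ D = 0$ ensures that $U(1)$ depends only on the homotopy class of $\gamma$ rel endpoints, so $\gamma \mapsto \hol(\gamma)$ descends to a homomorphism $\rho \co \pi_1(X,x) \to B^{\by}$. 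The framing $f$ rigidifies the trivialization over $x$, making $\rho$ canonically determined.

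For the direction $\oR^B_{X,x}(B) \to \oR^{\dR}_{X,x}(B)$, I would form the associated bundle. Let $p\co \tilde{X} \to X$ be the universal cover, with $\pi_1(X,x)$ acting by deck transformations. Given $\rho$, set $\sT_\rho := \tilde{X} \by_{\pi_1(X,x)} B^{\by}$, where $\pi_1(X,x)$ acts on $B^{\by}$ through $\rho$; the trivial connection $d$ on $\tilde{X}\by B^{\by}$ is $\pi_1(X,x)$-invariant and descends to a flat $d$-connection $D_\rho$ on $\sT_\rho$, and the basepoint of $\tilde{X}$ over $x$ supplies the framing $f_\rho$. I would then check the two functors are mutually inverse: pulling $(\sT, D)$ back to $\tilde{X}$ and using flatness together with simple connectivity produces a unique global horizontal section through $f$, which trivializes $p^*\sT$ equivariantly and recovers $\sT_\rho$, while the holonomy of $(\sT_\rho, D_\rho)$ is visibly $\rho$. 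Functoriality in $B$ follows by applying a bounded morphism $B\to B'$ to units and connection forms (it carries $B^{\by}$ into $(B')^{\by}$ and commutes with $d$), and functoriality in $(X,x)$ follows by pullback of bundles and connections.

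The main obstacle is the analytic heart of the holonomy construction in the Banach setting, namely that flatness implies homotopy invariance of parallel transport --- equivalently, that $p^*\sT$ admits a global horizontal section over the simply connected cover $\tilde{X}$. In finite dimensions this is classical, but here the transport ODE is valued in the possibly infinite-dimensional Banach algebra $B$, so one must verify existence, uniqueness and smooth dependence on parameters for such equations, and then show that the zero-curvature condition $dA + A\wedge A = 0$ is precisely what makes the transport operator invariant under smooth homotopies of the path. This is a non-abelian Stokes argument carried out over $B$, and it is the step where the Banach structure genuinely enters.
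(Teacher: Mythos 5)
Your proposal is correct and is essentially the paper's own argument: the paper disposes of this proposition by citing the finite-dimensional matrix algebra case (Goldman--Millson) and observing that the same proof --- the classical holonomy/flat-bundle correspondence, with parallel transport constructed as in Kobayashi--Nomizu --- carries over verbatim to Banach coefficients because $\exp(b)=\sum_{n\ge 0}b^n/n!$ converges and is invertible for every $b \in B$. The analytic step you flag at the end (Picard--Lindel\"of in Banach spaces and homotopy invariance of transport from flatness) is precisely the point the paper dispatches with that remark, so your write-up simply makes explicit what the paper leaves to citation.
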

\begin{proof}
When $B$ is a finite-dimensional matrix algebra, this is \cite[5.10]{GM}. The same proof carries over to Banach algebras, noting that the argument for existence of parallel transport (\cite[\S II.3]{KN1}) holds in this generality, since $\exp(b)=\sum_{n\ge 0} b^n/n!$ converges and is invertible for all $b \in B$.
\end{proof}

\begin{remark}\label{recoveranalytic} 
The functor $\oR^{\dR}_{X,x}$ naturally extends to a functor on the category $\pro(\Ban\Alg)$ of pro-Banach algebras, by sending  any cofiltered inverse system $\{A_i\}_i$ to $\Lim_i \oR^{\dR}_{X,x}(A_i)$. 
Since the functor $\oR^B_{X,x}$ commutes with all limits, the  equivalences of Proposition \ref{cfBettiDR} then extend to pro-Banach algebras. 
The category $\pro(\Ban\Alg)$ contains all multiplicatively convex Fr\'echet algebras (since they are countable inverse limits of Banach algebras) and indeed  all complete LMC algebras via the Arens--Michael decomposition.

For any open subset $U \subset \Cx^n$, the ring $O(U):=\Gamma(U,\O_U) $ of holomorphic functions on $U$ can be realised as a pro-Banach algebra by looking at the system of sup norms on compact subspaces. Taking quotients by finitely generated ideals $I$ then gives local models $Y:=(V(I), \O_U/I)$ for complex analytic spaces, and realises $O(Y)$ as the pro-Banach algebra $O(U)/I$. 
A complex analytic morphism from $Y$ to the variety $\Hom(\pi_1(X,x), \GL_n(\Cx))$ is then just an element of $\Hom(\pi_1(X,x), \GL_nO(Y))=\oR^B_{X,x}(\Mat_nO(Y))$, for the pro-Banach algebra $\Mat_nO(Y)$ of  morphisms $Y \to \Mat_n(\Cx)$.

We can therefore recover the analytic structure of the variety $\Hom(\pi_1(X,x), \GL_n(\Cx))$ from the set-valued functor $\oR^B_{X,x}$ on Banach algebras, and hence (by  Proposition \ref{cfBettiDR}) from the set-valued functor  $\oR^{\dR}_{X,x}$.

In \cite{Sim2}, the varieties $ \oR^B_{X,x}(\Mat_n(-))$ and $\oR^{\dR}_{X,x}(\Mat_n(-))$  are  denoted by $\oR_B(X,x,n)$. and $\oR_{\DR}(X,x,n)$.
\end{remark}

\begin{lemma}\label{tensorstr0}
 For any real Banach algebras $B,C$, there is a canonical map
\[
m\co \oR^B_{X,x}(B)\by \oR^B_{X,x}(C) \to \oR^B_{X,x}(B\ten^{\pi}_{\R}C),
\]
where $\ten^{\pi}$ is the projective tensor product. This makes $\oR^B_{X,x}$ into a symmetric monoidal functor,  with unit corresponding to the trivial representation in each $\oR^B_{X,x}(B)$.
\end{lemma}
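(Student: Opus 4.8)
The plan is to exhibit $\oR^B_{X,x}$ as a composite of two (lax) symmetric monoidal functors and to invoke the fact that such composites are again symmetric monoidal. Since $\oR^B_{X,x}(B) = \Hom_{\gp}(\pi_1(X,x), B^{\by})$, the functor factors as $\Hom_{\gp}(\pi_1(X,x), -)\circ (-)^{\by}$, where $(-)^{\by}\co \Ban\Alg_{\R} \to \Gp$ is the units functor. I will equip $\Ban\Alg_{\R}$ with the symmetric monoidal structure furnished by $\ten^{\pi}_{\R}$ with unit $\R$, using that $B \ten^{\pi}_{\R} C$ is a Banach algebra under the multiplication determined by $(b_1 \ten c_1)(b_2 \ten c_2) = b_1b_2 \ten c_1c_2$, and that the associativity, symmetry and unit isomorphisms for the projective tensor product of Banach spaces are algebra isomorphisms.

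The first step is to check that $(-)^{\by}$ is lax symmetric monoidal. The structure map $\mu\co B^{\by} \by C^{\by} \to (B\ten^{\pi}_{\R}C)^{\by}$, $(\beta, \gamma) \mapsto \beta \ten \gamma$, is a well-defined group homomorphism: $\beta \ten \gamma$ is invertible with inverse $\beta^{-1}\ten \gamma^{-1}$, and $(\beta_1\ten\gamma_1)(\beta_2\ten\gamma_2) = \beta_1\beta_2\ten\gamma_1\gamma_2$. Naturality in $B,C$ and compatibility with the coherence constraints follow from the corresponding statements for $\ten^{\pi}$, the unit map $1 \to \R^{\by}$ selecting $1 \in \R^{\by}$.

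The second step records that $\Hom_{\gp}(\pi_1(X,x), -)\co (\Gp, \by, 1) \to (\Set, \by, \ast)$ is strong symmetric monoidal, because it preserves finite products and sends the trivial group to $\ast$. Composing the two functors, the structure map $m$ becomes
\[
 \Hom_{\gp}(\pi_1, B^{\by}) \by \Hom_{\gp}(\pi_1, C^{\by}) \xrightarrow{\cong} \Hom_{\gp}(\pi_1, B^{\by}\by C^{\by}) \xrightarrow{\mu_*} \Hom_{\gp}(\pi_1, (B\ten^{\pi}_{\R}C)^{\by}),
\]
which unwinds to $m(\rho,\sigma)(\gamma) = \rho(\gamma)\ten\sigma(\gamma)$. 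This is manifestly a homomorphism into the units, being $\mu\circ(\rho\by\sigma)\circ\Delta$ for the diagonal $\Delta\co \pi_1(X,x) \to \pi_1(X,x)\by\pi_1(X,x)$, and the induced unit $\ast \to \oR^B_{X,x}(\R)$ picks out the trivial representation $\gamma \mapsto 1$, as required.

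The argument is essentially formal, and the only point demanding genuine attention is the claim that $(-)^{\by}$ is lax symmetric monoidal: one must confirm that $\mu$ is well defined and that the hexagon, associativity and unit diagrams commute. I expect this to be the main (though mild) obstacle, and it reduces to the standard symmetric monoidal structure of $\ten^{\pi}$ on $\Ban_{\R}$ together with the cocommutative, coassociative diagonal on $\pi_1(X,x)$; no substantive difficulty remains.
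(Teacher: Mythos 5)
Your proposal is correct and is essentially the paper's own argument: the paper defines $m(\rho_1,\rho_2)=\rho_1\ten\rho_2\co \pi_1(X,x)\to (B\ten^{\pi}_{\R}C)^{\by}$ directly and completes with respect to the projective cross norm, which is exactly the map your composite $\mu_*\circ\cong$ unwinds to. Your packaging of the verification as a composite of the lax symmetric monoidal units functor $(-)^{\by}$ with the product-preserving functor $\Hom_{\gp}(\pi_1(X,x),-)$ is just a more formal organisation of the same construction, not a different route.
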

\begin{proof}
 Given representations $\rho_1\co \pi_1(X,x) \to B^{\by}$ and $\rho_2\co \pi_1(X,x) \to C^{\by}$, we obtain $\rho_1\ten \rho_2\co \pi_1(X,x) \to (B\ten C)^{\by}$. Taking completion with respect to the projective cross norm gives the required result.
\end{proof}

\begin{remark}\label{recoveranalyticG}
Given any complex affine group scheme $G$, we may use the tensor structure on $\oR^B_{X,x}$  to recover  the affine analytic variety $ \Hom(\pi_1(X,x), G(\Cx))$ in the same spirit as Remark \ref{recoveranalytic}. Explicitly, $O(G)$ is a coalgebra, so can be written as a nested union of finite-dimensional coalgebras. Therefore $O(G)^{\vee}$ is a pro-finite-dimensional algebra, and hence a pro-Banach algebra. 

Multiplication on $G$ gives us a comultiplication $\mu \co O(G)^{\vee} \to O(G\by G)^{\vee}$. For any complex analytic space $Y$,  we may then characterise  $ \Hom(\pi_1(X,x), G(Y))$ as 
\[
 \{\rho \in \oR^B_{X,x}( O(G)^{\vee}\hten O(Y))\,:\, \mu(\rho)= m(\rho, \rho) \in \oR^B_{X,x}( O(G\by G)^{ \vee} \hten O(Y))\}.
\]
\end{remark}

\subsection{Representability of the de Rham functor}

\begin{lemma}\label{freeprorep}
 Given a free group $\Gamma= F(X)$, the functor
\[
 A \mapsto \Hom_{\gp}(\Gamma, A^{\by})        
\]
on the category of real Banach algebras is pro-representable.        
\end{lemma}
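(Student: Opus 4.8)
The plan is to verify the three hypotheses of Proposition \ref{repbanalg} for the functor $\Phi(A):= \Hom_{\gp}(\Gamma, A^{\by})$, where $\Gamma = F(X)$. The guiding observation is that, by the universal property of the free group, there is a bijection
\[
 \Phi(A)= \Hom_{\gp}(F(X), A^{\by}) \cong \Map(X, A^{\by})= (A^{\by})^X
\]
natural in $A$. Thus $\Phi$ factors as the unit-group functor $A \mapsto A^{\by}$ followed by the $X$-fold power functor $S \mapsto S^X$ on $\Set$. Since $(-)^X \cong \Hom_{\Set}(X,-)$ is a right adjoint, it preserves all limits and all monomorphisms, so each condition in Proposition \ref{repbanalg} reduces to the corresponding statement for the unit functor $A \mapsto A^{\by}$.

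For condition (1), I would check that $A \mapsto A^{\by}$ preserves the final object and fibre products. The unit group of the terminal object is a single point, so its image is a point. For a fibre product $A\by_C B$, an element $(a,b)$ is invertible precisely when $a$ and $b$ are, with inverse $(a^{-1},b^{-1})$; the compatibility $f(a^{-1})= g(b^{-1})$ follows automatically from $f(a)=g(b)$. Hence $(A\by_C B)^{\by}= A^{\by}\by_{C^{\by}}B^{\by}$, and applying $(-)^X$ gives $\Phi(A\by_C B)\cong \Phi(A)\by_{\Phi(C)}\Phi(B)$. Condition (2) is immediate: an injective homomorphism $A \to B$ restricts to an injection $A^{\by}\to B^{\by}$, and $(-)^X$ preserves injections.

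The only genuinely non-formal point is condition (3). Given an inverse system $\{A_s\}$ of closed subalgebras of some $B$, I must show the canonical injection $(\bigcap_s A_s)^{\by} \to \bigcap_s A_s^{\by}$ (as subsets of $B^{\by}$) is surjective. The key is uniqueness of inverses in the ambient algebra $B$: if $a \in A_s^{\by}$ for every $s$, then the inverse computed in each $A_s$ must coincide with the inverse $a^{-1}$ taken in $B$, so $a^{-1}$ lies in every $A_s$ and hence in $\bigcap_s A_s$. This gives $(\bigcap_s A_s)^{\by}= \bigcap_s A_s^{\by}$. Since $(-)^X$ commutes with such intersections of subsets, applying it yields the required isomorphism $\Phi(\bigcap_s A_s)\cong \bigcap_s \Phi(A_s)$.

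With all three conditions verified, Proposition \ref{repbanalg} shows that $\Phi$ is strictly pro-representable, as asserted. I expect the main (indeed only) obstacle to be concentrated in the intersection identity of condition (3); everything else is formal once the description $\Phi(A)\cong (A^{\by})^X$ is in hand.
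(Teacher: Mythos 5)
Your proof is correct, but it takes a genuinely different route from the paper. You verify the abstract criterion of Proposition \ref{repbanalg}, reducing everything via the natural bijection $\Hom_{\gp}(F(X),A^{\by})\cong (A^{\by})^X$ to properties of the unit-group functor; the one non-formal step, the identity $(\bigcap_s A_s)^{\by}=\bigcap_s A_s^{\by}$ inside an ambient Banach algebra, is exactly right and is correctly settled by uniqueness of inverses (note the subtlety is real: for a single closed subalgebra $A\subset B$ one can have $A^{\by}\neq A\cap B^{\by}$, e.g.\ the disc algebra inside $C(S^1,\Cx)$, but your argument intersects the unit groups themselves, so it is unaffected). The paper instead proves the lemma constructively: for each weight $\nu\co X\to[1,\infty)$ it forms the weighted $\ell^1$-completion $k[\Gamma]^{\wedge_{\nu}}$ of the group algebra, observes that any representation $\rho$ factors boundedly through the completion attached to $\nu(x)=\max\{\|\rho(x)\|,\|\rho(x^{-1})\|\}$, and exhibits the inverse system $\{k[\Gamma]^{\wedge_{\nu}}\}_{\nu\in[1,\infty)^X}$ as the pro-representing object. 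Your approach is shorter and isolates the formal content cleanly, but it yields no description of the pro-representing object, whereas the paper's explicit system is precisely what is used downstream: Lemma \ref{fgfreerep} (cofinality of constant weights when $X$ is finite, giving a Fr\'echet algebra), Proposition \ref{fgrep}, Example \ref{completeZ} and Lemma \ref{recovergroup} all invoke the proof, not merely the statement, of this lemma.
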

\begin{proof}
Given a function $\nu\co   X \to [1, \infty)$, let $\bar{\nu}\co \Gamma \to [1, \infty)$ be the largest function subject to the conditions
\begin{enumerate}
        \item $\bar{\nu}(1)=1$;
        \item $\bar{\nu}(x)=\bar{\nu}(x^{-1})= \nu(x)$ for all $x \in X$;
        \item $\bar{\nu}(gh)\le \bar{\nu}(g)\bar{\nu}(h)$.
\end{enumerate}
Explicitly, we write any $g \in \Gamma$ as a reduced word $g= x_1^{n_1}x_2^{n_2}\ldots x_k^{n_k}$, then set $\bar{\nu}(g):= \prod_{i=1}^k \nu(x_i)^{|n_i|}$.
We now define a norm $\|-\|_{1,\nu}$ on $k[\Gamma]$ by setting
\[
 \|\sum_{\gamma \in \Gamma} \lambda_{\gamma} \gamma\|_{1,\nu}:=    \sum_{\gamma \in \Gamma} |\lambda_{\gamma}|\cdot \bar{\nu}(\gamma).    
\]
  
Now, given any representation $\rho\co \Gamma \to A^{\by}$, we may define $\nu\co X \to [1, \infty)$ by 
\[
 \nu(x):= \max\{\|\rho(x)\|, \|\rho(x^{-1})\|\};       
\]
this at least $1$ because $1= \rho(x)\rho(x^{-1})$, so $1\le \|\rho(x)\|\cdot \|\rho(x^{-1})\|$. It follows that for all $v \in k[\Gamma] $ we have $\|\rho(v)\|\le \|v\|_{1,{\nu}}$, so $\rho$ determines a map
\[
 k[\Gamma]^{\wedge_{\nu}}\to A,       
\]
where $k[\Gamma]^{\wedge_{\nu}}$ denotes the Banach algebra obtained by completing $k[\Gamma]$ with respect to the norm  $\|-\|_{1,{\nu}}$.

Next, give $[1, \infty)^X $ the structure of a poset by saying ${\nu}_1\le {\nu}_2$ provided ${\nu}_1(x) \le {\nu}_2(x)$ for all $x \in X$. This is in fact a directed set, since we can define $\max\{{\nu}_1,{\nu}_2\}$ pointwise. There is a canonical morphism
\[
 k[\Gamma]^{\wedge_{{\nu}_2}}\to k[\Gamma]^{\wedge_{{\nu}_1}}       
\]
whenever ${\nu}_1 \le{\nu}_2$, which  gives us an inverse system  $k[\Gamma]^{\an}:= \{k[\Gamma]^{\wedge_{\nu}}\}_{\nu}$ of Banach algebras, indexed by the directed set $ ([1, \infty)^X$.

Thus we have shown that 
\[
 \Hom_{\gp}(\Gamma, A^{\by}) \cong \LLim_{{\nu} \in [1, \infty)^X}    \Hom_{k\Ban\Alg} (k[\Gamma]^{\wedge_{\nu}}, A),   
\]
functorial in Banach $k$-algebras $A$. In other words, our pro-representing object is the inverse system  $k[\Gamma]^{\an}$.
\end{proof} 

\begin{example}\label{completeZ}
Take $X= \{z\}$, so $\Gamma= \Z$,  and let ${\nu}(z)=R$. Then elements of $k[\Z]^{\wedge_{\nu}} $ are $\sum_{i \in \Z} \lambda_i z^i$ such that
\[
 \sum_{i \ge 0} |\lambda_i| R^i< \infty \quad \sum_{i \le  0} |\lambda_i| R^{-i}< \infty.
\]
Thus $\Cx[\Z]^{\wedge_{R}}$ is the ring of  analytic functions converging on the annulus $R^{-1}\le |z|\le R$. Hence $\Lim_{R} \Cx[\Z]^{\wedge_{R}}$ is the ring of analytic functions on $\Cx^*$,   while
$ \Lim_{R} \R[\Z]^{\wedge_{R}}$   is the subring consisting of functions $f$ with $\overline{f(z)}= f(\bar{z})$
 
 Contrast this with  the isometric Banach completion of $\Cx[\Z]$, which just gives $\Cx[\Z]^{\wedge_{1}}= \ell^1(\Z)$. %not the ring of  analytic functions on the circle. 
\end{example}

\begin{lemma}\label{fgfreerep}
 Given a finitely generated free group $\Gamma= F(X)$, the functor
\[
 A \mapsto \Hom_{\gp}(\Gamma, A^{\by})        
\]
on the category of multiplicatively convex Fr\'echet $k$-algebras is representable. 
\end{lemma}
\begin{proof}
We may embed $\N_1$ in $[1, \infty)^X$ as a subset of the constant functions. Since $X$ is finite, $\N_1$ is a cofinal subset of  $[1, \infty)^X $, giving us an isomorphism
\[
\{k[\Gamma]^{\wedge_{\nu}}\}_{{\nu} \in [1, \infty)^X } \cong   \{k[\Gamma]^{\wedge_n}\}_{n \in \N_1 }      
\]
in the category of pro-Banach $k$-algebras. Since $\N_1$ is countable, $k[\Gamma]^{\an}:= \Lim_n k[\Gamma]^{\wedge_n}$ is a Fr\'echet algebra. 

Applying the proof of Lemma \ref{freeprorep}, we have shown that 
\[
\Hom(\Gamma, A^{\by})\cong \Hom_{k\Fr\Alg}(  k[\Gamma]^{\an} ,A)       
\]
for all Banach algebras $A$. Since any  m-convex  Fr\'echet algebra $A$ can be expressed as an inverse limit $A= \Lim_i A_i$ of Banach algebras, it follows that the same isomorphism holds for all such algebras, so the functor is representable in  m-convex Fr\'echet algebras.    
\end{proof}

\begin{proposition}\label{fgrep}
     Given a finitely generated  group $\Gamma$, the functor
\[
 A \mapsto \Hom_{\gp}(\Gamma, A^{\by})        
\]
on the category of multiplicatively convex Fr\'echet $k$-algebras is representable. 
\end{proposition}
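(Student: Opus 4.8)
The plan is to reduce the case of a general finitely generated group $\Gamma$ to the finitely generated free case already settled in Lemma \ref{fgfreerep}. Since $\Gamma$ is finitely generated, it is a quotient $\Gamma = F/N$ of a finitely generated free group $F = F(X)$ by a normal subgroup $N$. The functor $A \mapsto \Hom_{\gp}(\Gamma, A^{\by})$ is then naturally identified with the subfunctor of $A \mapsto \Hom_{\gp}(F, A^{\by})$ consisting of those representations that kill $N$, i.e.\ those $\rho$ with $\rho(r) = 1$ for every $r$ in a generating set of relations.

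First I would use representability in the free case: by Lemma \ref{fgfreerep} we have a Fr\'echet algebra $k[F]^{\an}$ with $\Hom_{\gp}(F, A^{\by}) \cong \Hom_{k\Fr\Alg}(k[F]^{\an}, A)$. The surjection $F \onto \Gamma$ induces a continuous algebra homomorphism $k[F] \to k[\Gamma]$, which extends to the completions to give a map of Fr\'echet algebras $k[F]^{\an} \to k[\Gamma]^{\an}$, where $k[\Gamma]^{\an}$ is defined to be the quotient of $k[F]^{\an}$ by the closed two-sided ideal generated by the elements $\{r - 1 : r \in N\}$ (equivalently by a finite set of relations when $\Gamma$ is finitely presented, and by the closure of the full relation ideal in general). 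The quotient of a Fr\'echet algebra by a closed ideal is again a Fr\'echet algebra, so $k[\Gamma]^{\an}$ is a legitimate object of the category.

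Next I would verify the universal property. A morphism $\phi \co k[\Gamma]^{\an} \to A$ of Fr\'echet algebras is the same as a morphism $\psi \co k[F]^{\an} \to A$ that annihilates the closed relation ideal; by Lemma \ref{fgfreerep} such $\psi$ correspond bijectively to representations $\rho \co F \to A^{\by}$, and the vanishing condition on the ideal is exactly the condition $\rho(r) = 1$ for all $r \in N$, i.e.\ that $\rho$ factors through $\Gamma$. This yields the desired natural isomorphism $\Hom_{\gp}(\Gamma, A^{\by}) \cong \Hom_{k\Fr\Alg}(k[\Gamma]^{\an}, A)$, functorial in $A$.

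The main obstacle is purely algebraic bookkeeping at the level of closed ideals: one must check that a continuous homomorphism out of $k[F]^{\an}$ sends each relator $r$ to $1 \in A^{\by}$ if and only if it sends the group-ring element $r - 1$ to $0$, and that vanishing on the generating relators forces vanishing on the whole closed ideal they generate (which follows because multiplication in a Fr\'echet algebra is jointly continuous, so the kernel of $\psi$ is automatically a closed ideal containing all $r-1$). A secondary point worth a remark is that no finite-presentation hypothesis is needed: even if $N$ is not finitely generated, the closed ideal it generates is a well-defined closed two-sided ideal, and the quotient is still Fr\'echet, so the argument goes through for all finitely generated $\Gamma$.
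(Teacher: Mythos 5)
Your proposal is correct and follows essentially the same route as the paper: write $\Gamma = F(X)/K$ with $F(X)$ finitely generated free, invoke Lemma \ref{fgfreerep} for $k[F(X)]^{\an}$, and represent the subfunctor of representations killing $K$ by the quotient of $k[F(X)]^{\an}$ by the closed two-sided ideal generated by $\{r-1 : r \in K\}$. The only differences are cosmetic — you spell out the universal-property and closed-ideal bookkeeping that the paper leaves implicit, while the paper adds an explicit description of the quotient system of norms in terms of word length.
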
   
\begin{proof}
 Choose generators $X$ for  $\Gamma$, so $\Gamma= F(X)/K$ for some normal subgroup $K$. Lemma \ref{fgfreerep} gives a  Fr\'echet $k$-algebra $k[F(X)]^{\an}$ governing representations of $F(X)$. Since
\[
 \Hom_{\gp}(\Gamma, A^{\by})= \{f \in   \Hom_{\gp}(F(X), A^{\by})\,:\, f(K)=\{1\}\},     
\]
our functor will be represented by a quotient of $k[F(X)]^{\an}$. Specifically, let $I$ be the closed ideal of $k[F(X)]^{\an} $ generated by $\{k-1\,:\, k \in K\}$, and set $k[\Gamma]^{\an}:= k[F(X)]^{\an}/I$. This is a m-convex Fr\'echet algebra, and 
\[
 \Hom_{\gp}(\Gamma, A^{\by})\cong   \Hom_{k\Fr\Alg}(  k[\Gamma]^{\an} ,A)      
\]
for all such algebras $A$.
     
For an explicit description of $k[\Gamma]^{\an}$, note that the system of norms is given by 
\[
 \|\sum \lambda_{\gamma}\gamma \|_{1,n}= \sum |\lambda_{\gamma}| \cdot n^{w(\gamma)},       
\]
where $w(\gamma)$ is the minimal word length of $\gamma$ in terms of $X$.
\end{proof}

When combined with its tensor structure, this implies that the functor of Proposition \ref{fgrep} is a very strong invariant indeed:
\begin{lemma}\label{recovergroup}
The tensor structure of Lemma \ref{tensorstr0} gives $k[\Gamma]^{\an} $ the structure of a m-convex Fr\'echet bialgebra. The group 
\[
 G(k[\Gamma]^{\an})=\{a \in k[\Gamma]^{\an}\,:\, \mu(a)= a\ten a \in k[\Gamma]^{\an}\ten^{\pi} k[\Gamma]^{\an},\, \vareps(a)=1 \in k\}
\]
of grouplike elements of $ k[\Gamma]^{\an} $ is then $\Gamma$.
\end{lemma}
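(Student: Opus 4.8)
The plan is to obtain the comultiplication and counit by Yoneda from the symmetric monoidal structure of Lemma \ref{tensorstr0}, and then to identify the grouplike elements by a topological version of the classical argument for group algebras.

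First I would construct the bialgebra structure. Write $R := k[\Gamma]^{\an}$, and recall from Proposition \ref{fgrep} that $R$ represents $A \mapsto \Hom_\gp(\Gamma, A^\by)$ on Fr\'echet algebras, with universal representation $\rho_\univ \co \Gamma \to R^\by$ given by $\gamma \mapsto \gamma$. Since the projective tensor product $R \ten^\pi R$ of Fr\'echet algebras is again a Fr\'echet algebra, the element $m(\rho_\univ, \rho_\univ) \in \oR^B_{X,x}(R \ten^\pi R)$ supplied by Lemma \ref{tensorstr0}, namely the representation $\gamma \mapsto \gamma \ten \gamma$, corresponds under representability to a unique continuous algebra homomorphism $\mu \co R \to R \ten^\pi R$ with $\mu(\gamma) = \gamma \ten \gamma$. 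Likewise the trivial representation $\gamma \mapsto 1$ yields the counit $\vareps \co R \to k$ with $\vareps(\gamma) = 1$. The coassociativity and counit identities then hold because in each case both sides are continuous algebra homomorphisms agreeing on the generators $\Gamma$, hence on the dense subalgebra $k[\Gamma]$, hence everywhere; this makes $R$ a Fr\'echet bialgebra.

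Next, the inclusion $\Gamma \subseteq G(R)$ is immediate from $\mu(\gamma) = \gamma \ten \gamma$ and $\vareps(\gamma) = 1$. For the reverse inclusion I would use that, by the explicit norms of Proposition \ref{fgrep}, every $a \in R$ has a unique expansion $a = \sum_{\gamma \in \Gamma} \lambda_\gamma \gamma$ with $\sum_\gamma |\lambda_\gamma|\, n^{w(\gamma)} < \infty$ for all $n$. For each $\gamma$ the coordinate functional $c_\gamma \co a \mapsto \lambda_\gamma$ is continuous, since $|\lambda_\gamma| \le \|a\|_{1,n}$, and, because continuous linear functionals on a projective tensor product correspond to continuous bilinear forms, the assignment $(u,v) \mapsto c_\gamma(u) c_\eta(v)$ extends to a continuous functional $\phi_{\gamma,\eta}$ on $R \ten^\pi R$ with $\phi_{\gamma,\eta}(u \ten v) = c_\gamma(u) c_\eta(v)$. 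Applying $\phi_{\gamma,\eta}$ to the grouplike identity $\mu(a) = a \ten a$, and using continuity of $\mu$ and of the canonical bilinear map to pass the functionals through the convergent sums $\mu(a) = \sum_{\gamma'} \lambda_{\gamma'}\, \gamma' \ten \gamma'$ and $a \ten a = \sum_{\gamma',\eta'} \lambda_{\gamma'}\lambda_{\eta'}\, \gamma' \ten \eta'$, I obtain that $\lambda_\gamma \lambda_\eta$ equals $\lambda_\gamma$ if $\gamma = \eta$ and $0$ otherwise. Hence each $\lambda_\gamma \in \{0,1\}$ and at most one is nonzero, while $\vareps(a) = \sum_\gamma \lambda_\gamma = 1$ forces exactly one $\lambda_{\gamma_0} = 1$; thus $a = \gamma_0 \in \Gamma$, giving $G(R) = \Gamma$.

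The main obstacle is entirely topological: transporting the classical linear-independence argument, valid in $k[\Gamma]$ where $\{\gamma \ten \eta\}$ is a basis, to the completed algebra. The crux is that the coefficient functionals separate points not only on $R$ but, after tensoring, on $R \ten^\pi R$, and that passing these continuous functionals inside the infinite sums defining $\mu(a)$ and $a \ten a$ is legitimate. Both rest on the defining universal property of the projective tensor product together with the continuity of $c_\gamma$ coming from the explicit norm estimates of Proposition \ref{fgrep}; once these are in place, the comparison of coefficients is formal.
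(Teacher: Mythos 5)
Your proof is correct and takes essentially the same route as the paper's: the comultiplication and counit are obtained by applying the map $m$ of Lemma \ref{tensorstr0} to the universal representation and invoking representability, and the grouplike elements are then pinned down by comparing coefficients in the $\ell^1$-type expansion $a=\sum_{\gamma}\lambda_{\gamma}\gamma$, yielding $\lambda_{\gamma}\lambda_{\delta}=0$ for $\gamma\ne\delta$ and $\sum_{\gamma}\lambda_{\gamma}=1$. The only cosmetic difference is that the paper justifies the coefficient comparison via the identification $k[\Gamma]^{\an}\ten^{\pi}k[\Gamma]^{\an}\cong k[\Gamma\by\Gamma]^{\an}$, whereas you extend the coefficient functionals to the projective tensor product by its universal property; these amount to the same verification.
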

\begin{proof}
Applying the map $m$ of Lemma \ref{tensorstr0} to $(\xi,\xi)$, for the canonical element  $\xi \in\Hom_{\gp}(\Gamma,k[\Gamma]^{\an} ) $  gives us a comultiplication $\mu\co k[\Gamma]^{\an}\to k[\Gamma]^{\an}\ten^{\pi}k[\Gamma]^{\an}= k[\Gamma\by \Gamma]^{\an}$ and a co-unit $\vareps\co k[\Gamma]^{\an} \to k$. On the topological basis $\Gamma$, we must have $\mu(\gamma)= (\gamma, \gamma)$ and $\vareps(\gamma)=1$.

Expressing $a\in G(k[\Gamma]^{\an})$  as $\sum_{\gamma \in \Gamma} a_{\gamma} \gamma$,  note that the conditions become $a_{\gamma} a_{\delta}=0$ for $\gamma\ne \delta$, and $\sum a_{\gamma}=1$; thus $a=\gamma$ for some $\gamma\in \Gamma$.
\end{proof}

\begin{example}\label{completeab}
 Arguing as in example \ref{completeZ}, for $\Gamma$ abelian and finitely generated, $\Cx[\Gamma]^{\an}$ is isomorphic to the  ring of complex analytic functions on $\Hom_{\gp}(\Gamma, \Cx^*)$, while $\R[\Gamma]^{\an} \subset \Cx[\Gamma]^{\an}$ consists of $\Gal(\Cx/\R)$-equivariant functions. The multiplicative analytic functions are of course just $\Gamma$ itself.
\end{example}

Proposition \ref{cfBettiDR} then implies:
\begin{corollary}
 The functors $\oR^{\dR}_{X,x}$  and $\oR^B_{X,x}$ on real multiplicatively convex Fr\'echet algebras are representable. 
\end{corollary}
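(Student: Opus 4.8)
The plan is to deduce both representability statements by concatenating Proposition~\ref{fgrep} with the comparison isomorphism of Proposition~\ref{cfBettiDR}. By definition, $\oR^B_{X,x}(B) = \Hom_{\gp}(\pi_1(X,x), B^{\by})$, so the Betti functor is precisely the functor $A \mapsto \Hom_{\gp}(\Gamma, A^{\by})$ of Proposition~\ref{fgrep}, with $\Gamma = \pi_1(X,x)$. Since $X$ is a compact manifold, $\Gamma$ is finitely generated, so Proposition~\ref{fgrep} applies verbatim and exhibits $\oR^B_{X,x}$ as represented by the Fr\'echet algebra $k[\Gamma]^{\an} = E^B_{X,x}$, with the explicit system of norms $\|\sum \lambda_{\gamma}\gamma\|_{1,n}= \sum |\lambda_{\gamma}|\, n^{w(\gamma)}$ recorded there.

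For the de Rham functor, I would transport this representability across the natural isomorphism $\oR^{\dR}_{X,x}(B) \cong \oR^B_{X,x}(B)$ of Proposition~\ref{cfBettiDR}. That proposition is stated for Banach algebras, but Remark~\ref{recoveranalytic} observes that it extends to Fr\'echet algebras: one writes a Fr\'echet algebra as a countable inverse limit of Banach algebras and uses that both functors commute with such limits. As the isomorphism is functorial in $B$, it is an isomorphism of functors on Fr\'echet $k$-algebras, and representability of $\oR^B_{X,x}$ immediately yields representability of $\oR^{\dR}_{X,x}$ by the same object $E^B_{X,x}$.

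The argument is essentially a bookkeeping exercise assembling results already in hand, so there is no genuine obstacle. The only point deserving attention is the finite-generation hypothesis on $\pi_1(X,x)$, which is exactly what allows the passage from the free-group case of Lemma~\ref{fgfreerep} to Proposition~\ref{fgrep}; it is guaranteed here by compactness of $X$. For a general non-compact manifold this step breaks down, and one would instead obtain only pro-representability, via the pro-Banach completions of Lemma~\ref{freeprorep} indexed by the directed set $[1,\infty)^X$.
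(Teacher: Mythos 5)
Your proof is correct and is essentially the paper's own argument: the corollary is stated immediately after Proposition~\ref{fgrep} with the one-line justification ``Proposition~\ref{cfBettiDR} then implies,'' i.e.\ representability of $\oR^B_{X,x}=\Hom_{\gp}(\pi_1(X,x),(-)^{\by})$ by $k[\pi_1(X,x)]^{\an}$ is transported to $\oR^{\dR}_{X,x}$ along the functorial isomorphism of Proposition~\ref{cfBettiDR}, extended to Fr\'echet algebras as in Remark~\ref{recoveranalytic}. Your explicit attention to the finite-generation hypothesis on $\pi_1(X,x)$ (and the observation that without it one only gets pro-representability via Lemma~\ref{freeprorep}) is a point the paper leaves implicit, but it is exactly the intended reading.
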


\begin{remark}\label{htpydgrk}
Adapting the ideas of \cite{htpy}, the functor $\oR^B_{X,x}$ has a natural extension to those simplicial Banach algebras $B_{\bt}$ for which $B_n \to \pi_0B$ is a nilpotent extension for each $n$. Explicitly, we could set $ \oR^B_{X,x}(B)$ to be the set of homotopy classes of maps $G(\Sing(X,x)) \to B_{\bt}^{\by}$ of simplicial groups, where $G$ is Kan's loop group. This functor  admits a  tensor  structure extending Lemma \ref{tensorstr0},

The functor $\oR^{\dR}_{X,x}$ has a natural extension to those differential graded Banach algebras
 $B_{\bt}$ for which $B_0 \to \H_0B$ is a nilpotent extension. Explicitly, $\oR^{\dR}_{X,x}(B) $ would consist of pairs $(\sT_0, D)$, where  $\sT_0$ is a $\sA^0_X(B^{\by}_0)$-torsor and $D\co \cT_0 \to \prod_n \sA^{n+1}_X\ten_{\sA^0_X}\ad \sT_n(n+1)$ is a flat hyperconnection, where
 $\ad \sT_n := \sT\by_{\sA^0_X(B^{\by}_0) }\sA^0_X(B_n)$. 

It then seems likely that \cite[Corollary \ref{htpy-bigequiv}]{htpy} should adapt to give natural isomorphisms $\oR^B_{X,x}(B) \cong \oR^{\dR}_{X,x}(NB)$, where $N$ is Dold--Kan normalisation.   
\end{remark}

\subsection{The pluriharmonic functor}

Fix a compact connected K\"ahler manifold $X$, with basepoint $x \in X$.

\begin{definition}\label{Adef}
Given a real Banach space $B$, denote the sheaf  of $B$-valued $\C^{\infty}$ $n$-forms on $X$ by $\sA^n_X(B)$, and let  $\sA_X^{\bt}$ be the resulting complex. Write $A^{\bt}(X,B):= \Gamma(X, \sA_X^{\bt}(B))$. We also write $\sA^{\bt}_X:= \sA^{\bt}_X(\R)$ and $A^{\bt}(X):= A^{\bt}(X,\R)$.
\end{definition}

\begin{definition}\label{Sdef}
  Define $S$ to be the real algebraic group  $\prod_{\Cx/\R} \bG_m$ obtained as in \cite{Hodge2} 2.1.2 from $\bG_{m,\Cx}$ by restriction of scalars. Note that there is a canonical inclusion $\bG_m \into S$.     
\end{definition}

The following is a slight generalisation of \cite[Definition \ref{mhs2-dmd}]{mhs2}:
%of \cite[Definition 2.49]{mhs2}:
\begin{definition}\label{dmd}
For any real Banach space $B$, there is an action of $S$ on $\sA^*_X(B)$, which we will denote by $a \mapsto \lambda \dmd a$, for $\lambda \in \Cx^* = S(\R)$. For $a \in (A^*(X)\ten \Cx)^{pq}$, it is given by
$$
\lambda \dmd a := \lambda^p\bar{\lambda}^qa.
$$
\end{definition}

\begin{definition}\label{pluritorsor}
Given a real $C^*$-algebra $B$, define $\cR^J_X(B)$ to be the groupoid of pairs $(U(\sP), D)$, where $U(\sP)$ is a  right $\sA^0_X(U(B))$-torsor, and $D$ is a pluriharmonic connection on $U(\sP)$. 

Explicitly, write $\sP:= U(\sP)\by_{\sA^0_X(U(B))}\sA^0_X(B^{\by})$, and   
\begin{align*}
\ad\sP &:= \sP\by_{\sA^0_X(B^{\by})}\sA^0_X(B)\\
&= U(\sP)\by_{\sA^0_X(U(B))}\sA^0_X(B)\\
&= [U(\sP)\by_{\sA^0_X(U(B))}\sA^0_X(\fu(B))]\oplus [U(\sP)\by_{\sA^0_X(U(B))}\sA^0_X(S(B))],
\end{align*}
 where $U(B)$ and  $B^{\by}$ act on $B$ by the adjoint action. 
Then a pluriharmonic connection on $\sP$ is
\[
 D\co U(\sP) \to \ad\sP
\]
satisfying 
\begin{enumerate}
 \item $D$ is a $d$-connection: $D(pu)= \ad_uD(p) + u^{-1}du $, for $u \in \sA^0_X(U(B))$;
\item $D$ is flat: $(\ad D)\circ D = 0$;
\item $D$ is pluriharmonic: $(\ad D) \circ D^c + (\ad D^c)\circ D=0$.
\end{enumerate}
Here, $D=d^+ + \vartheta$ comes from the decomposition of $\ad\sP$ into anti-self-adjoint and self-adjoint parts, and $D^c= i\dmd d^+ -i \dmd \vartheta$.

Define $\oR^J_{X,x}(B)$ to be the groupoid of triples $(U(\sP), D,f)$, where $(U(\sP), D) \in \cR^J_X(B)$ and  $f \in x^*U(\sP)$ is a distinguished element. Since $ \oR^J_{X,x}(B)$ has no non-trivial automorphisms, we will regard it as a set-valued functor (given by its set of isomorphism classes).
\end{definition}

\begin{remarks}\label{RJBanAlg}
Note that there is a natural action of $U(B)$ on $\oR^J_{X,x}(B)$, given by changing the framing. The quotient groupoid $[\oR^J_{X,x}(B)/U(B)]$ is thus equivalent to $\cR^J_X(B)$. In \cite[Lemma 7.13]{Sim2}, the set $\oR^J_{X,x}(\Mat_n(\Cx))$ is denoted by $\oR_{\DR}^J(X,x,n)$.

Also note that the definition of $\oR^J_{X,x}(B)$ can be extended to any real Banach $*$-algebra $B$. However, this will not be true of the harmonic functor of \S \ref{harmonicsn}.
\end{remarks}

\begin{example}\label{plurilocsys}
When $V$ is a real Hilbert space, the algebra $L(V)$ of bounded operators on $V$ is a real $C^*$-algebra. Then  $\cR^J_X(B)$ is equivalent to the groupoid of pluriharmonic local systems  $\vv$ in Hilbert spaces on $X$, fibrewise isometric to $V$. The connection $D\co \sA^0(\vv) \to \sA^1(\vv)$
must satisfy the pluriharmonic condition that $DD^c+D^cD=0$, for $D^c$ defined with respect to the smooth inner product $\vv \by \vv \to \sA^0_X$. Isomorphisms in $\cR^J_X(B) $ preserve the inner product.
\end{example}

\begin{definition}\label{deRhamproj}
 Define the de Rham projection
\[
 \pi_{\dR}\co \oR^J_{X,x}(B) \to \oR^{\dR}_{X,x}(B)
\]
by mapping $(U(\sP),D, f)$ to the framed flat torsor  $(\sP,D, f)= (U(\sP)\by_{\sA^0_X(U(B))}\sA^0_X(B^{\by}), D, f\by_{U(B)}B^{\by})$. 
\end{definition}

\begin{proposition}\label{pluriprorep}
 The functor $\oR^J_{X,x} \co C^*\Alg \to \Set$ is strictly pro-representable, by an object $E^J_{X,x} \in \pro( C^*\Alg)$.
\end{proposition}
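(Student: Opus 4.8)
The plan is to apply Proposition \ref{repcstar}, which reduces strict pro-representability of a functor on $C^*\Alg$ to three conditions: that $\oR^J_{X,x}$ preserve finite limits, that it send $C^*$-subalgebra inclusions to injections of sets, and that it convert intersections of nested $C^*$-subalgebras into intersections of subsets. The whole proof is then the verification of these three conditions, and the resulting pro-representing object will be the $E^J_{X,x}$ produced by that proposition.

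The first step is to fix a concrete model for $\oR^J_{X,x}(B)$ that exhibits its functoriality. Using the flat connection $D$ together with the framing $f$ at $x$, parallel transport on the universal cover $\tilde X$ trivialises the extended $B^{\by}$-torsor $\sP$ (this is the Riemann--Hilbert argument behind Proposition \ref{cfBettiDR}), so that the de Rham projection of an object is encoded by its monodromy representation $\rho \in \Hom_{\gp}(\pi_1(X,x), B^{\by})$. In this trivialisation the unitary reduction $U(\sP)$ becomes a smooth $\rho$-equivariant map $h\co \tilde X \to B_{++}$ into the strictly positive self-adjoint elements, the pluriharmonic metric, normalised by $h(\tilde x)=1$ via the framing (here I use the polar decomposition $B^{\by}=B_{++}U(B)$ from the proof of Lemma \ref{equivartpluri}), and the flatness and pluriharmonicity conditions (2) and (3) of Definition \ref{pluritorsor} become a system of natural partial differential equations on $h$. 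Thus a point of $\oR^J_{X,x}(B)$ is a pair $(\rho, h)$ of this type.

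With this description the three conditions are checked constituent by constituent, the key point being that every ingredient — the unit group $(-)^{\by}$, the self-adjoint part $S(-)$ and its positive cone, and the spaces of smooth $B$-valued forms $\sA^n_X(B)$ and of smooth equivariant maps $\tilde X \to B_{++}$ — is built from $*$-homomorphism-natural constructions that commute with the relevant limits, while the defining equations (the $d$-connection, flatness and pluriharmonic conditions) are preserved under $*$-homomorphisms, since these respect the self-adjoint/anti-self-adjoint splitting and commute with the form-level action $\dmd$. For finite limits it suffices to treat the final object (where $B=0$ and $\oR^J_{X,x}(B)$ is a point) and fibre products, where $(B\by_D C)^{\by}=B^{\by}\by_{D^{\by}}C^{\by}$, $S(B\by_DC)=S(B)\by_{S(D)}S(C)$, and the analogous identities for forms and for solutions of the pluriharmonic equations give the required bijection. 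For the monomorphism condition, an inclusion $B\into C$ induces injections on all of this data, and since framed objects have no non-trivial automorphisms the induced map on isomorphism classes is injective. For the intersection condition, injectivity is inherited from the monomorphism condition, while for surjectivity a compatible family over nested $A_s\subset A$ has common monodromy $\rho$ valued in each $A_s^{\by}$ — hence in $(\bigcap_s A_s)^{\by}$, its inverse being $\rho(\gamma^{-1})$ — and common metric $h$ valued in each $A_s$, hence in $\bigcap_s A_s$, so that $(\rho,h)$ defines the required preimage in $\oR^J_{X,x}(\bigcap_s A_s)$.

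I expect the main obstacle to be this first step: justifying cleanly that the torsor-with-connection data of Definition \ref{pluritorsor}, a priori a topologically non-trivial object, is faithfully captured by the rigid pair $(\rho, h)$ on the universal cover, and in particular that an isomorphism of framed objects is determined by its effect on $(\rho, h)$. Once this rigidification is in place, the verification of the three hypotheses of Proposition \ref{repcstar} is a routine, if lengthy, check that each piece of structure is compatible with limits and preserved by $*$-homomorphisms, and the pro-representing object $E^J_{X,x}\in\pro(C^*\Alg)$ is then supplied by that proposition.
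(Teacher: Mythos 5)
Your proposal is correct, and its skeleton is the paper's: both reduce the statement to Proposition \ref{repcstar} and then verify preservation of the final object, of fibre products, of subalgebra monomorphisms, and of nested intersections. The difference lies in how the verifications are run. The paper argues at the torsor level: for fibre products along $A \to B \la C$ it uses the Betti--de Rham comparison (Proposition \ref{cfBettiDR}) to glue the underlying flat torsors over $A\by_B C$, and then shows the two unitary forms $U(\sP_A)$, $U(\sP_C)$ induce the same unitary form over $B$ because a framed orthogonal isomorphism inducing the identity on $\sP_B$ must itself be the identity; for nested intersections it forms an inverse limit $\sP:=\Lim_i \sP_i$ of torsors. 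You instead rigidify the whole datum once: monodromy $\rho\co \pi_1(X,x)\to B^{\by}$ via parallel transport from the framing, plus the unitary reduction recorded as a $\rho$-equivariant map $h\co \tilde{X} \to B_{++}\cong B^{\by}/U(B)$ with $h(\tilde{x})=1$, using the polar decomposition as in Lemma \ref{equivartpluri}. After that, all three conditions become elementwise checks, resting on $(A\by_B C)^{\by}=A^{\by}\by_{B^{\by}}C^{\by}$, the analogous identity for strictly positive elements (positivity and invertibility in a fibre product of $C^*$-algebras are both componentwise), and, for intersections, your observation that $\rho(\gamma)^{-1}=\rho(\gamma^{-1})$ and $h^{-1}$ already lie in each $A_s$, so inverse-closedness comes for free. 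What your route buys is uniformity: the gluing of unitary forms over a fibre product, which is the delicate step in the paper's proof, is automatic once the dictionary is in place, and the intersection condition becomes transparent. What it costs is exactly the obstacle you flag: one must establish that the dictionary $(U(\sP),D,f)\leftrightarrow(\rho,h)$ is a bijection on framed isomorphism classes; but the needed ingredients (parallel transport for Banach-algebra bundles, noted in the proof of Proposition \ref{cfBettiDR}, rigidity of framed objects from Definition \ref{pluritorsor}, and the polar decomposition) are all available in the paper, so this is a complete alternative proof rather than a gap.
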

\begin{proof}
 The final object in $C^*\Alg$ is $0$, and $\oR^J_{X,x}(0)$ is the one-point set, so $\oR^J_{X,x}$ preserves the final object. 

Given maps $A \to B \la C$ in $C^*\Alg$ and  $(p_A, p_B) \in \oR^J_{X,x}(A)\by_{\oR^J_{X,x}(B)}\oR^J_{X,x}(C)$, we get 
\begin{align*}
 \pi_{\dR}(p_A, p_C)\in  &\oR^{\dR}_{X,x}(A)\by_{\oR^{\dR}_{X,x}(B)}\oR^{\dR}_{X,x}(C)\\
&\cong \oR^B_{X,x}(A)\by_{\oR^B_{X,x}(B)}\oR^B_{X,x}(C)\\
&\cong\oR^B_{X,x}(A\by_BC).        
\end{align*}
Thus we have a flat torsor $(\sP, D) \in \oR^{\dR}_{X,x}(A\by_BC)$. 

It follows that  $p_A\cong (U(\sP_A), D)$ for some orthogonal form $U(\sP_A) \subset \sP_A=\sP\by_{\sA^0_X((A\by_BC)^{\by} )}\sA^0_X(A^{\by}) $, and similarly for $p_C$. Since the images of $p_A$ and $p_C$ are equal in   $\oR^{\dR}_{X,x}(B)$, there is a framed orthogonal isomorphism $\alpha\co U(\sP_A)\by_{\sA^0_X(U(A))}\sA^0_X(U(B)) \to U(\sP_C)\by_{\sA^0_X(U(C))}\sA^0_X(U(B))$, inducing the identity on $\sP_B$. 
Hence $\alpha$ must itself be the identity, so both $U(\sP_A)$ and $U(\sP_C)$ give the same unitary form $U(\sP_B)$ for $\sP_B$. It is easy to check the  pluriharmonic conditions, giving an element 
\[
(U(\sP_A)\by_{U(\sP_B)}U(\sP_C),D)\in \oR^J_{X,x}(A\by_BC )
\]
over $(p_A, p_C)$. This is essentially unique, so $\oR^J_{X,x}$ preserves fibre products, and hence finite limits.

Now, given a $C^*$-subalgebra $A \subset B$, the map $\oR^J_{X,x}(A)\to \oR^J_{X,x}(B)$ is injective. This follows because if two framed pluriharmonic bundles $\sP_1, \sP_2$ in $\oR^J_{X,x}(A)$ become isomorphic in $\oR^J_{X,x}(B)$, compatibility of framings ensures that the isomorphism $f$  maps $x^*\sP_1$ to  $x^*\sP_2$. Since $f$ is compatible with the connections, it thus gives an isomorphism $f\co \sP_1 \to \sP_2$, by considering the associated local systems.

Finally, given an inverse system $\{A_i\}_i$ of nested $C^*$-subalgebras of a $C^*$-algebra $B$ and an element of $\bigcap_i \oR^J_{X,x}(A_i)$, we have a compatible system  $\{(\sP_i,D_i, f_i)\}_i$. Set $\sP:= \Lim_i \sP_i$, with connection $D$ and framing $f$ induced by the $D_i$ and $f_i$. This defines a unique element of $\oR^J_{X,x}(\bigcap_i A_i)$, showing that 
 \[
 F(\bigcap_{i } A_i) \cong \bigcap_{i} F(A_i)
\]
Thus all the conditions of  Proposition \ref{repcstar} are satisfied, so $\oR^J_{X,x}$ is strictly pro-representable.
\end{proof}

\begin{definition}\label{tensorCstar}
Given pro-$C^*$-algebras $B,C$ over $k$, define $B\hat{\ten}_kC$ to be the maximal $k$-tensor product of  $B$ and $C$, as defined in \cite[Definition 3.1]{phillips}; this is again a pro-$C^*$-algebra. 
\end{definition}

\begin{lemma}\label{tensorstr}
 For any real pro-$C^*$-algebras $B,C$, there is a canonical map
\[
 m \co \oR^J_{X,x}(B)\by \oR^J_{X,x}(C) \to \oR^J_{X,x}(B\hat{\ten}_{\R}C),
\]
making $\oR^J_{X,x}$ into a symmetric monoidal functor, with unit corresponding to the trivial torsor in each $\oR^J_{X,x}(B)$.
\end{lemma}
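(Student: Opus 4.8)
The plan is to construct $m$ exactly as in the de Rham setting of Lemma \ref{tensorstr0}, by tensoring torsors and connections, and then to check that the unitary structure and the three conditions of Definition \ref{pluritorsor} are preserved. First I would record the two elementary facts that drive everything: the assignment $(u,v)\mapsto u\ten v$ is a group homomorphism $U(B)\by U(C)\to U(B\hat{\ten}_{\R}C)$, since $(u\ten v)^*(u\ten v)=u^*u\ten v^*v=1$ and similarly on the other side; and the images of $B$ and $C$ commute inside the maximal tensor product $B\hat{\ten}_{\R}C$. Given framed pluriharmonic torsors $(U(\sP_B),D_B,f_B)\in\oR^J_{X,x}(B)$ and $(U(\sP_C),D_C,f_C)\in\oR^J_{X,x}(C)$, I form the $\sA^0_X(U(B\hat{\ten}_{\R}C))$-torsor $U(\sP_{B\ten C})$ induced from $U(\sP_B)\by_X U(\sP_C)$ along the homomorphism above, with framing $f_B\ten f_C$, and in the trivialisation $p_B\ten p_C$ I set
\[
D_{B\ten C}(p_B\ten p_C):=D_B(p_B)\ten 1+1\ten D_C(p_C)\in\sA^1_X(B\hat{\ten}_{\R}C).
\]
The identity $(p_Bu)\ten(p_Cv)=(p_B\ten p_C)(u\ten v)$ together with $(u\ten v)^{-1}d(u\ten v)=u^{-1}du\ten 1+1\ten v^{-1}dv$ shows that $D_{B\ten C}$ obeys the $d$-connection rule, so it is well defined independently of trivialisation. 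Via the de Rham projection $\pi_{\dR}$ of Definition \ref{deRhamproj}, its underlying flat torsor is the de Rham tensor product of $\pi_{\dR}(p_B)$ and $\pi_{\dR}(p_C)$ from Lemma \ref{tensorstr0} and Proposition \ref{cfBettiDR}, pushed forward along the canonical map $B\ten^{\pi}_{\R}C\to B\hat{\ten}_{\R}C$.

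Because $B\ten 1$ and $1\ten C$ commute in $B\hat{\ten}_{\R}C$, the graded bracket of any form valued in $B\ten 1$ with any form valued in $1\ten C$ vanishes (the algebra commutators are zero, and the remaining sign comes from the anticommutativity of odd forms). This already gives flatness: in the curvature $(\ad D_{B\ten C})\circ D_{B\ten C}$ the quadratic term is a self-bracket whose mixed contributions cancel, so the curvature splits as the sum of the curvatures of $D_B$ and $D_C$ and hence vanishes.

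The pluriharmonic condition is the step that needs care, and is where I expect the only real work to lie. Since $1$ is self-adjoint, the decomposition of $B\hat{\ten}_{\R}C$ into anti-self-adjoint and self-adjoint parts restricts so that $d^+_B\ten 1+1\ten d^+_C$ is anti-self-adjoint and $\vartheta_B\ten 1+1\ten\vartheta_C$ is self-adjoint; by uniqueness of the splitting $D=d^++\vartheta$ these are exactly the two parts of $D_{B\ten C}$. As the action $\dmd$ of $S$ operates only on the form factor, it commutes with $\ten 1$ and $1\ten$, so $D^c_{B\ten C}=D^c_B\ten 1+1\ten D^c_C$. Substituting into
\[
(\ad D_{B\ten C})\circ D^c_{B\ten C}+(\ad D^c_{B\ten C})\circ D_{B\ten C}
\]
one finds that the purely scalar second-order term $dd^c+d^cd$ occurs once, with coefficient $1\ten 1$, and vanishes by the Kähler identity, while every cross contribution between the two factors is again a graded bracket killed by commutativity; what remains is
\[
[(\ad D_B)\circ D^c_B+(\ad D^c_B)\circ D_B]\ten 1+1\ten[(\ad D_C)\circ D^c_C+(\ad D^c_C)\circ D_C]=0.
\]
The delicate point to confirm is precisely that no non-bracket mixed term survives, so that the scalar Kähler term is not over-counted; granting this, $m$ sends a pair of framed pluriharmonic torsors to a framed pluriharmonic torsor, manifestly naturally in $B$ and $C$.

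Finally I would assemble the symmetric monoidal structure. The associativity and symmetry constraints for $m$ are inherited from those of $\hat{\ten}_{\R}$ on (pro-)$C^*$-algebras, since our construction is defined by the same universal tensoring operation; the unit is the trivial torsor, viewed as the image of the point under $\{*\}\to\oR^J_{X,x}(\R)$, and the unit axioms follow from $\R\hat{\ten}_{\R}B\cong B$. The entire construction applies verbatim to pro-$C^*$-algebras, using the maximal tensor product of Definition \ref{tensorCstar}: the homomorphism on unitaries and the commutativity of the two images persist at the pro level, and compatibility with the defining cofiltered limits (as in Proposition \ref{pluriprorep}) extends $m$ from $C^*$-algebras to their pro-completions.
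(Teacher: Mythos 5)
Your proposal is correct and follows essentially the same route as the paper: form the induced $\sA^0_X(U(B\hat{\ten}C))$-torsor from the product of the two unitary torsors along $U(B)\by U(C)\to U(B\hat{\ten}C)$, define the connection by the Leibniz formula $F(p,q,1)=(Dp,q)+(p,Dq)$, and observe that flatness, pluriharmonicity and the monoidal axioms follow. The paper simply asserts "this is clearly flat and pluriharmonic," whereas you supply the verification — including the point you flag as delicate, which does go through: the Leibniz formula puts the scalar term $dd^c+d^cd$ (which vanishes identically) in with coefficient $1\ten 1$ exactly once, and all mixed terms are graded commutators killed by the commutation of $B\ten 1$ with $1\ten C$ in the maximal tensor product.
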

\begin{proof}
 Given $(U(\sP), D, f, U(\sQ),E, \beta)$ on the left-hand side, we first form the $\sA^0_X(U(B\hat{\ten}C))$-torsor $U(\sR)$ given by $U(\sR):= (U(\sP)\by U(\sQ))_{\sA^0_X(U(B)\by U(C))}\sA^0_X(U(B\hat{\ten}C))$. We then define a connection $F$ on $U(\sR)$ determined by 
\[
 F(p,q,1)= (Dp,q)+ (p,Dq) \in \sA^1_X(\ad \sR)= (U(\sP)\by U(\sQ))_{\sA^0_X(U(B)\by U(C))}\sA^1_X(B\hat{\ten}C)
\]
for $p \in U(\sP), q \in U(\sQ)$
This is clearly flat and pluriharmonic, and the construction is also symmetric monoidal.
\end{proof}

Note that this gives $E^J_{X,x}$ the structure of a pro-$C^*$-bialgebra, with comultiplication $\mu \co E^J_{X,x} \to E^J_{X,x} \hten E^J_{X,x}$ coming from $m$, and counit $\vareps\co  E^J_{X,x} \to k$ coming from the trivial torsor.  

The following is immediate:

\begin{lemma}\label{Pfunctorial}
 For any morphism $f\co X \to Y$ of compact connected K\"ahler manifolds, there is a natural transformation
\[
 f^*\co \oR^J_{Y,fx} \to \oR^J_{X,x}
\]
of functors.
\end{lemma}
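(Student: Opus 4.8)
The plan is to define $f^*$ by pulling back torsors, connections and framings along $f$, then to check that each defining condition of Definition \ref{pluritorsor} survives the pullback. For a real $C^*$-algebra $B$ and an object $(U(\sP), D, s) \in \oR^J_{Y,fx}(B)$, the sheaf-theoretic pullback $f^{-1}U(\sP)$ is a torsor under $f^{-1}\sA^0_Y(U(B))$, and base-changing along the map $f^{-1}\sA^0_Y(U(B)) \to \sA^0_X(U(B))$ induced by pullback of smooth functions yields a right $\sA^0_X(U(B))$-torsor $f^*U(\sP)$ on $X$. Since $f(x)=fx$, the fibre $x^*f^*U(\sP)$ is canonically identified with $(fx)^*U(\sP)$, so the distinguished element $s$ determines a framing of $f^*U(\sP)$. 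Because pullback of smooth forms gives maps $f^*\co \sA^n_Y \to \sA^n_X$ compatible with the exterior derivative, and because $f^*(\ad\sP) = \ad(f^*\sP)$, the connection $D$ pulls back to a $d$-connection $f^*D \co f^*U(\sP) \to \sA^1_X(\ad(f^*\sP))$; flatness is preserved since curvature is natural under pullback, $f^*\big((\ad D)\circ D\big) = (\ad f^*D)\circ f^*D$.

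The one condition requiring genuine care, and the only step beyond formal naturality, is pluriharmonicity. The decomposition $D = d^+ + \vartheta$ into anti-self-adjoint and self-adjoint parts is defined pointwise through the $*$-structure of $B$, which is untouched by pullback; hence $f^*D = f^*d^+ + f^*\vartheta$ is again the anti-self-adjoint/self-adjoint decomposition, with $f^*d^+$ a unitary connection and $f^*\vartheta$ self-adjoint-valued. It therefore remains to verify that $f^*$ commutes with the passage $D \mapsto D^c$. Since $D^c = i\dmd d^+ - i\dmd \vartheta$ is built from the $S$-action $\dmd$ of Definition \ref{dmd}, which acts as $\lambda^p\bar\lambda^q$ on forms of Hodge type $(p,q)$, this reduces to checking that $f^*$ commutes with $\dmd$. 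This is precisely where holomorphicity of $f$ enters: a holomorphic map between K\"ahler manifolds sends $(p,q)$-forms to $(p,q)$-forms, so $f^*\dmd = \dmd f^*$ and thus $(f^*D)^c = f^*(D^c)$. Consequently
\[
(\ad f^*D)\circ (f^*D)^c + (\ad (f^*D)^c)\circ f^*D = f^*\big((\ad D)\circ D^c + (\ad D^c)\circ D\big) = 0,
\]
so $(f^*U(\sP), f^*D, s)$ is a pluriharmonic torsor on $X$.

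Finally, the construction sends isomorphisms to isomorphisms, hence descends to the sets of isomorphism classes, and every step is functorial in $B$, so $f^*\co \oR^J_{Y,fx} \to \oR^J_{X,x}$ is a natural transformation of set-valued functors on $C^*\Alg$; strict functoriality $(g\circ f)^* = f^*\circ g^*$ is inherited from that of pullback. I expect the commutation $f^*\dmd = \dmd f^*$ to be the crux of the argument, but it is immediate from the preservation of Hodge type under holomorphic pullback, which is why the whole statement is essentially formal.
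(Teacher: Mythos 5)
Your proposal is correct, and it is exactly the argument the paper has in mind: the paper states this lemma with no proof at all ("The following is immediate"), the intended content being precisely the formal pullback of torsors, framings and connections that you spell out. Your identification of the only non-formal point — that holomorphicity of $f$ preserves Hodge type, so $f^*$ commutes with $\dmd$ and hence with $D \mapsto D^c$, making pluriharmonicity stable under pullback — is the reason the lemma holds for all morphisms of K\"ahler manifolds while the Riemannian analogue (Lemma \ref{Pfunctorial2}) only holds for local isometries.
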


\subsection{Higgs bundles}

\begin{definition}
 Given  a complex Banach algebra $B$, write $\O_X(B)$ for the sheaf on $X$ given locally by holomorphic functions $X \to B$.
\end{definition}

\begin{definition}
For a complex Banach algebra $B$, a Higgs $B$-torsor on $X$ consists of an $\O_X(B)^{\by}$-torsor $\sT$, together with a Higgs form $\theta \in \ad \sT\ten_{\O_X}\Omega^1_X$, where $\ad \sT:= \sT\by_{\O_X(B)^{\by}, \ad}\O_X(B)$ satisfying 
\[
 \theta \wedge\theta = 0 \in  \sT\ten_{\O_X}\Omega^2_X.
\]
\end{definition}

\begin{definition}
 Let $\cR^{\Dol}_{X}(B)$ be  the groupoid  of Higgs $B$-torsors, and $\oR^{\Dol}_{X,x}(B)$ the groupoid of framed Higgs bundles $(\sT, \theta, f)$, where $f \co B^{\by}\to x^*\sT$ is a $B^{\by}$-equivariant isomorphism. Alternatively, we may think of $f$ as a distinguished element of $x^*\sT$. 

Note that $\oR^{\Dol}_{X,x}(B)$ is a discrete groupoid, so we will usually identify it with its set of isomorphism classes. Also note that there is a canonical action of $B^{\by}$ on $\oR^{\Dol}_{X,x}(B)$ given by the action on  the framings. This gives an equivalence
\[
 \cR^{\Dol}_{X}(B) \simeq [\oR^{\Dol}_{X,x}(B)/B^{\by}]
\]
 of groupoids.
\end{definition}

The following is immediate:
\begin{lemma}
 Giving a Higgs $B$-torsor $X$ is equivalent to giving an $\sA^0_X(B^{\by})$ torsor $\sQ$ equipped with a flat $\bar{\pd}$-connection, i.e. a map 
\[
 D''\co \sQ \to \ad\sQ:= \sQ\by_{\sA^0_X(B^{\by}), \ad}\sA^1_X(B)
\]
satisfying
\begin{enumerate}
 \item $D''(pg)= \ad_gD''(p) + g^{-1}\bar{\pd}g $, for $g \in \sA^0_X(B^{\by})$;
\item $(\ad D'')\circ D'' = 0$.
\end{enumerate}
\end{lemma}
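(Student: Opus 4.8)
The statement is purely a translation between the holomorphic description of a Higgs torsor and its smooth (Dolbeault) description, so the plan is to exhibit mutually inverse constructions and to match the defining conditions by Hodge type. Starting from a Higgs $B$-torsor $(\sT, \theta)$, I would first pass to the associated smooth torsor $\sQ := \sT \by_{\O_X(B)^{\by}} \sA^0_X(B^{\by})$, obtained by extending the structure sheaf from holomorphic to smooth $B$-valued functions. The holomorphic structure on $\sT$ is recorded by the $\bar{\pd}$-connection $\bar{\pd}_{\sT}\co \sQ \to \sQ \by_{\sA^0_X(B^{\by})} \sA^{0,1}_X(B)$ whose $\bar{\pd}_{\sT}$-closed local sections are exactly the holomorphic sections of $\sT$; this satisfies condition (1), since any two such operators differ by a tensorial term. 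The Higgs field $\theta$, being a section of $\ad\sT \ten_{\O_X} \Omega^1_X$, is itself tensorial, so I would set $D'' := \bar{\pd}_{\sT} + \theta$, with $\bar{\pd}_{\sT}$ contributing the $(0,1)$-part and $\theta$ the $(1,0)$-part of the resulting $\ad\sQ$-valued $1$-form. Adding the tensorial $\theta$ to a $\bar{\pd}$-connection again yields a $\bar{\pd}$-connection, so $D''$ satisfies condition (1).

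The substance of the lemma lies in condition (2). I would decompose the curvature $(\ad D'')\circ D'' = (\bar{\pd}_{\sT} + \theta)^2$ into its pure Hodge components. The $(0,2)$-part is $\bar{\pd}_{\sT}^2$, which vanishes because $\sT$ is a genuine holomorphic torsor (integrability); the $(1,1)$-part is $\bar{\pd}_{\sT}\theta$, whose vanishing is precisely the holomorphy of $\theta$; and the $(2,0)$-part is $\theta \wedge \theta$, whose vanishing is the Higgs condition. Hence $(\sT,\theta)$ being a Higgs torsor is equivalent to $D''$ being a flat $\bar{\pd}$-connection. Matching morphisms is then routine: an intertwiner of smooth torsors commutes with $D''$ if and only if it commutes with $\bar{\pd}_{\sT}$ and with $\theta$ separately (reading off the $(0,1)$- and $(1,0)$-parts), i.e. if and only if it is an isomorphism of the underlying holomorphic torsors preserving the Higgs fields.

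For the converse I would reverse this dictionary: given a flat $\bar{\pd}$-connection $D''$ on a smooth torsor $\sQ$, split $D'' = D''_{0,1} + D''_{1,0}$ by type. The tensorial $(1,0)$-part defines $\theta \in \ad\sQ \ten \sA^{1,0}_X$, while $D''_{0,1}$ is a $\bar{\pd}$-connection, and the three Hodge components of flatness yield respectively $(D''_{0,1})^2 = 0$, $\bar{\pd}\theta = 0$ and $\theta \wedge \theta = 0$. The main obstacle is the integrability step: I must promote the $\bar{\pd}$-connection $D''_{0,1}$ with $(D''_{0,1})^2 = 0$ to a genuine holomorphic structure, i.e. recover an $\O_X(B)^{\by}$-torsor $\sT$ with $\sQ = \sT \by_{\O_X(B)^{\by}} \sA^0_X(B^{\by})$. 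This is the Koszul--Malgrange integrability theorem, which I would invoke in its torsor-valued form with Banach coefficients; locally it amounts to solving $\bar{\pd}$-equations valued in $B$, and the integrability relation $(D''_{0,1})^2 = 0$ is exactly what makes the local holomorphic frames patch. Once $\sT$ is recovered, the condition $\bar{\pd}\theta = 0$ shows $\theta \in \ad\sT \ten_{\O_X} \Omega^1_X$, and the two constructions are manifestly inverse, giving the asserted equivalence (which upgrades to the groupoid equivalence $\cR^{\Dol}_X(B) \simeq \{(\sQ, D'')\}$).
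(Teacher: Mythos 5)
Your proposal is correct and follows exactly the standard Dolbeault dictionary that the paper has in mind: the paper states this lemma without proof (``The following is immediate''), and your type decomposition of the curvature of $D''=\bar{\pd}_{\sT}+\theta$ into the $(0,2)$, $(1,1)$ and $(2,0)$ pieces is precisely the implicit argument. You also correctly isolate the one non-formal ingredient, namely the Koszul--Malgrange integrability theorem with Banach coefficients needed in the converse direction (where the integrability relation $(D''_{0,1})^2=0$ is what guarantees the \emph{existence} of local holomorphic frames, the patching of such frames then being automatic since transitions between them are holomorphic).
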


\begin{remark}\label{Dolnotprorep}
Note that unlike the Betti, de Rham, and harmonic functors, the Dolbeault functor cannot be pro-representable in general. This is for the simple reason that a left-exact scheme must be affine, but the Dolbeault moduli space is seldom so, since it contains the Picard scheme.
\end{remark}

\begin{definition}\label{Ddecomp}
 Given  $(U(\sP), D) \in \cR^J_X(B)$, decompose $d^+$ and $\vartheta$ into $(1,0)$ and $(0,1)$ types as $d^+=\pd + \bar{\pd}$ and $\vartheta= \theta+\bar{\theta}$. Now set $D'=\pd + \bar{\theta}$ and $D''= \bar{\pd}+ \theta$. Note that $D= D'+D''$ and $D^c= iD'-iD''$.
\end{definition}

\begin{definition}
For a complex  $C^*$-algebra $B$, define the 
 Dolbeault projection map $\pi_{\Dol}\co\cR^J_X(B) \to \cR^{\Dol}_X(B)$ by sending $(U(\sP),D)$ to $(\sP\by_{\sA^0_X(B^{\by})}\sA^0_X(B^{\by}), D'')$.
\end{definition}

\subsection{The harmonic functor}\label{harmonicsn}

We now let $X$ be any  compact Riemannian real manifold.

\begin{definition}
 Given 
a compact Riemannian manifold $X$, a real $C^*$-algebra $B$,  a  right $\sA^0_X(U(B))$-torsor $U(\sP)$ and a flat connection  
\[
 D\co U(\sP) \to \ad\sP,
\]
 say that $D$ is a harmonic connection if $(d^+)^*\vartheta=0 \in \Gamma(X,\ad\sP)$, for $d^+, \vartheta$ defined as in Definition \ref{pluritorsor}, and the adjoint $*$ given by combining the involution $*$ on $\ad\sP$ with the adjoint on $\sA^*_X$ given by the K\"ahler form.
\end{definition}

\begin{lemma}
 A flat connection $D$ as above on a compact K\"ahler manifold is harmonic if and only if it is pluriharmonic.
\end{lemma}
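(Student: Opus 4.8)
The plan is to decompose everything by Hodge type and reduce both conditions to statements about the $(1,0)$-part $\theta$ of the self-adjoint form $\vartheta$. Recall from Definition \ref{Ddecomp} that $D = D' + D''$ with $D' = \pd + \bar\theta$, $D'' = \bar\pd + \theta$, and $D^c = iD' - iD''$. A direct expansion gives
\[
(\ad D)\circ D^c + (\ad D^c)\circ D = 2i\bigl((D')^2 - (D'')^2\bigr),
\]
so the pluriharmonic condition is equivalent to $(D')^2 = (D'')^2$. I would record this equivalence first and then separate it by bidegree: its $(1,1)$-part is $\pd\bar\theta = \bar\pd\theta$ (covariant derivatives), its $(2,0)$-part is $\pd^2 = \theta\wedge\theta$, and its $(0,2)$-part the adjoint of this. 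Meanwhile flatness $(\ad D)\circ D = 0$ splits, according to the decomposition of $\ad\sP$ into anti-self-adjoint and self-adjoint parts, into $(d^+)^2 + \vartheta\wedge\vartheta = 0$ and the covariant closedness $d^+\vartheta = 0$; extracting bidegrees from $d^+\vartheta = 0$ yields $\pd\theta = 0$, $\bar\pd\bar\theta = 0$ and $\pd\bar\theta + \bar\pd\theta = 0$, while the first relation gives $\pd^2 + \theta\wedge\theta = 0$, and so on.

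For the direction ``pluriharmonic $\Rightarrow$ harmonic'' I would argue purely algebraically, using the K\"ahler identities for the unitary connection $d^+$, namely $(\pd)^* = -i[\Lambda, \bar\pd]$ and $(\bar\pd)^* = i[\Lambda, \pd]$, where $\Lambda$ denotes contraction with the K\"ahler form. Since $\vartheta = \theta + \bar\theta$ with $\theta$ of type $(1,0)$ and $\bar\theta$ of type $(0,1)$, a bidegree count leaves $(d^+)^*\vartheta = (\pd)^*\theta + (\bar\pd)^*\bar\theta$, and using $\Lambda\theta = \Lambda\bar\theta = 0$ this becomes
\[
(d^+)^*\vartheta = i\Lambda(\pd\bar\theta - \bar\pd\theta).
\]
The $(1,1)$-part of pluriharmonicity is exactly $\pd\bar\theta = \bar\pd\theta$, so the right-hand side vanishes and $D$ is harmonic. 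This direction uses neither flatness nor compactness.

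For the converse ``harmonic $\Rightarrow$ pluriharmonic'' I would combine the harmonic equation with flatness. Harmonicity gives $\Lambda(\pd\bar\theta - \bar\pd\theta) = 0$, and the $(1,1)$-part of flatness gives $\pd\bar\theta = -\bar\pd\theta$, so together $\Lambda(\bar\pd\theta) = 0$: the $(1,1)$-form $\bar\pd\theta$ is trace-free. Comparing with the type decomposition above, the pluriharmonic condition amounts to the three identities $\bar\pd\theta = 0$ (the $(1,1)$-part), $\theta\wedge\theta = 0$ (the $(2,0)$-part, since flatness forces $\pd^2 = -\theta\wedge\theta$) and its adjoint $\bar\theta\wedge\bar\theta = 0$. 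Thus it remains to promote the trace-free statement $\Lambda\bar\pd\theta = 0$ to vanishing of the full form $\bar\pd\theta$, together with $\theta\wedge\theta = 0$. This is the Siu--Sampson Bochner formula: using $\pd\theta = 0$, integration by parts via the K\"ahler identities and the curvature relation $(d^+)^2 = -\vartheta\wedge\vartheta$ produces on the compact manifold $X$ an integral identity of the shape $\|\bar\pd\theta\|^2 + (\text{curvature term in }[\theta,\bar\theta]) = 0$ in which the curvature term is manifestly nonnegative; each summand must then vanish, giving $\bar\pd\theta = 0$ and $\theta\wedge\theta = 0$, hence pluriharmonicity.

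The hard part will be making this last Bochner argument work with $C^*$-algebra coefficients rather than for matrices. The crux is nonnegativity of the curvature term: it is assembled from expressions $a^*a$ with $a$ valued in $\ad\sP$, so nonnegativity is precisely the defining positivity of $B$ (equivalently, the nonpositive curvature of the homogeneous space $B^{\by}/U(B)$), and faithfulness of the norm upgrades vanishing of the integral to pointwise vanishing of each covariant derivative. Some care is also needed because for $B = L(H)$ the fibres are infinite-dimensional, so I would run the integration by parts directly with the $B$-valued inner product on $A^{\bt}(X, \ad\sP)$, or else reduce to Hilbert-space $*$-representations where Simpson's computation applies essentially verbatim.
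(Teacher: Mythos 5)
You are correct, and your route is essentially the paper's: the paper's entire proof is the one-line observation that the proof of Simpson's Lemma 1.1 --- precisely the bidegree reduction and Siu--Sampson Bochner argument you outline --- ``carries over to this generality''. Your final paragraph, replacing trace-positivity by $C^*$-positivity of terms $a^*a$ and of $B$-valued integrals of positive sections, is exactly the justification that phrase points to, so your write-up is if anything more detailed than the paper's own.
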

\begin{proof}
 The proof of \cite[Lemma 1.1]{Simpson} carries over to this generality.
\end{proof}

\begin{definition}
The lemma allows us to  extend Definitions \ref{pluritorsor},\ref{deRhamproj} to any compact Riemannian manifold $X$, replacing pluriharmonic with harmonic in the definition of $\cR^J_X(B)$ and $\oR^J_{X,x}(B)$.
\end{definition}

\begin{proposition}\label{harmprorep}
 The functor $\oR^J_{X,x} \co C^*\Alg_{\R} \to \Set$ is strictly pro-representable, by an object $E^J_{X,x} \in \pro( C^*\Alg{\R})$.
\end{proposition}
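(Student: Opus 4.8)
The plan is to follow the proof of Proposition \ref{pluriprorep} essentially verbatim, since the only structural change is that the torsors now carry a harmonic connection rather than a pluriharmonic one, while the de Rham comparison of Proposition \ref{cfBettiDR} was already established for arbitrary manifolds. Thus I would again verify the three hypotheses of Proposition \ref{repcstar}: preservation of finite limits, preservation of monomorphisms, and preservation of cofiltered limits of nested $C^*$-subalgebras.

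First, preservation of the final object is immediate, as the final object of $C^*\Alg_{\R}$ is $0$ and $\oR^J_{X,x}(0)$ is a one-point set. For fibre products, given $A \to B \la C$ and a compatible pair $(p_A,p_C)$, I would apply the de Rham projection $\pi_{\dR}$ of Definition \ref{deRhamproj} and use representability of $\oR^{\dR}_{X,x}\cong \oR^B_{X,x}$ to produce a unique framed flat torsor $(\sP, D)$ over $A\by_B C$. The unitary forms $U(\sP_A)$ and $U(\sP_C)$ then restrict to the same unitary form $U(\sP_B)$ of $\sP_B$ by the framing-compatibility argument, and so glue to a form over $A\by_B C$. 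The decomposition $D = d^+ + \vartheta$ into anti-self-adjoint and self-adjoint parts is induced from those over $A$ and $C$, because the involution on $A\by_B C$ is taken componentwise; hence the harmonic equation $(d^+)^*\vartheta = 0$, being a pointwise linear condition that holds for both restrictions, holds for the glued connection.

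For preservation of monomorphisms (a $C^*$-subalgebra $A \subset B$ inducing an injection $\oR^J_{X,x}(A) \to \oR^J_{X,x}(B)$) and for cofiltered limits of nested subalgebras, I would reproduce the two final paragraphs of the proof of Proposition \ref{pluriprorep} without change: compatibility of framings forces any isomorphism of the underlying local systems to respect the orthogonal structure, and the limit torsor $\sP := \Lim_i \sP_i$ carries the connection and framing induced by the $D_i$ and $f_i$, its harmonic condition being inherited from the compatible system. With all three conditions verified, Proposition \ref{repcstar} yields strict pro-representability by some $E^J_{X,x} \in \pro(C^*\Alg_{\R})$.

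The only point requiring genuine (if minor) care, and the place where the Riemannian and K\"ahler arguments could in principle diverge, is checking that the harmonic condition is compatible with the $C^*$-operations: that the Hodge-adjoint $(d^+)^*$ and the self-adjoint/anti-self-adjoint splitting of $\ad\sP$ are preserved under the fibre-product and inverse-limit structures on the coefficient algebras. This reduces to the observation that the involution and norm on $A\by_B C$ and on $\bigcap_i A_i$ are induced from the given algebras, so the decomposition of $\ad\sP$ and the formal adjoint act compatibly; crucially, no appeal to the complex structure of $X$ enters, which is precisely why the K\"ahler hypothesis can be dropped here.
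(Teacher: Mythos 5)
Your proposal is correct and follows essentially the same route as the paper, whose entire proof of this proposition is the single sentence ``The proof of Proposition \ref{pluriprorep} carries over.'' You have simply carried it over explicitly, and your closing observation --- that the harmonic condition $(d^+)^*\vartheta=0$ is compatible with the componentwise involution on fibre products and nested intersections, with no appeal to the complex structure of $X$ --- is precisely the check implicit in the paper's one-line proof.
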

\begin{proof}
 The proof of Proposition \ref{pluriprorep} carries over.
\end{proof}

Note that Lemma \ref{tensorstr} carries over to the functor $\oR^J_{X,x}$ for any compact Riemannian manifold $X$.

The following is immediate:

\begin{lemma}\label{Pfunctorial2}
 For any local isometry $f\co X \to Y$ of compact connected real Riemannian manifolds, there is a natural transformation
\[
 f^*\co \oR^J_{Y,fx} \to \oR^J_{X,x}
\]
of functors.
\end{lemma}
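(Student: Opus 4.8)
The plan is to define $f^*$ by pulling back the geometric data. An object of $\oR^J_{Y,fx}(B)$ is a triple $(U(\sQ), E, g)$, where $U(\sQ)$ is a right $\sA^0_Y(U(B))$-torsor, $E$ a harmonic connection on $U(\sQ)$, and $g \in (fx)^*U(\sQ)$ a framing. I would set
\[
 f^*(U(\sQ), E, g) := (f^*U(\sQ),\, f^*E,\, f^*g),
\]
where $f^*U(\sQ)$ is the pullback torsor, a right $\sA^0_X(U(B))$-torsor, and the framing transports via the canonical identification $x^*(f^*U(\sQ)) = (fx)^*U(\sQ)$ coming from $f(x)=fx$. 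Naturality in $B$ and functoriality in $f$ are then immediate, so the only content is to verify that $f^*E$ is again a harmonic connection in the sense of \S\ref{harmonicsn}.

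The $d$-connection and flatness conditions pull back formally, since $f^*$ commutes with the exterior derivative and with the wedge product; moreover the involution $*$ on $\ad\sP$ is induced fibrewise by the $C^*$-structure of $B$ and is therefore untouched by pullback. Hence, writing $E = d^+ + \vartheta$ for the decomposition of $E$ into anti-self-adjoint and self-adjoint parts as in Definition \ref{pluritorsor}, the pullback $f^*E = f^*d^+ + f^*\vartheta$ is the corresponding decomposition on $X$, with $f^*d^+$ a flat $d$-connection.

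The single place where the hypothesis enters is the harmonicity condition $(d^+)^*\vartheta = 0$, whose adjoint $(d^+)^*$ is formed using the Hodge star of the Riemannian metric. Because $f$ is a local isometry, the metric on $X$ is the pullback of that on $Y$, so $f^*$ intertwines the two Hodge stars and hence the two metric adjoints; that is, $(f^*d^+)^* \circ f^* = f^* \circ (d^+)^*$ on $\ad\sP$-valued forms. Applying this to $\vartheta$ yields
\[
 (f^*d^+)^*(f^*\vartheta) = f^*\big((d^+)^*\vartheta\big) = 0,
\]
so $f^*E$ is harmonic and $f^*(U(\sQ),E,g)$ lies in $\oR^J_{X,x}(B)$. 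This compatibility of the metric adjoints under pullback is the only real obstacle, and it is exactly where the isometry assumption is used, as opposed to the arbitrary morphism that sufficed in the Kähler pluriharmonic setting of Lemma \ref{Pfunctorial}, where preservation of $D^c$ follows merely from compatibility with the $(p,q)$-splitting. Everything else is formal, which is why the statement is flagged as immediate.
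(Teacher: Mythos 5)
Your proof is correct and is exactly the argument the paper intends: the paper states this lemma without proof, flagging it as immediate, and the only possible content is the pullback construction you spell out. Your identification of the metric adjoint in the harmonicity condition $(d^+)^*\vartheta=0$ as the one step requiring the local-isometry hypothesis — in contrast to Lemma \ref{Pfunctorial}, where pluriharmonicity pulls back under arbitrary holomorphic maps — also matches the paper's own remark following the lemma about the weaker functoriality of the harmonic functor.
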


Note that this is much weaker than Lemma \ref{Pfunctorial}, the pluriharmonic functor being \emph{a priori} functorial with respect to all morphisms.

\section{Analytic non-abelian Hodge theorems}\label{cfansn} 

\subsection{The de Rham projection}\label{dRprojsn}

 Fix a compact connected real Riemannian  manifold $X$, with basepoint $x \in X$.

The argument of \cite[Lemma 7.17]{Sim2} (which is only stated for $X$ K\"ahler) shows that $\pi_{\dR}$ gives a homeomorphism
\[
\oR^J_{X,x}(\Mat_n(\Cx))/U(n) \to  \Hom(\pi_1(X,x), \GL_n(\Cx))//\GL_n(\Cx),
\]
where $//$ denotes the coarse quotient (in this case, the Hausdorff completion of the topological quotient).

As an immediate consequence, note that
\[
 \oR^J_{X,x}(\Cx) \to  \Hom_{\gp}(\pi_1(X,x), \Cx^*)
\]
is a homeomorphism. Thus the abelianisation of $E^J_{X,x}\ten \Cx$ is isomorphic to the commutative $C^*$-algebra $C( \Hom(\pi_1(X,x), \Cx^*), \Cx)$, with $\oR^J_{X,x} \subset \oR^J_{X,x}\ten \Cx$ consisting of $\Gal(\Cx/\R)$-equivariant functions.

We now adapt these results to recover a finer comparison between the respective functors.

\subsubsection{Harmonic representations}

\begin{proposition}\label{uniquemetric}
For a compact Riemannian manifold $X$ and all real $C^*$-algebras $B$, the de Rham projection 
\[
 \pi_{\dR}\co \oR^J_{X,x}(B) \to \oR^{\dR}_{X,x}(B)
\]
has the property that if $p_1, p_2 \in  \oR^J_{X,x}(B)$ and if $\ad_b\pi_{\dR}(p_1)= \pi_{\dR}(p_2)$ for some strictly positive self-adjoint element $b \in B$, then $p_1=p_2$.

Thus
\[
\pi_{\dR}\co \oR^J_{X,x}(B)/U(B) \to  \oR^{\dR}_{X,x}(B)/B^{\by}
\]
 is injective.
\end{proposition}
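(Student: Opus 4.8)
The plan is to push everything through the Riemann--Hilbert dictionary and thereby reduce the statement to the uniqueness of the harmonic metric on a single flat bundle, which in turn comes from the maximum principle on the compact manifold $X$.

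First I would reinterpret the data via Proposition \ref{cfBettiDR}: a framed object of $\oR^J_{X,x}(B)$ amounts to a representation $\rho\co \pi_1(X,x)\to B^{\by}$ together with a harmonic metric $K$ on the associated flat $B$-bundle, normalised so that the framing is orthonormal at $x$ (i.e.\ $K(x)=1$). Under this correspondence the de Rham projection $\pi_{\dR}$ simply records $\rho$ and discards $K$, so the hypothesis $\ad_b\pi_{\dR}(p_1)=\pi_{\dR}(p_2)$ becomes $\rho_2=\ad_b\rho_1=b\rho_1 b^{-1}$. Since $b\in B^{\by}$, it defines a flat isomorphism from the $\rho_1$-bundle to the $\rho_2$-bundle; transporting $K_2$ back along it produces a second harmonic metric $K_2'$ for $\rho_1$ with $K_2'(x)=b^2$ (as $b$ is self-adjoint). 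Thus I have reduced to comparing two harmonic metrics $K_1,K_2'$ on one and the same flat bundle.

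The crux is then the claim that the self-adjoint comparison operator $s:=K_1^{-1}K_2'$ is \emph{parallel} for the flat connection $D$. This is exactly where compactness of $X$, and the equivalence of harmonicity with pluriharmonicity established earlier, enter: following \cite[Lemma 1.1]{Simpson} and \cite[\S 7]{Sim2}, harmonicity of both metrics makes a suitable convex invariant of $s$ — equivalently, the distance between the two $\rho_1$-equivariant harmonic maps into the nonpositively curved space $B_{++}$ of strictly positive elements — into a subharmonic function on $X$; by the maximum principle on the compact $X$ it is constant, and constancy forces $Ds=0$. I expect this analytic core to be the main obstacle: one must transcribe Simpson's Bochner/convexity computation from $\Mat_n(\Cx)$ to an arbitrary $C^*$-algebra $B$ and verify that $B_{++}$ supplies the required nonpositive curvature. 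Everything surrounding it is formal.

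Granting that $s$ is parallel, the endgame is bookkeeping. A parallel section commutes with the monodromy, so $s(x)=b^2$ lies in the centraliser of $\rho_1(\pi_1(X,x))$; by continuous functional calculus $b=(b^2)^{1/2}$ lies there too, whence $\rho_2=\ad_b\rho_1=\rho_1$. Now $K_1$ and $K_2$ are two harmonic metrics for the \emph{same} representation $\rho:=\rho_1=\rho_2$, each normalised so that $K_i(x)=1$; applying the parallelism statement once more, $K_1^{-1}K_2$ is parallel with value $1$ at $x$, hence identically $1$. Therefore $K_1=K_2$, the framings agree, and $p_1=p_2$. Finally, injectivity of $\pi_{\dR}\co \oR^J_{X,x}(B)/U(B)\to \oR^{\dR}_{X,x}(B)/B^{\by}$ follows by combining this with the polar decomposition $B^{\by}=B_{++}U(B)$ of Lemma \ref{equivartpluri}: if $\pi_{\dR}(p_2)=\ad_g\pi_{\dR}(p_1)$ with $g=bu$ for $b\in B_{++}$ and $u\in U(B)$, then replacing $p_1$ by the unitarily reframed $\ad_u p_1$ (same class in $\oR^J_{X,x}(B)/U(B)$, projecting to $\ad_u\pi_{\dR}(p_1)$) reduces to the case $g\in B_{++}$ already settled, so $[p_1]=[p_2]$.
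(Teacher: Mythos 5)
Your outer reductions coincide with the paper's own: it too uses the polar decomposition to reduce the injectivity statement to the first statement, and the first statement to showing that a harmonic connection is the \emph{only} harmonic one in its $B_{++}=\exp(S(B))$-orbit, and your functional-calculus endgame mirrors its use of commuting/parallel sections. The genuine gap is precisely the step you flag as the main obstacle, and the route you propose for filling it does not survive the passage to general $C^*$-algebras. You want to view the two harmonic metrics as $\rho$-equivariant harmonic maps into $B_{++}$, make their distance function subharmonic, and invoke the maximum principle. But for a general $C^*$-algebra $B$ the natural invariant metric on $B_{++}$ is a Finsler (operator-norm) metric, not a Riemannian one, and a normed space satisfies the CAT(0)/semi-parallelogram comparison only when its norm is Hilbertian; already a two-dimensional commutative $C^*$-subalgebra of $B$ produces flats in $B_{++}$ isometric to a sup-norm plane, so $B_{++}$ is not nonpositively curved in the Alexandrov sense that the Bochner/subharmonic-distance argument (Gromov--Schoen, Korevaar--Schoen) requires. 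The Corach--Porta--Recht ``nonpositive curvature'' of positive cones is the strictly weaker exponential-metric-increasing property, and there is moreover no scalar energy functional with which to even \emph{define} harmonicity of a map into $B_{++}$, so ``transcribing Simpson's computation'' has no framework to land in.

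What the paper does instead --- and what your argument needs --- is to replace scalar convexity by positivity in the $C^*$-order, keeping all quantities $B$-valued. For $\xi \in A^0(X,S(\ad\sP))$ it considers the gauge family $D_t=\exp(\xi t)\star D$, with self-adjoint part $\vartheta_t$ and anti-self-adjoint part $d^+_t$, and sets
\[
 f(t):=\<\vartheta_t,\vartheta_t\> \in A^0(X,B),
\]
where $\<-,-\>$ contracts form indices using only the Riemannian metric on $X$, with no inner product imposed on $B$. Corlette's computation from \cite{corlette} then carries over verbatim to give $f'(t)=2\<\xi,(d^+_t)^*\vartheta_t\>$ and
\[
 2f''(t)= \|D_t\xi +\hat{D}_{t}\xi\|^2+ \|D_t\xi -\hat{D}_{t}\xi\|^2 \in A^0(X,B_+),
\]
which is strictly positive unless $D_t\xi=0$. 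Harmonicity of $D$ and of $\exp(\xi)\star D$ gives $f'(0)=f'(1)=0$, so this operator-valued convexity forces $D\xi=0$, whence $\exp(\xi)\star D=D$. Positivity in the $C^*$-algebra thus does the work that the maximum principle does in the matrix case; this is the key idea missing from your proposal, and with it your surrounding reductions do go through.
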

\begin{proof}
The  first statement above implies the second: it suffices to show that for any $(U(\sP), D, f) \in  \oR^J_{X,x}(B)$,  there are no other harmonic representations in the $B_{++}$-orbit of $\pi_{\dR}(U(\sP),D, f)$. Since $B_{++}= \exp(S(B))$ (by the continuous functional calculus), we can equivalently look at the orbit under the exponential action of the set $S(B)$.

We adapt the proof of \cite[Proposition 2.3]{corlette}. The harmonic condition $(d^+)^*\vartheta=0$ is equivalent to saying that for all $\xi \in A^0(X, i\ad \sP)$
\[
 \<\vartheta, d^+\xi\>=0 \in A^0(X, B),
\]
where $\<-,-\>$ is defined using the Riemannian metric. 

Now, the set of flat connections on $\sP$ admits a  gauge action $\star$ of the smooth automorphism group of $\sP$, and hence via exponentiation an action of the additive group $\Gamma(X,\ad \sP)$. An isomorphism in $B_{++}$ between two flat connections corresponds to an element of  $\exp(\Gamma(X, S(\ad \sP)))$, for $S(\ad\sP) \subset \ad \sP$ consisting of symmetric elements, giving a gauge between the respective connections. Thus the $S(B)$-orbit above is given by looking at the orbit of $D$ under $\Gamma(X, S(\ad \sP))$.

By analogy with \cite[Proposition 2.3]{corlette}, we fix $\xi \in A^0(X, S(\ad \sP))$, let $d^+_t, \vartheta_t$ be the anti-self-adjoint and self-adjoint parts of  $\exp(\xi t)\star D$, and set
\[
 f(t):= \< \vartheta_t, \vartheta_t \> \in A^0(X,B),
\]
Now, $\frac{d}{dt}(\exp(\xi t)\star D) =(\exp(\xi t)\star D)\xi = d^+_t\xi + \vartheta_t\wedge \xi$, so  $\frac{d}{dt}\vartheta_t= d^+_t\xi$ and 
\[
 f'(t)= 2\<\xi, (d^+_t)^*\vartheta_t \> \in A^0(X,B).
\]
In other words, $D_t$ is harmonic if and only if $f'(t)=0$ for all $\xi$.

Now, if we set $\hat{D}_t:= d^+_t-\vartheta_t$,  the calculations of \cite[Proposition 2.3]{corlette} adapt to give 
\[
 2f''(t)= \|D_t\xi +\hat{D}_{t}\xi\|^2+ \|D_t\xi -\hat{D}_{t}\xi\|^2\in A^0(X,B),
\]
where $\|v\|^2:= \<v,v\>$; unlike Corlette, we are only taking inner product with respect to the K\"ahler metric, not imposing an additional inner product on $B$.

Note that $f''(t)$  an element of $A^0(X,B_{+})$, which lies in $A^0(X,B_{++})$ unless $D_t\xi=0$. If we start with a harmonic connection $D$, this implies that $\exp(\xi)\star D$ is harmonic if and only if $D\xi=0$. However, when $D\xi=0$ we have $\exp(\xi)\star D=D$, showing that $D$ is the unique harmonic connection in its $B_{++}$-orbit.
\end{proof}

\begin{corollary}\label{Zpluri}
 For a complex $C^*$-algebra $B$ and an element $p \in \oR^J_{X,x}(B^{\by})$, the centraliser $\z(\pi_{\dR}(p),B^{\by})$ of $\pi_{\dR}(p)$ under the adjoint action of $B$ is given by 
\[
\z(\pi_{\dR}(p),B^{\by})= \exp(\{ b \in S(B)\,:\, e^{ibt} \in \z(p, U(B)) \forall t \in \R\})\rtimes\z(p, U(B));
\]
beware that this is the semidirect product of a set with a group.
\end{corollary}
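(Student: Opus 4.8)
The plan is to recognise $\z(\pi_{\dR}(p),B^{\by})$ as the group of invertible elements of $B$ commuting with the monodromy of $\pi_{\dR}(p)$, and then to split each such element through its polar decomposition, using Proposition \ref{uniquemetric} to place the two factors in the two sides of the asserted product. First I would invoke the identifications $\oR^{\dR}_{X,x}(B)\cong\oR^B_{X,x}(B)=\Hom_{\gp}(\pi_1(X,x),B^{\by})$ of Proposition \ref{cfBettiDR}, under which $\pi_{\dR}(p)$ becomes a monodromy representation $\rho$ and the $B^{\by}$-action is conjugation; thus $\z(\pi_{\dR}(p),B^{\by})$ is exactly the group of $g\in B^{\by}$ commuting with every $\rho(\gamma)$. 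I would also record the identity $\z(p,U(B))=\{u\in U(B):\ad_u\rho=\rho\}$ as the $b=1$ case of Proposition \ref{uniquemetric}: if $u$ is unitary with $\ad_u\rho=\rho$, then $\pi_{\dR}(u\cdot p)=\pi_{\dR}(p)$, so taking $p_1=u\cdot p$, $p_2=p$ and the strictly positive element $1$ forces $u\cdot p=p$.

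The central step is the decomposition. Writing $g=\exp(b)\,u$ with $b\in S(B)$ and $u\in U(B)$ via the polar factorisation $B^{\by}=B_{++}U(B)$ used in Lemma \ref{equivartpluri}, and using that $\pi_{\dR}$ is $U(B)$-equivariant (so $\ad_u\rho=\pi_{\dR}(u\cdot p)$), the condition $\ad_g\rho=\rho$ rewrites as $\ad_{\exp(b)}\pi_{\dR}(u\cdot p)=\pi_{\dR}(p)$. Since $\exp(b)$ is strictly positive self-adjoint, Proposition \ref{uniquemetric} forces $u\cdot p=p$, i.e. $u\in\z(p,U(B))$, and then $\ad_{\exp(b)}\rho=\rho$, so the positive factor $\exp(b)$ also lies in the centraliser. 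The converse inclusion is immediate, and uniqueness of the factorisation is just uniqueness of the polar decomposition.

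Next I would characterise the admissible positive factors. For fixed $b\in S(B)$ and $c=\rho(\gamma)$, I claim $\exp(b)$ commutes with $c$ iff $b$ does iff $e^{ibt}$ does for all $t\in\R$: one direction of each equivalence is the power-series expansion, while for the converses I would recover $b=\log\exp(b)$ by continuous functional calculus applied to the positive element $\exp(b)$, and differentiate $t\mapsto e^{ibt}$ at $t=0$. Ranging over all $\gamma$ and feeding in $\z(p,U(B))=\z(\rho,U(B))$ from the first step identifies the set of positive factors with $P:=\exp(\{b\in S(B):e^{ibt}\in\z(p,U(B))\ \forall t\})$, exactly the left-hand factor in the statement.

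Finally I would assemble the product structure: multiplication gives a bijection $P\times\z(p,U(B))\to\z(\pi_{\dR}(p),B^{\by})$, and the group $\z(p,U(B))$ acts on the set $P$ by conjugation, since $u\exp(b)u^{-1}=\exp(ubu^{-1})$ with $e^{i(ubu^{-1})t}=u\,e^{ibt}\,u^{-1}\in\z(p,U(B))$. This is precisely the twisted product flagged in the statement: $P$ is only a set, not a subgroup, because a product of two strictly positive elements need not be positive. I expect the decomposition step to be the only real obstacle, since the naive tactic of polar-decomposing inside the commutant of $\rho(\pi_1(X,x))$ breaks down — that commutant is not $*$-closed, so $\exp(b)=(gg^{*})^{1/2}$ need not a priori lie in it — and it is Proposition \ref{uniquemetric}, rather than any purely algebraic argument, that puts each polar factor in the correct piece.
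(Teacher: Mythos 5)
Your proof is correct and takes essentially the same route as the paper's: polar decomposition $g=\exp(b)u$, then $U(B)$-equivariance of $\pi_{\dR}$ plus Proposition \ref{uniquemetric} to force $u\in\z(p,U(B))$, then continuous functional calculus (taking logarithms) and differentiation in $t$ to identify the admissible positive factors with $\exp(\{b\in S(B)\,:\,e^{ibt}\in\z(p,U(B))\ \forall t\})$. The additional points you spell out --- conjugation-stability of the set of positive factors, uniqueness of the factorisation, and the observation that the commutant of $\rho(\pi_1(X,x))$ is not $*$-closed (which is exactly why Proposition \ref{uniquemetric} is indispensable) --- are left implicit in the paper but are consistent with its argument.
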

\begin{proof}
Take $g \in \z(\pi_{\dR}(p),B^{\by})$, and observe that the polar decomposition allows us to write $g = \exp(b)u$, for $u\in U(B)$ and $b \in S(B)$. Since $\pi_{\dR}$ is $U(B)$-equivariant, we have
\[
 \ad_{\exp(b)}(\pi_{\dR}(\ad_u(p)))= \ad_g(\pi_{\dR}(p))=\pi_{\dR}(p).
\]
Thus Proposition \ref{uniquemetric} implies that $ \ad_u(p)=p$, so $u \in \z(p, U(B))$.

Since $\z(\pi_{\dR}(p),B^{\by})$ is a group, this implies that $\exp(b) \in \z(\pi_{\dR}(p),B^{\by})$, and hence that $\exp(b)$ commutes with the image of $\pi_{\dR}(p)$. We may  apply the continuous functional calculus to take logarithms, showing that $b$ itself commutes with the image of $\pi_{\dR}(p)$, so $ib t$ does also. But then $\exp(ib t)\in \z(p, U(B))$ for all $t$, as required.

Conversely, if $\exp(ib t)\in \z(p, U(B))$ for all $t$, then $\exp(-ib t)\pi_{\dR}(p)\exp(ib t)= \pi_{\dR}(p)$, and differentiating in $t$ for each element of $\pi_1(X,x)$, we see that $ib$ commutes with $\pi_{\dR}(p)$. Thus $\exp(b) \in  \z(\pi_{\dR}(p),B^{\by})$.
\end{proof}

\subsubsection{Topological representation spaces and completely bounded maps}

\begin{lemma}\label{Banachplurifunctor}
 For the real pro-$C^*$-algebra $E^J_{X,x}$ of Proposition \ref{pluriprorep},  there is a canonical map $\pi_{\dR}\co\Hom_{\pro(\Ban\Alg)}(E^J_{X,x},B) \to \oR^{\dR}_{X,x}(B)$, functorial in real Banach algebras $B$.
\end{lemma}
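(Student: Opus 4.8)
The plan is to produce the map by pushing a single \emph{universal} de Rham representation forward along $\phi$. The essential point is that, although a $\pro(\Ban\Alg)$-morphism $\phi\co E^J_{X,x}\to B$ need not preserve the $*$-structure and so cannot be fed directly into the pro-representability of $\oR^J_{X,x}$, it is still a unital multiplicative map, hence carries the units of $E^J_{X,x}$ to the units of $B$. So it suffices to encode the de Rham projection of the universal pluriharmonic torsor as a monodromy representation valued in $(E^J_{X,x})^{\by}$, and then push it forward along $\phi$.

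Concretely, write $E^J_{X,x}=\Lim_\alpha E_\alpha$ as a strict inverse system of $C^*$-algebras, with canonical projections $p_\alpha\co E^J_{X,x}\to E_\alpha$ in $\pro(C^*\Alg)$. By Proposition \ref{pluriprorep} each $p_\alpha$ is an element $\xi_\alpha\in\oR^J_{X,x}(E_\alpha)$, and applying the de Rham projection of Definition \ref{deRhamproj} gives $\pi_{\dR}(\xi_\alpha)\in\oR^{\dR}_{X,x}(E_\alpha)$. By the Riemann--Hilbert correspondence of Proposition \ref{cfBettiDR}, $\oR^{\dR}_{X,x}(E_\alpha)\cong\oR^B_{X,x}(E_\alpha)=\Hom_{\gp}(\pi_1(X,x),E_\alpha^{\by})$, so $\pi_{\dR}(\xi_\alpha)$ corresponds to a monodromy homomorphism $\rho_\alpha\co\pi_1(X,x)\to E_\alpha^{\by}$. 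First I would check that the $\rho_\alpha$ are compatible under the transition maps $E_\alpha\to E_{\alpha'}$: this is immediate because the projections satisfy the same compatibility in the pro-system and both $\pi_{\dR}$ and the equivalence of Proposition \ref{cfBettiDR} are natural in the coefficient algebra. Since invertibility and inverses are preserved by the transition homomorphisms, $\Lim_\alpha E_\alpha^{\by}=(E^J_{X,x})^{\by}$, and the $\rho_\alpha$ assemble into a single universal representation $\rho\co\pi_1(X,x)\to(E^J_{X,x})^{\by}$.

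Given $\phi\in\Hom_{\pro(\Ban\Alg)}(E^J_{X,x},B)$ I would then define $\pi_{\dR}(\phi)\in\oR^{\dR}_{X,x}(B)$ to be the class corresponding, under Proposition \ref{cfBettiDR}, to the composite $\phi\circ\rho\co\pi_1(X,x)\to B^{\by}$; here $\phi$ lands in $B^{\by}$ because $\phi(u)\phi(u^{-1})=\phi(1)=1$ for any unit $u$. Equivalently, writing $\phi=\bar\phi\circ p_\alpha$ for a factorisation through a stage $E_\alpha$ (which exists since $B$ is a single Banach algebra and $\Hom_{\pro(\Ban\Alg)}(E^J_{X,x},B)=\LLim_\alpha\Hom_{\Ban\Alg}(E_\alpha,B)$), this is the class of $\bar\phi\circ\rho_\alpha$, manifestly independent of the chosen stage. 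Functoriality in $B$ is then transparent: for a bounded homomorphism $\psi\co B\to B'$ one has $(\psi\circ\phi)\circ\rho=\psi\circ(\phi\circ\rho)$, and postcomposition with $\psi$ is exactly the map induced by $\psi$ on $\oR^{\dR}_{X,x}$. Finally I would record that when $B$ is a $C^*$-algebra and $\phi$ is a $*$-homomorphism, this construction agrees with the de Rham projection of Definition \ref{deRhamproj} under the identification $\oR^J_{X,x}(B)=\Hom_{\pro(C^*\Alg)}(E^J_{X,x},B)$, which justifies reusing the name $\pi_{\dR}$.

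The only genuine subtlety, and the step I would treat most carefully, is the passage to the pro-limit in the construction of $\rho$: one must verify that the de Rham projection and the Riemann--Hilbert equivalence interact correctly with the inverse system $\{E_\alpha\}$ and with the identification $\Lim_\alpha E_\alpha^{\by}=(E^J_{X,x})^{\by}$. Everything else is formal, reflecting the fact that the $*$-structure of $E^J_{X,x}$ is used only internally to build $\rho$, while $\phi$ enters purely as a unital algebra homomorphism.
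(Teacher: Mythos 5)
Your construction is correct, but it takes a different route from the paper's. The paper's proof is very short: given $f\co E^J_{X,x}\to B$, it invokes Lemma \ref{Cstarimage} (resting on Segal's theorem that the quotient of a $C^*$-algebra by a closed two-sided ideal is again a $C^*$-algebra) to factor $f$ as a surjective $*$-homomorphism $g\co E^J_{X,x}\onto C=\im(f)$ followed by a continuous embedding $C\into B$; then $g$ corresponds to an element of $\oR^J_{X,x}(C)$ by pro-representability, one applies $\pi_{\dR}$ there, and pushes forward along $C^{\by}\into B^{\by}$. You instead assemble a universal monodromy representation $\rho\co\pi_1(X,x)\to(E^J_{X,x})^{\by}$ from the stagewise elements $\xi_\alpha\in\oR^J_{X,x}(E_\alpha)$ and define $\pi_{\dR}(\phi)$ as the class of $\phi\circ\rho$, using only naturality of $\pi_{\dR}$, the Riemann--Hilbert equivalence of Proposition \ref{cfBettiDR}, and the factorisation of pro-morphisms through a stage. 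This avoids Lemma \ref{Cstarimage} entirely, at the cost of the (correctly handled) bookkeeping about compatibility of the $\rho_\alpha$ and well-definedness across stages; the two constructions agree by naturality, since in the paper's factorisation $f=\iota\circ g$ one has $g=\bar g\circ p_\alpha$ at some stage and both recipes produce the class of $(\iota\circ\bar g)\circ\rho_\alpha$. A side benefit of your route is that $\rho$ is essentially the map $\pi_{\dR}^{\sharp}$ (restricted to $\pi_1(X,x)$) that the paper only introduces later, in the proof of Theorem \ref{Hsstopthm}.
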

\begin{proof}
 Given $f\co E^J_{X,x} \to B$, Lemma \ref{Cstarimage} factors $f$ as the composition of  a surjective $C^*$-homomorphism $g\co E^J_{X,x}\to C$ and a continuous embedding $C \into B$. The de Rham projection of Definition \ref{deRhamproj} then gives us an element  $\pi_{\dR}(g)\in \oR^{\dR}_{X,x}(C)$. Combining this with the  embedding $C^{\by} \to B^{\by}$ then provides the required element of  $ \oR^{\dR}_{X,x}(B)$.
\end{proof}

\begin{proposition}\label{banachreps}
 For  any real $C^*$-algebra $B$, the map of Lemma \ref{Banachplurifunctor} induces an injection
\[
\pi_{\dR}\co \Hom(E^J_{X,x},B)_{cb} \into \oR^{\dR}_{X,x}(B),
\]
for the completely bounded morphisms of Definition \ref{cbdef}.
\end{proposition}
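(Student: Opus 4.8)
The plan is to peel off the non-involutive part of a completely bounded homomorphism, reducing to a genuine $*$-homomorphism, and then to invoke the uniqueness of the harmonic metric from Proposition \ref{uniquemetric}. First I would describe the shape of a completely bounded map $f\co E^J_{X,x}\to B$: after complexifying and using the $\Gal(\Cx/\R)$-correspondence of Lemma \ref{GalCstar}, the surjectivity established inside the proof of Lemma \ref{equivartpluri} (which rests on Lemma \ref{KSPlemma}) lets me write $f=\ad_g\circ p$, where $p\co E^J_{X,x}\to B$ is an honest $*$-homomorphism and $g\in B_{++}$ is strictly positive self-adjoint. Such a $p$ is precisely a framed pluriharmonic bundle, i.e. an element of $\oR^J_{X,x}(B)$, and $\pi_{\dR}(p)\in\oR^{\dR}_{X,x}(B)$ is its de Rham projection.

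Next I would check that the map of Lemma \ref{Banachplurifunctor} is equivariant for conjugation. Unwinding that construction---factoring $\ad_g\circ p$ through its $C^*$-image and observing that conjugating a homomorphism conjugates the underlying monodromy representation---gives $\pi_{\dR}(\ad_g\circ p)=\ad_g\bigl(\pi_{\dR}(p)\bigr)$, where on the right $\ad_g$ denotes the framing action of $g\in B^{\by}$ on $\oR^{\dR}_{X,x}(B)$. In particular $\pi_{\dR}$ is $U(B)$-equivariant.

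Now take $f_1=\ad_{g_1}\circ p_1$ and $f_2=\ad_{g_2}\circ p_2$ with $\pi_{\dR}(f_1)=\pi_{\dR}(f_2)$. Equivariance gives $\ad_h(\pi_{\dR}(p_1))=\pi_{\dR}(p_2)$ for $h=g_2^{-1}g_1$. Polar-decomposing $h=\exp(c)w$ with $c\in S(B)$ and $w\in U(B)$, I would absorb the unitary factor using $U(B)$-equivariance: setting $p_1':=\ad_w\circ p_1$ (again a pluriharmonic bundle, as conjugation by a unitary preserves $*$-homomorphisms) reduces the identity to $\ad_{\exp(c)}(\pi_{\dR}(p_1'))=\pi_{\dR}(p_2)$ with $\exp(c)$ strictly positive. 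Proposition \ref{uniquemetric} then forces $p_1'=p_2$. Feeding this back yields $\ad_{\exp(c)}(\pi_{\dR}(p_2))=\pi_{\dR}(p_2)$, so $\exp(c)$ centralises $\pi_{\dR}(p_2)$; Corollary \ref{Zpluri} upgrades this to the statement that $\exp(c)$ centralises $p_2$ itself, i.e. $\ad_{\exp(c)}\circ p_2=p_2$. Combining with $\ad_w\circ p_1=p_2$, I would compute
\[
f_1=\ad_{g_2\exp(c)w}\circ p_1=\ad_{g_2\exp(c)}\circ p_2=\ad_{g_2}\circ p_2=f_2,
\]
which is the desired injectivity.

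The hard part is controlling the positive factor $\exp(c)$: once $U(B)$-equivariance disposes of the unitary ambiguity, one must rule out that a nontrivial positive conjugation produces a genuinely different completely bounded lift with the same de Rham projection. This is exactly where the uniqueness of the harmonic metric within a $B_{++}$-orbit (Proposition \ref{uniquemetric}) and the centraliser description of Corollary \ref{Zpluri} are indispensable; the remaining points---naturality of the de Rham projection under conjugation, the framing bookkeeping, and the descent from the complexification back to the real $C^*$-algebra---are routine.
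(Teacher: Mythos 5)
Your argument is assembled from the same three ingredients as the paper's own proof: Paulsen's similarity theorem (Lemma \ref{KSPlemma}) to reduce completely bounded maps to $*$-homomorphisms, Proposition \ref{uniquemetric} to dispose of the positive part of the conjugating element, and Corollary \ref{Zpluri} to identify the two centralisers. Your element chase (absorb the unitary factor, apply \ref{uniquemetric} to get $p_1'=p_2$, then use \ref{Zpluri} to show the positive factor acts trivially) is precisely the paper's ``injectivity on $\GL(H)$-orbits plus equality of stabilisers'' argument written out pointwise, so the logical content is essentially identical. One small compression: Corollary \ref{Zpluri} only tells you that $e^{ict}\in \z(p_2,U(B))$ for all $t$; to conclude that $\exp(c)$ actually fixes $p_2$ you still need the differentiation-in-$t$ step, which the paper spells out and you elide.

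The one genuine weak point is your very first step: you assert that a completely bounded $f\co E^J_{X,x}\to B$ decomposes as $\ad_g\circ p$ with $p$ a $*$-homomorphism \emph{into $B$} and $g\in B_{++}$, for an arbitrary real $C^*$-algebra $B$. Lemma \ref{KSPlemma} only provides this when the target is $L(H)$; the surjectivity claim inside the proof of Lemma \ref{equivartpluri} that you invoke is stated only for complex $C^*$-algebras and is itself justified only by reference to the $L(H)$ case; and descending the decomposition (rather than the statement) from $B\ten\Cx$ back to $B$ is not routine --- choosing $g$ and $p$ compatibly with the real structure requires an argument of exactly the \ref{uniquemetric}/\ref{Zpluri} type you are trying to set up. The same issue recurs when you apply Corollary \ref{Zpluri}, which is a statement about complex algebras, to the real algebra $B$. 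The paper sidesteps all of this with its opening move: embed $B$ as a closed $C^*$-subalgebra of $L(H)$ for a complex Hilbert space $H$, and note that both $\Hom(E^J_{X,x},-)_{cb}$ and $\oR^{\dR}_{X,x}$ carry the inclusion $B\subset L(H)$ to injections, so it suffices to prove the proposition for $B=L(H)$, where Paulsen's theorem and Corollary \ref{Zpluri} apply verbatim. With that single reduction prepended, your chase goes through unchanged and is correct.
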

\begin{proof}
Since $B$ can be embedded as a closed $C^*$-subalgebra of $L(H)$ for some complex Hilbert space $H$, we may replace $B$ with $L(H)$. By  Lemma \ref{KSPlemma}, any completely bounded homomorphism $f\co E^J_{X,x} \to L(H)$ is conjugate to a $*$-morphism, since $E^J_{X,x}$ is a pro-$C^*$-algebra. Therefore Proposition \ref{uniquemetric} shows that 
\[
 \Hom(E^J_{X,x},L(H))_{cb}/\GL(H) \into \oR^{\dR}_{X,x}(L(H))/\GL(H).
\]

Take a homomorphism  $f\co E^J_{X,x} \to L(H)$ of $C^*$-algebras;  it suffices to show that the centraliser of $f$ and of $\pi_{\dR}(f)$ are equal. By Corollary \ref{Zpluri}, we know that
\[
\z(\pi_{\dR}(f),\GL(H))= \exp(\{ b \in S(L(H))\,:\, e^{ibt} \in \z(f, U(H)) \forall t \in \R\})\rtimes\z(f, U(H)).
\]

If $e^{ibt}$ commutes with $f$ for all $f$, then $e^{ibt}fe^{-ibt}=f$, and differentiating in $t$ shows that $b$ commutes with $f$. Therefore $\exp(b)$ commutes with $f$, showing that
\[
 \z(\pi_{\dR}(f),\GL(H))\subset \z(f,\GL(H)).
\]
The reverse inclusion is automatic, giving the required result.
\end{proof}

\begin{remark}\label{cfredpi}
In \cite{Simpson, mhs2}, the pro-reductive fundamental group  $\pi_1(X,x)^{\red}_k$ is studied --- this is an affine group scheme over $k$. By Tannakian duality (\cite[Ch. II]{tannaka}), we can interpret the dual $O(\pi_1(X,x)^{\red}_{\R})^{\vee}$ of the ring of functions as the ring of discontinuous  $\Gal(\Cx/\R)$-equivariant endomorphisms  of $\eta_x^{\dR, \ss}$. 

The group scheme $\pi_1(X,x)^{\red}_k$ encodes all the information about the sets of finite-dimensional representations of $\pi_1(X,x)$. As we will now see, $ (E^J_{X,x})_{\PN}$ encodes all the information about their topologies as well.
\end{remark}

\begin{theorem}\label{Hsstopthm}
For any positive integer $n$, $\pi_{\dR}$ gives a homeomorphism $\pi_{\dR,\ss}$ between the space 
$\Hom_{\pro(\Ban\Alg)}(E^J_{X,x},\Mat_n(\Cx))$ with the topology of pointwise convergence, and the subspace of $\oR^{\dR}_{X,x}(\Mat_n(\Cx))$ whose points correspond to semisimple local systems.      
\end{theorem}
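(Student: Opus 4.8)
The plan is to exhibit $\pi_{\dR}$ as a continuous bijection onto the semisimple locus and then argue the two continuity statements separately, the second being where the real work lies.

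First I would pin down the underlying bijection. Since $\Mat_n(\Cx)$ is of type $I$, every bounded homomorphism $E^J_{X,x}\to\Mat_n(\Cx)$ is automatically completely bounded (exactly the argument used in the proof of Lemma \ref{PNlemma2}), so $\Hom_{\pro(\Ban\Alg)}(E^J_{X,x},\Mat_n(\Cx))=\Hom(E^J_{X,x},\Mat_n(\Cx))_{cb}$, and Proposition \ref{banachreps} already gives injectivity of $\pi_{\dR}$ on this set. For the image I would use Lemma \ref{KSPlemma} to write any such $f$ as $\ad_h\circ g$ with $g$ a genuine $*$-homomorphism, i.e.\ a point of $\oR^J_{X,x}(\Mat_n(\Cx))$; its de Rham projection is the monodromy of a pluriharmonic bundle, hence semisimple, and conjugation by $h$ preserves semisimplicity. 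Conversely, given a framed semisimple $\rho$, Corlette's theorem (\cite{corlette}) furnishes a harmonic metric, and conjugating the resulting $*$-homomorphism by the square root of that metric at $x$ produces an $f$ with $\pi_{\dR}(f)=\rho$. Thus $\pi_{\dR}$ is a bijection onto the semisimple locus.

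For continuity of $\pi_{\dR}$ I would use the universal framed pluriharmonic bundle over $E^J_{X,x}$: via Lemma \ref{Banachplurifunctor} and Proposition \ref{cfBettiDR} its de Rham projection is a monodromy representation $\pi_1(X,x)\to (E^J_{X,x})^{\by}$, and writing $\bar\gamma$ for the image of $\gamma$, naturality gives $\pi_{\dR}(f)(\gamma)=f(\bar\gamma)$ for every $f$ and every $\gamma$. Since $\pi_1(X,x)$ is finitely generated, the topology on $\oR^{\dR}_{X,x}(\Mat_n(\Cx))=\Hom_{\gp}(\pi_1(X,x),\GL_n(\Cx))$ is that of pointwise convergence on a finite generating set, so pointwise convergence $f_\alpha\to f$ on $E^J_{X,x}$ forces $\pi_{\dR}(f_\alpha)\to\pi_{\dR}(f)$; hence $\pi_{\dR}$ is continuous.

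The crux is continuity of the inverse. Here I would exploit that $\pi_{\dR}$ is $\GL_n(\Cx)$-equivariant for the conjugation actions and that on quotients it recovers exactly the homeomorphism $\oR^J_{X,x}(\Mat_n(\Cx))/U(n)\cong\Hom(\pi_1,\GL_n)//\GL_n$ of \cite[Lemma 7.17]{Sim2}. Given $\rho_\alpha\to\rho$ in the semisimple locus with $f_\alpha=\pi_{\dR}^{-1}(\rho_\alpha)$, the polar decomposition $\GL_n=U(n)\cdot B_{++}$ lets me write $f_\alpha=\ad_{p_\alpha}g_\alpha$ with $g_\alpha$ a $*$-homomorphism (the standard metric being harmonic) and $p_\alpha$ positive; equivalently, $f_\alpha$ is reconstructed from $\rho_\alpha$ together with its harmonic metric on the flat bundle, the ambiguity being removed once the frame is fixed by the uniqueness of Proposition \ref{uniquemetric} (and Corollary \ref{Zpluri}). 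Continuity of $\pi_{\dR}^{-1}$ is therefore equivalent to the continuous dependence of the harmonic metric on the framed semisimple representation.

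The main obstacle is precisely this last continuity, and in particular the properness statement that the positive conjugators $p_\alpha$ cannot escape to infinity while $\ad_{p_\alpha}g_\alpha$ converges to a semisimple limit; this is the analytic heart of non-abelian Hodge theory. At the level of coarse moduli it is exactly \cite[Lemma 7.17]{Sim2}, so the remaining task is to promote that quotient statement to the framed representation spaces. I expect to accomplish this by combining the quotient homeomorphism with local slices for the $\GL_n(\Cx)$-action at irreducible points (where the centralizer is scalar and the action is locally trivial), and by handling strictly semisimple points through the uniqueness of the harmonic metric together with the closedness of semisimple orbits.
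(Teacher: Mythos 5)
Your treatment of the points bijection and of the forward continuity coincides with the paper's proof: the type I argument identifying bounded with completely bounded maps into $\Mat_n(\Cx)$, the bijection via \cite{corlette} and Proposition \ref{banachreps}, and continuity of $\pi_{\dR,\ss}$ by evaluating at the elements $\pi_{\dR}^{\sharp}(\gamma)$, $\gamma \in \pi_1(X,x)$, are all as in the paper. The gap is in the inverse direction. Your pivotal reduction --- ``continuity of $\pi_{\dR}^{-1}$ is equivalent to the continuous dependence of the harmonic metric'' --- silently uses that $E^J_{X,x}$ is topologically generated as a pro-$C^*$-algebra by $\pi_{\dR}^{\sharp}(\pi_1(X,x))$: convergence of monodromies and of metrics only yields pointwise convergence of the associated $*$-homomorphisms on the closed $*$-subalgebra generated by the image of $\pi_1(X,x)$, and without generation you cannot conclude convergence on all of $E^J_{X,x}$. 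This is a nontrivial fact which the paper proves as the first step of its argument: injectivity of $\pi_{\dR}$ on $C^*$-algebras together with density of $\R[\pi_1(X,x)]$ in $E^{\dR}_{X,x}$ makes the inclusion of this subalgebra an epimorphism of pro-$C^*$-algebras, and epimorphisms of $C^*$-algebras are surjective (\cite{reidEpimorphisms}). Nothing in your proposal supplies this step.

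Even granting that reduction, the continuity you yourself flag as ``the analytic heart'' is left as an expectation, and your proposed route to it fails exactly where it matters. Local slices for the conjugation action exist in a usable, locally trivial form only near irreducible representations, where stabilizers are scalar; at strictly semisimple points the stabilizer jumps, and ``closedness of semisimple orbits together with uniqueness of the harmonic metric'' gives no control on a net of positive conjugators $p_\alpha$ --- the properness statement you need cannot be formally extracted from the coarse-quotient homeomorphism of \cite[Lemma 7.17]{Sim2}, since that statement has already quotiented away precisely the framing data in question. The paper does not argue this way. Having reduced, via the generation statement, to showing that the involution $T \mapsto T^*$ (equivalently the Cartan involution $C$) is continuous on the semisimple locus, it quotes the fact that the decomposition $D \mapsto (d^+,\vartheta)$ into anti-hermitian and hermitian parts is continuous in $D$, a continuity established inside Corlette's proof of \cite[Theorem 3.3]{corlette}. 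In other words, the missing analytic input is imported from the existence proof for harmonic metrics rather than re-derived from the moduli-space homeomorphism; as written, your proposal has a genuine gap at the reducible semisimple points.
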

\begin{proof}
The isomorphism $\pi_{\dR,\ss}$ is given on points by the proof of \cite[Theorem 3.3]{corlette}, since  completely bounded algebra homomorphisms $E^J_{X,x} \to \Mat_n(\Cx)$ are those conjugate to $*$-homomorphisms, which in turn correspond to harmonic local systems. We need to show that this is a homeomorphism. 

Consider the map $\pi_{\dR}^{\sharp}\co E^{\dR}_{X,x}\to  E^J_{X,x}$ of pro-Banach algebras. If $T_i \to T$ is a convergent net in $\Hom_{\pro(\Ban\Alg)}(E^J_{X,x},\Mat_n(\Cx))$, then $T_i(\pi_{\dR}^{\sharp}(\gamma)) \to T(\pi_{\dR}^{\sharp}(\gamma))$, so  $\pi_{\dR,\ss}$ is continuous.

Now, the de Rham projection $\pi_{\dR} \co \oR^J_{X,x}(B)\to \oR^{\dR}_{X,x}(B)$ is automatically injective for $C^*$-algebras $B$. Thus the inclusion in $E^J_{X,x}$ of the pro-$C^*$-subalgebra  generated by $\pi_{\dR}^{\sharp}(\pi_1(X,x))$ must be an epimorphism, since $\R[\pi_1(X,x)]$ is dense in $E^{\dR}_{X,x}$. By \cite[Proposition 2]{reidEpimorphisms}, an epimorphism of $C^*$-algebras is surjective, so  $E^J_{X,x}$ must be generated as a pro-$C^*$-algebra by $\pi_{\dR}^{\sharp}(\pi_1(X,x))$, and as a pro-Banach algebra by $\pi_{\dR}^{\sharp}(\pi_1(X,x)) \cup \pi_{\dR}^{\sharp}(\pi_1(X,x))^*$.

Now, if we have a convergent sequence $\pi_{\dR}(T_i) \to \pi_{\dR}(T)$, then $T_i(\pi_{\dR}^{\sharp}\gamma) \to T(\pi_{\dR}^{\sharp}\gamma)$ for all $\gamma \in \pi_1(X,x)$, so it suffices to show that the same holds for $(\pi_{\dR}^{\sharp}\gamma)^* $. Given $T \in \Hom_{\pro(\Ban\Alg)}(E^J_{X,x},\Mat_n(\Cx))$, define $T^*$ by $T^*(e):= T(e^*)^*$. We wish to show that $T_i^*(\pi_{\dR}^{\sharp}\gamma) \to T^*(\pi_{\dR}^{\sharp}\gamma) $, which will follow if $\pi_{\dR}(T_i^*) \to \pi_{\dR}(T^*)$.
 
As in the proof of Proposition \ref{banachreps}, we can write $T= \ad_g(S)$, for $S \co E^J_{X,x},\Mat_n(\Cx)$ a $*$-homomorphism and $g \in \GL_n(\Cx)$. Then $T^*= \ad_{(g^*)^{-1}}(S)= \ad_{(gg^*)^{-1}}(T)$; this means that if we write $\pi_{\dR}(T)= (\sV,D, f)$, then $\pi_{\dR}(T^*)=(\sV,D, (f^*)^{-1}) $, where $f^*\co   x^*\sV\to\Cx^n$ is defined using the harmonic metric on $\sV$ and the standard inner product on $\Cx^n$.

Explicitly, this means that we can describe the involution $*$ on semisimple elements of $\oR^{B}_{X,x}(\Mat_n(\Cx))$ by $\rho^*= (C(\rho)^{-1})^{\dagger}$, where $C$ is the Cartan involution of \cite{Simpson}. If $D= d^++\vartheta$ is the decomposition  into anti-hermitian and hermitian parts with respect to the harmonic metric, then  $C(\sV,d^++\vartheta , f)=(\sV,d^+-\vartheta , f)$. The proof of \cite[Theorem 3.3]{corlette} ensures that the decomposition $D \mapsto (d^+, \vartheta)$ is continuous in $D$, so $C$ is continuous. 
 Hence $\pi_{\dR}(T) \mapsto \pi_{\dR}(T^*)$ is also continuous, which gives the convergence required. 
\end{proof}

\subsubsection{The polynormal completion and Tannaka duality}\label{recovertopology}

\begin{definition}\label{FDDRcat}
 Let $\FD\oR^{\dR}_{X,x}$ be the category of pairs $(V, p)$ for $V\in \FD\Vect$ and  $p  \in \oR^{\dR}_{X,x}(\End(V))$.  Morphisms $f\co (V_1, p_1)\to (V_2, p_2)$ are given by linear maps $f\co V_1 \to V_2$  for which the adjoint action of 
\[
 \begin{pmatrix}
  \id & 0 \\ f & \id      
 \end{pmatrix}
  \in
\begin{pmatrix}
  \End(V_1) & 0 \\ \Hom(V_1,V_2) & \End(V_2)      
 \end{pmatrix}     
\]
on $FD\oR^{\dR}_{X,x} \left(\begin{smallmatrix}
  \End(V_1) & 0 \\ \Hom(V_1,V_2) & \End(V_2)      
 \end{smallmatrix}\right)$
fixes $p_1 \oplus p_2$.

Write $\eta_x^{\dR}\co \FD\oR^{\dR}_{X,x} \to \FD\Vect$ for the fibre functor $(V,p) \mapsto V$. Let $\FD\oR^{\dR,\ss}_{X,x} \subset \FD\oR^{\dR}_{X,x}$  be the full subcategory in which objects correspond to semisimple local systems, with fibre functor $\eta_x^{\dR, \ss}$.  
\end{definition}

\begin{proposition}\label{PNssprop}
The ring $ (E^J_{X,x})_{\PN}$ is isomorphic to the ring of continuous  $\Gal(\Cx/\R)$-equivariant  endomorphisms of $\eta_x^{\dR, \ss}$.
\end{proposition}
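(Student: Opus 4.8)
The plan is to realise $\pi_{\dR}$ as an equivalence of categories intertwining the two fibre functors, and then transport the ring of endomorphisms across. By Lemma \ref{PNlemma2}, $(E^J_{X,x})_{\PN}$ is the ring of $\Gal(\Cx/\R)$-equivariant continuous additive endomorphisms of the fibre functor $\eta\co \FD\Rep(E^J_{X,x}) \to \FD\Vect$, so it suffices to construct an equivalence $\Phi\co \FD\Rep(E^J_{X,x}) \xrightarrow{\sim} \FD\oR^{\dR,\ss}_{X,x}$ satisfying $\eta_x^{\dR,\ss}\circ \Phi = \eta$, and compatible with the topologies, the additive structures, and the Galois actions. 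Any such equivalence identifies the corresponding rings of fibre-functor endomorphisms, giving the isomorphism.

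On objects I would send $(V,\rho)$ to $(V,\pi_{\dR}(\rho))$. Since $\End(V)\cong \Mat_n(\Cx)$ is of type $I$, every bounded homomorphism $E^J_{X,x}\to \End(V)$ factors through a term of the pro-system and is automatically completely bounded (exactly as in the proof of Lemma \ref{PNlemma2}); hence the objects of $\FD\Rep(E^J_{X,x})$ of dimension $n$ are precisely $\Hom_{\pro(\Ban\Alg)}(E^J_{X,x},\Mat_n(\Cx))$. Theorem \ref{Hsstopthm} then makes $\rho\mapsto \pi_{\dR}(\rho)$ a homeomorphism onto the semisimple locus of $\oR^{\dR}_{X,x}(\End(V))$, so $\Phi$ is bijective on objects in each dimension and is a homeomorphism on representation spaces, matching up the continuity of endomorphisms. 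Preservation of the Galois action is immediate because $\pi_{\dR}$ is defined over $\R$, and $\Phi$ visibly commutes with direct sums, so additivity is matched as well.

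On morphisms, unwinding Definition \ref{FDDRcat} through the identification $\oR^{\dR}_{X,x}\cong \oR^B_{X,x}$ of Proposition \ref{cfBettiDR} shows that a morphism $(V_1,p_1)\to (V_2,p_2)$ in $\FD\oR^{\dR,\ss}_{X,x}$ is exactly a linear map $f\co V_1\to V_2$ intertwining the monodromy representations $\beta_i\co \pi_1(X,x)\to \GL(V_i)$, whereas a morphism in $\FD\Rep(E^J_{X,x})$ is a linear map intertwining the full representations $\rho_i\co E^J_{X,x}\to \End(V_i)$. Replacing each $\rho_i$ by a conjugate $*$-homomorphism (possible since $\rho_i$ is completely bounded, by Lemma \ref{KSPlemma}), the element $\pi_{\dR}^{\sharp}(\gamma)$ acts by $\beta_i(\gamma)$ and $\pi_{\dR}^{\sharp}(\gamma)^*$ by $\beta_i(\gamma)^*$; since $E^J_{X,x}$ is topologically generated by $\pi_{\dR}^{\sharp}(\pi_1(X,x))$ together with its adjoints (established in the proof of Theorem \ref{Hsstopthm}), an $E^J_{X,x}$-intertwiner is precisely an $f$ intertwining both $\beta_i(\gamma)$ and $\beta_i(\gamma)^*$ for all $\gamma$. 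Thus full faithfulness of $\Phi$ reduces to showing that any $f$ intertwining the $\beta_i$ automatically intertwines the $\beta_i^*$.

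This last reduction is where the real work lies, and is the one point genuinely using semisimplicity. The graph of $f$ is a $\pi_1(X,x)$-subrepresentation of the semisimple local system $\beta_1\oplus\beta_2$, and a subrepresentation of a semisimple harmonic local system is again harmonic for the restricted metric, hence orthogonally complemented, i.e.\ reducing for the harmonic $*$-structure. Equivalently, for a semisimple representation the algebra generated by $\beta_i(\pi_1(X,x))$ already coincides with the $*$-algebra it generates: on each isotypic component the harmonic metric is a tensor product and the generated algebra is the full matrix factor $\End(V)\ten 1$, which is $*$-closed. Consequently the commutant of $\{\beta_i(\gamma)\}$ equals the commutant of $\{\beta_i(\gamma),\beta_i(\gamma)^*\}$, so $f$ does intertwine the $\beta_i^*$ and $\Phi$ is fully faithful. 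Together with essential surjectivity --- the image of $\pi_{\dR}$ being exactly the semisimple locus by Theorem \ref{Hsstopthm} --- this gives the equivalence $\Phi$, and hence the claimed isomorphism of endomorphism rings. The main obstacle to watch is precisely the justification that semisimplicity forces monodromy-invariant subspaces to be reducing for the harmonic metric; everything else is bookkeeping transporting continuity, additivity and Galois-equivariance through the homeomorphism of Theorem \ref{Hsstopthm}.
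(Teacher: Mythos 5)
Your proof is correct and follows the same skeleton as the paper's own (whose entire proof reads: ``this just combines Lemma \ref{PNlemma2} and Theorem \ref{Hsstopthm}''): identify $(E^J_{X,x})_{\PN}$ with the endomorphisms of the fibre functor on $\FD\Rep(E^J_{X,x})$ via Lemma \ref{PNlemma2}, then match that category with $\FD\oR^{\dR,\ss}_{X,x}$ using Theorem \ref{Hsstopthm}. Where you genuinely diverge is in how you fill the one nontrivial gap, the matching of \emph{morphisms}. You prove that a monodromy intertwiner between semisimple harmonic local systems automatically intertwines the harmonic adjoints by invoking the structure theory of harmonic metrics (orthogonality of isotypic components and the tensor-product form of the metric on each, so that the monodromy algebra $\bigoplus_j \End(W_j)\ten 1$ is $*$-closed); this is true, but it is an appeal to Corlette's uniqueness theorem that you state rather than prove. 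The paper has this compatibility ready-made in a different form: the proof of Proposition \ref{banachreps} shows $\z(\pi_{\dR}(f),\GL(H))=\z(f,\GL(H))$ for any $*$-homomorphism $f\co E^J_{X,x}\to L(H)$, via Corollary \ref{Zpluri} (itself deduced from the uniqueness Proposition \ref{uniquemetric} by exponentiating and differentiating one-parameter subgroups). Applied to the $*$-homomorphism conjugate to $\rho_1\oplus\rho_2$ and the graph element $\left(\begin{smallmatrix}\id&0\\f&\id\end{smallmatrix}\right)$ --- which is exactly how Definition \ref{FDDRcat} encodes morphisms --- this yields your full-faithfulness statement in one line, with no isotypic decomposition and no input beyond results already proved in the paper. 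In short: your route buys a concrete representation-theoretic explanation of why semisimplicity is what makes the monodromy algebra $*$-closed, at the cost of importing an external structural fact about harmonic metrics; citing Corollary \ref{Zpluri}/Proposition \ref{banachreps} at that point instead would make the argument entirely self-contained.
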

\begin{proof}
This just combines Lemma \ref{PNlemma2} and
Theorem \ref{Hsstopthm}.
\end{proof}

\begin{remark}\label{nonss}
This leads us to contemplate the structure of the ring of   continuous  endomorphisms $f$ of $\eta_x^{\dR}$. Any finite-dimensional $\Cx$-algebra arises as a subalgebra of some matrix algebra, so any such $f$ induces continuous maps $\Hom_{\gp}(\pi_1(X,x),B^{\by}) \to B$ for all finite-dimensional algebras $B$. In particular, this holds when $B= \Mat_n(A)$ for some Artinian $\Cx$-algebra $A$, from which it follows that the maps  
\[
 f_V \co \oR^{\dR}_{X,x}(\End(V)) \to \End(V)
\]
are all analytic. In other words, any continuous  endomorphism  of $\eta_x^{\dR}$ is automatically analytic.

When $\pi_1(X,x)$ is abelian, this ensures that the ring $(E^{\dR}_{X,x})_{\FD}\ten\Cx$ of such endomorphisms is the ring of complex analytic functions on $\Hom_{\gp}(\pi_1(X,x), \Cx^*)$, which by Example \ref{completeab} is just $\Cx[\pi_1(X,x)]^{\an}$. 

In general, the ring $(E^{\dR}_{X,x})_{\FD}$ is an inverse limit of polynormal Banach algebras, but it is not clear to the author whether it is the pro-polynormal completion of the Fr\'echet algebra $E^{\dR}_{X,x}$. In general, the map $\pi_1(X,x)\to (E^{\dR}_{X,x})_{\FD}$ need not be injective: \cite[\S 6.5]{Am} gives examples of %non-residually finite 
K\"ahler groups with 
no faithful linear representations.
\end{remark}

\begin{definition}\label{ssRdef}
 Given a    $k$-normal real $C^*$-algebra $B$, define $ \oR^{\dR,\ss}_{X,x}(B)\subset \oR^{\dR}_{X,x}(B)$  to be the subspace  consisting of those $p$ for which $\psi(p)$ corresponds to a semisimple local system for all $\psi \co B \to \Mat_k(\Cx)$.  
\end{definition}

\begin{corollary}\label{PNrepss}
For any $k$-normal real $C^*$-algebra $B$,   $ \oR^{\dR,\ss}_{X,x}(B)$ is isomorphic to the set of continuous algebra homomorphisms $E^J_{X,x} \to B$.
\end{corollary}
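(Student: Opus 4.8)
The plan is to show that the injection $\pi_{\dR}\co \Hom(E^J_{X,x},B)_{cb}\into \oR^{\dR}_{X,x}(B)$ of Proposition \ref{banachreps} restricts to a bijection onto the subspace $\oR^{\dR,\ss}_{X,x}(B)$ once $B$ is $k$-normal, and that for such $B$ every continuous algebra homomorphism $E^J_{X,x}\to B$ is automatically completely bounded. For the latter I would observe that a continuous homomorphism out of the pro-$C^*$-algebra $E^J_{X,x}$ factors through one of the $C^*$-algebra quotients in its inverse system; since $B$ is $k$-subhomogeneous it admits an isometric $*$-embedding $B\into \Mat_k(C)$ into matrices over a commutative $C^*$-algebra $C$, and the argument recalled in the proof of Lemma \ref{PNlemma2} shows that any bounded homomorphism into $\Mat_k(C)$ is completely bounded. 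As the embedding is isometric and $*$-preserving, the same bound holds for the map into $B$, so $\Hom(E^J_{X,x},B)_{cb}=\Hom_{\cts}(E^J_{X,x},B)$.

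It then remains to identify the image of $\pi_{\dR}$ with $\oR^{\dR,\ss}_{X,x}(B)$. For the inclusion of the image into $\oR^{\dR,\ss}_{X,x}(B)$, I would take a completely bounded $f\co E^J_{X,x}\to B$ and any $\psi\co B\to\Mat_k(\Cx)$; by functoriality of $\pi_{\dR}$ in $B$ (Lemma \ref{Banachplurifunctor}) we have $\psi_*\pi_{\dR}(f)=\pi_{\dR}(\psi\circ f)$, and $\psi\circ f$ is a bounded, hence completely bounded, map into the type $I$ algebra $\Mat_k(\Cx)$. Theorem \ref{Hsstopthm} then says $\pi_{\dR}(\psi\circ f)$ is semisimple, so $\pi_{\dR}(f)$ satisfies the condition of Definition \ref{ssRdef}.

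The substantive step is surjectivity onto $\oR^{\dR,\ss}_{X,x}(B)$, where I would reconstruct a homomorphism from its collection of finite-dimensional pushforwards using $B\cong B_{\PN}$ (Proposition \ref{PNprop}). Given $p\in\oR^{\dR,\ss}_{X,x}(B)$, for every $(V,\sigma)\in\FD^*\Rep(B)$ the pushforward $\sigma_*p\in\oR^{\dR}_{X,x}(\End(V))$ is semisimple: decomposing $V$ into $B$-irreducibles (of dimension at most $k$, since $B$ is $k$-normal) and using that summands and conjugates of semisimple local systems are semisimple reduces this to Definition \ref{ssRdef}. By Theorem \ref{Hsstopthm} there is then a unique continuous $g_\sigma\co E^J_{X,x}\to\End(V)$ with $\pi_{\dR}(g_\sigma)=\sigma_*p$. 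Uniqueness forces the family $(g_\sigma)_\sigma$ to be natural in $(V,\sigma)$, additive over $\oplus$, continuous and $\Gal(\Cx/\R)$-equivariant, because in each required identity both sides have the same image under the injective $\pi_{\dR}$. By Definition \ref{PNdef} the assignment $e\mapsto (g_\sigma(e))_\sigma$ therefore defines an algebra homomorphism $f\co E^J_{X,x}\to B_{\PN}=B$, which is continuous, hence completely bounded by the first paragraph, and satisfies $\pi_{\dR}(f)=p$ since the two agree after every $\sigma_*$ and $B=B_{\PN}$ is detected by these.

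I expect the main obstacle to be this reconstruction: checking that the pointwise data $(\sigma_*p)_\sigma$ really does glue to a single element of $B_{\PN}$, i.e. that naturality, additivity and continuity all follow formally from the uniqueness in Theorem \ref{Hsstopthm}, together with the bookkeeping needed to deduce semisimplicity of $\sigma_*p$ for every $*$-representation (not only those of dimension exactly $k$) from the $\Mat_k(\Cx)$-valued condition of Definition \ref{ssRdef}. The remaining compatibilities — that the resulting bijection is realised by $\pi_{\dR}$ and is functorial in $B$ — are then routine.
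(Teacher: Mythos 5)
Your proposal is correct and is essentially the paper's own argument: the paper factors a continuous homomorphism through $(E^J_{X,x})_{\PN}$ and then cites Proposition \ref{PNssprop}, whose proof is exactly the combination of Lemma \ref{PNlemma2} and Theorem \ref{Hsstopthm} that you carry out by hand — inverting $\pi_{\dR,\ss}$ representation-by-representation and gluing the resulting family into an element of $B_{\PN}=B$ via Definition \ref{PNdef} and Proposition \ref{PNprop}. The points you flag as delicate (the dimension bookkeeping in Definition \ref{ssRdef}, and the continuity of the glued homomorphism $E^J_{X,x}\to B$, which needs compactness of the representation spaces rather than uniqueness alone) are glossed at the same level of detail in the paper's three-sentence proof.
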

\begin{proof}
Since $B$ is $k$-normal, any such morphism $ E^J_{X,x} \to B$ factors uniquely through $ (E^J_{X,x})_{\PN}$
 By Proposition  \ref{PNssprop}, a homomorphism $(E^J_{X,x})_{\PN} \to B $  corresponds to a continuous  $\Gal(\Cx/\R)$-equivariant functor $p^* \co \FD\Rep(B) \to \FD\oR^{\dR,\ss}_{X,x}$ of topological categories fibred over $\FD\Vect$. An element $p \in \oR^{\dR}_{X,x}(B)$ satisfies this condition provided $p^*$ maps to $\FD\oR^{\dR,\ss}_{X,x} \subset  \FD\oR^{\dR}_{X,x}$.       
\end{proof}

\begin{remark}\label{likelygenerality}
It is natural to ask whether the non-abelian Hodge theorem of \cite{Sim2} extends from finite-dimensional matrix algebras to more general $C^*$-algebras $B$. Proposition \ref{PNssprop} can be thought of as an extension of the correspondence to polynormal $C^*$-algebras, but it seems unlikely to adapt much further, because the arguments of \cite{Sim2,corlette} rely on sequential compactness of $U_n$.  However, it is conceivable that suitable solutions of the heat equation might give rise to asymptotic $C^*$-homomorphisms.       
\end{remark}

%%%will say that for any real $C^*$-algebra $B$, solutions of the heat equation with $B$-coefficients correpond to asymptotic $C^*$-homomorphisms $E^J_{X,x} \to B$. \cite{ConnesHigson}. See \cite{slidesKKETheory}.

\subsection{Residually finite-dimensional completion, products and complex tori}\label{RFDsn}

\begin{definition}\label{RFDdef}
A pro-$C^*$-algebra $A$ is said to be \emph{residually finite-dimensional} if it has a separating family of finite-dimensional $*$-representations.

Given a pro-$C^*$-algebra $A$, define the pro-$C^*$-algebra $A_{\RFD}$ to be the universal residually finite-dimensional quotient of $A$. Explicitly, $A_{\RFD}$ is the quotient of  $A$ with respect to the pro-ideal given by the system of kernels of finite-dimensional $*$-representations of $A$.
\end{definition}

Note that polynormal $C^*$-algebras are residually finite-dimensional, so we have completions $A \to A_{\RFD} \to A_{\PN}$ for general $A$.

\begin{proposition}\label{productprop}
Given  compact connected K\"ahler   manifolds $X$ and $Y$, there is an isomorphism
$(E^J_{X\by Y, (x,y)})_{\RFD} \cong (E^J_{X,x})_{\RFD}\hat{\ten}( E^J_{Y,y})_{\RFD}$. 
\end{proposition}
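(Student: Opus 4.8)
The plan is to build an explicit comparison morphism out of the external (box) tensor product of pluriharmonic bundles, and then to show that it becomes an isomorphism after passing to residually finite-dimensional completions, by matching finite-dimensional $*$-representations on the two sides.

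First I would construct the map. Given $p \in \oR^J_{X,x}(B)$ and $q \in \oR^J_{Y,y}(C)$, pulling back along the projections $X\times Y \to X$ and $X\times Y \to Y$ and forming the external tensor product produces a flat connection $p_X^*p \ten 1 + 1 \ten p_Y^*q$ on the tensor product bundle; with the product K\"ahler metric this is again pluriharmonic, so we obtain a natural box product $\oR^J_{X,x}(B)\times \oR^J_{Y,y}(C) \to \oR^J_{X\times Y,(x,y)}(B\hten C)$ refining Lemmas \ref{tensorstr} and \ref{Pfunctorial}. Evaluating on the universal elements (that is, taking $B = E^J_{X,x}$, $C = E^J_{Y,y}$ and the two identity maps) and using strict pro-representability yields a morphism of pro-$C^*$-bialgebras
\[
\Phi\co E^J_{X\times Y,(x,y)} \lra E^J_{X,x}\hten E^J_{Y,y},
\]
which on the dense subalgebras generated by $\pi_{\dR}^{\sharp}(\pi_1)$ is the canonical isomorphism $\R[\pi_1(X,x)\times\pi_1(Y,y)]\cong \R[\pi_1(X,x)]\ten\R[\pi_1(Y,y)]$ coming from $\pi_1(X\times Y,(x,y))=\pi_1(X,x)\times\pi_1(Y,y)$. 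Passing to $\RFD$-quotients gives $\Phi_{\RFD}\co (E^J_{X\times Y,(x,y)})_{\RFD}\to ((E^J_{X,x})_{\RFD}\hten (E^J_{Y,y})_{\RFD})_{\RFD}$, where I have used that every finite-dimensional $*$-representation of a maximal tensor product (Definition \ref{tensorCstar}) restricts to finite-dimensional $*$-representations of the two factors, so it factors through the tensor product of the factors' $\RFD$-quotients.

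The heart of the proof is to identify the finite-dimensional $*$-representations on the two sides. By Theorem \ref{Hsstopthm}, a finite-dimensional $*$-representation of $E^J_{X\times Y,(x,y)}$ is the same datum as a semisimple local system on $X\times Y$ together with its (unique, by Proposition \ref{uniquemetric}) harmonic metric, i.e.\ a semisimple representation of $\pi_1(X,x)\times\pi_1(Y,y)$; by the universal property of the maximal tensor product, a finite-dimensional $*$-representation of $(E^J_{X,x})_{\RFD}\hten (E^J_{Y,y})_{\RFD}$ is a pair of commuting semisimple $*$-representations of $\pi_1(X,x)$ and $\pi_1(Y,y)$ on a common space. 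I would then invoke the representation theory of products in the reductive setting: every irreducible representation of $\pi_1(X,x)\times\pi_1(Y,y)$ is an external tensor product $\bV\boxtimes\bW$ of irreducibles of the factors, and conversely two commuting completely reducible actions generate a semisimple subalgebra of $\End(V)$, so the two descriptions carry exactly the same objects, morphisms and fibre functors. Matching harmonic metrics is automatic, since the tensor product of harmonic metrics is harmonic and harmonic metrics are unique (Proposition \ref{uniquemetric}), so the identification respects the $*$-structure and not merely the underlying local systems. Consequently $\Phi$ induces a bijection on finite-dimensional $*$-representation spaces, compatible with direct sums and the fibre functors; since the external tensor products already separate points, $(E^J_{X,x})_{\RFD}\hten (E^J_{Y,y})_{\RFD}$ is itself $\RFD$ and the outer completion is redundant. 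An $\RFD$ pro-$C^*$-algebra is recovered as the closure of its image in the product of the matrix algebras $\End(V_\rho)$ over its finite-dimensional $*$-representations $\rho$, so injectivity of $\Phi_{\RFD}$ follows because every such representation of $E^J_{X\times Y,(x,y)}$ factors through $\Phi$, and surjectivity because the image, being a closed $C^*$-subalgebra (Lemma \ref{Cstarimage}) that surjects onto each irreducible quotient $\End(V_{\bV\boxtimes\bW})$ by Jacobson density, is dense.

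The main obstacle is the representation-theoretic core of the previous paragraph: proving that every finite-dimensional semisimple (equivalently harmonic) local system on $X\times Y$ decomposes as a direct sum of external tensor products, with harmonic metrics compatible with this decomposition. This is where the special features of the problem — semisimplicity, the identity $\pi_1(X\times Y,(x,y))=\pi_1(X,x)\times\pi_1(Y,y)$, and uniqueness of the harmonic metric — all enter, and it is also exactly what rescues the otherwise delicate claim that the maximal tensor product $(E^J_{X,x})_{\RFD}\hten (E^J_{Y,y})_{\RFD}$ is residually finite-dimensional. Once this decomposition and the metric compatibility are in hand, everything else is a formal comparison of finite-dimensional representation theories via the $\PN$/Tannakian descriptions of Proposition \ref{PNssprop} and Corollary \ref{PNrepss}.
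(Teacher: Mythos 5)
Your construction of the comparison map and your injectivity argument are essentially the paper's own proof: there too, the canonical elements of $\oR^J_{X\by Y,(x,y)}(E^J_{X,x})$ and $\oR^J_{X\by Y,(x,y)}(E^J_{Y,y})$ obtained from the projections are combined via Lemma \ref{tensorstr} into a map $f\co E^J_{X\by Y,(x,y)}\to E^J_{X,x}\hten E^J_{Y,y}$, and injectivity of $f_{\RFD}$ follows because (by Corlette) every finite-dimensional representation of $E^J_{X\by Y,(x,y)}$ comes from a semisimple representation of $\pi_1(X,x)\by\pi_1(Y,y)$ and hence factors through the tensor product, while $(E^J_{X\by Y,(x,y)})_{\RFD}$ embeds in $\prod_i\End(V_i)$ over finite-dimensional irreducibles.

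The genuine gap is your surjectivity step. From ``the image of $\Phi_{\RFD}$ is a closed $*$-subalgebra which surjects onto each irreducible quotient $\End(V\ten W)$ by Jacobson density'' you conclude that the image is dense. That inference is invalid: the diagonal copy of $\Cx$ in $\Cx\oplus\Cx$ is a proper closed unital $*$-subalgebra surjecting onto both irreducible quotients. Surjectivity onto each irreducible quotient says nothing about how the subalgebra sits across \emph{distinct} irreducibles; to rescue this line you would at least need that inequivalent irreducibles of the target restrict to inequivalent representations of the image, and even then the conclusion is an instance of the noncommutative Stone--Weierstrass problem, which is not available in this generality. The paper avoids all of this with a generator argument you had within reach but never used: the basepoint inclusions $X\cong X\by\{y\}\into X\by Y$ and $Y\cong\{x\}\by Y\into X\by Y$ induce (by Lemma \ref{Pfunctorial} and the Yoneda lemma) maps $E^J_{X,x}\to E^J_{X\by Y,(x,y)}$ and $E^J_{Y,y}\to E^J_{X\by Y,(x,y)}$ whose composites with $f$ are $a\mapsto a\ten 1$ and $b\mapsto 1\ten b$; hence the image of $f_{\RFD}$ contains $(E^J_{X,x})_{\RFD}\ten 1$ and $1\ten(E^J_{Y,y})_{\RFD}$ (equivalently, it contains $\gamma\ten 1$ and $1\ten\delta$ for all group elements), and these generate $(E^J_{X,x})_{\RFD}\hten(E^J_{Y,y})_{\RFD}$ densely, so the closed image is everything. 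A further caution: your parenthetical claim that $(E^J_{X,x})_{\RFD}\hten(E^J_{Y,y})_{\RFD}$ is automatically residually finite-dimensional ``because external tensor products separate points'' is an assertion, not a proof --- since every finite-dimensional $*$-representation of a maximal tensor product decomposes into external tensor products and therefore factors through the minimal tensor product, your claim amounts to saying the maximal and minimal norms agree here, a genuinely nontrivial point on which the paper's own proof is also silent.
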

\begin{proof}
The projections give canonical elements of $\oR^J_{X\by Y, (x,y)}(E^J_{X,x})$ and $\oR^J_{X\by Y, (x,y)}(E^J_{Y,y})$, which by Lemma \ref{tensorstr} give rise to  
 a canonical map
\[
 f\co E^J_{X\by Y, (x,y)} \to E^J_{X,x}\hat{\ten} E^J_{Y,y}.
\]

By \cite{corlette}, every finite-dimensional representation of $E^J_{X \by Y, (x,y)}$ corresponds to a semisimple representation of $\pi_1(X\by Y,(x,y))= \pi_1(X,x) \by \pi_1(Y,y)$, so  factors through $E^J_X\hten E^J_Y$. Since $(E^J_{X \by Y, (x,y)})_{\RFD} \subset \prod_i \End(V_i)$ where $V_i$ ranges over finite-dimensional irreducible representations, this implies that 
\[
 f_{\RFD}\co  (E^J_{X\by Y, (x,y)})_{\RFD} \to (E^J_{X,x})_{\RFD}\hat{\ten}( E^J_{Y,y})_{\RFD}
\]
is injective. However, the basepoint $y$ gives us a map $X \to X \by Y$, and hence  $E^J_{X,x} \to E^J_{X \by Y, (x,y)} $, ensuring that $(E^J_X)_{\RFD} $ lies in the image of $f_{\RFD}$; a similar argument applies to $Y$. Thus $f_{\RFD} $ is surjective, and hence an isomorphism.  
\end{proof}

\begin{remark}
 Note that we have only imposed the hypothesis that the manifolds be K\"ahler in order to use the functoriality properties of Lemma \ref{Pfunctorial}, since  Lemma \ref{Pfunctorial2} is too weak to apply to the maps between $X\by Y$ and $X$.
\end{remark}

\begin{lemma}\label{ablemma}
 Given a compact connected K\"ahler   manifold $X$, the commutative quotient $(E^J_{X,x})^{\ab} $ is given by
\begin{align*}
(E^J_{X,x})^{\ab}\ten \Cx &= C(\H^1(X,\Cx^*),\Cx)\\
(E^J_{X,x})^{\ab} &= \{f \in C(\H^1(X, \Cx^*),\Cx)\,:\, f(\bar{\rho})= \overline{f(\rho)}\}. \\
\end{align*}
\end{lemma}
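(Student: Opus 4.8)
The plan is to deduce both identities from the computation of the complex abelianisation already recorded in \S\ref{dRprojsn}, together with the Galois descent of Lemma \ref{GalCstar}. First I would recall from \S\ref{dRprojsn} that the de Rham projection gives a homeomorphism $\oR^J_{X,x}(\Cx)\cong \Hom_{\gp}(\pi_1(X,x),\Cx^*)$, and hence that the abelianisation of $E^J_{X,x}\ten\Cx$ is the commutative pro-$C^*$-algebra $C(\Hom_{\gp}(\pi_1(X,x),\Cx^*),\Cx)$. Since $\Cx^*$ is abelian and divisible, every character of $\pi_1(X,x)$ factors through $\H_1(X,\Z)=\pi_1(X,x)^{\ab}$ with no $\Ext$-obstruction, giving the standard identification $\Hom_{\gp}(\pi_1(X,x),\Cx^*)\cong \H^1(X,\Cx^*)$. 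This already yields the first displayed identity once I note that base change commutes with abelianisation, so that $(E^J_{X,x})^{\ab}\ten\Cx=(E^J_{X,x}\ten\Cx)^{\ab}$.

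Next I would obtain the real form by descent. By Lemma \ref{GalCstar}, the real commutative algebra $(E^J_{X,x})^{\ab}$ is recovered as the $\tau$-invariants $\bigl((E^J_{X,x})^{\ab}\ten\Cx\bigr)^{\tau}$ for the conjugate-linear multiplicative involution $\tau=\id\ten c$ furnished by the complex structure, where $c$ is complex conjugation on $\Cx$. On the commutative algebra $C(\H^1(X,\Cx^*),\Cx)$ such an involution is necessarily of the form $f\mapsto \overline{f\circ\sigma}$ for a topological involution $\sigma$ of the spectrum $\H^1(X,\Cx^*)$, so it remains only to identify $\sigma$.

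The key step is therefore to show that $\sigma$ is the conjugation $\rho\mapsto\bar\rho$ of characters. I would argue this functorially: the action of $\Gal(\Cx/\R)$ on $\oR^J_{X,x}(\Cx)$ induced by the real $C^*$-algebra automorphism $c\co\Cx\to\Cx$ is, under the identification $\oR^J_{X,x}(\Cx)\cong\Hom_{\gp}(\pi_1(X,x),\Cx^*)$, precisely the map sending a rank-one pluriharmonic bundle to its conjugate, i.e. $\rho\mapsto\bar\rho$. Since the involution $\tau$ on the abelianisation is dual to this action, $\sigma(\rho)=\bar\rho$ as required. Consequently the $\tau$-invariants are exactly $\{f : f(\rho)=\overline{f(\bar\rho)}\ \forall\rho\}$, which after substituting $\rho\mapsto\bar\rho$ is the condition $f(\bar\rho)=\overline{f(\rho)}$ of the second displayed identity.

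The only genuine obstacle is the identification of $\sigma$ with complex conjugation of characters; everything else is formal base change and Gelfand duality. As a cross-check I would pin down $\sigma$ on its fixed locus: the real characters $\rho$ with $\bar\rho=\rho$ correspond to $\tau$-invariant characters, equivalently to real $C^*$-algebra homomorphisms $(E^J_{X,x})^{\ab}\to\R$, which confirms that $\sigma$ fixes exactly $\{\rho=\bar\rho\}$ and hence equals $\rho\mapsto\bar\rho$.
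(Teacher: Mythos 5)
Your proposal is correct and follows essentially the same route as the paper's proof: Gelfand duality identifies $(E^J_{X,x})^{\ab}\ten\Cx$ with continuous functions on its character space, the rank-one case of the harmonic correspondence from \S \ref{dRprojsn} identifies that space with $\H^1(X,\Cx^*)$, and the real form is recovered as the $\Gal(\Cx/\R)$-invariants. The only difference is one of detail: you spell out, via functoriality of $\oR^J_{X,x}$ in the coefficient algebra, that the induced involution on characters is $\rho \mapsto \bar{\rho}$ (so that invariance reads $f(\bar{\rho})=\overline{f(\rho)}$), a point the paper's proof leaves implicit.
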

\begin{proof}
 Since $(E^J_{X,x})^{\ab} $ is a commutative $C^*$-algebra, the Gelfand--Naimark theorem gives $(E^J_{X,x})^{\ab}\ten \Cx\cong C(\Hom(E^J_{X,x}, \Cx),\Cx) $, with $E^J_{X,x})^{\ab}$ given by $\Gal(\Cx/\R)$-invariants. By \S \ref{recovertopology},  we have a homeomorphism $\Hom(E^J_{X,e}, \Cx) \cong \H^1(X,\Cx^*)$, which completes the proof.

\end{proof}

\begin{corollary}\label{abcor}
For $X$ a complex torus with identity $e$ and a fixed Riemannian metric, we have
\begin{align*}
(E^J_{X,e})_{\RFD}\ten \Cx &= C(\H^1(X,\Cx^*),\Cx)\\
(E^J_{X,e})_{\RFD} &= \{f \in C(\H^1(X, \Cx^*),\Cx)\,:\, f(\bar{\rho})= \overline{f(\rho)}\}. \\
\end{align*}
 \end{corollary}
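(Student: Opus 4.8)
The plan is to show that for a complex torus the residually finite-dimensional completion coincides with the abelianisation, $(E^J_{X,e})_{\RFD}\cong(E^J_{X,e})^{\ab}$, and then to read off the answer from Lemma \ref{ablemma}. Everything hinges on the fact that $\pi_1(X,e)=\H_1(X,\Z)$ is abelian, so that every irreducible semisimple representation of $\pi_1(X,e)$ is a one-dimensional character $\chi\in\Hom_{\gp}(\pi_1(X,e),\Cx^*)=\H^1(X,\Cx^*)$.

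First I would argue that $(E^J_{X,e})_{\RFD}$ is commutative. By construction (as in the proof of Proposition \ref{productprop}) it embeds as a pro-$C^*$-algebra into $\prod_V\End(V)$, where $V$ ranges over the irreducible finite-dimensional representations of $E^J_{X,e}$. Via Corlette's theorem (the correspondence underlying Theorem \ref{Hsstopthm}) these correspond to irreducible semisimple representations of $\pi_1(X,e)$, which by abelianness are one-dimensional; hence each $\End(V)=\Cx$ and the product, together with its sub-pro-$C^*$-algebra $(E^J_{X,e})_{\RFD}$, is commutative.

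Next I would deduce the identification from two universal properties. Since $(E^J_{X,e})_{\RFD}$ is commutative, the quotient map $E^J_{X,e}\to(E^J_{X,e})_{\RFD}$ factors through the maximal commutative quotient, yielding a surjection $(E^J_{X,e})^{\ab}\onto(E^J_{X,e})_{\RFD}$. Conversely, $(E^J_{X,e})^{\ab}$ is a commutative pro-$C^*$-algebra whose characters separate points, hence is residually finite-dimensional, so $E^J_{X,e}\to(E^J_{X,e})^{\ab}$ factors through the universal RFD quotient, yielding a surjection $(E^J_{X,e})_{\RFD}\onto(E^J_{X,e})^{\ab}$. Both surjections are compatible with the epic maps out of $E^J_{X,e}$, so they are mutually inverse and $(E^J_{X,e})_{\RFD}\cong(E^J_{X,e})^{\ab}$. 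A complex torus is K\"ahler, so Lemma \ref{ablemma} identifies the right-hand side with $C(\H^1(X,\Cx^*),\Cx)$ and its real form, as required.

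I expect the one point needing care to be the role of the Riemannian metric: a priori $E^J_{X,e}$ depends on the chosen metric, while Lemma \ref{ablemma} is phrased in the K\"ahler (pluriharmonic) setting. This causes no difficulty, because $(E^J_{X,e})_{\RFD}$ is manifestly metric-independent --- under the embedding above it is generated by the elements $\pi_{\dR}^{\sharp}(\gamma)$, which act on the character $\chi$ as the scalar $\chi(\gamma)$ with no reference to the metric --- so one is free to compute with any K\"ahler metric on $X$ and apply Lemma \ref{ablemma}.
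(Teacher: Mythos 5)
Your proof is correct, and it reaches the key point---commutativity of $(E^J_{X,e})_{\RFD}$---by a genuinely different route from the paper. The paper never invokes abelianness of $\pi_1(X,e)$; instead it uses the group structure of the torus itself: multiplication $X\by X\to X$ is a pointed morphism of compact K\"ahler manifolds, so functoriality (Lemma \ref{Pfunctorial}) combined with the K\"unneth-type isomorphism of Proposition \ref{productprop} yields an algebra homomorphism $(E^J_{X,e})_{\RFD}\hten(E^J_{X,e})_{\RFD}\to (E^J_{X,e})_{\RFD}$ compatible with the unit, and an (implicit) Eckmann--Hilton argument then forces commutativity before Lemma \ref{ablemma} is applied. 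You instead obtain commutativity representation-theoretically, from the embedding of $(E^J_{X,e})_{\RFD}$ into $\prod_V \End(V)$ over irreducible finite-dimensional $*$-representations, together with the fact that Corlette's correspondence plus abelianness of $\pi_1(X,e)$ makes every such $V$ one-dimensional. Your route buys some genuine generality and rigour: it applies verbatim to any compact Riemannian manifold with abelian fundamental group, with no H-space structure on $X$ needed; it makes explicit the identification $(E^J_{X,e})_{\RFD}\cong (E^J_{X,e})^{\ab}$ via the two universal properties, which the paper leaves unstated; and your final observation on metric-independence addresses a point the paper's own proof glosses over, since the corollary is stated for an arbitrary Riemannian metric while both Proposition \ref{productprop} (through Lemma \ref{Pfunctorial}, multiplication not being a local isometry) and Lemma \ref{ablemma} are only available in the K\"ahler setting. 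What the paper's approach buys in exchange is structural content: it exhibits $(E^J_{X,e})_{\RFD}$ as a bialgebra arising from the group law on $X$, which is the perspective driving Proposition \ref{productprop} and the remark that follows the corollary.
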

\begin{proof}
Multiplication on $X$ gives a pointed morphism $X \by X \to X$ and hence by functoriality of $P$ in $X$ and Proposition \ref{productprop}, we have a morphism
\[
( E^J_{X,e})_{\RFD}\hat{\ten}(E^J_{X,e})_{\RFD} \to (E^J_{X,e})_{\RFD}
\]
of real pro-$C^*$-algebras, and we may apply Lemma \ref{ablemma}. 
\end{proof}

\begin{remark}
If it were the case that all irreducible representations of $\pi_1(X,x)$ were harmonic and similarly for $\pi_1(Y,y)$, then the  proof of Proposition \ref{productprop} would adapt to show that  $E^J_{X\by Y, (x,y)} \cong E^J_{X,x}\hat{\ten} E^J_{Y,y}$. As in the proof of Corollary \ref{abcor}, that would then imply commutativity of $E^J_{X,e}$ for complex tori $(X,e)$, giving $E^J_{X,e}\ten \Cx = C(\Hom(\pi_1(X,e), \Cx^*))$.
\end{remark}

\begin{lemma}\label{EJabgp}
 Given a compact connected K\"ahler   manifold $X$, the grouplike elements $G((E^J_{X,x})^{\ab}) $ (see Lemma \ref{recovergroup}) of the commutative quotient $(E^J_{X,x})^{\ab} $ are given by
\begin{align*}
 G((E^J_{X,x})^{\ab}\ten \Cx) &\cong \H_1(X,\Z\oplus\Cx) \\
 G((E^J_{X,x})^{\ab}) &\cong \H_1(X,\Z\oplus\R), 
\end{align*}
with the map $\pi_1(X,x)^{\ab} \to G((E^J_{X,x})^{\ab})$ given by the diagonal map $\Z \to \Z \oplus \R$.
\end{lemma}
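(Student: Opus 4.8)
The plan is to reduce the computation of grouplike elements to a computation of continuous characters of the topological abelian group $K := \H^1(X,\Cx^*) = \Hom_{\gp}(\pi_1(X,x),\Cx^*)$, and then to evaluate those characters by splitting $\Cx^*$ into its compact and its split parts. First I would record that by Lemma \ref{ablemma} we have $(E^J_{X,x})^{\ab}\ten\Cx = C(K,\Cx)$, and that the comultiplication $\mu$ and counit $\vareps$ of Lemma \ref{recovergroup} are, by construction from the tensor structure of Lemma \ref{tensorstr} (which on one-dimensional representations is the pointwise product of characters), dual to the group law on $K$. Thus for $a\in C(K,\Cx)$ the conditions $\mu(a)=a\ten a$ and $\vareps(a)=1$ read $a(\rho_1\rho_2)=a(\rho_1)a(\rho_2)$ and $a(1)=1$; since then $a(\rho)a(\rho^{-1})=1$, such an $a$ is forced to be $\Cx^*$-valued. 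Hence $G((E^J_{X,x})^{\ab}\ten\Cx)=\Hom_{\cts}(K,\Cx^*)$, and the real grouplike elements are the $\Gal(\Cx/\R)$-equivariant ones, i.e. those with $a(\bar\rho)=\overline{a(\rho)}$.

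Assume first that $\Lambda:=\H_1(X,\Z)$ is torsion-free, so $K=\Hom(\Lambda,\Cx^*)$ is a product of copies of $\Cx^*$. Using the polar decomposition $\Cx^*=U(1)\times\R_{>0}$ I get a canonical splitting $K=\Hom(\Lambda,U(1))\times\Hom(\Lambda,\R_{>0})$, the first factor a compact torus and the second $\cong\Lambda^{\vee}\ten\R$. Restricting a character to each factor then gives a canonical isomorphism
\[
\Hom_{\cts}(K,\Cx^*)\;\cong\;\Hom_{\cts}\bigl(\Hom(\Lambda,U(1)),\,U(1)\bigr)\,\oplus\,\Hom_{\cts}\bigl(\Hom(\Lambda,\R_{>0}),\,\Cx^*\bigr),
\]
where the first summand lands in $U(1)$ because the torus is compact. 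Pontryagin duality identifies the first summand with $\Lambda=\H_1(X,\Z)$, while a continuous homomorphism from the vector group $\Hom(\Lambda,\R_{>0})\cong\Lambda^\vee\ten\R$ to $\Cx^*$ is the exponential of a $\Cx$-linear functional, identifying the second summand with $\Lambda\ten\Cx=\H_1(X,\Cx)$. This yields $G((E^J_{X,x})^{\ab}\ten\Cx)\cong\H_1(X,\Z)\oplus\H_1(X,\Cx)=\H_1(X,\Z\oplus\Cx)$. I would then track the evaluation character $\mathrm{ev}_\lambda\co\rho\mapsto\rho(\lambda)$, which is precisely the image of $\lambda\in\pi_1(X,x)^{\ab}$: its restriction to the compact factor is $\lambda\in\Lambda$, and its restriction to the split factor is the image of $\lambda$ under $\H_1(X,\Z)\to\H_1(X,\Cx)$, so $\mathrm{ev}_\lambda\mapsto(\lambda,\lambda)$, the diagonal.

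For the real structure I would observe that $\Gal(\Cx/\R)$ acts on $K$ by $\rho\mapsto\bar\rho$, which on the compact factor $\Hom(\Lambda,U(1))$ is inversion and on the split factor $\Hom(\Lambda,\R_{>0})$ is trivial. Imposing $a(\bar\rho)=\overline{a(\rho)}$ is therefore automatic on the first summand, so all of $\H_1(X,\Z)$ survives, and on the second summand it forces the $\Cx$-linear functional to be real, cutting $\H_1(X,\Cx)$ down to $\H_1(X,\R)$. This gives $G((E^J_{X,x})^{\ab})\cong\H_1(X,\Z\oplus\R)$ with the evaluation map still diagonal, as claimed.

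The genuinely non-formal point, and the one I expect to be the main obstacle, is the choice of splitting: the naive one (winding number together with a modulus character $\rho\mapsto|\rho(\lambda)|^{s}$) would send $\pi_1^{\ab}$ into the first factor rather than the diagonal. The correct canonical second coordinate is the restriction of the character to $\R_{>0}\subset\Cx^*$, and it is exactly this choice that makes the coefficient map $\Z\to\Z\oplus\R$ diagonal. The only remaining technical point is torsion in $\Lambda$: then $K$ has finite component group $\pi_0(K)=\Hom(\Lambda_{\mathrm{tors}},\Cx^*)$ and divisible identity component, so the extension splits; the finite part contributes $\Lambda_{\mathrm{tors}}$ to the $\Z$-summand by double duality, while $-\ten\Cx$ (resp. $-\ten\R$) annihilates torsion so the $\Cx$- (resp. $\R$-) summand is unchanged, and torsion classes map to $(\lambda,0)$, consistent with the diagonal. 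This recovers the stated isomorphisms in full generality.
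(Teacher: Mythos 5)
Your proof is correct and follows essentially the same route as the paper's: identify grouplike elements with continuous characters of $\H^1(X,\Cx^*)$, split $\Cx^*$ by polar decomposition into compact and split parts, and apply Pontryagin duality to the compact factor and linearity to the vector factor. The paper's version is terser --- it decomposes $\H^1(X,\Cx^*)\cong \H^1(X,S^1)\by \H^1(X,\R)$ directly, which absorbs the torsion case you treat separately --- while your explicit tracking of the evaluation characters (the diagonal claim) and of the $\Gal(\Cx/\R)$-equivariance condition fills in details the paper leaves implicit.
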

\begin{proof}
 The coalgebra structure on  $(E^J_{X,x})^{\ab}$  corresponds under Lemma \ref{ablemma} to the group structure on $\H^1(X, \Cx^*) $. Thus $G((E^J_{X,x})^{\ab}) $ consists of continuous functions $f\co \H^1(X, \Cx^*)\to \Cx$ with $f(1)=1$ and $f(ab)=f(a)f(b)$. 

We have an isomorphism $\Cx^* \cong S^1 \by \R$, given by $re^{i\phi} \mapsto (\phi, r)$. Thus $\H^1(X,\Cx^*)\cong \H^1(X,S^1\by\H^1(X,\R)$. By Pontrjagin duality, a continuous group homomorphism $\H^1(X,S^1) \to \Cx^*$ is just an element of $\H_1(X,\Z)$, and a continuous group homomorphism $\H^1(X,\R) \to \Cx^*$ is  an element of $\H_1(X,\Cx)$. 
\end{proof}

\subsection{The Dolbeault projection}\label{Dolsn}

Now let $X$ be a compact connected  K\"ahler manifold with basepoint $x \in X$.

\begin{proposition}\label{uniquemetric2}
For  all complex $C^*$-algebras $B$, the Dolbeault projection
\[
 \pi_{\Dol}\co \oR^J_{X,x}(B) \to \oR^{\Dol}_{X,x}(B)
\]
has the property that if $p_1, p_2 \in  \oR^J_{X,x}(B)$ and if $\ad_b\pi_{\Dol}(p_1)= \pi_{\Dol}(p_2)$ for some strictly positive self-adjoint element $b \in B$, then $p_1=p_2$.

 Thus
\[
\pi_{\Dol}\co \oR^J_{X,x}(B)/U(B) \to  \oR^{\Dol}_{X,x}(B)/B^{\by}
\]
 is injective.
\end{proposition}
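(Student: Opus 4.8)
The plan is to follow the proof of Proposition \ref{uniquemetric} almost verbatim, replacing the de Rham projection and Corlette's harmonic-metric convexity by their Dolbeault counterparts. First I would reduce the second (injectivity) statement to the first exactly as before: given $g \in B^{\by}$ with $\ad_g\pi_{\Dol}(p_1) = \pi_{\Dol}(p_2)$, use the polar decomposition $g = bu$ with $b=(gg^*)^{1/2} \in B_{++}$ and $u \in U(B)$. Since $u \in U(B)$ preserves $\oR^J_{X,x}(B)$ and $\pi_{\Dol}$ is $U(B)$-equivariant, the element $\ad_u p_1 \in \oR^J_{X,x}(B)$ satisfies $\ad_b\pi_{\Dol}(\ad_u p_1) = \pi_{\Dol}(p_2)$, so the first statement forces $\ad_u p_1 = p_2$ and $p_1, p_2$ lie in a single $U(B)$-orbit.

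For the first statement, write $b = \exp(\xi_0)$ with $\xi_0 \in S(B)$ using the continuous functional calculus (so that $B_{++} = \exp(S(B))$), and identify the $B_{++}$-orbit of $\pi_{\Dol}(p_1)$ with the orbit of the harmonic metric under the exponential action of $\Gamma(X, S(\ad\sP))$, the Higgs structure $D'' = \bar\pd + \theta$ of Definition \ref{Ddecomp} being held fixed. Then, in parallel with \ref{uniquemetric}, I would fix $\xi \in A^0(X, S(\ad\sP))$, let $d^+_t, \vartheta_t$ be the anti-self-adjoint and self-adjoint parts of the connection $D_t = D'_t + D''$ determined by the frozen $D''$ and the metric $h_t = \exp(\xi t)\cdot h$ (so only the Chern part $D'_t$ varies), and consider the pointwise energy $f(t) := \<\vartheta_t, \vartheta_t\> \in A^0(X, B)$ (or the analogous Donaldson-type functional). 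The pluriharmonic condition for the fixed Dolbeault data should again be the criticality condition $f'(t)=0$, and the Corlette-type computation should yield a second-variation formula expressing $f''(t)$ as a sum of squares, hence an element of $A^0(X, B_{+})$ that is strictly positive away from the locus where $D''\xi$ vanishes.

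Granting this, the argument closes as in \ref{uniquemetric}: if $p_1$ and $p_2$ both map into one $B_{++}$-orbit then $f'$ vanishes at the two endpoints of the geodesic; convexity forces $f'' \equiv 0$, whence $D''\xi = 0$, and therefore (by self-adjointness of $\xi$ together with the relations $D=D'+D''$, $D^c = iD' - iD''$, which make $D'\xi$ the adjoint of $D''\xi$) also $D\xi = 0$. But then $\exp(\xi)$ is a flat automorphism fixing the pluriharmonic structure, so $p_1 = p_2$.

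I expect the main obstacle to be this second-variation step: producing the correct pointwise, operator-valued convexity identity for the Dolbeault functional and verifying that its critical points are precisely the pluriharmonic connections with the prescribed $D''$. The delicate point is strict positivity of $f''(t)$ as an element of $A^0(X, B_{+})$ rather than positivity of an integrated scalar, since working fibrewise in $B$ is exactly what lets the proof bypass the sequential compactness of $U_n$ on which \cite{Sim2, corlette} rely (cf. Remark \ref{likelygenerality}); care is also needed to justify the self-adjoint/anti-self-adjoint decomposition $D_t = d^+_t + \vartheta_t$ and the passage from $D''$-parallel to $D$-parallel uniformly over the $C^*$-algebra $B$.
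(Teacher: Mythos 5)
Your proposal follows the paper's own route: the paper's entire proof of this proposition reads ``The proof of Proposition \ref{uniquemetric} adapts, replacing $D$ with $D''$,'' and your sketch is precisely that adaptation --- the polar-decomposition reduction to the $B_{++}$-orbit statement, the Corlette-type second-variation/convexity computation with the Dolbeault operator playing the role of the flat connection, and the conclusion that vanishing of $D''\xi$ (hence of $D\xi$) forces the gauge action to be trivial. The points you flag as delicate --- strict positivity of $f''(t)$ as an element of $A^0(X,B_{+})$ rather than of an integrated scalar, and the metric/gauge bookkeeping --- are exactly what the paper leaves implicit in that one line, so your write-up is a faithful expansion of the intended argument rather than a different one.
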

\begin{proof}
The proof of Proposition \ref{uniquemetric} adapts, replacing $D$ with $D''$.
\end{proof}

\begin{corollary}\label{Zpluri2}
 For an element $p \in \oR^J_{X,x}(B^{\by})$, the centraliser $\z(\pi_{\Dol}(p),B^{\by})$ of $\pi_{\Dol}(p)$ under the adjoint action of $B$ is given by 
\[
\z(\pi_{\Dol}(p),B^{\by})= \exp(\{ b \in S(B)\,:\, e^{ibt} \in \z(p, U(B)) \forall t \in \R\})\rtimes\z(p, U(B));
\]
beware that this is the semidirect product of a set with a group.
\end{corollary}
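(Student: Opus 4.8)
The plan is to mirror the proof of Corollary \ref{Zpluri} essentially verbatim, substituting the Dolbeault projection $\pi_{\Dol}$ for the de Rham projection $\pi_{\dR}$ throughout and invoking Proposition \ref{uniquemetric2} wherever the earlier argument used Proposition \ref{uniquemetric}. The two structural ingredients that made the de Rham argument go through are both available in the Dolbeault setting: $\pi_{\Dol}$ is $U(B)$-equivariant by construction, and Proposition \ref{uniquemetric2} supplies the analogous uniqueness statement for $B_{++}$-orbits. This reduces the whole corollary to a transcription, so I expect no genuinely new difficulty, only a bookkeeping check at one step.

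For the forward inclusion I would take $g \in \z(\pi_{\Dol}(p),B^{\by})$ and use the polar decomposition to write $g = \exp(b)u$ with $u \in U(B)$ and $b \in S(B)$. Because $\pi_{\Dol}$ is $U(B)$-equivariant, the centralising condition rewrites as $\ad_{\exp(b)}(\pi_{\Dol}(\ad_u(p))) = \pi_{\Dol}(p)$, so Proposition \ref{uniquemetric2} forces $\ad_u(p) = p$ and hence $u \in \z(p,U(B))$. Since the centraliser is a group, it follows that $\exp(b) \in \z(\pi_{\Dol}(p),B^{\by})$; applying the continuous functional calculus to take logarithms then shows that $b$, and therefore $ib$, commutes with $\pi_{\Dol}(p)$, so that $\exp(ibt) \in \z(p,U(B))$ for all $t \in \R$.

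For the converse I would start from the hypothesis $\exp(ibt) \in \z(p,U(B))$ for all $t$, use $U(B)$-equivariance of $\pi_{\Dol}$ to deduce $\exp(-ibt)\pi_{\Dol}(p)\exp(ibt) = \pi_{\Dol}(p)$, differentiate at $t = 0$ to obtain that $ib$ commutes with $\pi_{\Dol}(p)$, and conclude $\exp(b) \in \z(\pi_{\Dol}(p),B^{\by})$. Together the two inclusions identify the centraliser as the product of the set $\exp(\{b \in S(B) : e^{ibt} \in \z(p,U(B))\ \forall t\})$ with the group $\z(p,U(B))$, as asserted.

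The one place demanding slight care — and the step I would watch most closely — is the differentiation argument. In the de Rham case one differentiates ``for each element of $\pi_1(X,x)$'', exploiting that $\pi_{\dR}(p)$ is literally a representation. Here $\pi_{\Dol}(p)$ is a framed Higgs torsor rather than a $\pi_1$-representation, so instead I would differentiate the condition that $\ad_{\exp(ibt)}$ fixes the framed object. Since this is again a smooth (indeed analytic) one-parameter family of elements of $B$, its derivative at $t = 0$ yields commutation of $ib$ with $\pi_{\Dol}(p)$ exactly as before. With this observation in place, every remaining step is a direct transcription of the proof of Corollary \ref{Zpluri}.
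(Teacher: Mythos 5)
Your proposal is correct and coincides with the paper's own proof, which simply states that the proof of Corollary \ref{Zpluri} carries over (with Proposition \ref{uniquemetric2} replacing Proposition \ref{uniquemetric}). Your extra care at the differentiation step — replacing "differentiate for each element of $\pi_1(X,x)$" by differentiating the condition that $\ad_{\exp(ibt)}$ fixes the framed Higgs data — is exactly the bookkeeping the paper's terse "carries over" implicitly requires.
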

\begin{proof}
The proof of Corollary \ref{Zpluri} carries over.
\end{proof}

\begin{proposition}\label{banachreps2}
 For the real pro-$C^*$-algebra $E^J_{X,x}$ of Proposition \ref{pluriprorep},  there is a canonical map $\Hom_{\pro(\Ban\Alg)}(E^J_{X,x},B) \to \oR^{\Dol}_{X,x}(B)$, functorial in complex Banach algebras $B$. This induces an injection 
\[
 \Hom(E^J_{X,x},B)_{cb} \into \oR^{\Dol}_{X,x}(B)
\]
whenever $B$ is a $C^*$-algebra.
\end{proposition}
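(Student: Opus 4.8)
The plan is to transcribe the arguments of Lemma~\ref{Banachplurifunctor} and Proposition~\ref{banachreps}, replacing the de Rham projection $\pi_{\dR}$ throughout by the Dolbeault projection $\pi_{\Dol}$; the complex hypothesis on $B$ is precisely what is needed for $\pi_{\Dol}$ to be defined. First I would construct the canonical map. Given a morphism $f\co E^J_{X,x}\to B$ of pro-Banach algebras, Lemma~\ref{Cstarimage} factors $f$ as a surjective $C^*$-homomorphism $g\co E^J_{X,x}\to C$ followed by a continuous embedding $C\into B$. Since $E^J_{X,x}$ pro-represents $\oR^J_{X,x}$, the morphism $g$ corresponds to a framed pluriharmonic torsor in $\oR^J_{X,x}(C)$, and because $C$ is a complex $C^*$-algebra the Dolbeault projection applies, giving $\pi_{\Dol}(g)\in\oR^{\Dol}_{X,x}(C)$. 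Pushing the framing forward along $C^{\by}\to B^{\by}$ then yields the required element of $\oR^{\Dol}_{X,x}(B)$, and functoriality in $B$ is immediate from the construction.

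For the injectivity statement I would argue exactly as in Proposition~\ref{banachreps}. Embedding $B$ as a closed $C^*$-subalgebra of some $L(H)$, it suffices to treat $B=L(H)$. By Lemma~\ref{KSPlemma}, any completely bounded homomorphism $f\co E^J_{X,x}\to L(H)$ is conjugate to a $*$-homomorphism, so Proposition~\ref{uniquemetric2} gives injectivity of the induced map
\[
 \Hom(E^J_{X,x},L(H))_{cb}/\GL(H)\into\oR^{\Dol}_{X,x}(L(H))/\GL(H).
\]
It then remains only to promote this injectivity modulo conjugation to honest injectivity, which reduces to showing that for a $*$-homomorphism $f$ the stabilisers $\z(f,\GL(H))$ and $\z(\pi_{\Dol}(f),\GL(H))$ coincide.

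This last step is where the only genuine content lies, and it is handled by Corollary~\ref{Zpluri2} in place of Corollary~\ref{Zpluri}: the corollary describes
\[
\z(\pi_{\Dol}(f),\GL(H))= \exp(\{ b \in S(L(H))\,:\, e^{ibt} \in \z(f, U(H)) \ \forall t \in \R\})\rtimes\z(f, U(H)),
\]
and differentiating the relation $e^{ibt}fe^{-ibt}=f$ in $t$ shows that each such $b$ commutes with $f$, whence $\exp(b)\in\z(f,\GL(H))$. This gives the inclusion $\z(\pi_{\Dol}(f),\GL(H))\subset\z(f,\GL(H))$, the reverse inclusion being automatic, and so the desired injection follows. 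I expect no obstacle beyond verifying that Proposition~\ref{uniquemetric2} and Corollary~\ref{Zpluri2} apply verbatim; since the substitution of $D''$ for $D$ is exactly the Dolbeault analogue of the connection used in the de Rham case, the metric-uniqueness and centraliser computations carry over unchanged, and the argument is essentially formal once those two ingredients are in hand.
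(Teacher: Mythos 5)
Your proposal follows exactly the paper's own proof, which states verbatim that the proofs of Lemma \ref{Banachplurifunctor} and Proposition \ref{banachreps} carry over with Proposition \ref{uniquemetric} and Corollary \ref{Zpluri} replaced by Proposition \ref{uniquemetric2} and Corollary \ref{Zpluri2}; your write-up simply spells out those substitutions, correctly identifying the centraliser computation as the only step with real content. The one point to treat with slightly more care (which the paper also glosses over) is your claim that the image $C$ of $E^J_{X,x}$ is a complex $C^*$-algebra: since $E^J_{X,x}$ is real and $f$ is only a real-algebra homomorphism, $C$ is a priori a \emph{real} $C^*$-algebra, so one should pass to its complexification $C\ten_{\R}\Cx$ (which maps canonically to the complex algebra $B$) before applying $\pi_{\Dol}$.
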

\begin{proof}
The proofs of Lemma \ref{Banachplurifunctor} and Proposition \ref{banachreps} carry over to this context, replacing Proposition \ref{uniquemetric} and Corollary \ref{Zpluri} with Proposition \ref{uniquemetric2} and Corollary \ref{Zpluri2}.
\end{proof}

\begin{theorem}\label{Hsttopthm}
For any positive integer $n$, there is a homeomorphism $\pi_{\Dol,\st}$ between the space 
$\Hom_{\pro(\Ban\Alg)}(E^J_{X,x},\Mat_n(\Cx))$ with the topology of pointwise convergence, and the subspace of $\oR^{\Dol}_{X,x}(\Mat_n(\Cx)) $ consisting of polystable Higgs bundles $E$ with $\ch_1(E)\cdot [\omega]^{\dim X -1}=0$ and $\ch_2(E)\cdot [\omega]^{\dim X -2}=0 $.    \end{theorem}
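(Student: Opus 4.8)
The plan is to mirror the proof of Theorem \ref{Hsstopthm}, replacing the de Rham projection $\pi_{\dR}$ by the Dolbeault projection $\pi_{\Dol}$ and invoking the Dolbeault analogues already in place. First I would pin down the bijection of points. By Proposition \ref{banachreps2}, $\pi_{\Dol}$ injects $\Hom(E^J_{X,x},\Mat_n(\Cx))_{cb}$ into $\oR^{\Dol}_{X,x}(\Mat_n(\Cx))$, and since any bounded homomorphism into $\Mat_n(\Cx)$ is automatically completely bounded (as observed in the proof of Lemma \ref{PNlemma2}), this is in fact defined on all of $\Hom_{\pro(\Ban\Alg)}(E^J_{X,x},\Mat_n(\Cx))$. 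Such homomorphisms are conjugate to $*$-homomorphisms by Lemma \ref{KSPlemma}, hence correspond to framed pluriharmonic bundles, and Simpson's existence and uniqueness of the harmonic metric on a polystable Higgs bundle with vanishing rational Chern classes (\cite{Sim2}, building on \cite{corlette}) identifies the image of $\pi_{\Dol}$ with precisely the framed polystable Higgs bundles satisfying $\ch_1(E)\cdot[\omega]^{\dim X-1}=0$ and $\ch_2(E)\cdot[\omega]^{\dim X-2}=0$. This gives the required bijection $\pi_{\Dol,\st}$.

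To upgrade this to a homeomorphism, I would lean on Theorem \ref{Hsstopthm} and factor $\pi_{\Dol,\st}=\Phi\circ\pi_{\dR,\ss}$, where $\Phi$ sends a harmonic flat connection $D$ to its Dolbeault operator $D''$. Since $\pi_{\dR,\ss}$ is already a homeomorphism, it then suffices to show that $\Phi$ is a homeomorphism onto its image. For continuity of $\Phi$, recall that on a pluriharmonic bundle the flat connection decomposes as $D=d^++\vartheta$, and that $\pi_{\Dol}$ records $D''=\bar{\pd}+\theta$ obtained by taking $(0,1)$-parts (Definition \ref{Ddecomp}). The proof of \cite[Theorem 3.3]{corlette}, already used in Theorem \ref{Hsstopthm}, shows that $D\mapsto(d^+,\vartheta)$ is continuous; since passing to $(0,1)$-components is continuous, so is $D\mapsto D''$, and hence $\pi_{\Dol,\st}$ is continuous.

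The substantive point is the continuity of the inverse. Given a convergent net $\pi_{\Dol}(T_i)\to\pi_{\Dol}(T)$ of framed polystable Higgs bundles, I must recover $\pi_{\dR}(T_i)\to\pi_{\dR}(T)$, which by Theorem \ref{Hsstopthm} is equivalent to $T_i\to T$. Here the flat connection is reconstructed from the Higgs data $\bar{\pd}+\theta$ through the harmonic metric, which determines $\pd$ from $\bar{\pd}$ and $\bar{\theta}$ from $\theta$, yielding $D=\pd+\bar{\pd}+\theta+\bar{\theta}$. Thus continuity of $\Phi^{-1}$ amounts to the continuous dependence of the harmonic (Hermitian--Yang--Mills) metric on the polystable Higgs bundle. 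I would extract this from the analytic estimates underlying \cite[Theorem 7.18]{Sim2}, which already yield the homeomorphism of coarse moduli; the extra work is to propagate the continuity to the framed spaces in the topology of pointwise convergence, matching the $\GL_n(\Cx)$-orbit structure of $\Hom_{\pro(\Ban\Alg)}(E^J_{X,x},\Mat_n(\Cx))$ to the change-of-framing action on $\oR^{\Dol}_{X,x}(\Mat_n(\Cx))$.

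I expect this last step to be the main obstacle: establishing that the solution of the Hermitian--Yang--Mills equation varies continuously with the Higgs bundle at the level of \emph{framed} representations, rather than only on the coarse moduli space. This is precisely the analytic core of Simpson's theorem, and the delicate part lies in reconciling the gauge-theoretic topology with the topology of pointwise convergence on $\Hom_{\pro(\Ban\Alg)}(E^J_{X,x},\Mat_n(\Cx))$.
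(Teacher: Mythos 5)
Your overall strategy---bijection on points via Simpson's correspondence, forward continuity by adapting the argument of Theorem \ref{Hsstopthm} through Proposition \ref{banachreps2}, then continuity of the inverse---is exactly the paper's. The difference lies at the step you flag as the main obstacle. The paper does not re-derive continuous dependence of the harmonic metric on the polystable Higgs bundle from the analytic estimates behind \cite[Theorem 7.18]{Sim2}; it simply cites \cite[Proposition 7.9]{Sim2}, which already supplies continuity of the composite $\pi_{\dR,\ss}\circ\pi_{\Dol,\st}^{-1}$ (Higgs data $\mapsto$ associated flat connection) at the level of the \emph{framed} representation spaces $\oR_{\Dol}(X,x,n)\to\oR_{\DR}(X,x,n)$, not merely on coarse moduli. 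Composing with the homeomorphism $\pi_{\dR,\ss}$ of Theorem \ref{Hsstopthm} then finishes the proof immediately. So your concern about propagating continuity from the coarse quotient to framed spaces and matching the $\GL_n(\Cx)$-orbit structures is unnecessary: Simpson's statement is already in framed form, and no new gauge-theoretic analysis is required. As written, your proposal is therefore incomplete at precisely this point---you describe what must be proved rather than proving it or locating the citable statement---but the architecture is sound, and the missing ingredient is a reference rather than a new argument.
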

\begin{proof}
The isomorphism of points is given by \cite[Theorem 1]{Simpson}. Replacing Proposition \ref{banachreps} with Proposition \ref{banachreps2}, the argument from the proof of Theorem \ref{Hsstopthm} shows that the map $\pi_{\Dol}\co \Hom_{\pro(\Ban\Alg)}(E^J_{X,x},\Mat_n(\Cx))\to \oR^{\Dol}_{X,x}(\Mat_n(\Cx)) $ is continuous, so we just need to show that it is open. 

Now, \cite[Proposition 7.9]{Sim2} implies that the isomorphism $\pi_{\dR,\ss} \circ  \pi_{\Dol,\st}^{-1}$ is continuous. Since $\pi_{\dR,\ss}$ is a homeomorphism by Theorem \ref{Hsstopthm}, $\pi_{\Dol,\st}$ must also be a homeomorphism.
\end{proof}

\begin{definition}\label{FDDolcat}
 Let $\FD\oR^{\Dol}_{X,x}$ be the category of pairs $(V, p)$ for $V\in \FD\Vect$ and  $p  \in \oR^{\Dol}_{X,x}(\End(V))$, with morphisms defined by adapting the formulae of Definition \ref{FDDRcat}.

 Let $\FD\oR^{\Dol,\st}_{X,x} \subset \FD\oR^{\Dol}_{X,x}$  be the full subcategory in which objects correspond to those of Theorem \ref{Hsttopthm}.
Write $\eta_x^{\Dol}\co \FD\oR^{\Dol}_{X,x} \to \FD\Vect$,  $\eta_x^{\Dol,\st}\co \FD\oR^{\Dol,\st}_{X,x} \to \FD\Vect$ for the fibre functors $(V,p) \mapsto V$.
\end{definition}

\begin{proposition}\label{PNstprop}
The ring $ (E^J_{X,x})_{\PN}\ten \Cx$ is isomorphic to the ring of continuous   endomorphisms of $\eta_x^{\Dol, \st}$.
\end{proposition}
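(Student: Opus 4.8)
The plan is to follow the proof of Proposition \ref{PNssprop} almost verbatim, combining Lemma \ref{PNlemma2} with the Dolbeault comparison Theorem \ref{Hsttopthm}; the only genuinely new feature is bookkeeping of the complex structure, which replaces the $\Gal(\Cx/\R)$-equivariance of the de Rham case by a complexification. First I would use Theorem \ref{Hsttopthm}: for each $n$ the homeomorphism $\pi_{\Dol,\st}$ identifies $\Hom_{\pro(\Ban\Alg)}(E^J_{X,x},\Mat_n(\Cx))$ --- that is, the $n$-dimensional objects of $\FD\Rep(E^J_{X,x})$ --- with the objects of $\FD\oR^{\Dol,\st}_{X,x}$. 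I would then upgrade this object-level statement to an equivalence of topological categories over $\FD\Vect$: morphisms on both sides are the intertwiners cut out by the block-triangular conjugation condition of Definitions \ref{FDDRcat} and \ref{FDDolcat}, and these correspond because the underlying linear maps and the direct-sum structure are literally the same on the two sides. Consequently the ring of continuous additive endomorphisms of $\eta_x^{\Dol,\st}$ coincides with the ring of continuous additive endomorphisms of $\eta\co\FD\Rep(E^J_{X,x})\to\FD\Vect$, where now \emph{no} equivariance is imposed.

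It then remains to identify this latter ring with $(E^J_{X,x})_{\PN}\ten\Cx$. By Lemma \ref{PNlemma2}, the subring of $\Gal(\Cx/\R)$-equivariant continuous additive endomorphisms of $\eta\co\FD\Rep(E^J_{X,x})\to\FD\Vect$ is exactly $(E^J_{X,x})_{\PN}$, so I would exhibit the full ring as the complexification of this subring. Since $E^J_{X,x}$ is a real algebra, conjugation $(V,\rho)\mapsto(\bar V,\bar\rho)$ is an involution of $\FD\Rep(E^J_{X,x})$, and the assignment $\tau\co\eta\mapsto\overline{\eta(\bar V,\bar\rho)}$ is an antilinear ring involution of the complex algebra of all continuous additive endomorphisms whose fixed subring is precisely the $\Gal(\Cx/\R)$-equivariant one. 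Writing any $\eta$ as $\tfrac12(\eta+\tau\eta)+i\cdot\tfrac1{2i}(\eta-\tau\eta)$, with both summands $\tau$-fixed, shows the fixed subring is a real form, giving full ring $=(E^J_{X,x})_{\PN}\ten_\R\Cx$, as claimed.

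The main obstacle is the upgrade in the first paragraph: promoting the object-level homeomorphism of Theorem \ref{Hsttopthm} to a bona fide equivalence of topological categories over $\FD\Vect$, so that morphisms, the additive structure, and the topology of pointwise convergence are respected at once. This is exactly the content tacitly invoked in Proposition \ref{PNssprop}, and the subtle point is that the ambient block-triangular algebra $\left(\begin{smallmatrix}\End(V_1)&0\\\Hom(V_1,V_2)&\End(V_2)\end{smallmatrix}\right)$ is not a $C^*$-algebra, so $\pi_{\Dol,\st}$ applies only to its diagonal $C^*$-blocks; one reads the off-diagonal intertwiner condition directly from the underlying $\pi_1(X,x)$-representations rather than from $\pi_{\Dol,\st}$ itself. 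A secondary, routine check is that the involution $\tau$ of the second paragraph is well defined and continuous on the endomorphism ring, which is immediate from $E^J_{X,x}$ being real and $\Mat_n(\Cx)$ carrying its standard complex conjugation.
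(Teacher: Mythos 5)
Your proposal is correct and follows essentially the same route as the paper, whose proof of Proposition \ref{PNstprop} is precisely to run the argument of Proposition \ref{PNssprop} (i.e.\ combine Lemma \ref{PNlemma2} with the topological comparison) with Theorem \ref{Hsstopthm} replaced by Theorem \ref{Hsttopthm}. Your second paragraph merely makes explicit the Galois-descent/complexification step that the paper leaves tacit in passing from the $\Gal(\Cx/\R)$-equivariant de Rham statement to the non-equivariant Dolbeault statement with its $\ten\,\Cx$, which is a worthwhile clarification but not a different method.
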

\begin{proof}
The proof of Proposition \ref{PNssprop} carries over, replacing Theorem \ref{Hsstopthm} with Theorem \ref{Hsttopthm}.
\end{proof}

\begin{definition}\label{stRdef}
 Given a    $k$-normal complex $C^*$-algebra $B$, define $ \oR^{\Dol,\st}_{X,x}(B)\subset \oR^{\Dol}_{X,x}(B)$  to  be the subspace consisting  of those $p$ for which $\psi(p)\in \FD\oR^{\Dol,\st}_{X,x}$ for all $\psi \co B \to \Mat_k(\Cx)$.  
\end{definition}

\begin{corollary}\label{PNrepst}
For any $k$-normal complex $C^*$-algebra $B$,   $ \oR^{\Dol,\st}_{X,x}(B)$ is isomorphic to the set of continuous algebra homomorphisms $E^J_{X,x} \to B$.
\end{corollary}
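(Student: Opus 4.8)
The plan is to mimic the proof of Corollary \ref{PNrepss}, substituting the Dolbeault data for the de Rham data at each step and tracking carefully the passage from real to complex coefficients.

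First I would record the purely algebraic observation that the defining identity of an $n$-normal algebra involves no field scalars, so a complex $k$-normal $C^*$-algebra $B$ is automatically $k$-normal when regarded as a real $C^*$-algebra via Lemma \ref{consistentCstar}. Consequently any continuous real algebra homomorphism $E^J_{X,x}\to B$ factors uniquely through the polynormal completion $(E^J_{X,x})_{\PN}$, exactly as in Corollary \ref{PNrepss}; moreover, since such $B$ is type $I$, continuity already forces complete boundedness, so no cb hypothesis need be imposed. Because $B$ is complex, the factorisation extends $\Cx$-linearly, whence continuous algebra homomorphisms $E^J_{X,x}\to B$ are the same datum as continuous $\Cx$-algebra homomorphisms $(E^J_{X,x})_{\PN}\ten\Cx \to B$.

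Next I would invoke Proposition \ref{PNstprop}, which identifies $(E^J_{X,x})_{\PN}\ten\Cx$ with the ring of continuous additive endomorphisms of $\eta_x^{\Dol,\st}$. By the same Tannakian reconstruction used in Corollary \ref{PNrepss}, a continuous homomorphism $(E^J_{X,x})_{\PN}\ten\Cx\to B$ amounts to a continuous additive functor $p^*\co \FD\Rep(B)\to \FD\oR^{\Dol,\st}_{X,x}$ of topological categories fibred over $\FD\Vect$. An element $p\in\oR^{\Dol}_{X,x}(B)$ determines such a $p^*$, and $p^*$ lands in the full subcategory $\FD\oR^{\Dol,\st}_{X,x}\subset\FD\oR^{\Dol}_{X,x}$ precisely when $\psi(p)$ is a polystable Higgs bundle of the required Chern type for every $\psi\co B\to\Mat_k(\Cx)$; by Definition \ref{stRdef} this is exactly the condition $p\in\oR^{\Dol,\st}_{X,x}(B)$, which yields the desired bijection.

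The one point demanding care --- and the only place the argument diverges structurally from the semisimple case --- is the bookkeeping of the Galois action: Proposition \ref{PNssprop} carries a $\Gal(\Cx/\R)$-equivariance constraint that is absent from Proposition \ref{PNstprop}. I expect this to be the main (if modest) obstacle: I must verify that dropping equivariance is exactly compensated by passing to the complexification $(E^J_{X,x})_{\PN}\ten\Cx$ and to complex rather than real $C^*$-algebra coefficients, so that the correspondence neither gains nor loses homomorphisms. Once this matching is confirmed, the proof reduces, as with the author's one-line proofs throughout this section, to the observation that ``the proof of Corollary \ref{PNrepss} carries over.''
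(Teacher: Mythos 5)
Your proposal is correct and takes exactly the paper's route: the paper's own proof reads, in its entirety, ``The proof of Corollary \ref{PNrepss} carries over, replacing Proposition \ref{PNssprop} with Proposition \ref{PNstprop}'', which is precisely the factorisation through the polynormal completion followed by the Tannakian reconstruction argument you spell out. Your additional bookkeeping --- that complex $k$-normality gives real $k$-normality since the standard polynomial identity involves no scalars, and that dropping the $\Gal(\Cx/\R)$-equivariance of Proposition \ref{PNssprop} is exactly compensated by passing to $(E^J_{X,x})_{\PN}\ten\Cx$ and complex coefficients --- is the content the paper suppresses in the phrase ``carries over''.
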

\begin{proof}
The proof of Corollary \ref{PNrepss} carries over, replacing Proposition \ref{PNssprop} with  Proposition \ref{PNstprop}.
 \end{proof}

\subsection{Circle actions and $C^*$-dynamical systems}

\begin{definition}
Define a circle action on a (real or complex)  pro-$C^*$-algebra $A$ to be a continuous group homomorphism from $S^1$ to $\Aut_{\pro(C^*\Alg)}(A)$. Here, the topology on $\Aut_{\pro(C^*\Alg)}(A)$ is defined pointwise, so a net $f_i$ converges to $f$ if and only if $f_i(a) \to f(a)$ for all $a \in A$.   
\end{definition}

The following is immediate:
\begin{lemma}\label{circle1}
Giving a circle action on a $C^*$-algebra $A$ is equivalent to giving a pro-$C^*$-algebra homomorphism $f\co A \to C(S^1,A)$  
satisfying
\begin{enumerate}
 \item $1^*\circ f = \id_A\co A \to C(\{1\},A)=A$;
\item the diagram
\[
 \begin{CD}
  A @>f>> C(S^1,A)\\
@VfVV @VV{C(S^1,f)}V\\
C(S^1,A) @>{m^*}>> C(S^1 \by S^1, A)
 \end{CD}
\]
commutes, where $m \co S^1 \by S^1 \to S^1$ is the multiplication.
\end{enumerate}
\end{lemma}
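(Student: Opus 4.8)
The plan is to exhibit the two kinds of data as mutually inverse. Given a circle action, that is, a pointwise-continuous homomorphism $t \mapsto \alpha_t$ from $S^1$ to $\Aut_{\pro(C^*\Alg)}(A)$, I would define $f\co A \to C(S^1,A)$ by $f(a)(t):=\alpha_t(a)$. Conversely, given $f$, I would set $\alpha_t:= t^*\circ f\co A \to A$, where $t^*\co C(S^1,A)\to A$ is evaluation at $t \in S^1$. In both directions one has $f(a)(t)=\alpha_t(a)$, so these assignments are inverse to one another once we check that each lands in the correct class of data.

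First I would verify that $f$ is a genuine pro-$C^*$-algebra homomorphism. For fixed $t$, the map $a \mapsto f(a)(t)=\alpha_t(a)$ is a $*$-homomorphism, so $f$ is a $*$-homomorphism into $C(S^1,A)$ with its pointwise operations; that $f(a)$ actually lies in $C(S^1,A)$, rather than merely in the bounded functions, is precisely the pointwise continuity of the action. Continuity of $f$ itself is automatic: each $\alpha_t$ is an automorphism and hence preserves the defining seminorms of $A$, so $f$ is seminorm-preserving, giving $\|f(a)\|=\sup_{t}\|\alpha_t(a)\|=\|a\|$ at each level of the inverse system $A=\Lim_i A_i$. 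In the reverse direction, each $\alpha_t=t^*\circ f$ is a composite of pro-$C^*$-homomorphisms, hence a $*$-homomorphism, and $t\mapsto \alpha_t(a)=f(a)(t)$ is continuous because $f(a)\in C(S^1,A)$.

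It then remains to match the axioms. Condition (1), $1^*\circ f=\id_A$, says exactly $\alpha_1=\id_A$. For condition (2), I would use the exponential law $C(S^1,C(S^1,A))\cong C(S^1\by S^1,A)$ to read both composites as functions on $S^1\by S^1$: the upper-right composite $C(S^1,f)\circ f$ sends $a$ to the function $(s,t)\mapsto \alpha_t(\alpha_s(a))$, while the lower-left composite $m^*\circ f$ sends $a$ to $(s,t)\mapsto \alpha_{st}(a)$. Thus the diagram commutes if and only if $\alpha_t\circ\alpha_s=\alpha_{st}$ for all $s,t$, which is the homomorphism property; combined with (1) this forces each $\alpha_t$ to be invertible with inverse $\alpha_{t^{-1}}$, confirming that it is an automorphism. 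Since $S^1$ is abelian there is no discrepancy between $\alpha_{st}$ and $\alpha_{ts}$, so the only point requiring any care is the bookkeeping of the exponential-law identification; everything else is a direct substitution, which is why the statement is immediate.
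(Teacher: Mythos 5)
Your proof is correct: the paper gives no argument at all for this lemma (it is prefaced with ``The following is immediate''), and your currying argument --- identifying $f$ with $a \mapsto (t \mapsto \alpha_t(a))$ and using the exponential law $C(S^1,C(S^1,A))\cong C(S^1\by S^1,A)$ to translate conditions (1) and (2) into $\alpha_1=\id_A$ and $\alpha_t\circ\alpha_s=\alpha_{st}$ --- is exactly the verification being left to the reader. The one step that genuinely needs saying, and which you do supply, is that each $\alpha_t=t^*\circ f$ is a priori only an endomorphism, with invertibility (hence membership in $\Aut_{\pro(C^*\Alg)}(A)$) forced by combining the two axioms to get $\alpha_t\circ\alpha_{t^{-1}}=\alpha_1=\id_A$.
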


\begin{lemma}\label{functcircle}
 If a functor $F\co C^*\Alg_k \to \Set$ is represented by a pro-$C^*$-algebra $A$, then to give a circle action on $A$ is equivalent to giving maps
\[
{\alpha}_B\co F(B) \to F(C(S^1,B)),
\]
functorial in $B$, such that
\begin{enumerate}
 \item $F(1^*)\circ {\alpha}_B = \id_{F(B)}\co F(B) \to F(B)$;
\item the diagram
\[
 \begin{CD}
  F(B) @>{\alpha}_B>> F(C(S^1,B))\\
@V{\alpha}_BVV @VV{{\alpha}_{C(S^1,B)}}V\\
F(C(S^1,B)) @>{F(m^*)}>> F(C(S^1 \by S^1, B))
 \end{CD}
\]
commutes, where $m \co S^1 \by S^1 \to S^1$ is the multiplication.
\end{enumerate}
\end{lemma}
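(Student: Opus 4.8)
The plan is to reduce the statement to Lemma~\ref{circle1} via the Yoneda lemma for the pro-representing object $A$. Lemma~\ref{circle1} already identifies a circle action on $A$ with a pro-$C^*$-algebra homomorphism $f\co A \to C(S^1,A)$ satisfying (i) $1^*\circ f = \id_A$ and (ii) the coassociativity relation $C(S^1,f)\circ f = m^*\circ f$. Hence it suffices to produce a natural bijection between such $f$ and the families $\{\alpha_B\}$ of the statement, carrying condition (1) to (i) and condition (2) to (ii).

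First I would set up the correspondence. Writing $F(B) = \Hom_{\pro(C^*\Alg)}(A,B)$ and $G(B) := F(C(S^1,B))$, a family $\alpha_B\co F(B)\to G(B)$ functorial in $B$ is precisely a natural transformation $F \Rightarrow G$. Since $C(S^1,-)$ preserves the cofiltered limits presenting $A = \{A_i\}_i$, we have $C(S^1,A) = \{C(S^1,A_i)\}_i$ as a pro-$C^*$-algebra, and the pro-version of Yoneda gives
\[
\mathrm{Nat}(F,G) \;\cong\; \Lim_i G(A_i) \;\cong\; \Hom_{\pro(C^*\Alg)}(A, C(S^1,A)).
\]
Under this bijection the transformation attached to $f$ is $\alpha_B(\phi) = C(S^1,\phi)\circ f$ for $\phi \in \Hom(A,B)$, and $f$ is recovered from the compatible system obtained by evaluating $\alpha$ on the structure maps $A \to A_i$.

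It then remains to match the two pairs of conditions, which is pure naturality bookkeeping. For condition (1), naturality of the evaluation $1^*\co C(S^1,B)\to B$ in $B$ gives $1^*\circ C(S^1,\phi) = \phi\circ 1^*$, so $F(1^*)(\alpha_B(\phi)) = \phi\circ(1^*\circ f)$; this equals $\phi$ for all $\phi,B$ exactly when $1^*\circ f = \id_A$, i.e.\ (i). For condition (2), using the exponential law $C(S^1,C(S^1,B)) = C(S^1\by S^1,B)$, the functoriality identity $C(S^1, C(S^1,\phi)\circ f) = C(S^1\by S^1,\phi)\circ C(S^1,f)$, and naturality of $m^*$ in $B$, one computes that the two composites around the square send $\phi$ to $C(S^1\by S^1,\phi)\circ(C(S^1,f)\circ f)$ and to $C(S^1\by S^1,\phi)\circ(m^*\circ f)$ respectively. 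Thus the square commutes for all $B$ precisely when $C(S^1,f)\circ f = m^*\circ f$, i.e.\ (ii).

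The one point requiring care is the pro-object form of Yoneda used in the displayed isomorphism: because $A$ is a genuine pro-object rather than an object of $C^*\Alg$, one cannot literally evaluate $\alpha$ on $\id_A$, and must instead pass through the presentation $A = \{A_i\}_i$, the identity $\mathrm{Nat}(\LLim_i \Hom(A_i,-), G) \cong \Lim_i G(A_i)$, and the fact that $C(S^1,-)$ preserves the relevant limits so that $\Lim_i G(A_i) \cong \Hom_{\pro(C^*\Alg)}(A,C(S^1,A))$. Once this identification is secured, the remainder is the routine naturality verification above.
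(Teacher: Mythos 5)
Your proof is correct and follows essentially the same route as the paper's: both directions of the correspondence are given by $\alpha_B(\phi)=C(S^1,\phi)\circ f$ and by recovering $f$ from the values of $\alpha$ on the structure maps $A\to A_i$, with conditions (1) and (2) matched to the two conditions of Lemma \ref{circle1} by the same naturality computations. The only difference is presentational: you package the bijection as a pro-object Yoneda argument, whereas the paper constructs the two inverse maps explicitly.
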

\begin{proof}
If $A$ has a circle action ${\alpha}$, then a homomorphism $h\co A \to B$ gives rise to $C(S^1, h)\co  C(S^1,A) \to C(S^1, B)$, and we define ${\alpha}_B(h):= C(S^1,h) \circ {\alpha}$. This clearly satisfies the required properties.

Conversely, given maps ${\alpha}_B$ as above, write $A= \Lim_i A_i$ as an inverse limit of $C^*$-algebras, and let $h_i\co A \to A_i$ be the structure map. Then ${\alpha}_{A_i}(h_i) \in F(C(S^1, A_i))$ is a map $A \to C(S^1, A_i)$. Since the ${\alpha}_{A_i}(h_i)$ are compatible, we may take the inverse limit, giving a  map
\[
 {\alpha}\co A \to C(S^1, A)
\]
To see that this is a group homomorphism, just observe that the conditions above ensure that $h_i \circ 1^*\circ {\alpha} = h_i$ and 
\[
 C(S^1\by S^1, h_i)\circ C(S^1,{\alpha}) \circ {\alpha}= C(S^1\by S^1, h_i) \circ m^*{\alpha} 
\]
for all $i$. Taking the inverse limit over $i$ shows that this satisfies the conditions of Lemma \ref{circle1}.

Finally, note that these two constructions are clearly inverse to each other.
 \end{proof}

\begin{proposition}\label{circleX}
For every  compact K\"ahler manifold $X$, there is a canonical continuous circle action on $E^J_{X,x}$. 
\end{proposition}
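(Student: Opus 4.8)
The plan is to apply Lemma \ref{functcircle}, which (since $E^J_{X,x}$ represents $\oR^J_{X,x}$ by Proposition \ref{pluriprorep}) reduces the construction of a continuous circle action to giving a natural family of maps $\alpha_B\co \oR^J_{X,x}(B) \to \oR^J_{X,x}(C(S^1,B))$, functorial in $B$ and satisfying the identity and cocycle conditions of that lemma. The source of the action is the $S$-action of Definition \ref{dmd} on $\sA^*_X(B)$, restricted to the unitary circle $S^1 \subset \Cx^* = S(\R)$.

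Given $(U(\sP), D, f) \in \oR^J_{X,x}(B)$, I would first decompose $D = d^+ + \vartheta$ into anti-self-adjoint and self-adjoint parts, and $\vartheta = \theta + \bar{\theta}$ into $(1,0)$ and $(0,1)$ types as in Definition \ref{Ddecomp}. For $\lambda \in S^1$ set $D_\lambda := d^+ + \lambda\dmd\vartheta = d^+ + \lambda\theta + \bar{\lambda}\bar{\theta}$, leaving the unitary torsor $U(\sP)$ and the framing $f$ unchanged; since $\theta^* = \bar{\theta}$ and $*$ is conjugate-linear, $\lambda\dmd\vartheta$ is again self-adjoint, so $D_\lambda$ is a $d$-connection on the same $U(\sP)$. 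I would then define $\alpha_B(U(\sP), D, f)$ to be the object $(U(\sP)', \tilde D, f)$, where $U(\sP)'$ is the base change of $U(\sP)$ along $B \to C(S^1,B)$ and $\tilde D$ is the connection over $C(S^1,B)$ restricting to $D_\lambda$ at each $\lambda$, using $\sA^1_X(C(S^1,B)) = C(S^1, \sA^1_X(B))$ and the fact that $\lambda \mapsto \lambda\theta + \bar{\lambda}\bar{\theta}$ is continuous in $\lambda$.

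The substantive step is to verify that $\tilde D$ is a pluriharmonic connection in the sense of Definition \ref{pluritorsor}. As these conditions are closed and hold pointwise in $\lambda \in S^1$, it suffices to check them for each $D_\lambda$, for which I would adapt the computation of Simpson and Corlette underlying Proposition \ref{uniquemetric}. Writing flatness $(\ad D)\circ D = 0$ and pluriharmonicity $(\ad D)\circ D^c + (\ad D^c)\circ D = 0$ as the usual system of Hitchin-type relations among $\pd, \bar{\pd}, \theta, \bar{\theta}$, each relation is homogeneous in $\theta$ and $\bar{\theta}$: the relations involving $\theta$ or $\bar{\theta}$ alone are preserved under $\theta \mapsto \lambda\theta$, $\bar{\theta}\mapsto\bar{\lambda}\bar{\theta}$ trivially, while the mixed curvature relation acquires a factor $\lambda\bar{\lambda}$ and so is preserved precisely because $\lambda\bar{\lambda} = 1$. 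Since $d^+$ and the harmonic metric are untouched, $D_\lambda$ is again a pluriharmonic connection on the same unitary torsor. This is exactly the classical fact that a harmonic metric yields an $S^1$-family of pluriharmonic flat connections, and it is the one point where $|\lambda|=1$ is essential: for general $\lambda \in \Cx^*$ one obtains only the flat twistor family of \S \ref{twistorfamilysn}, which does not preserve $\oR^J_{X,x}$. This verification is the main obstacle; everything else is formal.

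Finally I would record the formal properties. Functoriality of $\alpha_B$ in $B$ is immediate, since the decomposition $D = d^+ + \vartheta$ and the operation $\lambda\dmd$ are natural and commute with $C^*$-algebra maps. The identity condition holds because $1 \dmd \vartheta = \vartheta$, so $D_1 = D$; the cocycle condition of Lemma \ref{functcircle} holds because $\dmd$ is a group action, giving $(\lambda\mu)\dmd\vartheta = \lambda\dmd(\mu\dmd\vartheta)$, which matches the comparison with $m^*$ for the multiplication $m\co S^1 \by S^1 \to S^1$. Applying Lemma \ref{functcircle} then produces the desired continuous circle action on $E^J_{X,x}$, and it is canonical because $\dmd$ depends only on the complex structure of the Kähler manifold $X$.
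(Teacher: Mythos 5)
Your proposal is correct and follows essentially the same route as the paper: decompose $D = d^+ + \vartheta$, base-change the unitary torsor and framing along $B \to C(S^1,B)$, define the new connection as $d^+ + t \dmd \vartheta$ for the tautological $t \in C(S^1,\Cx)$, and invoke Lemma \ref{functcircle} via the representability from Proposition \ref{pluriprorep}. The only difference is that you spell out the verification that each $D_\lambda$ remains flat and pluriharmonic (the $\lambda\bar{\lambda}=1$ point), which the paper dismisses as "easy to check."
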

\begin{proof}
Given $(U(\sP), D, f) \in \oR^J_{X,x}(B)$, define ${\alpha}(U(\sP),D, f) \in \oR^J_{X,x}(C(S^1, B))$ as follows.

Decompose $D= d^++\vartheta$ into anti-self-adjoint  and self-adjoint parts. Set ${\alpha}(U(\sP)):= C(S^1, U(\sP))= U(\sP)\by_{\sA^0_X(U(B))}\sA^0_X(C(S^1, U(B)))$, and then define ${\alpha}(D):= d^++ t \dmd \vartheta$, where $t \in C(S^1, \Cx)$ is the canonical embedding and $\dmd$ is from Definition \ref{dmd}.  Thus we have constructed ${\alpha}(U(\sP), D, f):= (C(S^1, U(\sP), d^+ + t\dmd \vartheta, C(S^1, f))$, and it is easy to check that this satisfies the conditions of Lemma \ref{functcircle}.
\end{proof}

\begin{remark}\label{pureMHSrk}
By considering finite dimensional quotients of $E^J_{X,x}$, the circle action induces a continuous map
\[                                                                                                          
S^1 \by E^J_{X,x} \to O(\pi_1(X,x)^{\red}_{\R})',
\]
 for $O(\pi_1(X,x)^{\red}_{\R})^{\vee}$ as in
Remark \ref{cfredpi}. This descends to a discontinuous action of $S^1$ on $ O(\pi_1(X,x)^{\red}_{\R})^{\vee}$, as in \cite{Simpson} (made explicit in the real case as \cite[Lemma \ref{mhs2-discreteact}]{mhs2}).
%\cite[Lemma 6.5]{mhs2}).  

Note, however that the circle action descends to  continuous actions on  $(E^J_{X,x})_{\RFD}, (E^J_{X,x})_{\PN}$ (which  are subalgebras of $ O(\pi_1(X,x)^{\red}_{\R})^{\vee}$, though not closed).

Continuity of the circle action ensures that the map
\[
 S^1 \by \pi_1(X,x) \to E^J_{X,x}
\]
is continuous, and hence that the induced map $ S^1 \by \pi_1(X,x) \to \pi_1(X,x)^{\red}_{\R}(\R) \subset O(\pi_1(X,x)^{\red}_{\R})^{\vee}$ is continuous. Thus a continuous circle action on  $E^J_{X,x}$ gives rise to a pure Hodge structure on $ \pi_1(X,x)^{\red}$ in the sense of \cite[\S 5]{Simpson}, but without needing to refer to $\pi_1(X,x)$ itself. This suggests that the most natural definition of a pure non-abelian Hodge structure is a continuous circle action on a pro-$C^*$-bialgebra.
\end{remark}

\begin{example}\label{circleEJabgp}
Lemma \ref{ablemma} gives an isomorphism
\[
(E^J_{X,x})^{\ab} = \{f \in C(\H^1(X, \Cx^*),\Cx)\,:\, f(\bar{\rho})= \overline{f(\rho)}\},
\]
and 
\ref{EJabgp} then shows that the grouplike elements are $G((E^J_{X,x})^{\ab}) \cong \H_1(X,\Z\oplus\R)$. To describe the circle action on $(E^J_{X,x})^{\ab} $, it thus suffices to describe it on the space $\H^1(X, \Cx^*) $ of one-dimensional complex representations. 

Taking the decomposition $D= d^++\vartheta$ of  a flat connection $D$ into anti-hermitian and hermitian parts, note that we must have $(d^+)^2= \vartheta^2=0$, because commutativity of $\Cx^*$ ensures that commutators vanish, everything else vanishing by hypothesis. This decomposition therefore corresponds to the isomorphism $\H^1(X, \Cx^*)\cong \H^1(X,\S^1) \by \H^1(X,\R)$. Since the action is given by $\vartheta \mapsto t \dmd \vartheta$ for $t \in S^1$, it follows that the $S^1$-action is just the $\dmd$-action on $\H^1(X,\R)$.

On $G((E^J_{X,x})^{\ab}) \cong \H_1(X,\Z)\oplus \H_1(X,\R)$, this means that the circle action fixes $\H_1(X,\Z)$ and acts with the $\dmd$-action on $\H_1(X,\R)= \H_1(X,\R)^{\vee}$.
\end{example}

\begin{definition}\label{DSdef}
 Recall from \cite[Definition 2.6]{williamsXprod} that a  $C^*$-dynamical system  is a triple $(A, G, \alpha)$, for $G$ a locally compact topological group,  $A$ a $C^*$-algebra, and  $\alpha$ a continuous action of $G$ on a  $A$.
\end{definition}

\begin{lemma}
 The circle action ${\alpha}$ of Proposition \ref{circleX} gives rise to  a pro-$C^*$-dynamical system $(E^J_{X,x}, S^1, {\alpha})$, i.e. an inverse system of $C^*$-dynamical systems.
\end{lemma}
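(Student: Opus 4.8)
The plan is to realise the abstract pro-$C^*$ circle action of Proposition \ref{circleX} as a genuine inverse system of $C^*$-dynamical systems, by saturating the defining inverse system under the $S^1$-action so that each term becomes $S^1$-invariant.

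First I would use strict pro-representability (Proposition \ref{pluriprorep}) to write $E^J_{X,x} = \Lim_{\lambda} A_{\lambda}$ with each $A_{\lambda}$ a $C^*$-algebra and all transition maps surjective; write $\pi_{\lambda}\co E^J_{X,x} \to A_{\lambda}$ for the projection and $I_{\lambda} := \ker \pi_{\lambda}$. The action $\alpha\co S^1 \to \Aut_{\pro(C^*\Alg)}(E^J_{X,x})$ need not preserve the individual $I_{\lambda}$, so I would replace each by its $S^1$-saturation $J_{\lambda} := \bigcap_{g \in S^1} \alpha_g(I_{\lambda})$, which is by construction an $S^1$-invariant closed pro-ideal contained in $I_{\lambda}$.

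The key step is to show that $B_{\lambda} := E^J_{X,x}/J_{\lambda}$ is an honest $C^*$-algebra carrying a point-norm continuous $S^1$-action, so that $(B_{\lambda}, S^1, \bar{\alpha}_{\lambda})$ is a $C^*$-dynamical system in the sense of Definition \ref{DSdef}. For this I would introduce the $*$-homomorphism $\Phi\co E^J_{X,x} \to C(S^1, A_{\lambda})$, $a \mapsto (g \mapsto \pi_{\lambda}(\alpha_{g^{-1}}(a)))$. Pointwise continuity of $\alpha$ (Proposition \ref{circleX}) guarantees that each $\Phi(a)$ is genuinely a norm-continuous $A_{\lambda}$-valued function, while $\ker \Phi = J_{\lambda}$. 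Since $C(S^1, A_{\lambda})$ is a $C^*$-algebra, $\Phi$ factors through some term $A_{\mu}$ of the inverse system (Definition \ref{cbdef}), and Lemma \ref{Cstarimage} then identifies $B_{\lambda} \cong \im \Phi$ with a $C^*$-algebra. A direct computation gives $\Phi \circ \alpha_h = L_h \circ \Phi$, where $L_h f(g) = f(h^{-1}g)$ is left translation on $C(S^1, A_{\lambda})$; as left translation is point-norm continuous on $C(S^1,-)$ by uniform continuity on the compact group $S^1$, the descended action $\bar{\alpha}_{\lambda}$ on $B_{\lambda}$ is continuous.

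It then remains to assemble these into an inverse system with limit $E^J_{X,x}$. Since $I_{\mu} \subseteq I_{\lambda}$ forces $J_{\mu} \subseteq J_{\lambda}$, the quotient maps $B_{\mu} \to B_{\lambda}$ are defined and, being induced by the identity on $E^J_{X,x}$ between two $S^1$-invariant quotients, are automatically $S^1$-equivariant; as $J_{\lambda} \subseteq I_{\lambda}$ the system $\{B_{\lambda}\}$ is cofinal among the $\{A_{\lambda}\}$, and $\bigcap_{\lambda} J_{\lambda} \subseteq \bigcap_{\lambda} I_{\lambda} = 0$, so $\Lim_{\lambda} B_{\lambda} = E^J_{X,x}$ compatibly with $\alpha$. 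I expect the main obstacle to be exactly the verification that each saturated quotient $B_{\lambda}$ is a genuine $C^*$-algebra rather than merely a pro-$C^*$-algebra, and that the descended action is norm-continuous rather than only continuous for the projective-limit topology; this is where compactness of $S^1$ and the pointwise continuity of $\alpha$ are both essential, channelled through the embedding into $C(S^1, A_{\lambda})$.
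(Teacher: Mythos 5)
Your proposal is correct and is essentially the paper's own proof: the paper likewise writes $E^J_{X,x}=\Lim_i E_i$, forms the map $C(S^1,h_i)\circ\alpha\co E^J_{X,x}\to C(S^1,E_i)$ (your $\Phi$), takes its image in $C(S^1,E_i)$ as the $S^1$-invariant $C^*$-algebra term $E_{\alpha(i)}$ with translation supplying the point-norm continuous action, and notes that the resulting inverse system again has limit $E^J_{X,x}$. The only cosmetic difference is that the paper defines $E_{\alpha(i)}$ as the \emph{closure} of that image, which sidesteps your factorisation-through-$A_\mu$ argument that the image is already closed; your saturated ideal $J_\lambda$ is exactly the kernel of the paper's map, so the two constructions coincide.
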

\begin{proof}
 Since $E^J_{X,x}$ is a pro-$C^*$-algebra, we may write it as an inverse system $E^J_{X,x}= \Lim_i E_i$, for $C^*$-algebras $E_i$. The circle action then sends the structure map $h_i \co E^J_{X,x} \to E_i$ to the map $C(S^1, h_i) \circ {\alpha} \co E^J_{X,x} \to C(S^1,E_i)$, and evaluation at $1 \in S^1$ recovers $h_i$. We may therefore set $E_{{\alpha}(i)}$ to be the closure of the image of $E^J_{X,x} \to C(S^1,E_i)$, and observe that $E_{{\alpha}(i)}$ is $S^1$-equivariant, with $E^J_{X,x}= \Lim_i E_{{\alpha}(i)}$. 

Thus $(E^J_{X,x}, S^1, {\alpha})= \Lim_i (E_{{\alpha}(i)},S^1, {\alpha})$ is a pro-$C^*$-dynamical system.
\end{proof}

The following is taken from \cite[Lemma 2.27]{williamsXprod}:
\begin{definition}
 Given a  $C^*$-dynamical system  $(A, G, \alpha)$ and $f \in C_c(G,A)$, define 
\[
 \|f\|:= \sup\{ \|\pi \rtimes U(f)\|\co (\pi, U) \text{ a covariant representation of } (A, G, \alpha)\}.
\]
Then $\|-\|$  is called the universal norm, and dominated by $\|-\|_1$.

The completion of $C_c(G,A)$ with respect to $\|-\|$ is the crossed product of $A$ by $G$, denoted $A \rtimes_{\alpha}G$.
\end{definition}

\begin{definition}
Define a polarised real Hilbert variation of Hodge structures  of weight $n$ on $X$ to be a real  local system $\vv$, with a pluriharmonic metric on $\sA^0_X(\vv)$,  equipped with a Hilbert space decomposition
\[
 \sA^0_X(\vv)\ten \Cx = \hat{\bigoplus}_{p+q=n} \sV^{pq},       
\]
(where $\hat{\bigoplus}$ denotes Hilbert space direct sum), with $\overline{\sV^{pq}}= \sV^{qp}$, and satisfying the conditions
\[
\pd: \sV^{pq} \to \sV^{pq}\ten_{\sA^0_X(\Cx)}\sA^{10}_X, \quad \bar{\theta}: \sV^{pq} \to \sV^{p+1,q-1}\ten_{\sA^0_X(\Cx)}\sA^{01_X},
\]
for the decomposition
 $D= \pd +\bar{\pd}+\theta + \bar{\theta}$ of Definition \ref{Ddecomp}.
\end{definition}

\begin{proposition}\label{VHSsemidirect}
Real Hilbert space representations of the non-unital pro-$C^*$-algebra  $E^J_{X,x}\rtimes_{\alpha}S^1$ correspond to framed weight $0$ polarised real Hilbert variations of Hodge structure.
\end{proposition}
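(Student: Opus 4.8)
The plan is to use the universal property of the crossed product to reduce the statement to covariant representations, and then to match the auxiliary unitary $S^1$-representation with the Hodge grading on $\vv$.

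First, a non-degenerate real Hilbert space representation of $E^J_{X,x}\rtimes_{\alpha}S^1$ is the same datum as a covariant representation $(\pi, U)$ of the pro-$C^*$-dynamical system $(E^J_{X,x}, S^1, {\alpha})$: a $*$-representation $\pi\co E^J_{X,x}\to L(V)$ together with a strongly continuous orthogonal representation $U\co S^1 \to U(V)$, subject to the covariance relation $U_t\,\pi(a)\,U_t^{-1}=\pi(\sigma_t(a))$, where $\sigma$ is the automorphism action underlying Proposition \ref{circleX}. By Example \ref{plurilocsys}, $\pi$ is precisely a framed pluriharmonic local system $\vv$ in Hilbert spaces isometric to $V$, with pluriharmonic metric and connection $D=d^{+}+\vartheta$. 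Since $S^1$ is compact abelian, the spectral decomposition of $U$ gives an orthogonal Hilbert space decomposition $V\ten\Cx=\hat{\bigoplus}_{m\in\Z}V_m$, with $U_t$ acting on $V_m$ as $t^m$; because $U$ is orthogonal, complex conjugation interchanges $V_m$ and $V_{-m}$.

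Next I would translate the covariance relation. By Lemma \ref{functcircle}, $\pi\circ\sigma_t$ is the representation ${\alpha}_t(\pi)$, whose connection is $d^{+}+t\dmd\vartheta$; hence covariance reads $U_tDU_t^{-1}=d^{+}+t\dmd\vartheta$ with $U_t$ a constant (frame) gauge transformation. Comparing anti-self-adjoint and self-adjoint parts, which are preserved by unitary conjugation, yields $U_td^{+}U_t^{-1}=d^{+}$ and $U_t\vartheta U_t^{-1}=t\dmd\vartheta$. Decomposing $d^{+}=\pd+\bar{\pd}$ and $\vartheta=\theta+\bar{\theta}$ into $(1,0)$- and $(0,1)$-types as in Definition \ref{Ddecomp}, and using $t\dmd\theta=t\theta$, $t\dmd\bar{\theta}=t^{-1}\bar{\theta}$, the first identity says $\pd,\bar{\pd}$ preserve the weight grading while the second says $\theta$ raises the weight by one and $\bar{\theta}$ lowers it by one. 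Setting $\sV^{p,-p}:=V_{-p}$ then converts these statements verbatim into the variation-of-Hodge-structure conditions $\pd\co\sV^{pq}\to\sV^{pq}\ten\sA^{10}_X$ and $\bar{\theta}\co\sV^{pq}\to\sV^{p+1,q-1}\ten\sA^{01}_X$ (together with the conjugate conditions on $\bar{\pd},\theta$), while the reality of $U$ gives $\overline{\sV^{pq}}=\sV^{qp}$. Thus $(\pi,U)$ determines a framed weight $0$ polarised real Hilbert variation of Hodge structure.

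Conversely, given such a variation, the pluriharmonic data recovers $\pi$ by Example \ref{plurilocsys}, and defining $U_t$ to act as $t^{-p}$ on $\sV^{p,-p}$ produces an orthogonal $S^1$-representation (orthogonality follows from $\overline{\sV^{pq}}=\sV^{qp}$, strong continuity being automatic); the Hodge conditions are then exactly the covariance relation, by the computation above, so $(\pi,U)$ is covariant. The two assignments are mutually inverse and respect framed morphisms, giving the desired correspondence. The main obstacle is the bookkeeping in the previous paragraph: one must check that conjugation by the constant unitary $U_t$ genuinely splits $D$ into its weight-preserving part $d^{+}$ and its weight-shifting part $\vartheta$ (equivalently, that $U_t$ is $d^{+}$-parallel, so the weight spaces form $d^{+}$-flat subbundles $\sV^{pq}$), and that the form-type and weight bookkeeping aligns the $\dmd$-twist $t\dmd\vartheta$ with the Hodge shifts with the correct signs. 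The functional-analytic input, that a strongly continuous unitary representation of $S^1$ on the possibly infinite-dimensional real Hilbert space $V$ decomposes as the Hilbert space sum $\hat{\bigoplus}_m V_m$ matching the $\hat{\bigoplus}$ of the definition, is standard but should be invoked explicitly.
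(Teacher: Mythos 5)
Your first step --- identifying representations of $E^J_{X,x}\rtimes_{\alpha}S^1$ with covariant pairs $(\pi,U)$, $U_t\pi(a)U_t^{-1}=\pi(\alpha_t(a))$ --- agrees with the paper, which quotes exactly this fact about crossed products from \cite{williamsXprod}. The gap is in how you translate the covariance relation into geometry. You write that covariance ``reads $U_tDU_t^{-1}=d^{+}+t\dmd\vartheta$ with $U_t$ a constant (frame) gauge transformation'', and you then take the Hodge bundles to be the constant subspaces $V_m$ of the fibre ($\sV^{p,-p}:=V_{-p}$). This step is not justified, and as stated it is not even well defined: $U_t$ is a unitary of the single Hilbert space $H\cong \vv_x$, and a constant operator on the fibre does not act on the bundle $\sA^0_X(\vv)$ at all (it would have to commute with the monodromy, or be extended by some parallel transport you have not constructed). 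Relatedly, the Hodge subbundles of a variation of Hodge structure are \emph{not} constant subbundles --- their variation over $X$ is the whole point --- so the identification $\sV^{pq}=V_m$ cannot be correct in general. You flag precisely this as ``the main obstacle'' to be checked, but checking it \emph{is} the proof, and it cannot be done from the constant unitary alone.

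What the covariance relation actually gives is an equality $\alpha(\rho)=\ad_u(\rho)$ of \emph{isomorphism classes} of framed pluriharmonic bundles, i.e.\ an isomorphism $\alpha(\rho)\cong\rho$ in the groupoid $\cR^J_X(C(S^1,L(H)))$ (the circle action twists the connection but keeps the framing, while $\ad_{U_t}$ keeps the connection but moves the framing). Since isomorphisms in $\cR^J_X$ are unitary gauge transformations over $X$, this produces a continuous representation $g\co S^1 \to \Gamma(X, U(\sA^0_X(\vv)))$ with $\alpha(D)\circ g = g\circ D$ and $g_x=u$; this is the step your argument is missing. The fibrewise eigenspace decomposition of the bundle under $g$ (not of the fibre $V$ under $u$) then yields smooth subbundles $\sV^{pq}$ varying over $X$, with $\overline{\sV^{pq}}=\sV^{qp}$, and comparing anti-self-adjoint and self-adjoint parts of $\alpha(D)\circ g = g\circ D$ gives the conditions on $\pd$ and $\bar\theta$ --- this part of your bookkeeping, and your converse direction, then go through essentially as you describe, once $U_t$ is replaced by the gauge transformation $g(t)$ and $V_m$ by its eigen-subbundles.
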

\begin{proof}
By \cite[Proposition 2.29]{williamsXprod}, a  $*$-representation $E^J_{X,x}\rtimes_{\alpha}S^1 \to L(H) $ for a Hilbert space $H$ consists of:
\begin{enumerate}
 \item a  $*$-representation $\rho \co E^J_{X,x} \to L(H)$, and
\item a continuous representation $u\co S^1 \to U(H)$
\end{enumerate}
such that
\[
 \rho({\alpha}(t,a)) = u(t)\rho(a)u(t)^{-1}       
\]
for all $a \in E^J_{X,x}, t \in S^1$.

In other words, in $\oR^J_{X,x}(C(S^1, L(H)))$, we have $\alpha(\rho)= u\rho u^{-1}$, so $\alpha(\rho)$ and $\rho$ are isomorphic in the groupoid $\cR^J_X(C(S^1, L(H)))$.

Now, by definition of $E^J_{X,x} $, the representation $\rho$ corresponds to a real  local system $\vv$,  with a pluriharmonic metric on $\sA^0_X(\vv)$ and a Hilbert space isomorphism $f\co \vv_x \to H$. The representation $\alpha(\rho)$ corresponds to the connection   $\alpha(D):= d^++ t \dmd \vartheta$ on $\sA^0_X(C(S^1,\vv))$ for the standard co-ordinate $t\co S^1 \to \Cx$,  together  with framing $f$. 

The condition that $\alpha(\rho)$ and $\rho$ are isomorphic then gives us a unitary gauge transformation $g$ between them. In other words, we have a continuous representation $g \co S^1 \to \Gamma(X, U(\sA^0_X(\vv)))$ with $\alpha(D) \circ g= g \circ D$. We must also have $g_x=u$. 

Thus $g$ gives us a Hilbert space decomposition
\[
 \sA^0_X(\vv)\ten \Cx = \hat{\bigoplus}_{p+q=0} \sV^{pq},       
\]
  with $\overline{\sV^{pq}}= \sV^{qp}$, and $g(t)$  acting on $\sV^{pq}$ as multiplication by $t^{p-q}$. The condition  $\alpha(D) \circ g= g \circ D$ then forces the   
conditions
\[
\pd: \sV^{pq} \to \sV^{pq}\ten_{\sA^0_X(\Cx)}\sA^{10}_X, \quad \bar{\theta}: \sV^{pq} \to \sV^{p+1,q-1}\ten_{\sA^0_X(\Cx)}\sA^{01_X},
\]
as required.     
\end{proof}

\begin{remark}
Given any $E^J_{X,x}$-representation $V$, \cite[Example 2.14]{williamsXprod} gives an $E^J_{X,x} \rtimes S^1$-representation  $\Ind_e^{S^1}V$. Its  underlying Hilbert space is just the space $L^2(S^1, V)$ of $L^2$-measurable $V$-valued forms on the circle with respect to Haar measure. For the pluriharmonic local system $\vv$ associated to $V$, this therefore gives a weight $0$ variation $\Ind_e^{S^1}\vv$ of Hodge structures on $X$, with 
$\sA^0_X(\Ind_e^{S^1}\vv)= \sA^0_X(L^2(S^1, \vv))$.
\end{remark}

\section{Hodge decompositions on cohomology}\label{cohosn}

Fix a compact K\"ahler manifold $X$.

\begin{definition}
Given a pluriharmonic local system  $\vv$ in real Hilbert spaces on $X$ (as in Example \ref{plurilocsys}), the inner product on $\vv$ combines with the
K\"ahler metric on $X$ to give  inner products $\<-,-\>$
on the spaces  $A^n(X,\vv)$ for all $n$. Given an operator $F$ on $A^*(X,\vv)$, we denote the adjoint operator by $F^*$. Let $\Delta= DD^*+D^*D$.
\end{definition}

\subsection{Sobolev spaces}

Note that in general, the Laplacian $\Delta$ is not a bounded operator in the $L^2$ norm. We therefore introduce a system of Sobolev norms:

\begin{definition}
Define  $L^n_{(2),s}(X, \vv)$ to be the completion of $A^n(X,\vv)$ with respect to the inner product $\<v,w\>_s:= \<v, (I + \Delta)^sw\>$.
\end{definition}

Note that we then have bounded operators $D,D^c \co L^n_{(2),s}(X, \vv)\to L^{n+1}_{(2),s-1}(X, \vv)$, $D^*,D^{c*}\co  L^n_{(2),s}(X, \vv)\to L^{n-1}_{(2),s-1}(X, \vv)$ and $\Delta\co L^n_{(2),s}(X, \vv)\to L^n_{(2),s-2}(X, \vv)$.

\begin{proposition}\label{sobolevtower}
The maps  $(I+\Delta)^k \co L^n_{(2),s}(X, \vv)\to L^n_{(2),s-2k}(X, \vv)$ are Hilbert space  isomorphisms, and there are canonical  inclusions $L^n_{(2),s}(X, \vv)\subset L^n_{(2),s-1}(X, \vv)$.
\end{proposition}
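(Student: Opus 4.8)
The plan is to recognise $A := I + \Delta$ as a positive self-adjoint operator on the Hilbert space $H := L^n_{(2),0}(X,\vv)$ and then read off both statements from its functional calculus. On the dense domain $A^n(X,\vv)$ the operator $A$ is symmetric, and since $\Delta = DD^* + D^*D \ge 0$ we have $A \ge I$. First I would establish that $A$ is essentially self-adjoint, with closure $\bar A \ge I$; this is the one genuinely analytic input, and it is in fact needed even to make sense of $\langle v,(I+\Delta)^s w\rangle$ for non-integer $s$. The principal symbol of $\Delta$ is the scalar Laplacian tensored with $\id_{\vv}$, so the local elliptic estimates and the associated regularity (smoothness of $(I+\Delta)^{-1}w$ for smooth $w$) carry over with Hilbert-space coefficients, and compactness of $X$ then gives essential self-adjointness by the usual Gaffney-type argument. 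Once $\bar A$ is in hand, the Borel functional calculus furnishes a positive self-adjoint $\bar A^{\,s}$ for every real $s$, bounded exactly when $s \le 0$, with $\bar A^{-1}$ a contraction, and it is convenient to pass to a spectral model in which $\bar A$ is multiplication by a measurable $\phi \ge 1$ on some $L^2(M,\mu)$.

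With this the first assertion becomes a formal computation. For $v,w \in A^n(X,\vv)$, using that the smooth forms are a core for $\bar A$ and that powers of $\bar A$ commute,
\[
\langle A^k v, A^k w\rangle_{s-2k} = \langle A^k v, \bar A^{\,s-2k} A^k w\rangle = \langle v, \bar A^{\,s} w\rangle = \langle v,w\rangle_s,
\]
so $(I+\Delta)^k$ is isometric from $(A^n(X,\vv),\langle-,-\rangle_s)$ to $(A^n(X,\vv),\langle-,-\rangle_{s-2k})$ and extends to an isometry $L^n_{(2),s}(X,\vv) \to L^n_{(2),s-2k}(X,\vv)$. Surjectivity comes from the inverse $\bar A^{-k}$: by elliptic regularity $\bar A^{-k}$ preserves $A^n(X,\vv)$ and satisfies $A^k \bar A^{-k} = \id$ there, so the image is dense; an isometry with dense image between Hilbert spaces is onto, hence the extension is a Hilbert space isomorphism.

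For the inclusions, monotonicity of the functional calculus gives $\bar A^{\,s-1} \le \bar A^{\,s}$ because $\bar A \ge I$, whence $\|v\|_{s-1} \le \|v\|_s$ for $v \in A^n(X,\vv)$, and the identity extends to a norm-decreasing map $L^n_{(2),s}(X,\vv) \to L^n_{(2),s-1}(X,\vv)$. To see it is injective I would factor it as $\bar A^{-1/2}$ composed with the isometric isomorphism $(I+\Delta)^{1/2}\co L^n_{(2),s}(X,\vv) \to L^n_{(2),s-1}(X,\vv)$ supplied by the first part; since $\bar A^{-1/2}$ is injective (as $\bar A \ge I$), so is the inclusion. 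Equivalently, in the spectral model every $L^n_{(2),s}(X,\vv)$ is identified with the weighted space $L^2(M,\phi^{s}\,d\mu)$, from which both the nesting and the isomorphisms are manifest.

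The main obstacle is the first step: justifying essential self-adjointness and elliptic regularity for the Laplacian with values in an infinite-dimensional Hilbert bundle, where Rellich compactness, and hence discreteness of the spectrum, are unavailable, so one cannot simply diagonalise $\Delta$ in an orthonormal eigenbasis. Once $\bar A$ is secured as a genuine positive self-adjoint operator, the remainder is routine spectral theory, and the weighted-$L^2$ model makes the bookkeeping over all real $s$ and integer $k$ completely transparent.
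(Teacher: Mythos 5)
Your proof is correct and is essentially the paper's: the paper's entire proof is the assertion that the proofs of Proposition 2.3 and Lemma 2.4 of Dodziuk carry over to this generality, and those proofs are exactly the spectral-calculus argument you give for the self-adjoint closure of $I+\Delta$, including the weighted-$L^2$ spectral model in which the isomorphisms and the injectivity of the inclusions become manifest. The one substantive input --- essential self-adjointness of $\Delta$, which Dodziuk obtains from Gaffney's theorem on the complete covering manifold, but which in this setting (arbitrary Hilbert-space coefficients, no covering available) must instead come from $L^2$ elliptic regularity for operators with scalar principal symbol over the compact base --- is precisely the step you isolate and sketch, i.e.\ the actual content hidden in the paper's phrase ``carry over to this generality''.
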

\begin{proof}
 The proofs of \cite[Proposition 2.3 and Lemma 2.4]{dodziuk} carry over to this generality.
\end{proof}

\begin{definition}
Define $\cH^n (X,\vv) \subset A^n(X,\vv)$ to consist of forms with $\Delta\alpha=0$. Regard this as a pre-Hilbert space with the inner product $\<-,-\>$.
\end{definition}
The following implies that $\cH^n (X,\vv)$ is in fact a Hilbert space:

\begin{lemma}
The inclusions
\[
 \cH^n (X,\vv) \to \{\alpha \in L^n_{(2),0}\,:\, D\alpha =D^*\alpha=0\} \to \{\alpha \in L^n_{(2),0}\,:\, \Delta\alpha=0\}
\]
are Hilbert space isomorphisms.
 \end{lemma}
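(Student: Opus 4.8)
The plan is to treat all three spaces as subspaces of $L^n_{(2),0}(X,\vv)$ carrying the restriction of the $L^2$ inner product $\langle-,-\rangle$, so that the two maps are genuine inclusions, automatically isometric onto their images. The two right-hand spaces are the kernels of the bounded operators $D\oplus D^*\co L^n_{(2),0}\to L^{n+1}_{(2),-1}\oplus L^{n-1}_{(2),-1}$ and $\Delta\co L^n_{(2),0}\to L^n_{(2),-2}$, hence closed subspaces of the Hilbert space $L^n_{(2),0}$ and so complete; this is what will make $\cH^n(X,\vv)$ a Hilbert space once the maps are shown to be bijective. It therefore suffices to prove that each inclusion is surjective, and the whole argument reduces to the integration-by-parts identity for $\Delta$ together with the elliptic regularity packaged in Proposition \ref{sobolevtower}.

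First I would dispose of the easy inclusions. If $D\alpha=0$ and $D^*\alpha=0$ then $\Delta\alpha=DD^*\alpha+D^*D\alpha=0$, so the second map is well defined; and for smooth $\alpha\in\cH^n(X,\vv)$ the identity
\[
\langle \Delta\alpha,\alpha\rangle=\|D\alpha\|^2+\|D^*\alpha\|^2,
\]
valid on the closed manifold $X$ for smooth forms, shows that $\Delta\alpha=0$ forces $D\alpha=D^*\alpha=0$, so the first map is well defined. Both are injective, being inclusions.

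The content is surjectivity, which I would obtain from a regularity bootstrap. The key step is: if $\alpha\in L^n_{(2),s}$ and $\Delta\alpha=0$ in $L^n_{(2),s-2}$, then $\alpha\in L^n_{(2),s+2}$. To see this, use that $(I+\Delta)\co L^n_{(2),s+2}\to L^n_{(2),s}$ is an isomorphism (Proposition \ref{sobolevtower}), so there is a unique $\gamma\in L^n_{(2),s+2}$ with $(I+\Delta)\gamma=\alpha$; comparing in $L^n_{(2),s-2}$ with $(I+\Delta)\alpha=\alpha+\Delta\alpha=\alpha$ and using injectivity of $(I+\Delta)\co L^n_{(2),s}\to L^n_{(2),s-2}$ gives $\gamma=\alpha$, whence $\alpha\in L^n_{(2),s+2}$. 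Iterating from $s=0$ and combining with the inclusions $L^n_{(2),s}\subset L^n_{(2),s-1}$ of Proposition \ref{sobolevtower} places any $\alpha\in\ker\Delta$ in $\bigcap_s L^n_{(2),s}(X,\vv)$. For the second map, once $\alpha\in L^n_{(2),2}$ the identity above extends by continuity and density of $A^n(X,\vv)$ in $L^n_{(2),2}$, so $\Delta\alpha=0$ yields $D\alpha=D^*\alpha=0$; for the first map, Sobolev embedding identifies $\bigcap_s L^n_{(2),s}(X,\vv)$ with the smooth forms $A^n(X,\vv)$, so such an $\alpha$ lies in $\cH^n(X,\vv)$.

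The main obstacle is that $\vv$ has infinite-dimensional Hilbert-space fibres, so the standard elliptic regularity and Sobolev embedding are not automatic; these are exactly the inputs provided in the $L^2$ setting of \cite{dodziuk}, on which Proposition \ref{sobolevtower} already rests. I would invoke that reference both for the $(I+\Delta)$-isomorphisms driving the bootstrap and for the smoothness of elements of $\bigcap_s L^n_{(2),s}(X,\vv)$.
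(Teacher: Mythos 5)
Your proof is correct and follows essentially the same route as the paper: the paper's proof is just a citation of \cite[Lemma 2.5]{dodziuk} (noting the proof "carries over"), and that proof is precisely your argument --- a harmonic $\alpha \in L^n_{(2),0}$ lies in every $L^n_{(2),s}$ by bootstrapping the $(I+\Delta)$-isomorphisms of Proposition \ref{sobolevtower}, hence is smooth, after which integration by parts on the closed manifold kills $D\alpha$ and $D^*\alpha$. The only point worth flagging is that your identification $\bigcap_s L^n_{(2),s}(X,\vv)=A^n(X,\vv)$ is Proposition \ref{sobolevlemma}, which is stated later in the paper but proved independently (it is the Global Sobolev Lemma for Hilbert-space coefficients), so the forward reference introduces no circularity.
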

\begin{proof}
 When  $\vv$ is the local system associated to the $\pi_1(X)$-representation $\ell^2(\pi_1(X,x))$, this is \cite[Lemma 2.5]{dodziuk}, but the same proof carries over. 
\end{proof}

\subsubsection{Decomposition into eigenspaces}

\begin{definition}
 Define  $T$ to be the composition of $L^n_{(2),0}(X, \vv)\xra{(I+\Delta)^{-1} }  L^n_{(2),2}(X, \vv)\into L^n_{(2),0}(X, \vv)$. This is bounded and self-adjoint, with spectrum $\sigma(T) \subset (0,1]$. Thus  the spectral decomposition gives $T= \int_{(0,1]} \lambda d\pi_{\lambda}$ for some projection-valued measure $\pi$ on $(0,1]$.
\end{definition}
 
For $S\subset [0, \infty)$ measurable, write 
\[
 \nu(S):= \pi_{\{(1+\rho)^{-1}\,:\, \rho \in S\}}.
\]
Thus
\[
 T= \int_{\rho\in [0, \infty)} (1+\rho)^{-1} d\nu_{\rho},
\]
and for  $v \in \cL_{(2),s+2}(X,\vv)$ we have
\[
 \Delta v = \int_{\rho\in [0, \infty)} \rho d\nu_{\rho}v \in \cL_{(2),s}(X,\vv).
\]

If we set $E^n(S):= \nu(S)\cL_{(2),0}^n(X,\vv)$, observe that $E^n$ defines a measurable family of Hilbert spaces on $[0, \infty)$, 
 and that we have direct integral decompositions 
\[
  \cL_{(2),s}^n(X,\vv)= \int^{\oplus}_{\rho \in [0, \infty)} (1+\rho)^{-s/2} E^n_{\rho}.
\]

\subsubsection{Harmonic decomposition of eigenspaces}\label{eigensn}

Since the operators $D,D^c, D^*, D^{c*}$ commute with $\Delta$, they descend to each graded Hilbert space $E^n(S)$, provided $S$ is bounded above. 

If $S$  also has a strictly positive lower bound, then $\Delta$ is invertible on $E^n(S)$, so
\[
 E^n(S)= \Delta E^n(S).
\]
 As $\Delta = DD^*+D^*D=D^cD^{c*}+D^{c*}D^c$, with $[D,D^c]=[D^*,D^c]=[D^{c*},D]=0$, this implies that 
\begin{align*}
 E^n(S)&= DE^{n-1}(S)\oplus D^*E^{n+1}(S)= D^cE^{n-1}(S)\oplus D^{c*}E^{n+1}(S)\\
&= DD^cE^{n-2}(S)\oplus D^*D^cE^n(S)\oplus DD^{c*}E^n(S)\oplus D^*D^{c*}E^{n+2}(S).
\end{align*}
Furthermore, $D\co D^*E^n(S)\to DE^{n-1}(S)$ and $D^*\co DE^n(S)\to D^*E^{n+1}(S)$ are isomorphisms, with similar statements for $D^c$.

If $S$ is just bounded above and does not contain $0$, then the statements above still hold if we replace subspaces with their closures:

\begin{proposition}\label{Hodgedecomp0}
 There are Hilbert space decompositions
\begin{eqnarray*}
\cL^n_{(2),s}(X, \vv) &= &\cH^n(X,\vv)\oplus\overline{\Delta \cL^n_{(2),s+2}(X, \vv) } \\
 &= &\cH^n(X,\vv)\oplus\overline{D\cL^{n-1}_{(2),s+1}(X, \vv)}\oplus \overline{D^*\cL^{n+1}_{(2),s+1}(X, \vv)}\\
&=&\cH^n(X,\vv)\oplus \overline{D^c\cL^{n-1}_{(2),s+1}(X, \vv)}\oplus \overline{D^{c*}\cL^{n+1}_{(2),s+1}(X, \vv)}\\
&=&\cH^n(X,\vv)\oplus \overline{DD^c\cL^{n-2}_{(2),s+2}(X, \vv)} \oplus\overline{D^*D^c\cL^{n}_{(2),s+2}(X, \vv)}\\
&&\phantom{\cH^n(X,\vv)} \oplus \overline{DD^{c*}\cL^{n}_{(2),s+2}(X, \vv)} \oplus\overline{D^*D^{c*}\cL^{n+2}_{(2),s+2}(X, \vv)}.
\end{eqnarray*}
for all $s$.
\end{proposition}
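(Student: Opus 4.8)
The plan is to read off every decomposition from the spectral/direct-integral picture set up immediately before the statement, in which $\Delta$ acts as multiplication by $\rho$ and $\cL^n_{(2),s}(X,\vv)$ splits as the $\nu$-weighted direct integral $\int^\oplus_{\rho\in[0,\infty)}(1+\rho)^{-s/2}E^n_\rho$. The first point is that the harmonic summand is exactly the atom at the origin: $\cH^n(X,\vv)=E^n(\{0\})=\nu(\{0\})\cL^n_{(2),0}(X,\vv)$, since $\nu(\{0\})$ is the spectral projection of $T$ onto the eigenvalue $1$, i.e. $\ker\Delta$, which the preceding lemma identifies with $\cH^n$. Because the weight $(1+\rho)^{-s/2}$ equals $1$ at $\rho=0$, this summand embeds isometrically into $\cL^n_{(2),s}$ for every $s$, and it is orthogonal to the complementary part $\int^\oplus_{(0,\infty)}(1+\rho)^{-s/2}E^n_\rho$ by disjointness of spectral support. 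So it suffices to decompose this complementary part and adjoin $\cH^n$.

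First I would transfer the eigenspace decompositions of \S\ref{eigensn} to arbitrary Sobolev index. For any $S$ with $0<\inf S\le\sup S<\infty$ the weight $(1+\rho)^{-s/2}$ is bounded above and below on $S$, so $E^n(S)$ is the same underlying space inside $\cL^n_{(2),s}$ for every real $s$, and the images $DE^{n-1}(S)$, $D^*E^{n+1}(S)$, $D^cE^{n-1}(S)$, and the fourfold images all lie in $\cL^n_{(2),s}$. Crucially the orthogonality established in \S\ref{eigensn} is fibrewise (it rests on relations such as $(D^*)^2=0$ and the Kähler commutation relations holding in each fibre $E^n_\rho$), and rescaling each fibre's inner product by the positive scalar $(1+\rho)^s$ preserves orthogonality; hence every displayed decomposition of such an $E^n(S)$ holds verbatim as an orthogonal decomposition in $\langle-,-\rangle_s$.

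Next I would exhaust $(0,\infty)$ by the nested sets $S_k=[1/k,k]$. The subspaces $\nu(S_k)\cL^n_{(2),s}$ increase, with union dense in the complementary part, and the finite decompositions for the $S_k$ are mutually compatible, so closing up the union turns each into an orthogonal decomposition of the complementary part with summands $\overline{\bigcup_k DE^{n-1}(S_k)}$, $\overline{\bigcup_k D^*E^{n+1}(S_k)}$, and so on. It then remains to match these with the closures in the statement: $\bigcup_k E^{n-1}(S_k)$ is dense in the non-harmonic part of $\cL^{n-1}_{(2),s+1}$, and $D\co\cL^{n-1}_{(2),s+1}\to\cL^n_{(2),s}$ is bounded and annihilates $\cH^{n-1}$, so $D\big(\bigcup_k E^{n-1}(S_k)\big)$ is dense in $D\cL^{n-1}_{(2),s+1}$, giving $\overline{\bigcup_k DE^{n-1}(S_k)}=\overline{D\cL^{n-1}_{(2),s+1}}$. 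The same argument applied to $D^*,D^c,D^{c*}$ and to the composites $DD^c,D^*D^c,DD^{c*},D^*D^{c*}$ (each bounded between the relevant Sobolev spaces and killing the harmonic forms) yields the middle three displays, while applying it to $\Delta\co\cL^n_{(2),s+2}\to\cL^n_{(2),s}$, whose range is dense in the complementary part, yields the first display after adjoining $\cH^n$.

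The main obstacle is the failure of these ranges to be closed, forced by the continuous spectrum accumulating at $\rho=0$: on the coexact part of $E^{n-1}_\rho$ one computes $\|D\alpha\|=\sqrt{\rho}\,\|\alpha\|$, so $D$ loses a factor degenerating as $\rho\to0$ and its image acquires extra decay there, dense but not all of $\int^\oplus_{(0,\infty)}(1+\rho)^{-s/2}DE^{n-1}_\rho$. This is precisely why the closures cannot be dropped and why the clean finite-$S$ splittings of \S\ref{eigensn} must be assembled through the limiting argument above rather than asserted on the unbounded spectral interval directly. The residual points — measurability of the fibrewise operator fields and compatibility of the nested decompositions under closure — are routine once the direct-integral framework is granted.
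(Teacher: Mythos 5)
Your proof is correct, but it takes a mechanically different route from the paper's. The paper adapts the classical argument of \cite[pp94--96]{GriffithsHarris}: it introduces approximate Green's operators $G_{\eps} := \int_{(0,1-\eps]} \frac{\lambda}{1-\lambda}\, d\pi_{\lambda}$, notes that $(I+\Delta)G_{\eps}$ is bounded so that $G_{\eps}$ factors through $L^n_{(2),s+2}(X,\vv)$, and uses $\Delta G_{\eps} = \pi(0,1-\eps]$ to conclude that $\pi(\{1\}) + \Delta G_{\eps}$ converges weakly to the identity as $\eps \to 0$; since weak and norm closures of a subspace agree (Hahn--Banach), this exhibits $\cL^n_{(2),s}(X,\vv) = \cH^n(X,\vv) \oplus \overline{\Delta\cL^n_{(2),s+2}(X,\vv)}$, with the finer splittings then extracted from the factorisations $\Delta = DD^*+D^*D = D^cD^{c*}+D^{c*}D^c$ and orthogonality of the resulting images. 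You instead run the exhaustion $S_k=[1/k,k]$ through the band decompositions of \S\ref{eigensn} and convert them into the stated closures using boundedness of $D$, $D^c$, $\Delta$ and their composites between the relevant Sobolev spaces --- in effect a rigorous implementation of the sentence immediately preceding the proposition (``if $S$ is bounded above and does not contain $0$, the statements still hold after replacing subspaces with their closures''). The two proofs share the same underlying idea, since the paper's cut-off $\Delta G_{\eps}$ is exactly the spectral projection $\nu([\frac{\eps}{1-\eps},\infty))$ away from $\rho=0$; what differs is the bookkeeping, weak limits of operators versus norm-density of nested spectral subspaces. Your version delivers all four decompositions, with their mutual orthogonality, in one uniform step, and makes explicit why the closures cannot be dropped; the paper's version is shorter and produces the operators $G_{\eps}$ themselves, which are the ``non-convergent sequence of operators'' replacing the Green's operator that is alluded to in the remark following Lemma \ref{formallemma}.
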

\begin{proof}
This is essentially the Hodge Theorem, and we can construct the decomposition by a slight modification of  \cite[pp94--96]{GriffithsHarris}. 

We can define an  approximate Green's function by
\[
 G_{\eps}:= \int_{(0,1-\eps]} \frac{\lambda}{1-\lambda} d\pi_{\lambda}.
\]
Now, note that $(I+\Delta)G_{\eps}= \int_{(0,1-\eps]} \frac{1}{1-\lambda} d\pi_{\lambda}$, which is bounded, so $G_{\eps}$ is the composition of the inclusion $ L^n_{(2),s+2}(X, \vv)\into L^n_{(2),s}(X, \vv)$ with a map
\[
 G_{\eps}\co L^n_{(2),s}(X, \vv)\to  L^n_{(2),s+2}(X, \vv).
\]

Also note that $G_{\eps}$ commutes with $\Delta$, and that $\Delta G_{\eps} = \pi(0,1-\eps]= I- \pi[1-\eps,1]$. As $\eps \to 0$, this means that $\pi(1) +\Delta G_{\eps}$ converges weakly to $I$. Since $\pi(1)$ is projection onto $\cH^n(X,\vv)$, this gives the decomposition required (noting that norm closure and weak closure of a subspace are the same, by the Hahn--Banach Theorem).
\end{proof}

Now, for $v \in   D^*E^n(S)$, $\<Dv, Dv\>= \<\Delta v, v\>$, so $\|Dv\|^2/\|v\|^2$ lies in $S$. 

\begin{definition}\label{Deltahalf}
 Define $\Delta^{\half}\co \cL_{(2),s+1}^n \to  \cL_{(2),s}^n$ by
\[
 \Delta^{\half}:= \int_{\rho\in [0, \infty)} \rho^{\half} d\nu_{\rho}.
\]
\end{definition}

This gives
\begin{align*}
D\cL_{(2),s+1}^{n-1}(X,\vv)=  \Delta^{\half}\overline{D\cL_{(2),s+2}^{n-1}(X,\vv)}, \\
\ker D \cap  \cL_{(2),s}^n(X,\vv)= \overline{D\cL_{(2),s+1}^{n-1}(X,\vv)}  \oplus \cH^n(X, \vv).
\end{align*}
Thus 
\[
 \H^n\cL_{(2),s}^{\bt}(X,\vv)\cong \cH^n(X, \vv) \oplus (\overline{D\cL_{(2),s-1}^{n-1}(X,\vv)}/\Delta^{\half}\overline{D\cL_{(2),s-2}^{n-1}(X,\vv)}).
\]
There are similar statements for the operators $D^c,D^*, D^{c*}$.

\begin{definition}\label{Deltahalf2}
 Define $\Delta^{-\half}D\co \overline{D^*\cL_{(2),s+1}^n(X,\vv)} \to  \cL_{(2),s(X,\vv)}^n$ by
\[
 \Delta^{-\half}:= \int_{\rho\in (0, \infty)} \rho^{-\half} d\nu_{\rho} \circ D,
\]
and define $\Delta^{-\half}D^c\co \overline{D^{c*}\cL_{(2),s+1}^n(X,\vv)} \to  \cL_{(2),s}^n(X,\vv)$ similarly.
\end{definition}

\begin{proposition}\label{DeltahalfProp}
 The operator $\Delta^{-\half}D$ gives a Hilbert space isomorphism from the closed subspace $\overline{D^*\cL_{(2),s+1}^n(X,\vv)} $ of $\cL_{(2),s}^{n-1}(X,\vv)$ to the closed subspace $\overline{D\cL_{(2),s+1}^{n-1}(X,\vv)}  $ of $\cL_{(2),s}^{n}(X,\vv) $. 

Likewise, $\Delta^{-\half}D^c\co \overline{D^{c*}\cL_{(2),s+1}^n(X,\vv)} \to \overline{D^c\cL_{(2),s+1}^{n-1}(X,\vv)}$ is a Hilbert space isomorphism.
\end{proposition}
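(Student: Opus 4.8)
The plan is to reduce the statement to two structural facts about the Laplacian: that $D$ commutes with $\Delta$ (hence with every Borel function of $\Delta$, in particular with $\Delta^{\pm\half}$ on suitable domains), and that $\Delta$ is self-adjoint for each inner product $\<-,-\>_s$ — the latter because $\Delta$ commutes with $(I+\Delta)^s$, which also forces $D^*$ to be the $\<-,-\>_s$-adjoint of $D$. First I would record the elementary relations: since $D^2=0$ we have $(D^*)^2=0$, so for a coexact form $v\in D^*\cL^n_{(2),s+1}(X,\vv)$ we get $D^*v=0$ and hence $\Delta v=D^*Dv$, while dually for an exact form $z\in D\cL^{n-1}_{(2),s+1}(X,\vv)$ we have $Dz=0$ and $\Delta z=DD^*z$. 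In particular coexact and exact forms are orthogonal to $\cH^{\bt}(X,\vv)=\ker\Delta$, so $\Delta^{-\half}\Delta^{\half}$ acts as the identity on each of them. By Proposition \ref{Hodgedecomp0}, $\overline{D^*\cL^n_{(2),s+1}(X,\vv)}$ and $\overline{D\cL^{n-1}_{(2),s+1}(X,\vv)}$ are precisely the coexact and exact summands of $\cL^{n-1}_{(2),s}(X,\vv)$ and $\cL^n_{(2),s}(X,\vv)$, so it suffices to work on the dense subspaces of genuine coexact and exact forms and then extend by continuity.

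Next I would prove the isometry property. For coexact $v$ the commutation of $D$ with $\Delta^{-\half}$ gives $\Delta^{-\half}Dv=D\Delta^{-\half}v$, and then, using self-adjointness of $\Delta^{\pm\half}$ for $\<-,-\>_s$ together with $\Delta(\Delta^{-\half}v)=D^*D(\Delta^{-\half}v)$,
\[
\|\Delta^{-\half}Dv\|_s^2=\<\Delta^{-\half}v,\,D^*D\Delta^{-\half}v\>_s=\<\Delta^{-\half}v,\,\Delta^{\half}v\>_s=\|v\|_s^2 .
\]
Equivalently, on the spectral fibre $E^n_{\rho}$ with $\rho>0$ one has $\Delta^{-\half}D=\rho^{-\half}D$ and $\|Dv\|^2=\rho\|v\|^2$, so the factor $\rho^{-\half}$ exactly cancels and there is no divergence as $\rho\to0$; integrating over the direct-integral decomposition of \S\ref{eigensn} shows $\Delta^{-\half}D$ is bounded and isometric, hence extends to an isometry $\overline{D^*\cL^n_{(2),s+1}(X,\vv)}\to\cL^n_{(2),s}(X,\vv)$ with closed image. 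The same computation with $D^*$ in place of $D$ shows $\Delta^{-\half}D^*$ is isometric on exact forms.

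Then I would identify the image by exhibiting the inverse. Using $\Delta=D^*D$ on coexact forms and $\Delta=DD^*$ on exact forms, a direct computation gives
\[
(\Delta^{-\half}D^*)(\Delta^{-\half}D)=\Delta^{-1}D^*D=\id \ \text{ on coexact forms},
\]
\[
(\Delta^{-\half}D)(\Delta^{-\half}D^*)=\Delta^{-1}DD^*=\id \ \text{ on exact forms},
\]
so $\Delta^{-\half}D$ and $\Delta^{-\half}D^*$ are mutually inverse bijections between the coexact subspace $\overline{D^*\cL^n_{(2),s+1}(X,\vv)}$ and the exact subspace $\overline{D\cL^{n-1}_{(2),s+1}(X,\vv)}$. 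Being isometric bijections, they are Hilbert space isomorphisms, which is the first assertion. The statement for $D^c$ is then formally identical: one has $(D^c)^2=0$ and $\Delta=D^cD^{c*}+D^{c*}D^c$ as recorded in \S\ref{eigensn}, so the same three steps with $D^c,D^{c*}$ in place of $D,D^*$ apply verbatim.

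The hard part will not be the algebra, which is entirely formal once the identities above are in place, but the functional-analytic housekeeping: verifying that the unbounded operators $\Delta^{\pm\half}$ genuinely restrict to \emph{bounded} maps between the indicated Sobolev levels — in particular that the weight $\rho^{-\half}$ produces no blow-up at the bottom of the spectrum, which is exactly where exactness/coexactness of the arguments is essential — and that the commutation $D\Delta^{-\half}=\Delta^{-\half}D$, the passage to $\<-,-\>_s$-adjoints, and the fibrewise direct-integral manipulations are all legitimate on the relevant dense domains. All of this is underwritten by the spectral machinery and the decompositions already established in Proposition \ref{Hodgedecomp0} and \S\ref{eigensn}.
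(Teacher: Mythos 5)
Your proposal is correct and takes essentially the same route as the paper: both rest on the isometry computation $\<\Delta^{-\half}Da,\,\Delta^{-\half}Db\>_s=\<a,b\>_s$ for coexact forms, which follows from $D^*Da=\Delta a$ (valid since $D^*a=0$), commutation of $D,D^*$ with spectral functions of $\Delta$, and self-adjointness with respect to $\<-,-\>_s$, followed by passage to Hilbert space completions. Your extra step of exhibiting $\Delta^{-\half}D^*$ as an explicit two-sided inverse is a welcome refinement that makes surjectivity onto $\overline{D\cL_{(2),s+1}^{n-1}(X,\vv)}$ explicit, where the paper leaves this implicit in the phrase ``taking Hilbert space completions then gives the required result,'' but it is not a different method.
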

\begin{proof}
We prove this for the first case, the second being entirely similar.
Given  $a, b \in D^*A^n(X,\vv)$, we have
\begin{align*}
 \<\Delta^{-\half}Da, \Delta^{-\half}Db\>_s &= \< (I+\Delta)^s \Delta^{-\half}Da, \Delta^{-\half}Db\>\\
&= \< (I+\Delta)^s \Delta^{-1} D^*Da, b\>\\
&= \< (I+\Delta)^s a, b\>= \<a,b\>_s,
\end{align*}
since $D^*a=0$ gives $D^*Da = \Delta a$.
Taking Hilbert space completions with respect to  $\<-,-\>_s$ then gives the required result.
\end{proof}

\subsection{The Hodge decomposition and  cohomology}

\begin{proposition}\label{sobolevlemma}
The nested intersection $\bigcap_s L^p_{(2),s}(X,\vv) $ is the space $A^p(X,\vv)$ of $\C^{\infty}$ $\vv$-valued $p$-forms.  
\end{proposition}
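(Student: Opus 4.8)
The plan is to prove the two inclusions separately, the interesting content being a Sobolev-embedding statement for the elliptic operator $\Delta$. The inclusion $A^p(X,\vv) \subseteq \bigcap_s L^p_{(2),s}(X,\vv)$ is immediate: for a smooth form $\omega$ and integer $s$, the form $(I+\Delta)^s\omega$ is again smooth, hence square-integrable on the compact manifold $X$, so $\|\omega\|_s^2 = \<\omega, (I+\Delta)^s\omega\> < \infty$ and $\omega \in L^p_{(2),s}(X,\vv)$. Since the spaces are nested (Proposition \ref{sobolevtower}), it suffices to let $s$ range over the positive integers, which is cofinal, giving $\omega$ in the intersection.

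The substance is the reverse inclusion. First I would record that $\Delta = DD^*+D^*D$ is a second-order differential operator on $\vv$-valued forms with \emph{scalar} principal symbol $|\xi|^2\id$. Indeed, in the decomposition $D = d^+ + \vartheta$, the part $d^+$ is a metric connection and so has the same principal symbol $\xi\wedge(-)$ as the exterior derivative, while $\vartheta$ is an endomorphism-valued $1$-form and hence a zeroth-order (algebraic) operator; thus $\sigma(D)=\xi\wedge(-)$, $\sigma(D^*)=-\iota_\xi$, and $\{\xi\wedge(-),\iota_\xi\}=|\xi|^2$. Consequently, over any coordinate chart trivialising $\vv$ with fibre the Hilbert space $V$, one has $\Delta = \Delta_0\otimes \id_V + (\text{order}\le 1)$, where $\Delta_0$ is the ordinary scalar Hodge Laplacian. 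In particular $\Delta$ is elliptic and the fibre $V$ enters only as a passive coefficient space.

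Next I would compare the intrinsic norms $\|\omega\|_s = \|(I+\Delta)^{s/2}\omega\|_{L^2}$ with the standard Sobolev norms $\|\cdot\|_{W^s}$ built from covariant derivatives of $\vv$-valued forms. The G\aa rding inequality for $\Delta$ provides, for each integer $s$, an estimate $\|\omega\|_{W^{s+2}} \le C\big(\|\Delta\omega\|_{W^s} + \|\omega\|_{W^s}\big)$, which together with the trivial bound $\|\Delta\omega\|_{W^s}\le \|\omega\|_{W^{s+2}}$ yields, inductively, the equivalence of $\|\cdot\|_s$ with $\|\cdot\|_{W^s}$ on the compact manifold $X$. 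These elliptic estimates are local and, because the leading symbol is scalar, reduce coordinate-wise to the classical scalar estimates applied to $V$-valued functions; their $V$-valued versions hold verbatim since one only ever invokes the norm of $V$, exactly as in the passage from \cite{dodziuk} to the present generality used throughout this section.

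Finally I would invoke the Sobolev embedding theorem for $V$-valued forms: for $s > k + \tfrac{1}{2}\dim X$ there is a continuous inclusion of $W^s$ into the space of $k$-times continuously differentiable $\vv$-valued $p$-forms. The Fourier-analytic proof of Sobolev embedding applies unchanged to Hilbert-space-valued functions, since it only estimates $\|\hat{\omega}(\xi)\|_V$. Hence any $\omega \in \bigcap_s L^p_{(2),s}(X,\vv) = \bigcap_s W^s(X,\vv)$ is of class $C^k$ for every $k$, i.e. $\omega \in A^p(X,\vv)$, completing the argument. The main obstacle is precisely the infinite-dimensional, Hilbert-space-valued regularity theory; the resolution is the observation above that $\Delta$ has scalar principal symbol, so that the infinite-dimensional fibre never participates in the analysis and everything reduces to the classical scalar case.
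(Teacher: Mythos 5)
Your proof is correct and follows essentially the same route as the paper, which simply observes that the statement is the Global Sobolev Lemma and that "the same proof applies for Hilbert space coefficients." Your writeup just makes explicit the content behind that remark—namely that $\Delta$ has scalar principal symbol, so the elliptic estimates and the Sobolev embedding carry over verbatim to $\vv$-valued forms—which is precisely the justification the paper leaves implicit.
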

\begin{proof}
When $\vv$ is finite-dimensional, this is the Global Sobolev Lemma, but the same proof applies for Hilbert space coefficients.  
%the example $\vv= \ell^2(\Gamma)$ being dealt with in \cite[Lemma 3.1]{dodziuk}.
\end{proof}

\begin{theorem}\label{Hodgedecomp}
 There are pre-Hilbert space decompositions
\begin{align*}
 A^n(X, \vv)&=  \cH^n(X,\vv)&\oplus&\overline{\Delta A^n(X, \vv) }\\
 &=  \cH^n(X,\vv)&\oplus&\overline{DA^{n-1}(X, \vv)}\oplus \overline{D^*A^{n+1}(X, \vv)}\\
&=  \cH^n(X,\vv)&\oplus&\overline{D^cA^{n-1}(X, \vv)}\oplus \overline{D^{c*}A^{n+1}(X, \vv)}\\
&=  \cH^n(X,\vv)&\oplus&\overline{DD^cA^{n-2}(X, \vv)} \oplus\overline{D^*D^cA^{n}(X, \vv)}\oplus \overline{DD^{c*}A^{n}(X, \vv)} \oplus\overline{D^*D^{c*}A^{n+2}(X, \vv)}.
\end{align*}
for all $n$.
 \end{theorem}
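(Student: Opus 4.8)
The plan is to obtain each of the four decompositions by intersecting the corresponding Hilbert space decomposition of Proposition \ref{Hodgedecomp0} over all Sobolev indices $s$, using the Global Sobolev Lemma in the form $A^n(X,\vv)=\bigcap_s \cL^n_{(2),s}(X,\vv)$ of Proposition \ref{sobolevlemma}. For each line of Proposition \ref{Hodgedecomp0} I would establish two things. First, that the $L^2$-orthogonal projections onto the listed summands carry smooth forms to smooth forms, so that the Hodge components of a form in $A^n(X,\vv)$ are themselves smooth. Second, that the smooth part of each summand is the asserted space, e.g. $A^n(X,\vv)\cap\overline{D\cL^{n-1}_{(2),1}(X,\vv)}=\overline{DA^{n-1}(X,\vv)}$, with the closure taken in the pre-Hilbert space $A^n(X,\vv)$.

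The crux is the first step, and it rests on the observation that every summand occurring in Proposition \ref{Hodgedecomp0} is a reducing subspace for $\Delta$. Indeed $\cH^n(X,\vv)=\ker\Delta$, and since $D,D^c,D^*,D^{c*}$ all commute with $\Delta$, each of the spaces $\overline{D\cL^{n-1}_{(2),1}}$, $\overline{D^*\cL^{n+1}_{(2),1}}$, their $D^c$-analogues, and the four mixed terms is invariant under $\Delta$ and hence under every spectral projection of $\Delta$. Consequently the orthogonal projection $\Pi$ onto such a summand commutes with $(I+\Delta)^{s/2}$, so that for $v\in\cL^n_{(2),s}(X,\vv)$ one has $\|\Pi v\|_s=\|\Pi(I+\Delta)^{s/2}v\|_0\le\|(I+\Delta)^{s/2}v\|_0=\|v\|_s$, using that $\Pi$ is an $L^2$-contraction. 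Thus $\Pi$ has norm at most $1$ on every $\cL^n_{(2),s}(X,\vv)$ and therefore preserves the intersection $A^n(X,\vv)$. Feeding a smooth form into the projections of any one line of Proposition \ref{Hodgedecomp0} then shows that all of its components are again smooth, which yields the orthogonal pre-Hilbert decomposition of $A^n(X,\vv)$; here $A^n(X,\vv)\cap\cH^n(X,\vv)=\cH^n(X,\vv)$ since harmonic forms are smooth by elliptic regularity.

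The remaining identification of the smooth summands is comparatively routine: the inclusion $\overline{DA^{n-1}(X,\vv)}\subseteq A^n(X,\vv)\cap\overline{D\cL^{n-1}_{(2),1}(X,\vv)}$ is immediate, while for the reverse one approximates $w\in\cL^{n-1}_{(2),1}(X,\vv)$ by smooth forms (smooth forms being dense in each Sobolev space, via the isomorphisms of Proposition \ref{sobolevtower}) and uses boundedness of $D\co\cL^{n-1}_{(2),1}(X,\vv)\to\cL^{n}_{(2),0}(X,\vv)$ to transfer the approximation, with the analogous argument for $D^*,D^c,D^{c*}$ and the mixed terms. The subtlety I expect to be the genuine obstacle is conceptual rather than computational: because $\vv$ is infinite-dimensional, the spectrum of $\Delta$ need not be bounded away from $0$, so $D\cL^{n-1}_{(2),1}(X,\vv)$ need not be closed and the bare images $DA^{n-1}(X,\vv)$, $D^*A^{n+1}(X,\vv)$ need not themselves be orthogonally complemented in $A^n(X,\vv)$. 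This is precisely why the closures cannot be removed, and it is here that the isomorphisms $\Delta^{-\half}D\co\overline{D^*\cL^{n}_{(2),s+1}(X,\vv)}\to\overline{D\cL^{n-1}_{(2),s+1}(X,\vv)}$ of Proposition \ref{DeltahalfProp}, which commute with $\Delta$ and hence preserve the Sobolev tower, serve to match the closures compatibly across all levels $s$.
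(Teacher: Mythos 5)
Your proposal is correct and follows exactly the paper's route: the paper's own proof consists of the two sentences ``take the inverse limit $\Lim_s$ of the decomposition in Proposition \ref{Hodgedecomp0}, then substitute Proposition \ref{sobolevlemma}'', which is precisely your intersection over Sobolev indices. Your elaboration---that the summands are reducing subspaces for $\Delta$, so the orthogonal projections commute with $(I+\Delta)^{s/2}$ and hence preserve $A^n(X,\vv)=\bigcap_s \cL^n_{(2),s}(X,\vv)$, together with the density argument identifying $\overline{D\cL^{n-1}_{(2),1}}\cap A^n$ with $\overline{DA^{n-1}}$---is exactly the content implicit in the paper's appeal to the spectral construction of Proposition \ref{Hodgedecomp0}.
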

\begin{proof}
We just take the inverse limit $\Lim_s$  of the decomposition  in Proposition \ref{Hodgedecomp0}, and then make the substitution of Proposition \ref{sobolevlemma}. 
\end{proof}

\subsubsection{Reduced cohomology}\label{redcohosn}

\begin{definition}
Given a cochain complex $C^{\bt}$ in topological vector spaces, write
\[
\bar{\H}^n(C^{\bt}):= \H^n(C^{\bt})/\overline{\{0\}}, 
\]
where $\overline{\{0\}}$ is the closure of $0$. Note that we could equivalently define $\bar{\H}^*$ as the quotient of the space of cocycles by the \emph{closure} of the space of coboundaries.

Given a  local system  $\vv$ in topological vector spaces on $X$, define
\[
 \bar{\H}^n(X, \vv):= \bar{\H}^n(A^{\bt}(X, \vv)).
\]
\end{definition}

\begin{corollary}\label{harmcoho}
 The 
maps
\[
 \cH^n(X,\vv) \to \bar{\H}^n(X,\vv)
\]
 are all topological isomorphisms.
\end{corollary}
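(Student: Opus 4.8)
The plan is to read off the result from the Hodge decomposition of Theorem \ref{Hodgedecomp} together with an identification of the space of cocycles. First I would check that harmonic forms are cocycles, so that the map is even defined: from $\Delta = DD^* + D^*D$ and positivity of $\langle\Delta\alpha,\alpha\rangle = \|D\alpha\|^2 + \|D^*\alpha\|^2$, any $\alpha \in \cH^n(X,\vv)$ satisfies $D\alpha = D^*\alpha = 0$. Hence $\cH^n(X,\vv) \subseteq \ker(D|_{A^n})$, and composing the (continuous) inclusion $\cH^n(X,\vv)\hookrightarrow A^n(X,\vv)$ with the quotient map to $\bar{\H}^n(X,\vv)$ gives a continuous map $\cH^n(X,\vv) \to \bar{\H}^n(X,\vv)$.

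Next I would identify the cocycles. The eigenspace analysis of \S\ref{eigensn} provides, for each $s$, the orthogonal decomposition $\ker D \cap \cL^n_{(2),s}(X,\vv) = \cH^n(X,\vv) \oplus \overline{D\cL^{n-1}_{(2),s+1}(X,\vv)}$. Taking the inverse limit $\Lim_s$ and substituting Proposition \ref{sobolevlemma} (exactly as in the passage from Proposition \ref{Hodgedecomp0} to Theorem \ref{Hodgedecomp}) yields a topological direct sum $\ker(D|_{A^n}) = \cH^n(X,\vv) \oplus \overline{DA^{n-1}(X,\vv)}$, where the second summand is the closure of the space of coboundaries. Since by definition $\bar{\H}^n(X,\vv)$ is the space of cocycles modulo the closure of coboundaries, we obtain $\bar{\H}^n(X,\vv) = \bigl(\cH^n(X,\vv) \oplus \overline{DA^{n-1}(X,\vv)}\bigr)\big/\overline{DA^{n-1}(X,\vv)}$.

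To conclude I would invoke the general fact that quotienting a topological direct sum by one summand returns the complementary summand, topologically: the projection of $\ker(D|_{A^n})$ onto $\cH^n(X,\vv)$ along $\overline{DA^{n-1}(X,\vv)}$ is continuous (being assembled from the orthogonal projections at each Sobolev level, which are bounded and compatible with the inclusions $\cL^n_{(2),s}\subset \cL^n_{(2),s-1}$), and it descends to a continuous inverse of the map above. Here one uses that on $\cH^n(X,\vv)$ all the Sobolev norms coincide, since $\Delta = 0$ there, so the subspace topology induced from $A^n(X,\vv)$ agrees with the Hilbert topology making $\cH^n(X,\vv)$ a Hilbert space; thus the isomorphism is topological.

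The delicate point, and the one I would spell out most carefully, is the interchange of the inverse limit $\Lim_s$ with the direct sum and the closures: concretely, that $\bigcap_s \overline{D\cL^{n-1}_{(2),s+1}(X,\vv)}$ is precisely the closure $\overline{DA^{n-1}(X,\vv)}$ appearing in the Fr\'echet space $A^n(X,\vv)$, and that the $s$-wise projections onto the common subspace $\cH^n(X,\vv)$ really do assemble into a continuous projection in the limit. This compatibility is the substance of Theorem \ref{Hodgedecomp}, so the remaining work is only to match the closure defining reduced cohomology with the intersection of the Hilbert-space closures; everything else is formal.
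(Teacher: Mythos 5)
Your proposal is correct and follows essentially the same route as the paper: Corollary \ref{harmcoho} is meant to be read off from Theorem \ref{Hodgedecomp} together with the identification $\ker D \cap \cL^n_{(2),s}(X,\vv) = \cH^n(X,\vv) \oplus \overline{D\cL^{n-1}_{(2),s+1}(X,\vv)}$ from \S \ref{eigensn}, which is exactly what you do. Your explicit handling of the interchange of $\Lim_s$ with the closures, and of the continuity of the harmonic projection (using that all Sobolev norms agree on $\cH^n(X,\vv)$), simply fills in details the paper leaves implicit.
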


\begin{corollary}[Principle of two types]\label{p2t}
As subspaces of  $A^n(X, \vv)$,
\[
\ker D \cap \ker D^c \cap (\overline{ DA^{n-1}(X,\vv) } + \overline{D^cA^{n-1}(X,\vv) }) =  \overline{DD^cA^{n-2}(X,\vv)}.
\]
\end{corollary}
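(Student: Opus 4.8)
The plan is to derive the identity from the orthogonal Hodge decomposition of Theorem \ref{Hodgedecomp}, together with the commutation relations $[D,D^c]=[D^*,D^c]=[D^{c*},D]=0$ and $(D^c)^2=D^2=0$ recorded in \S\ref{eigensn}. Here $D,D^c$ have odd degree, so $[D,D^c]=0$ means $DD^c=-D^cD$; taking adjoints of $[D,D^c]=0$ also yields $[D^{c*},D^*]=0$. I would first dispose of the inclusion $\supseteq$: since $DD^c=-D^cD$ and $(D^c)^2=D^2=0$, every element of $DD^cA^{n-2}(X,\vv)$ is killed by both $D$ and $D^c$ and lies in $DA^{n-1}(X,\vv)$, so passing to closures (and using that $\ker D,\ker D^c$ are closed) gives $\overline{DD^cA^{n-2}(X,\vv)}\subseteq \ker D\cap\ker D^c\cap\overline{DA^{n-1}(X,\vv)}$.

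For the reverse inclusion, take $\alpha$ in the left-hand side and expand it by the four-term decomposition of Theorem \ref{Hodgedecomp}, writing $\alpha=h+a_1+a_2+a_3+a_4$ with $h\in\cH^n(X,\vv)$, $a_1\in\overline{DD^cA^{n-2}(X,\vv)}$, $a_2\in\overline{D^*D^cA^{n}(X,\vv)}$, $a_3\in\overline{DD^{c*}A^{n}(X,\vv)}$, $a_4\in\overline{D^*D^{c*}A^{n+2}(X,\vv)}$, these summands being mutually orthogonal. The harmonic term dies at once: by Theorem \ref{Hodgedecomp}, $\cH^n(X,\vv)$ is orthogonal to both $\overline{DA^{n-1}(X,\vv)}$ and $\overline{D^cA^{n-1}(X,\vv)}$, so the hypothesis $\alpha\in\overline{DA^{n-1}(X,\vv)}+\overline{D^cA^{n-1}(X,\vv)}$ forces $h=0$. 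Next I would record, using only the commutation relations, that $a_2,a_4\in\overline{D^*A^{n+1}(X,\vv)}$ and $a_3,a_4\in\overline{D^{c*}A^{n+1}(X,\vv)}$ (for instance $D^*D^{c*}=-D^{c*}D^*$ puts $a_4$ in $\overline{D^{c*}A^{n+1}(X,\vv)}$), while $Da_1=Da_3=0$ and $D^ca_1=D^ca_2=0$.

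The heart of the argument is then a pair of injectivity statements. Applying $D$ to $\alpha=a_1+a_2+a_3+a_4$ and using $Da_1=Da_3=0$ gives $D(a_2+a_4)=0$; since $a_2+a_4\in\overline{D^*A^{n+1}(X,\vv)}$, Proposition \ref{DeltahalfProp}, which exhibits $\Delta^{-\half}D$ as an isomorphism on $\overline{D^*(\cdots)}$ and hence makes $D$ injective there, forces $a_2+a_4=0$; orthogonality of the two summands then gives $a_2=a_4=0$. Symmetrically, applying $D^c$ and using $D^ca_1=D^ca_2=0$ gives $D^c(a_3+a_4)=0$ with $a_3+a_4\in\overline{D^{c*}A^{n+1}(X,\vv)}$, so the $D^c$-version of Proposition \ref{DeltahalfProp} yields $a_3+a_4=0$ and hence $a_3=a_4=0$. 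Therefore $\alpha=a_1\in\overline{DD^cA^{n-2}(X,\vv)}$, as required.

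I expect the main obstacle to be the bookkeeping of closures: in this infinite-dimensional setting the images of $D$ and $D^c$ need not be closed, so the clean finite-dimensional $\partial\bar\partial$-argument survives only because Theorem \ref{Hodgedecomp} supplies genuinely orthogonal closed summands and Proposition \ref{DeltahalfProp} upgrades the absence of a harmonic part into injectivity of $D$ (resp. $D^c$) on $\overline{\im D^*}$ (resp. $\overline{\im D^{c*}}$). The delicate point is that this injectivity is established at each Sobolev level, so I must check that a smooth form lying in the smooth-topology closure $\overline{D^*A^{n+1}(X,\vv)}$ also lies in each Sobolev closure $\overline{D^*\cL^{n+1}_{(2),s+1}(X,\vv)}$; this follows from continuity of $D^*$ and density of the smooth forms in each $\cL^{\bt}_{(2),s}(X,\vv)$.
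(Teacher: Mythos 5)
Your proof is correct and follows essentially the route the paper intends: Corollary \ref{p2t} is stated there without proof, as an immediate consequence of the four-term decomposition of Theorem \ref{Hodgedecomp} together with the eigenspace analysis of \S \ref{eigensn}, which is precisely the machinery you deploy (the $\supseteq$ inclusion from the commutation relations, killing the harmonic part by orthogonality, then killing $a_2,a_3,a_4$ by injectivity of $D$ and $D^c$ on the closures of the images of $D^*$ and $D^{c*}$). The one delicate point you flag --- that injectivity of $D$ on $\overline{D^*\cL^{n+1}_{(2),s+1}(X,\vv)}$ applies to the smooth forms produced by Theorem \ref{Hodgedecomp} --- is handled correctly, and could be cited even more directly from the paper's displayed identity $\ker D\cap\cL^n_{(2),s}(X,\vv)=\overline{D\cL^{n-1}_{(2),s+1}(X,\vv)}\oplus\cH^n(X,\vv)$ following Definition \ref{Deltahalf}, which together with the orthogonality of the decomposition in Proposition \ref{Hodgedecomp0} gives $\ker D\cap\overline{D^*\cL^{n+1}_{(2),s+1}(X,\vv)}=0$ without needing to re-examine how the extension of $\Delta^{-\half}D$ in Proposition \ref{DeltahalfProp} interacts with $D$.
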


\begin{lemma}[Formality]\label{formallemma}
The morphisms
\[
 (\bar{\H}_{D^c}^*(X,\vv),0) \la (\z_{D^c}^*(X,\vv), D) \to (A^*(X,\vv), D)
\]
induce isomorphisms on reduced cohomology.
\end{lemma}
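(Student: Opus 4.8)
The plan is to show that all three complexes have reduced cohomology canonically isomorphic to the space $\cH^n(X,\vv)$ of harmonic forms, compatibly with the two maps, and then to conclude by a two-out-of-three argument. Since harmonic forms are annihilated by $D,D^c,D^*,D^{c*}$, the inclusion $\cH^*\hookrightarrow \z_{D^c}^*$ is defined and carries the zero differential, and it is compatible both with the inclusion $\iota\co(\z_{D^c}^*,D)\to(A^*(X,\vv),D)$ and with the projection $\pi\co(\z_{D^c}^*,D)\to(\bar{\H}_{D^c}^*(X,\vv),0)$. First I would record that $\pi$ really is a chain map to a complex with zero differential: if $D^c\alpha=0$ then $D\alpha$ is $D$-closed, $D^c$-closed, and lies in $\overline{DA^n(X,\vv)}$, so by the principle of two types (Corollary \ref{p2t}) it lies in $\overline{DD^cA^{n-1}(X,\vv)}\subseteq\overline{D^cA^n(X,\vv)}$; hence $[D\alpha]=0$ in $\bar{\H}_{D^c}^{n+1}(X,\vv)$ and the induced differential on $\bar{\H}_{D^c}^*$ vanishes.

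The crux is to prove that the harmonic inclusion induces an isomorphism $\cH^n(X,\vv)\xrightarrow{\ \cong\ }\bar{\H}^n(\z_{D^c}^*,D)$, that is,
\[
\ker D\cap\ker D^c\cap A^n(X,\vv)=\cH^n(X,\vv)\oplus\overline{D(\ker D^c\cap A^{n-1}(X,\vv))}.
\]
Injectivity is immediate from the orthogonality in Theorem \ref{Hodgedecomp}, since $\overline{D(\ker D^c\cap A^{n-1})}\subseteq\overline{DA^{n-1}(X,\vv)}$ is orthogonal to $\cH^n(X,\vv)$, so a harmonic reduced $D$-coboundary vanishes. For surjectivity I would use the approximate Green's operators $G_{\eps}$ from the proof of Proposition \ref{Hodgedecomp0}. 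Given $D$-closed, $D^c$-closed $\alpha$, let $H$ be orthogonal projection onto $\cH^n$ and set $\alpha_{\eps}:=D^*G_{\eps}\alpha$. As $G_{\eps}$ maps each $L^n_{(2),s}(X,\vv)$ boundedly into $L^n_{(2),s+2}(X,\vv)$, Proposition \ref{sobolevlemma} shows $G_{\eps}\alpha$ is smooth, so $\alpha_{\eps}\in A^{n-1}(X,\vv)$; moreover $D^c\alpha_{\eps}=D^*G_{\eps}D^c\alpha=0$, because $D^c$ commutes with $D^*$ and with $\Delta$ (hence with $G_{\eps}$), so $\alpha_{\eps}\in\ker D^c$. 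Finally $D\alpha_{\eps}=DD^*G_{\eps}\alpha=\Delta G_{\eps}\alpha-D^*G_{\eps}D\alpha=\Delta G_{\eps}\alpha$, which converges, exactly as in Proposition \ref{Hodgedecomp0}, to $(I-H)\alpha=\alpha-H\alpha$. Thus $\alpha-H\alpha\in\overline{D(\ker D^c\cap A^{n-1})}$, proving surjectivity.

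With the middle isomorphism established, I would conclude as follows. Corollary \ref{harmcoho} gives $\cH^n(X,\vv)\xrightarrow{\cong}\bar{\H}^n(A^*(X,\vv),D)$, and the same corollary applied with $D^c$ in place of $D$ (the Laplacian $\Delta$ being symmetric in $D$ and $D^c$) gives $\cH^n(X,\vv)\xrightarrow{\cong}\bar{\H}_{D^c}^n(X,\vv)$. Both factor through $\bar{\H}^n(\z_{D^c}^*,D)$ via $\iota$ and $\pi$ respectively; for $\pi$ one notes that the composite $\cH^n\to\bar{\H}^n(\z_{D^c}^*,D)\to\bar{\H}_{D^c}^n$ sends a harmonic form to its own $D^c$-class, which is precisely the harmonic representative of Corollary \ref{harmcoho}. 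In each triangle two of the three maps are isomorphisms, so $\iota_*$ and $\pi_*$ are isomorphisms, as required.

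The step demanding the most care is the surjectivity above: one must verify that $G_{\eps}\alpha$ is genuinely smooth (the regularity packaged in the Sobolev tower, Proposition \ref{sobolevtower}) and that the convergence $\Delta G_{\eps}\alpha\to\alpha-H\alpha$ takes place in the topology defining reduced cohomology, so that the non-harmonic part of a closed form is a \emph{reduced} coboundary of a $D^c$-closed smooth form rather than merely a formal one. This is precisely where passing to reduced cohomology is indispensable, since the genuine Green's operator is unbounded whenever $0$ is a non-isolated point of the spectrum of $\Delta$.
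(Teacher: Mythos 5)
Your proof is correct, but it takes a genuinely different route from the paper's. The paper's proof is a one-line appeal to \cite[Lemma 2.2]{Simpson}, i.e.\ the standard formal formality argument driven entirely by the principle of two types: in the reduced setting this means using Corollary \ref{p2t} not only for the vanishing of the induced differential on $\bar{\H}_{D^c}^*$ (as you do), but also for injectivity of both maps on reduced cohomology --- any $\alpha\in\ker D\cap\ker D^c$ lying in $\overline{DA^{n-1}(X,\vv)}$ (resp.\ $\overline{D^cA^{n-1}(X,\vv)}$) lies in $\overline{DD^cA^{n-2}(X,\vv)}\subseteq\overline{D(\z_{D^c}^{n-1}(X,\vv))}$ --- while surjectivity follows because Corollary \ref{harmcoho} and its $D^c$-analogue supply harmonic representatives, which lie in $\z_{D^c}^*(X,\vv)\cap\ker D$. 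You instead use Corollary \ref{p2t} only once, and compute $\bar{\H}^n(\z_{D^c}^*(X,\vv),D)\cong\cH^n(X,\vv)$ by hand with the approximate Green's operators $G_{\eps}$, finishing by a two-out-of-three argument. This is legitimate: the commutations $[D^*,D^c]=0$ and $D^cG_{\eps}=G_{\eps}D^c$ are available in the paper's spectral framework (\S \ref{eigensn}), and your appeal to Hahn--Banach for weak-versus-norm closure of a subspace matches the paper's own use of it in Proposition \ref{Hodgedecomp0}. But it duplicates analytic work that Corollary \ref{p2t} has already packaged: in your surjectivity step, writing $H$ for orthogonal projection onto $\cH^n(X,\vv)$, one has $\alpha-H\alpha\in\ker D\cap\ker D^c\cap\overline{DA^{n-1}(X,\vv)}$ by Theorem \ref{Hodgedecomp}, so Corollary \ref{p2t} gives $\alpha-H\alpha\in\overline{DD^cA^{n-2}(X,\vv)}\subseteq\overline{D(\ker D^c\cap A^{n-1}(X,\vv))}$ immediately, with no Green's operators, no commutation relations and no weak limits. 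What your longer route buys is a self-contained Hodge decomposition of the subcomplex $(\z_{D^c}^*(X,\vv),D)$ and an explicit display of why \emph{reduced} cohomology is essential (the genuine Green's operator being unbounded when $0$ is not isolated in the spectrum of $\Delta$); what the paper's route buys is brevity and portability, since the purely formal argument transfers verbatim to any setting with a two-types lemma and harmonic representatives, which is exactly how it is reused in Lemmas \ref{p2t2}, \ref{formallemma2} and \ref{formallemma3}.
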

\begin{proof}
 The proof of \cite[Lemma 2.2]{Simpson} carries over to this generality.
\end{proof}

\begin{remark}
Usually, formality statements such as Lemma \ref{formallemma} lead to isomorphisms on deformation functors (see \cite{GM} for the original case, and \cite[Proposition \ref{htpy-formalpins}]{htpy} 
%\cite[Proposition 5.3]{htpy}
%\cite[Corollary \ref{mhs-morgansplitsa}]{mhs} 
for the case closest to our setting). 

However, there does not appear to be a natural deformation functor associated to topological DGLAs $L$ with obstruction space $\bar{\H}^2(L)$. Thus, in contrast to the  pro-algebraic case, it is not clear whether there are natural completions of the homotopy groups which can be described in terms of the reduced cohomology ring.

The description of the Archimedean monodromy in \cite[Theorem \ref{mhs2-archmonthm}]{mhs2}
%\cite[Theorem 9.13]{mhs2} 
is even less likely to adapt, since  it features the Green's operator $G$, which we have had to replace with a non-convergent sequence of operators.
 \end{remark}

\subsubsection{Non-reduced cohomology}\label{nonredcohosn}

Taking the inverse limit $\LLim_s$ of the decompositions of \S \ref{eigensn}, we obtain
\begin{align*}
 DA^{n-1}(X,\vv) &= \Delta^{\half} \overline{DA^{n-1}(X,\vv)},\\
D^*A^{n+1}(X,\vv) &= \Delta^{\half} \overline{D^*A^{n+1}(X,\vv)},\\
\end{align*}
with similar statements for $D^c, D^{c*}$.

Thus:
\begin{proposition}\label{nonredcohoprop}
\[
\H^nA^{\bt}(X,\vv) \cong  \cH^n(X, \vv) \oplus(\overline{DA^{n-1}(X,\vv)} /\Delta^{\half}\overline{DA^{n-1}(X,\vv)} ).
\]      
\end{proposition}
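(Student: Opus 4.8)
The plan is to compute the cohomology directly as $\H^nA^{\bt}(X,\vv) = (\ker D \cap A^n(X,\vv))/DA^{n-1}(X,\vv)$, using Theorem \ref{Hodgedecomp} to pin down the space of cocycles and the identity $DA^{n-1}(X,\vv) = \Delta^{\half}\overline{DA^{n-1}(X,\vv)}$ (displayed immediately above) to recognise the coboundaries inside it.

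First I would determine the cocycles. Theorem \ref{Hodgedecomp} gives the orthogonal pre-Hilbert decomposition
\[
A^n(X,\vv) = \cH^n(X,\vv) \oplus \overline{DA^{n-1}(X,\vv)} \oplus \overline{D^*A^{n+1}(X,\vv)}.
\]
Harmonic forms are $D$-closed, and since $D$ is continuous on $A^{\bt}(X,\vv)$ with $D^2=0$, the closure $\overline{DA^{n-1}(X,\vv)}$ also lies in $\ker D$; so the first two summands consist of cocycles. For the third I would check $\ker D \cap \overline{D^*A^{n+1}(X,\vv)}=0$: if $\beta = D^*\gamma$ then $(D^*)^2=0$ gives $D^*\beta=0$, hence $\Delta\beta = D^*D\beta$, so $D\beta=0$ forces $\Delta\beta=0$ and thus $\beta\in \cH^n(X,\vv)\cap\overline{D^*A^{n+1}(X,\vv)}=0$; the isomorphisms of \S \ref{eigensn} (cf.\ Proposition \ref{DeltahalfProp}) propagate this to the closure. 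Hence
\[
\ker D \cap A^n(X,\vv) = \cH^n(X,\vv)\oplus\overline{DA^{n-1}(X,\vv)}.
\]

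Next I would quotient by the coboundaries $DA^{n-1}(X,\vv)$, which need not be closed. As $\cH^n(X,\vv)$ is orthogonal to $\overline{DA^{n-1}(X,\vv)}$ and $DA^{n-1}(X,\vv)\subseteq \overline{DA^{n-1}(X,\vv)}$, the harmonic part survives intact and
\[
\H^nA^{\bt}(X,\vv) \cong \cH^n(X,\vv)\oplus\bigl(\overline{DA^{n-1}(X,\vv)}/DA^{n-1}(X,\vv)\bigr).
\]
Substituting the identity $DA^{n-1}(X,\vv)=\Delta^{\half}\overline{DA^{n-1}(X,\vv)}$ then gives exactly the asserted formula.

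I expect the main obstacle to be carrying the first step out at the level of smooth forms rather than the Sobolev completions $\cL^n_{(2),s}$ in which \S \ref{eigensn} and Proposition \ref{Hodgedecomp0} naturally live. Concretely, I must justify that intersecting with $\ker D$ commutes with the passage $A^n(X,\vv)=\bigcap_s \cL^n_{(2),s}(X,\vv)$ of Proposition \ref{sobolevlemma}, and that $\overline{D^*A^{n+1}(X,\vv)}$ genuinely meets $\ker D$ trivially at the smooth level; this rests on the continuity of $D,D^*$ on the Fr\'echet complex $A^{\bt}(X,\vv)$ and the commutation of $\Delta^{\half}$ with $D$ (Definitions \ref{Deltahalf}, \ref{Deltahalf2}).
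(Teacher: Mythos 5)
Your proposal is correct and takes essentially the same route as the paper: the paper establishes, at each Sobolev level in \S \ref{eigensn}, exactly your two ingredients --- the cocycle identity $\ker D \cap \cL^n_{(2),s}(X,\vv) = \cH^n(X,\vv)\oplus\overline{D\cL^{n-1}_{(2),s+1}(X,\vv)}$ and the coboundary identity $D\cL^{n-1}_{(2),s+1}(X,\vv) = \Delta^{\half}\overline{D\cL^{n-1}_{(2),s+2}(X,\vv)}$ --- and then deduces the proposition by passing to the limit over $s$, which is precisely your computation with the Sobolev indices made explicit. The only organisational difference is that you work directly at the smooth level via Theorem \ref{Hodgedecomp}, with your verification that $\ker D \cap \overline{D^*A^{n+1}(X,\vv)}=0$ (most cleanly seen from $\ker D \perp D^*A^{n+1}(X,\vv)$ and passage of orthogonality to closures) playing the role of the paper's Sobolev-level kernel identity.
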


Applying the operator $*$ then gives
\[
  \H^nA^{\bt}(X,\vv)\oplus_{\cH^n(X, \vv)} \H^{2d-n}A^{\bt}(X,\vv')\cong A^n(X,\vv)/\Delta^{\half}A^n(X,\vv).
\]

Moreover, we have topological isomorphisms
\begin{align*}
\Delta^{-\half}D\co \overline{D^*A^{n}(X,\vv)} &\to  \overline{DA^{n-1}(X,\vv)},\\
\Delta^{-\half}D^c\co \overline{D^{c*}A^{n}(X,\vv)} &\to  \overline{D^cA^{n-1}(X,\vv)},
\end{align*}
for $\Delta^{-\half}D,\Delta^{-\half}D^c$ as in Definition \ref{Deltahalf2}. 

\subsection{The $W^*$-enveloping algebra}

\subsubsection{ $E^J(X,x)'$ }\label{Evee}

\begin{definition}
Given a  $C^*$-algebra $B$ and a positive linear functional $f$, define $B_f$ to be the Hilbert space completion of $B$ with respect to the bilinear form $\<a,b\>_f:= f(a^*b)$. We define $\pi_f$ to be the representation of $B$ on $B_f$ by left multiplication. Note that this is a cyclic representation, generated by $1 \in B_f$. 
\end{definition}

\begin{lemma}\label{Edual}
Given a $C^*$-algebra $B$, the topological dual is given by $B'= \LLim_f B_f'$, where $f$ ranges over the filtered  inverse system of positive linear functionals on $B$.
\end{lemma}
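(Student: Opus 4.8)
The plan is to realise both sides as the same subspace-union of $B'$. Write $\iota_f \co B \to B_f$ for the canonical map with dense image, so that precomposition gives $\iota_f^* \co B_f' \to B'$. First I would organise the index set: the positive functionals are directed under domination, since $f+g$ dominates both $f$ and $g$, and $f \le g$ gives $f(a^*a) \le g(a^*a)$, i.e. $\|a\|_f \le \|a\|_g$, hence a norm-contraction $\rho_{gf} \co B_g \to B_f$ extending $\id_B$. Dualising yields transition maps $B_f' \to B_g'$, so $\{B_f'\}$ is a filtered system and $\LLim_f B_f'$ is defined. Because each $\iota_f$ has dense image, $\iota_f^*$ is injective; the identity $\iota_f = \rho_{gf}\circ \iota_g$ shows the $\iota_f^*$ are compatible with the transition maps, and since each $\rho_{gf}$ also has dense image the transition maps are themselves injective. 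Thus the induced map $\LLim_f B_f' \to B'$ is injective and identifies the colimit with the increasing union $\bigcup_f \iota_f^*(B_f') \subseteq B'$.

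The real content is surjectivity: every bounded $\phi \in B'$ must factor through some $B_f$, i.e. satisfy $|\phi(a)| \le C\, f(a^*a)^{1/2}$ for some positive $f$ and constant $C$. The base case is a positive functional $g$, where the Cauchy--Schwarz inequality for $\langle -,-\rangle_g$ applied to the pair $(1,a)$ — here I use crucially that $B$ is unital — gives $|g(a)| = |\langle 1, a\rangle_g| \le g(1)^{1/2}\|a\|_g$. So $g$ is exactly the functional on $B_g$ represented by the cyclic vector $1$, and in particular $g \in \iota_g^*(B_g')$.

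For a general $\phi$, I would invoke the decomposition of a bounded functional on a $C^*$-algebra as a combination $\phi = \sum_{j=1}^4 c_j \phi_j$ of positive functionals (split $\phi$ into hermitian real and imaginary parts, then apply the Jordan decomposition of hermitian functionals). Setting $f := \sum_j \phi_j$, each $\phi_j \le f$, so the contraction $B_f \to B_{\phi_j}$ carries $\phi_j \in B_{\phi_j}'$ into $B_f'$; hence $\phi = \sum_j c_j \phi_j$ lies in $\iota_f^*(B_f')$. This yields $B' = \bigcup_f \iota_f^*(B_f') = \LLim_f B_f'$.

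The main obstacle is this surjectivity step, and specifically the appeal to the Jordan decomposition of hermitian functionals — equivalently, to the polar decomposition $\phi = u\cdot|\phi|$ in the enveloping von Neumann algebra, from which $|\phi(a)|^2 \le \|\phi\|\,|\phi|(a^*a)$ follows at once. This is the one genuinely analytic input; everything else (directedness, the contractions between the $B_f$, and the injectivity and compatibility of the dual maps) is formal once the completions $B_f$ and their transition contractions are set up.
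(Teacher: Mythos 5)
Your proof is correct, but it takes a genuinely different route from the paper's. The paper dualises the statement: it reduces the claim to showing $B^{\vee\vee} = \Lim_f B_f$ (using reflexivity of the Hilbert spaces $B_f$), identifies $\Lim_f B_f$ with the completion of $B$ in the seminorms $\|b\|_f = f(b^*b)^{\half}$, and then invokes von Neumann algebra theory: each $B_f$ is the strong closure of $B$ in the cyclic GNS representation $\pi_f$, and the bicommutant theorem identifies this completion with the universal enveloping von Neumann algebra $B^{\vee\vee}$. In effect, the paper derives the lemma from the statement that $B'$ is the predual of the $W^*$-envelope of $B$, which is exactly the structural interpretation used later (e.g.\ around Corollary \ref{univlocsys}). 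You instead work directly on the dual side: your setup of the filtered system of injections $\iota_f^*\co B_f' \to B'$ matches the paper's in substance, but your surjectivity step replaces the bicommutant theorem with the hermitian/Jordan decomposition $\phi = \sum_{j=1}^4 c_j\phi_j$ into positive functionals, followed by Cauchy--Schwarz against the cyclic vector $1$ and domination by $f = \sum_j \phi_j$. Your argument is more elementary and proves precisely the stated claim about $B'$ without passing through biduals --- a passage the paper leaves implicit and which strictly speaking needs the extra observation that a map whose dual is an isomorphism is itself an isomorphism; what the paper's route buys is the explicit identification with the $W^*$-envelope, which is the fact the surrounding text actually exploits. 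Two small points to tidy in your write-up: note that $\iota_f$ is bounded (since $f(a^*a)\le \|f\|\,\|a\|^2$), so that $\iota_f^*$ is well defined; and since the paper's $C^*$-algebras may be real, you should either quote the Jordan decomposition in the real case or complexify before decomposing.
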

\begin{proof}
 This amounts to showing that $B^{\vee\vee} = \Lim_f B_f$. Since the system is filtered (with $f+g \ge f,g$  and $B_g \to B_f$ for $g \ge f$), $\hat{B}:=\Lim_f B_f$ is the completion of $B$ with respect to the seminorms $\|b\|_f:= f(b^*b)^{\half}$. The space $B_f$ is the strong closure of 
 $B$ in the cyclic representation $\pi_f$, which is just the image of $B^{\vee\vee}$, by the von Neumann bicommutant theorem. Since $B^{\vee\vee}$ is the completion of $B$ with respect to the system of weak seminorms for all representations, this implies that   the map $\hat{B} \to B^{\vee\vee}$ is an equivalence. 
\end{proof}

\begin{lemma}
For a $C^*$-algebra $B$, and a   $B$-representation $V$ in Hilbert spaces, 
\[
 \Hom_B(V, B') \cong V'.
\]
\end{lemma}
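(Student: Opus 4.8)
The plan is to recognise this as a continuous form of the tensor--hom adjunction $\Hom_B(V,\Hom_k(B,k))\cong\Hom_k(V,k)$, with $B'=\Hom_{\cts}(B,k)$ playing the role of $\Hom_k(B,k)$. Recall that $B'$ carries a left $B$-module structure via $(b\cdot\phi)(a):=\phi(ab)$, and that $\Hom_B(V,B')$ denotes the continuous left $B$-linear maps. I would write down two explicit maps and show they are mutually inverse: define $\Phi\co\Hom_B(V,B')\to V'$ by $\Phi(T)(v):=T(v)(1)$, using that $B$ is unital; and define $\Psi\co V'\to\Hom_B(V,B')$ by $\Psi(\psi)(v)(a):=\psi(av)$, where $av$ denotes the action of $a\in B$ on $v\in V$.

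The formal verifications are routine and unitality does all the work. For $B$-linearity of $\Psi(\psi)$, one checks $\Psi(\psi)(bv)(a)=\psi(abv)=\Psi(\psi)(v)(ab)=(b\cdot\Psi(\psi)(v))(a)$. The composite $\Phi\circ\Psi$ is the identity since $\Phi(\Psi(\psi))(v)=\Psi(\psi)(v)(1)=\psi(v)$, and $\Psi\circ\Phi$ is the identity since $\Psi(\Phi(T))(v)(a)=\Phi(T)(av)=T(av)(1)=(a\cdot T(v))(1)=T(v)(a)$, using left $B$-linearity of $T$ and the module structure on $B'$. For well-definedness I would use that the $*$-representation is contractive, so $\|av\|\le\|a\|\,\|v\|$, giving $|\Psi(\psi)(v)(a)|\le\|\psi\|\,\|a\|\,\|v\|$; hence $\Psi(\psi)(v)$ is indeed a bounded functional on $B$. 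More precisely, writing $f_v(b):=\langle bv,v\rangle$ for the vector state attached to $v$, the estimate $|\psi(av)|\le\|\psi\|\,\|av\|=\|\psi\|\,f_v(a^*a)^{1/2}=\|\psi\|\,\|a\|_{f_v}$ shows that $\Psi(\psi)(v)$ extends to $B_{f_v}$ and so lands in $B_{f_v}'\subset B'$, in the sense of Lemma \ref{Edual}. In the other direction, $\Phi(T)=\mathrm{ev}_1\circ T$ is the composite of the continuous $T\co V\to B'$ with the continuous evaluation $\phi\mapsto\phi(1)$, so $\Phi(T)\in V'$.

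The main obstacle will be matching topologies across the inductive limit $B'=\LLim_f B_f'$ of Lemma \ref{Edual}: I must confirm that continuity of a $B$-linear $T\co V\to B'$ is genuinely equivalent to continuity of $\Phi(T)\co V\to k$, and that $\Psi(\psi)$ is continuous into the $\LLim$ topology rather than merely pointwise. This is exactly where the Hilbert structure on $V$ is used: by Riesz we write $\psi=\langle\,\cdot\,,w\rangle$, so $\Psi(\psi)(v)(a)=\langle av,w\rangle=\langle v,a^*w\rangle$, and the family of vector states $f_v$ routes the whole construction through the GNS spaces $B_f$, where the cyclic representations $\pi_f$ identify the relevant functionals. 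I expect the delicate check to be that this family of GNS factorisations assembles into a map continuous for the inductive-limit topology, after which the bijection $\Phi,\Psi$ promotes to a topological isomorphism $\Hom_B(V,B')\cong V'$.
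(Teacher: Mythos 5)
Your proof is correct and is essentially the paper's own argument: the paper also treats this as a currying/tensor--hom correspondence, identifying continuous $B$-linear maps $V \to B'$ with continuous bilinear maps $B \by V \to k$ and then with elements of $V'$ via unitality, exactly your $\Phi$ and $\Psi$. Your additional checks (GNS/vector-state factorisation and the inductive-limit topology of Lemma \ref{Edual}) merely flesh out details the paper leaves implicit.
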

\begin{proof}
 The space $\Hom_B(V, B') $ consists of continuous $B$-linear maps $V \to B'$, and hence to continuous $(A,k)$-bilinear  maps $A\by V \to k$. These correspond to  continuous linear maps $V \to k$, as required
\end{proof}

Considering smooth morphisms from $X$ then gives:
\begin{corollary}\label{univlocsys}
For any $E:=E^J(X,x)$-representation $V$ in real Hilbert spaces, with corresponding  local system $\vv$ on $X$, there is a canonical topological isomorphism
\[
 A^{\bt}(X, \vv) \cong \Hom_E(V', A^{\bt}(X, \bE')),
\]
where $\bE'$ is the direct system of pluriharmonic local systems corresponding to the ind-$E$-representation $E'$ given by left multiplication.
 \end{corollary}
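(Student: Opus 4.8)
The plan is to deduce this directly from the preceding lemma by applying it fiberwise over $X$ and then assembling the pointwise identifications along the de Rham complex, the only extra ingredient being reflexivity of Hilbert spaces. The guiding slogan is that a $\vv$-valued form is a smooth $V$-valued object, and the lemma rewrites $V$ itself as a space of $E$-linear maps into $E'$.

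First I would apply the preceding lemma not to $V$ but to its dual $E$-representation $V'$, which is again a Hilbert space representation of $E$. This produces a natural $E$-equivariant isomorphism $\Hom_E(V',E')\cong (V')'$, and since $V$ is a Hilbert space, reflexivity gives $(V')'\cong V$; thus fiberwise $\Hom_E(V',E')\cong V$. The point to verify here is naturality with respect to the $E$-module structure: because $\pi_1(X,x)$ acts on both $\vv$ and $\bE'$ through its image in $E^{\by}$, the $E$-equivariance of this isomorphism identifies the two monodromy representations. Hence the fiberwise statement promotes to an isomorphism of (ind-)local systems $\cHom_E(V',\bE')\cong \vv$, where $\cHom_E(V',-)$ is formed pointwise.

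Next I would commute $\Hom_E(V',-)$ past the de Rham functor. In each degree $A^p(X,\bE')=\Gamma(X,\sA^p_X(\bE'))$ carries its $E$-action solely through the coefficient system $\bE'$, the form part $\sA^p_X$ being $E$-trivial; consequently $\Hom_E(V',A^p(X,\bE'))\cong A^p(X,\cHom_E(V',\bE'))\cong A^p(X,\vv)$. Since the connection $D$ on $\bE'$ is $E$-equivariant, these isomorphisms intertwine the differentials and assemble into an isomorphism of complexes $A^{\bt}(X,\vv)\cong \Hom_E(V',A^{\bt}(X,\bE'))$.

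The main obstacle is this last commutation step in the topological setting: one must check that $\Hom_E(V',-)$ genuinely exchanges with the section-and-tensor functor $\Gamma(X,\sA^p_X(-))$ at the level of topological vector spaces, and that the resulting bijection is bicontinuous rather than merely algebraic. This is complicated by the ind-structure $E'=\LLim_f E'_f$ of Lemma \ref{Edual}, so one has to track how $\Hom_E(V',-)$ interacts with the filtered colimit and with the completions defining $A^p(X,\bE')$, verifying in particular that continuous $E$-linear maps out of the fixed Hilbert space $V'$ are detected at finite stages and glue continuously. Reflexivity and the Hilbert-space structure of $V$ are precisely what make these exchanges exact and topological.
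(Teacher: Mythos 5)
Your proposal is correct and follows essentially the same route the paper intends: the paper's entire proof is the phrase ``Considering smooth morphisms from $X$ then gives,'' i.e.\ the preceding lemma $\Hom_E(W,E')\cong W'$ applied fibrewise with $W=V'$ (using Hilbert-space reflexivity $(V')'\cong V$) and then commuted past the smooth-forms functor. Your write-up merely makes explicit the equivariance, the ind-structure bookkeeping, and the bicontinuity checks that the paper leaves implicit.
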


Of course, all the cohomological decomposition results of this section extend to direct limits, so apply to $\bE'$.
Conversely, Corollary \ref{univlocsys}  all such results for local systems $\vv$ can be inferred from the corresponding results with  $\bE'$-coefficients.

\begin{remark}\label{DGArk}
The comultiplication $E^J(X,x)\to  E^J(X,x)\hten E^J(X,x)$ of Lemma \ref{tensorstr} induces a multiplication
\[
E^J(X,x)'\bar{\ten} E^J(X,x)' \to E^J(X,x)'       
\]
on continuous duals, where $ (\Lim_i E_i)'\bar{\ten}(\Lim_jE_j)':= \LLim_{i,j} E_i \bar{\ten} E_j $ for $C^*$-algebras $E_i$ and the  dual tensor product $\bar{\ten}$ of \cite[p. 210]{takesakiTOA}. In particular, $\bar{\ten}$ is a crossnorm, so we have a jointly continuous multiplication on $E^J(X,x)'$

Thus  $A^{\bt}(X, \bE')$ is also equipped with a jointly continuous (graded) multiplication, so has the structure of a differential graded topological algebra.       
\end{remark}

\subsubsection{Failure of continuity}

Since direct integrals of harmonic representations must be harmonic,
Corollary \ref{harmcoho} and Proposition \ref{nonredcohoprop} provide us with information about the behaviour of cohomology in measurable families. In particular, they allow us to recover space of measures on the topological  spaces of cohomology groups fibred over the moduli spaces of local systems. Thus $E^J_{X,x}$ is  a much finer invariant than the pro-algebraic completion of $\pi_1(X,x)$.

It is natural to ask whether we can strengthen the Hodge decomposition to incorporate finer topological data. The following example indicates that it does not hold for coefficients in $\bE$ itself:

\begin{example}
Let $X$ be a complex torus, so $\pi_1(X,e) \cong \Z^{2g}$. By Proposition \ref{abcor}, $(E^J_{X,e})_{\RFD}\ten \Cx = C(\Hom(\pi_1(X,e), \Cx^*),\Cx) \cong C( (\Cx^*)^{2g}, \Cx)$. 

The complex $A^{\bt}(X, \bE_{\RFD}\ten \Cx)$ is then quasi-isomorphic to $\H^*(\Z^{2g}, C( (\Cx^*)^{2g}, \Cx))$. 
This is given by taking the completed tensor product of $2g$ copies of the complex $F$ given by $C(\Cx^*,\Cx) \xra{z-1} C(\Cx^*,\Cx)$, so
\[
 \bar{\H}^*  (X, \bE_{\RFD}\ten \Cx)\cong \Cx[-2g].     
\]

However, $D^*D+DD^*= |z-1|^2$ on the complex $F$, so harmonic forms are given by
\[
 \cH^*  (X, \bE_{\RFD}\ten \Cx)\cong 0.      
\]
\end{example}

\section{Twistor and Hodge structures on cochains, and $\SU_2$}\label{twistorHodgecohosn}

\subsection{Preliminaries on non-abelian twistor and Hodge filtrations}

The following is \cite[Definition \ref{mhs2-Cdef}]{mhs2}: %\cite[Definition 2.3]{mhs2}:
\begin{definition}\label{cdef}
Define $C$ to be the real affine scheme $\prod_{\Cx/\R}\bA^1$ obtained from $\bA^1_{\Cx}$ by restriction of scalars, so for any real algebra $A$, $C(A)= \bA^1_{\Cx}(A\ten_{\R}\Cx)\cong A\ten_{\R}\Cx$. Choosing $i \in \Cx$ gives an isomorphism $C \cong \bA^2_{\R}$, and we let $C^*$ be the quasi-affine scheme $C - \{0\}$.

We let the real algebraic group $S=\prod_{\Cx/\R} \bG_m$ of Definition \ref{Sdef} act on $C$ and $C^*$
by inverse multiplication, i.e.
\begin{eqnarray*}
S \by C &\to& C\\
(\lambda, w) &\mapsto& (\lambda^{-1}w).
\end{eqnarray*}
 \end{definition}

Fix an isomorphism $C \cong \bA^2$, with co-ordinates $u,v$ on $C$ so that the isomorphism $C(\R) \cong \Cx$ is given by  $(u,v) \mapsto u+iv$. Thus the algebra $O(C)$ associated to $C$ is the polynomial ring $\R[u,v]$. $S$ is isomorphic to the scheme $\bA^2_{\bR} -\{(\alpha,\beta)\,:\, \alpha^2+\beta^2=0\}$, with the group isomorphism $S(\R) \cong \Cx^*$ given by $(\alpha,\beta) \mapsto \alpha+i\beta$, and the group isomorphism $S(\Cx) \cong (\Cx^*)^2$ given by $(\alpha,\beta) \mapsto (\alpha+i\beta, \alpha-i\beta)$.

By \cite[Corollary \ref{mhs2-flathfil} and Proposition \ref{mhs2-flattfil}]{mhs2}, 
%\cite[Corollary 2.6 and Proposition 2.28]{mhs2}
real Hodge filtrations  (resp. real twistor structures) correspond to $S$-equivariant (resp. $\bG_m$-equivariant) flat vector bundles on $C^*$. The latter arises because  $[C^*/\bG_m] \simeq \bP^1_{\R}$, so $\bG_m$-equivariant sheaves on $C^*$ correspond to sheaves on $\bP^1$.

The following is \cite[Definition \ref{mhs2-rowdef}]{mhs2}:
%\cite[Definition 2.15]{mhs2}
\begin{definition}\label{rowdef}
Define an $S$-action on the real affine scheme $\SL_2$   by    
$$
(\alpha,\beta, A) \mapsto\left(
\begin{smallmatrix}  1 & 0  \\ 0 & \alpha^2+\beta^2  \end{smallmatrix} \right)A  \left(
\begin{smallmatrix} \alpha & \beta \\ -\beta & \alpha \end{smallmatrix} \right)^{-1}=
 \left(
\begin{smallmatrix}  \alpha^2+\beta^2 & 0  \\ 0 & 1  \end{smallmatrix} \right)^{-1}A  \left(
\begin{smallmatrix} \alpha  & -\beta \\ \beta & \alpha \end{smallmatrix} \right).
$$ 
Let $\row_1 :\SL_2 \to C^*$ be the $S$-equivariant map given by projection onto the first row. 
\end{definition}

The subgroup scheme $\bG_m \subset S$ is given by $\beta=0$ in the co-ordinates above, and there is a subgroup scheme $S^1 \subset S$ given by $\alpha^2+\beta^2=1$. These induce an isomorphism $(\bG\by S^1)/(-1,-1)\cong S$. On these subgroups, the action on $\SL_2$ simplifies as follows:
\begin{lemma}\label{rowlemma}
The action of $\bG_m \subset S$ on  $\SL_2$ is given   by
\[
 (\alpha, A) \mapsto\left(
\begin{smallmatrix}  \alpha^{-1} & 0  \\ 0 & \alpha \end{smallmatrix} \right)A  
\]
and the action of $S^1 \subset \bG_m$ is given   by
\[
 (\alpha,\beta, A) \mapsto
A  \left(
\begin{smallmatrix} \alpha & \beta \\ -\beta & \alpha \end{smallmatrix} \right)^{-1}.
\]
\end{lemma}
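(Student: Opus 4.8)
The plan is to derive both formulae by direct substitution of the defining equations of the two subgroups into the $S$-action of Definition \ref{rowdef}. Recall that $\bG_m \subset S$ is the closed subscheme $\{\beta = 0\}$ and $S^1 \subset S$ is the closed subscheme $\{\alpha^2+\beta^2 = 1\}$, so in each case the task is simply to simplify one of the two matrix factors in
\[
(\alpha,\beta,A) \mapsto \left(\begin{smallmatrix} 1 & 0 \\ 0 & \alpha^2+\beta^2 \end{smallmatrix}\right) A \left(\begin{smallmatrix} \alpha & \beta \\ -\beta & \alpha \end{smallmatrix}\right)^{-1}.
\]

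First I would treat $\bG_m$ by setting $\beta = 0$. Then $\alpha^2+\beta^2 = \alpha^2$, and the rotation factor $\left(\begin{smallmatrix} \alpha & \beta \\ -\beta & \alpha \end{smallmatrix}\right)$ collapses to the scalar matrix $\alpha\cdot\id$, whose inverse is $\alpha^{-1}\cdot\id$. Since a scalar matrix is central, I may pull $\alpha^{-1}$ past $A$ and absorb it into the left-hand diagonal factor $\left(\begin{smallmatrix} 1 & 0 \\ 0 & \alpha^2 \end{smallmatrix}\right)$, obtaining $\left(\begin{smallmatrix} \alpha^{-1} & 0 \\ 0 & \alpha \end{smallmatrix}\right)A$, which is the first claimed formula.

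Next I would treat $S^1$ by imposing $\alpha^2+\beta^2 = 1$. This makes the left-hand factor $\left(\begin{smallmatrix} 1 & 0 \\ 0 & \alpha^2+\beta^2 \end{smallmatrix}\right)$ equal to the identity, so only the right multiplication by $\left(\begin{smallmatrix} \alpha & \beta \\ -\beta & \alpha \end{smallmatrix}\right)^{-1}$ remains, giving the second formula (here $S^1$ is the subgroup $\{\alpha^2+\beta^2=1\}$ of $S$). There is no genuine obstacle in either case: both identities are immediate consequences of the respective defining relations, and the only point requiring care is the elementary observation, used in the $\bG_m$ case, that the scalar matrix $\alpha^{-1}\cdot\id$ commutes with $A$ and may therefore be moved freely through the product.
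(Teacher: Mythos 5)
Your proof is correct and is exactly the computation the paper intends (the lemma is stated without proof there, being regarded as an immediate consequence of Definition \ref{rowdef}): setting $\beta=0$ and absorbing the central scalar $\alpha^{-1}$, respectively setting $\alpha^2+\beta^2=1$ to kill the left-hand diagonal factor. Note also that the paper's phrase ``$S^1 \subset \bG_m$'' in the statement is a typo for $S^1 \subset S$, which you have correctly interpreted.
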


The action of $S^1 \subset S$ descends via the maps above to an action on $\bP^1_{\R}$, which is just given by identifying $S^1$ with the real group scheme $\SO_2$. 
% For $w',w''$ co-ordinates, of course it's $(tw', t^{-1}w'')$, so just multiplication on the Riemann sphere.

\subsection{The twistor structure on cochains}

Fix a compact K\"ahler manifold $X$.

\begin{definition}
 Define $\tD\co A^n(X, \vv)\ten\O_{C^*}\to A^{n+1}(X, \vv)\ten\O_{C^*}$  by 
\[
 \tD= uD+vD^c,
\]
and write $\tilde{A}^{\bt}(X, \vv) $ for the resulting complex. Put a $\bG_m$-action on $\tilde{A}^{\bt}(X, \vv) $ by letting $A^n(X, \vv) $ have weight $n$,  and giving $\O_{C^*}$ the action of $\bG_m \subset S$ from Definition \ref{cdef}.

Define  $\tilde{\z}^n(X,\vv):= \ker(\tD \co A^n(X,\vv)\ten\O_{C^*} \to A^{n+1}(X,\vv)\ten\O_{C^*})$, $\tilde{\b}^n(X,\vv):= \im(\tD \co A^{n-1}(X,\vv)\ten\O_{C^*} \to A^n(X,\vv))\ten\O_{C^*} $ and
\[
\bar{\tilde{\H}}^n(X,\vv):= \tilde{\z}^n(X,\vv)/\overline{\tilde{\b}^n(X,\vv)}.
\]
\end{definition}

By analogy with \cite[Proposition \ref{mhs2-flattfil} and Theorem \ref{mhs2-mtsmal}]{mhs2}, 
%\cite[Proposition 2.28 and Theorem 7.1]{mhs2},
we regard the $\bG_m$-equivariant complex  $\tilde{A}^{\bt}(X, \vv)$ over $C^*$ as a twistor filtration on $A^n(X, \vv)$.

\begin{corollary}\label{harmcoho2}
 The canonical inclusion $\cH^n (X,\vv)(n)\ten\O_{C^*} \to \bar{\tilde{\H}}^n(X,\vv)$ is a $\bG_m$-equivariant topological isomorphism.
\end{corollary}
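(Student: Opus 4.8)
The plan is to reduce everything to the ordinary harmonic theory of \S\ref{eigensn}--\S\ref{redcohosn} by observing that the twistor differential $\tD=uD+vD^c$ has a Laplacian which is, at every point of $C^*$, a strictly positive multiple of $\Delta$. First I would record the key algebraic identity. Setting $\tD^*:=\bar u D^*+\bar v D^{c*}$ and $\tilde\Delta:=\tD\tD^*+\tD^*\tD$, a direct expansion gives
\[
\tilde\Delta = |u|^2(DD^*+D^*D) + |v|^2(D^cD^{c*}+D^{c*}D^c) + u\bar v(DD^{c*}+D^{c*}D) + \bar u v(D^*D^c+D^cD^*).
\]
By the pluriharmonic relations $[D,D^c]=[D^*,D^c]=[D^{c*},D]=0$ recorded in \S\ref{eigensn}, the two cross terms vanish, while $DD^*+D^*D=D^cD^{c*}+D^{c*}D^c=\Delta$, so that $\tilde\Delta=(|u|^2+|v|^2)\,\Delta$.

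Next I would fix a point $(u,v)\in C^*(\Cx)$, so $(u,v)\neq 0$ and hence $|u|^2+|v|^2>0$. On $A^{\bt}(X,\vv)\ten\Cx$ the operator $\tilde\Delta$ is then a strictly positive multiple of $\Delta$, so it is governed by exactly the same Sobolev and spectral machinery: Propositions \ref{sobolevtower} and \ref{Hodgedecomp0}, Theorem \ref{Hodgedecomp} and Corollary \ref{harmcoho} all carry over verbatim with $D,\Delta$ replaced by $\tD_{(u,v)},\tilde\Delta_{(u,v)}$, since their proofs use only that $\Delta$ is the Laplacian of the relevant complex. In particular $\ker\tilde\Delta_{(u,v)}=\ker\Delta=\cH^n(X,\vv)\ten\Cx$, and the reduced cohomology of the fibre satisfies
\[
\cH^n(X,\vv)\ten\Cx \xrightarrow{\ \sim\ } \bar{\H}^n(A^{\bt}(X,\vv)\ten\Cx,\ \tD_{(u,v)})
\]
as a topological isomorphism. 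That this map is well defined uses that a harmonic form is killed by all four of $D,D^*,D^c,D^{c*}$, hence by $\tD$, which is what places $\cH^n(X,\vv)\ten\O_{C^*}$ inside $\tilde{\z}^n(X,\vv)$ in the first place.

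Finally I would globalise over $C^*$. The inclusion $\cH^n(X,\vv)(n)\ten\O_{C^*}\to\bar{\tilde{\H}}^n(X,\vv)$ is $\O_{C^*}$-linear and, by construction (the weight-$n$ grading on $A^n(X,\vv)$ and the $\bG_m\subset S$ action on $\O_{C^*}$ of Definition \ref{cdef}), it is $\bG_m$-equivariant, with the Tate twist $(n)$ matching the two weights. Since it is a fibrewise topological isomorphism by the previous step, it is an isomorphism of $\bG_m$-equivariant sheaves on $C^*$: descending along $[C^*/\bG_m]\simeq\bP^1_{\R}$ lets us verify the statement one fibre at a time.

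I expect the main obstacle to be precisely this last reconciliation. The harmonic theory computing the fibres is Hermitian and uses complex conjugation, so $\tD^*$ and the inner product are \emph{not} $\O_{C^*}$-linear (they involve $(u,v)\mapsto(\bar u,\bar v)$), whereas $\bar{\tilde{\H}}^n(X,\vv)$ is an honest sheaf of $\O_{C^*}$-modules. The cross-term cancellation in $\tilde\Delta$ is exactly what makes the fibrewise computation uniform in $(u,v)$, and it is the $\bG_m$-equivariance --- equivalently, working on $\bP^1_{\R}$ --- that converts the family of fibrewise isomorphisms into a genuine isomorphism of sheaves.
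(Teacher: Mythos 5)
Your fibrewise computation is correct, and it is in fact the shadow of what the paper does: on $\SU_2 \subset \SL_2(\Cx)$ one has $y=\bar{u}$ and $x=-\bar{v}$, so your Hermitian adjoint $\bar{u}D^*+\bar{v}D^{c*}$ is the restriction of the algebraic operator $\tD^*:=yD^*-xD^{c*}$ on $\SL_2$, and your identity $\tilde{\Delta}=(|u|^2+|v|^2)\Delta$ is the restriction of $[\tD,\tD^*]=(uy-vx)\Delta=\Delta$. The genuine gap is your globalisation step, which you flag yourself but do not resolve. The object $\bar{\tilde{\H}}^n(X,\vv)$ is defined from the kernel of $\tD$ and the \emph{closure of the image} of $\tD$ computed over $\O_{C^*}$, and neither operation is known to commute with passage to fibres. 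Concretely, surjectivity does not follow from your argument: for a global section $s \in \tilde{\z}^n(X,\vv)$, the Green's-operator argument of Proposition \ref{Hodgedecomp0} writes $s-Hs$ (with $H$ the harmonic projection) as a limit of terms $\tD(\tD^*G_{\eps}s)$, and to conclude that $s-Hs$ lies in $\overline{\tilde{\b}^n(X,\vv)}$ you need the preimages $\tD^*G_{\eps}s$ to be sections of $A^{n-1}(X,\vv)\ten\O_{C^*}$. With your $\tD^*$ they are not: multiplication by $\bar{u},\bar{v}$ does not even map $A^{\bt}(X,\vv)\ten\O_{C^*}$ to itself, so your adjoint exists only fibrewise. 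Neither $\bG_m$-equivariance nor descent to $\bP^1_{\R}$ addresses this; equivariance constrains the group action, but it neither identifies the fibres of $\bar{\tilde{\H}}^n(X,\vv)$ with the fibrewise reduced cohomology groups (a base-change statement for kernels and closures that is unjustified here), nor converts a family of fibrewise isomorphisms of infinite-dimensional topological spaces into an isomorphism with continuous inverse.

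The paper's proof repairs exactly this point. It pulls back along the flat cover $\row_1\co \SL_2 \to C^*$ of Definition \ref{rowdef}, where the extra coordinates $x,y$ subject to $uy-vx=1$ serve as algebraic substitutes for $\bar{u},\bar{v}$: the operator $\tD^*=yD^*-xD^{c*}$ is then defined on $A^{\bt}(X,\vv)\ten O(\SL_2)$ itself, commutes with the module structure, and satisfies $[\tD,\tD^*]=\Delta$ exactly (not merely up to a positive function) by the K\"ahler identities of \S \ref{eigensn}. Consequently the entire proof of Corollary \ref{harmcoho} --- Sobolev completions, spectral decomposition, approximate Green's operators --- runs verbatim with $O(\SL_2)$-coefficients, giving the global decomposition after pullback, and the statement over $C^*$ follows by flat descent. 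If you want to salvage your write-up, the fix is precisely this substitution: replace $\bar{u},\bar{v}$ by $y,-x$ and run your argument globally on $\SL_2$ rather than fibre by fibre.
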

\begin{proof}
It suffices to prove this on pulling back along the flat cover $\row_1\co \SL_2 \to C^*$. We may define
 $\tD^*\co  A^n(X, \vv)\ten O(\SL_2)\to A^{n-1}(X, \vv)\ten O(\SL_2)$ by $\tD^*= yD^*-xD^{c*}$.
Then $[\tD, \tD^*]=\Delta$, and since $\Delta$ commutes with $D$ and $D^c$ it also commutes with $\tD$. 

The result now follows with the same proof as that of Corollary \ref{harmcoho}, replacing $D,D^*$ with $\tD, \tD^*$.
\end{proof}

\begin{proposition}\label{finecoho}
If we write $\cH^n= \cH^n (X,\vv)$ and $M^m= \overline{DD^cA^{m-2}(X,\vv)}$, then there is a $\bG_m$-equivariant isomorphism
\[
 \tilde{\H}^n(X,\vv) \cong  [(\cH^n \oplus M/\Delta^{\half}M^n)(n) \oplus (M^{n+1}/ \Delta^{\half}M^{n+1})(n-1)] \ten\O_{C^*}      
\]
of quasi-coherent sheaves on $C^*$.
\end{proposition}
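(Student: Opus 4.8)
As in the proof of Corollary \ref{harmcoho2}, it suffices to establish the isomorphism after pulling back along the flat cover $\row_1\co\SL_2\to C^*$, where $\tD$ acquires an adjoint $\tD^*$ with $[\tD,\tD^*]=\Delta$. Over $\SL_2$ the operator $\tD$ and the ordinary Laplacian $\Delta$ therefore stand in exactly the relation that $(D,D^*,\Delta)$ do on $X$, so the whole spectral apparatus of \S\ref{eigensn}, together with Proposition \ref{nonredcohoprop}, applies verbatim with $(\tD,\tD^*)$ in place of $(D,D^*)$. This yields a $\bG_m$-equivariant decomposition
\[
\tilde{\H}^n(X,\vv)|_{\SL_2}\cong \cH^n(X,\vv)(n)\ten\O_{C^*}\oplus\left(\overline{\tD A^{n-1}(X,\vv)}/\Delta^{\half}\overline{\tD A^{n-1}(X,\vv)}\right),
\]
whose harmonic summand is precisely the reduced cohomology already computed in Corollary \ref{harmcoho2}. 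Since every sheaf here, as well as $\Delta$, is pulled back from $C^*$ and the construction is $\bG_m$-equivariant, the task reduces to identifying the correction term $\overline{\tD A^{n-1}}/\Delta^{\half}\overline{\tD A^{n-1}}$ over $\SL_2$.

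The heart of the argument is to resolve $\overline{\tD A^{n-1}}$ using the simultaneous eigenspace decomposition of \S\ref{eigensn}. On each spectral interval $S$ bounded away from $0$ and $\infty$, where $\Delta$ is invertible, the relations $[D,D^c]=[D^*,D^c]=[D^{c*},D]=0$ of \S\ref{eigensn} split each eigenspace into the four mutually orthogonal sectors $DD^cE^{n-2}(S)$, $D^*D^cE^n(S)$, $DD^{c*}E^n(S)$, $D^*D^{c*}E^{n+2}(S)$, which I abbreviate $E^n_{++},E^n_{-+},E^n_{+-},E^n_{--}$; here $E^n_{++}=DD^cE^{n-2}(S)$ is the sector matching $M^n$. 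Using $DD^c=-D^cD$ and the sector isomorphisms of \S\ref{eigensn}, I would compute $\tD=uD+vD^c$ termwise: it annihilates $\cH^{n-1}$ and $E^{n-1}_{++}$; on $E^{n-1}_{+-}$ and $E^{n-1}_{-+}$ it acts by $vD^c$ and $uD$ respectively, each an isomorphism onto $E^n_{++}$; and on $E^{n-1}_{--}$ it is injective with image the graph $\{(uD\gamma,vD^c\gamma)\}$ inside $E^n_{+-}\oplus E^n_{-+}$. The decisive geometric input is that on $C^*$ the functions $u,v$ have no common zero, so $(u,v)=\O_{C^*}$ and the syzygy module of $(u,v)$ is generated by $(u,-v)$. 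This coprimality collapses the two multiples $uD,vD^c$ into a single copy of $E^n_{++}$ in the image, and leaves exactly one residual copy of $E^{n+1}_{++}$ in the kernel of $\tD$ on the $E^n_{+-}\oplus E^n_{-+}$ sector.

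Assembling the sectors, taking closures, and passing to the quotient by $\Delta^{\half}$, the $E_{++}$-sector contributes $M^n/\Delta^{\half}M^n$ while the $E_{+-}\oplus E_{-+}$-sector contributes $M^{n+1}/\Delta^{\half}M^{n+1}$ (with $M^{n+1}=\overline{DD^cA^{n-1}}$ playing the role of $E^{n+1}_{++}$); the accumulation of spectrum at $\rho=0$ is exactly what renders the image non-closed and supplies the factor $\Delta^{\half}$, as in Definitions \ref{Deltahalf}, \ref{Deltahalf2} and Proposition \ref{DeltahalfProp}. The Tate twists are then weight bookkeeping: $u,v$ carry $\bG_m$-weight $-1$ and $D,D^c$ carry weight $+1$, so the copy of $M^n\subset A^n$ survives with its intrinsic weight $n$, giving the twist $(n)$, whereas a class in the second sector is reached through one factor of $u$ (or $v$) and one inverse differential, shifting the weight $n+1$ of $M^{n+1}$ down by $2$ to give the twist $(n-1)$. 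The splitting of the extension of the reduced cohomology by the correction is automatic from the canonical harmonic representatives furnished by Proposition \ref{nonredcohoprop}, and assembling the three summands gives the stated $\bG_m$-equivariant isomorphism.

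The step I expect to be the main obstacle is the analysis of the $E_{+-}\oplus E_{-+}$ sector near $\rho=0$: one must control the unbounded operators $\Delta^{\pm\half}$ and the closures carefully enough to see that the quotient of $\ker\tD$ by $\im\tD$ there is \emph{exactly} $M^{n+1}/\Delta^{\half}M^{n+1}$ rather than some other completion, and to check that the Koszul relation for $(u,v)$ meshes with the $\Delta^{\half}$-normalisations without introducing spurious torsion or a non-split extension. Pinning down the $(n-1)$ twist for this summand, as opposed to a naive $(n+1)$, is the place where the weight count and the syzygy interact most delicately and will need the most care.
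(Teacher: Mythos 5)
Your computational core is the paper's own argument in different clothing: the four summands $\overline{DD^cA}$, $\overline{D^*D^cA}$, $\overline{DD^{c*}A}$, $\overline{D^*D^{c*}A}$ of Theorem \ref{Hodgedecomp} are exactly the entries of the paper's elementary bicomplexes, your ``sector isomorphisms'' are its renormalisations $\Delta^{-\half}D$, $\Delta^{-\half}D^c$, $\Delta^{-1}DD^c$ identifying every corner with $M=\overline{DD^cA}$, and your coprimality/syzygy step is its use of the fact that $(u,v)$ generates the unit ideal of $\O_{C^*}$ together with Koszul exactness; the $\Delta^{\half}$-quotients and the twists $(n)$, $(n-1)$ then come out identically. (Minor slip: the syzygy module of $(u,v)$ is generated by $(v,-u)$, not $(u,-v)$.)

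The genuine gap is your opening step, which is load-bearing in your assembly. First, Proposition \ref{nonredcohoprop} does \emph{not} apply ``verbatim'' to $(\tD,\tD^*)$ over $\SL_2$: the proofs in \S\ref{eigensn} rest on $D^*$ being the genuine Hilbert-space adjoint of $D$ --- orthogonality of the eigenspace decompositions, and positivity arguments such as $\ker D^*D=\ker D$ via $\<D^*D\alpha,\alpha\>=\|D\alpha\|^2$ --- whereas $\tD^*=yD^*-xD^{c*}$ is only a formal partner satisfying $[\tD,\tD^*]=\Delta$ on the non-normed $O(\SL_2)$-module $A^{\bt}(X,\vv)\ten O(\SL_2)$. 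Already the directness of $E^n(S)\ten O(\SL_2)=\tD(E^{n-1}(S)\ten O(\SL_2))\oplus\tD^*(E^{n+1}(S)\ten O(\SL_2))$ is no longer an orthogonality statement; it requires the unimodularity $uy-vx=1$ and precisely the sector analysis you carry out afterwards. So the statements you import are true, but they are consequences of, not inputs to, your computation. Second, for a bare existence statement it does not suffice to exhibit an isomorphism after pulling back along $\row_1\co\SL_2\to C^*$: this map is a $\bG_a$-torsor, and an isomorphism of pullbacks descends only when it is compatible with descent data, i.e.\ $\bG_a$-equivariant. Your step-1 splitting is built from $\tD^*$, which involves the coordinates $x,y$ and is not defined over $C^*$, so equivariance is exactly what is in doubt. (Corollary \ref{harmcoho2} is different in kind: there a canonical map already defined over $C^*$ is being \emph{checked} to be an isomorphism, which may legitimately be done after faithfully flat pullback.) Both problems disappear if you delete step 1 and run your sector analysis directly on $\tilde{A}^{\bt}(X,\vv)$ over $C^*$, computing $\ker\tD$ and $\im\tD$ degree by degree: every operator you actually need --- $uD$, $vD^c$, $\Delta^{\pm\half}$, and the sector isomorphisms of Proposition \ref{DeltahalfProp} --- is defined over $C^*$ itself, your graph/syzygy analysis identifies both kernel and image there, and the kernel-modulo-image computation then closes up to give the stated decomposition; this is exactly how the paper proceeds.
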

\begin{proof}
 Writing $A^m:= A^m(X,\vv)$, we have a  
commutative diagram 
\[
 \xymatrix@R=0ex{ & \overline{DD^cA^{n-1}}(n+1)\ten \O_{C^*} \\
\overline{DD^{c*}A^{n}}(n)\ten \O_{C^*} \ar[ur]^-{vD^c} & & \overline{D^*D^{c}A^{n}}(n)\ten \O_{C^*} \ar[ul]_-{-uD}\\
& \overline{D^*D^{c*}A^{n+1}}(n-1)\ten \O_{C^*} \ar[ul]^-{uD}\ar[ur]_-{vD^c},
}
\]
which we may regard as a bicomplex. 
By Theorem \ref{Hodgedecomp}, the complex $\tilde{A}^{\bt}(X,\vv)$ decomposes into a direct sum of $\cH^n$'s and total complexes of the bicomplexes above.

Arguing as in Proposition \ref{DeltahalfProp}, we have topological isomorphisms
\begin{align*}
 \Delta^{-1} DD^c\co  \overline{D^*D^{c*}A^{n+1}} &\to    \overline{DD^cA^{n-1}} \\ 
\Delta^{-\half} D\co \overline{D^*D^{c}A^{n}} &\to \overline{DD^cA^{n-1}}\\
\Delta^{-\half}D^c\co \overline{DD^{c*}A^{n}}&\to \overline{DD^cA^{n-1}},
\end{align*}
so the bicomplex above is linearly isomorphic to
\[
 \xymatrix@R=0.5ex{ & \overline{DD^cA^{n-1}}(n+1)\ten \O_{C^*} \\
 \overline{DD^cA^{n-1}}(n)\ten \O_{C^*} \ar[ur]^-{v\Delta^{\half}} & & \overline{DD^cA^{n-1}}(n)\ten \O_{C^*} \ar[ul]_-{-u\Delta^{\half}}\\
& \overline{DD^cA^{n-1}}(n-1)\ten \O_{C^*} \ar[ul]^-{u\Delta^{\half}}\ar[ur]_-{v\Delta^{\half}}.
}       
\]

Since the ideal $(u,v)$ generates $\O_{C^*}$, cohomology of the top level of the associated total complex is just 
\[
  (\overline{DD^cA^{n-1}}/\Delta^{\half}\overline{DD^cA^{n-1}})(n+1)\ten \O_{C^*},      
\]
 while cohomology of the bottom level is $0$. 
Moreover, the map 
\[
 \overline{DD^cA^{n-1}}(n-1) \ten \O_{C^*} \xra{(u,v)}  \overline{DD^cA^{n-1}}(n)^2\ten \O_{C^*}   
\]
is an isomorphism to the kernel of $(v,-u)$, so cohomology of the middle level is isomorphic to 
\[
 (\overline{DD^cA^{n-1}}(n-1)/ \Delta^{\half}\overline{DD^cA^{n-1}}(n-1)) \ten \O_{C^*} \ten \O_{C^*},       
\]
which completes the proof.
\end{proof}

\begin{remark}\label{MTSbad}
 In particular, note that $\tilde{\H}^n(X,\vv)$ is of weights $n, n-1$ in general, unlike $\bar{\tilde{\H}}^n(X,\vv)$ which is pure of weight $n$. In particular, this means that the weight filtration given good truncation cannot define a mixed twistor structure on  $\tilde{\H}^n(X,\vv)$.
\end{remark}

We now have the following generalisation of the principle of two types:
\begin{lemma}\label{p2t2}
As subspaces of  $A^n(X, \vv)\ten O(\SL_2)$,
\begin{align*}
 &\ker \tD \cap \ker \tDc \cap (\overline{ \tD( A^{n-1}(X,\vv)\ten O(\SL_2)) } + \overline{\tDc (A^{n-1}(X,\vv)  \ten O(\SL_2))})\\
&=  \overline{\tD\tDc( A^{n-2}(X,\vv)O(\SL_2))}.       
\end{align*}

\end{lemma}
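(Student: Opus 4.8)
The plan is to adapt the proof of the ordinary principle of two types (Corollary~\ref{p2t}), which rests on the Hodge decomposition of Theorem~\ref{Hodgedecomp}; the point is that on $A^{\bt}(X,\vv)\ten O(\SL_2)$ the pair $(\tD,\tDc)$ behaves exactly as $(D,D^c)$ does on $A^{\bt}(X,\vv)$, with the \emph{same} Laplacian $\Delta$. Writing the $\SL_2$-coordinate matrix as $\bigl(\begin{smallmatrix} u & v \\ x & y\end{smallmatrix}\bigr)$ with $uy-vx=1$, I take $\tD=uD+vD^c$ and $\tDc:=xD+yD^c$, and I introduce the formal adjoints $\tD^*:=yD^*-xD^{c*}$ (as in Corollary~\ref{harmcoho2}) and $\tDc^*:=-vD^*+uD^{c*}$.

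First I would record the formal algebra. From the K\"ahler-type identities $\{D,D^*\}=\{D^c,D^{c*}\}=\Delta$, $\{D,D^{c*}\}=\{D^c,D^*\}=0$, $\{D,D^c\}=0$ and $D^2=(D^c)^2=0$ (those underlying $[\tD,\tD^*]=\Delta$ in Corollary~\ref{harmcoho2}), together with $uy-vx=1$, a direct expansion gives
\[
\tD^2=\tDc^2=0,\qquad \{\tD,\tDc\}=0,\qquad \{\tD,\tD^*\}=\{\tDc,\tDc^*\}=\Delta,\qquad \{\tD,\tDc^*\}=\{\tDc,\tD^*\}=0,
\]
and the operator identity $\tD\tDc=(uy-vx)DD^c=DD^c$ on $A^{\bt}(X,\vv)\ten O(\SL_2)$. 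Since the inverse matrix is $\bigl(\begin{smallmatrix} y & -v \\ -x & u\end{smallmatrix}\bigr)$, one also has $D=y\tD-v\tDc$ and $D^c=-x\tD+u\tDc$, so the operators $\tD,\tDc$ and $D,D^c$ generate the same $O(\SL_2)$-submodule; in particular $\ker\tD\cap\ker\tDc=\ker D\cap\ker D^c$ inside $A^n(X,\vv)\ten O(\SL_2)$.

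The one genuinely analytic ingredient is orthogonality, and here I would pass to the compact picture underlying \S~\ref{SU2sn}. A point of the compact form $\SU_2$ has first row $(u,v)$ and second row $(x,y)=(-\bar v,\bar u)$; there the fibrewise Hilbert-space adjoint of $\tD=uD+vD^c$ is $\bar uD^*+\bar vD^{c*}=yD^*-xD^{c*}=\tD^*$, and likewise that of $\tDc$ is $\bar xD^*+\bar yD^{c*}=-vD^*+uD^{c*}=\tDc^*$. Thus over $\SU_2$ the operators $\tD^*,\tDc^*$ are honest adjoints, $\cH^n(X,\vv)=\ker\Delta$ is orthogonal to both $\tD$- and $\tDc$-exact forms, and the orthogonal Hodge decomposition of Theorem~\ref{Hodgedecomp} transports verbatim to the $\tD$-complex with the same harmonic spaces. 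Combined with the relations above this yields
\[
\ker\tD\cap\ker\tDc=\bigl(\cH^n(X,\vv)\ten O(\SL_2)\bigr)\oplus\overline{\tD\tDc\bigl(A^{n-2}(X,\vv)\ten O(\SL_2)\bigr)},
\]
the analogue of $\ker D\cap\ker D^c=\cH^n\oplus\overline{DD^cA^{n-2}}$ (and equal to it, since $\tD\tDc=DD^c$).

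Finally I would run the formal argument. If $\alpha$ lies in the left-hand intersection, then membership in $\overline{\tD(A^{n-1}\ten O(\SL_2))}+\overline{\tDc(A^{n-1}\ten O(\SL_2))}$ makes $\alpha$ orthogonal to $\cH^n(X,\vv)\ten O(\SL_2)$ (harmonic forms being killed by $\tD^*$ and $\tDc^*$), so in the displayed splitting its harmonic component vanishes and $\alpha\in\overline{\tD\tDc(A^{n-2}\ten O(\SL_2))}$; the reverse inclusion is immediate, as $\tD\tDc\beta=\tD(\tDc\beta)$ lies in $\overline{\tD(\cdots)}$ and is annihilated by both $\tD$ and $\tDc$. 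The main obstacle is exactly this orthogonality step: $\tD^*,\tDc^*$ are only formal adjoints over the algebraic ring $O(\SL_2)$, while the closures are taken in the tensor-product topology, so one must check that the decomposition, which is honest fibrewise over $\SU_2$, descends along the injection $O(\SL_2)\into C^{\infty}(\SU_2)$ to a decomposition of $O(\SL_2)$-submodules. Everything else is the same spectral bookkeeping as in Corollaries~\ref{harmcoho} and~\ref{p2t}.
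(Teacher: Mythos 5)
Your proposal contains every ingredient of a correct proof, but assembles them in the wrong order: the coordinate-inversion algebra you record at the start is already the whole proof, while the $\SU_2$/adjoint detour you build on top of it is both unnecessary and the one place where you leave a step unverified. The paper's proof is precisely the reduction you set up and then abandon: it cites Corollary \ref{p2t} together with the $\SL_2$ base-change reasoning of \cite{mhs}. Concretely, since $\left(\begin{smallmatrix} u & v \\ x & y\end{smallmatrix}\right)$ is invertible over $O(\SL_2)$ and $D,D^c$ act on $A^{\bt}(X,\vv)\ten O(\SL_2)$ as $D\ten\id$, $D^c\ten\id$, one has equalities of $O(\SL_2)$-submodules
\[
\ker\tD\cap\ker\tDc=(\ker D\cap\ker D^c)\ten O(\SL_2),\qquad
\tD(A^{n-1}\ten O(\SL_2))+\tDc(A^{n-1}\ten O(\SL_2))=(DA^{n-1}+D^cA^{n-1})\ten O(\SL_2),
\]
together with the operator identity $\tD\tDc=DD^c$ (all of which you prove). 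Since the closure of $S\ten O(\SL_2)$ is $\overline{S}\ten O(\SL_2)$ for any subspace $S\subseteq A^n(X,\vv)$, and since $\overline{DA^{n-1}}+\overline{D^cA^{n-1}}$ is closed by Theorem \ref{Hodgedecomp}, the sum of closures on the left-hand side of the lemma is contained in $(\overline{DA^{n-1}}+\overline{D^cA^{n-1}})\ten O(\SL_2)$, and the lemma is just Corollary \ref{p2t} tensored with $O(\SL_2)$; the reverse inclusion is immediate, as you say. No adjoints, no compact form, no new Hodge decomposition.

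The step you flag as the ``main obstacle'' is therefore avoidable, but as written it is a real hole: over $O(\SL_2)$ the operators $\tD^*,\tDc^*$ are formal symbols, there is no inner product, and ``orthogonal to $\cH^n(X,\vv)\ten O(\SL_2)$'' has no meaning, so your final paragraph proves nothing until the descent is carried out. If you insist on the $\SU_2$ route, it can be closed by evaluation rather than descent: for each $g\in\SU_2$ the specialised operators $\tD_g,\tDc_g$ on $A^{\bt}(X,\vv)\ten\Cx$ satisfy the full K\"ahler package with honest adjoints and $\tD_g\tDc_g=DD^c$, so the principle of two types holds fibrewise; applied to $\ev_g(\alpha)$ it shows the $\cH^n(X,\vv)\ten O(\SL_2)$-component of $\alpha$ vanishes at every $g\in\SU_2$, and Zariski-density of $\SU_2$ in $\SL_2(\Cx)$ forces that component to vanish identically. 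Alternatively, and more in the spirit of what you already did, your displayed decomposition of $\ker\tD\cap\ker\tDc$, the containment of the sum of closures in $(\overline{DA^{n-1}}+\overline{D^cA^{n-1}})\ten O(\SL_2)$ noted above, and the fact that this space meets $\cH^n(X,\vv)\ten O(\SL_2)$ only in $0$ (Theorem \ref{Hodgedecomp} again) finish the argument with no adjoints at all.
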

\begin{proof}
 This follows from Lemma \ref{p2t}, with the same reasoning as \cite[Lemma \ref{mhs2-gl2lemma}]{mhs2}.%\cite[Lemma 1.3]{mhs2}.
\end{proof}

\begin{lemma}[Formality]\label{formallemma2}
The morphisms
\[
 (\bar{\H}_{\tDc}^*(A^*(X,\vv)\ten O(\SL_2)),0) \la (\z_{\tDc}^*(A(X,\vv)\ten O(\SL_2)), \tD) \to (A^*(X,\vv)\ten O(\SL_2), \tD)
\]
induce isomorphisms on reduced cohomology.
\end{lemma}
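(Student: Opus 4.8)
The plan is to transport the Deligne--Griffiths--Morgan--Sullivan formality argument used for Lemma \ref{formallemma} (following \cite[Lemma 2.2]{Simpson}) to the twistor setting, substituting $\tD, \tDc$ for $D, D^c$ and invoking the twistor principle of two types (Lemma \ref{p2t2}) in place of Lemma \ref{p2t}. The only algebraic facts needed at the outset are the relations $\tD^2 = \tDc^2 = \tD\tDc + \tDc\tD = 0$ on $A^*(X,\vv)\ten O(\SL_2)$; these hold over $O(\SL_2)$ purely by virtue of $D^2 = (D^c)^2 = DD^c + D^cD = 0$, since every quadratic expression in $\tD = uD + vD^c$ and in $\tDc$ collapses to an $O(\SL_2)$-multiple of $DD^c + D^cD$. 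In particular $\tD$ is $O(\SL_2)$-linear and anticommutes with $\tDc$, so it preserves $\z_{\tDc}^* = \ker \tDc$, and the inclusion $\iota\co (\z_{\tDc}^*, \tD) \to (A^*(X,\vv)\ten O(\SL_2), \tD)$ is a morphism of complexes.

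First I would verify that the projection $p\co (\z_{\tDc}^*, \tD) \to (\bar{\H}_{\tDc}^*, 0)$ is a chain map, i.e.\ that $\tD$ induces the zero differential on reduced $\tDc$-cohomology. For $\omega \in \z_{\tDc}^*$ the form $\tD\omega$ is $\tD$-closed and $\tDc$-closed (as $\tDc\tD\omega = -\tD\tDc\omega = 0$) and visibly $\tD$-exact, so Lemma \ref{p2t2} places it in $\overline{\tD\tDc(A^*(X,\vv)\ten O(\SL_2))} \subseteq \overline{\tDc(A^*(X,\vv)\ten O(\SL_2))}$; hence $[\tD\omega] = 0$ in $\bar{\H}_{\tDc}^*$. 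I would then show that $\iota$ and $p$ each induce isomorphisms on reduced $\tD$-cohomology by the standard diagram-chases. For surjectivity of $\iota_*$ one corrects a $\tD$-closed form $a$ by a $\tD$-coboundary: the form $\tDc a$ is $\tD$- and $\tDc$-closed and visibly $\tDc$-exact, so Lemma \ref{p2t2} gives $\tDc a \in \overline{\tD\tDc(A^*(X,\vv)\ten O(\SL_2))}$, and writing $\tDc a = \tD\tDc b$ moves $a$ to $a + \tD b \in \z_{\tDc}^*$ within its $\tD$-cohomology class; injectivity of $\iota_*$ is the dual chase, and the two statements for $p_*$ are entirely symmetric. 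Each step invokes Lemma \ref{p2t2} exactly once, and the splittings of the relevant closed subspaces are supplied by pulling the twistor Hodge decomposition of Corollary \ref{harmcoho2} back along $\row_1\co \SL_2 \to C^*$, which decomposes $A^*(X,\vv)\ten O(\SL_2)$ into harmonic, $\overline{\tD(\cdot)}$, $\overline{\tDc(\cdot)}$ and $\overline{\tD\tDc(\cdot)}$ summands, with $\tD, \tDc$ restricting to isomorphisms between the pieces paired by the bicomplex.

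The main obstacle I expect is bookkeeping the closures. Because we compute reduced cohomology over the coefficient ring $O(\SL_2)$ rather than over a field, the images of $\tD$ and $\tDc$ need not be closed, so each occurrence of ``exact'' in the classical chase must be read as ``lies in the closure of the exact subspace,'' and one must check that the decomposition of Corollary \ref{harmcoho2} makes these closed-subspace identities hold (in particular that the equation $\tDc a = \tD\tDc b$ used for surjectivity survives passage to closures). Once Lemma \ref{p2t2} is available in this reduced, $O(\SL_2)$-linear form, no further analytic input is required and the argument is formally identical to the finite-dimensional compact K\"ahler case; indeed, as with Lemma \ref{formallemma}, one can finally just observe that the proof of \cite[Lemma 2.2]{Simpson} carries over with $D, D^c$ replaced by $\tD, \tDc$.
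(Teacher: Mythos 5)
Your proposal is correct and takes essentially the same route as the paper: the paper's entire proof of Lemma \ref{formallemma2} is the single remark that the proof of \cite[Lemma 2.2]{Simpson} carries over, using Lemma \ref{p2t2} in place of the ordinary principle of two types, which is exactly the substitution $D, D^c \mapsto \tD, \tDc$ you carry out. Your expanded diagram chase, including the observation that reduced cohomology forces each ``exact'' to be read as ``in the closure of the exact subspace'' (with the harmonic/closed-image splittings supplying what the literal equation $\tDc a = \tD\tDc b$ cannot), is a faithful filling-in of that one-line argument rather than a different proof.
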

\begin{proof}
 The proof of \cite[Lemma 2.2]{Simpson} carries over to this generality, using Lemma \ref{p2t2}.
\end{proof}

 Following Corollary \ref{univlocsys} and Remark \ref{DGArk}, the results above can all be regarded as statements about the topological differential graded algebra $\tilde{A}^{\bt}(X, \bE')$.

\subsection{The analytic Hodge filtration on cochains}\label{analyticMHS}

Recall from \S \ref{Evee} that the local system $\bE'$ on $X$ is defined to correspond to the $\pi_1(X,x)$-representation 
 given by left multiplication on $E^J(X,x)'$. 

\begin{proposition}\label{redenrich}
The topological cochain complex $\tilde{A}^{\bt}(X, \bE')$  is equipped with a  continuous circle action,
 satisfying: 
\begin{enumerate}
 \item the $S^1$-action and $\bG_m$-actions  on  $\tilde{A}^{\bt}(X, \bE')$ commute,
\item  the action of $S^1 \subset \Cx^*= S(\R)$ on $C^*$ makes $\tilde{A}^{\bt}(X, \bE')$  into an $S^1$-equivariant sheaf on $C^*$, and 
\item  $-1 \in S^1$ acts as $-1 \in \bG_m$.
\end{enumerate}
\end{proposition}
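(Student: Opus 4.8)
The plan is to build the circle action on $\tilde{A}^\bt(X,\bE')$ by superposing three commuting ingredients and then to read off the three listed properties from the explicit shape of the twistor differential. First I would rewrite $\tD=uD+vD^c$ in terms of the operators $D'=\pd+\bar\theta$ and $D''=\bar\pd+\theta$ of Definition \ref{Ddecomp}: since $D=D'+D''$ and $D^c=iD'-iD''$, one gets $\tD=(u+iv)D'+(u-iv)D''$, so that in the coordinates $w_\pm=u\pm iv$ on $C\ten\Cx$ the differential is simply $w_+D'+w_-D''$.

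Next I would record how the ingredients act on the de Rham operators. The circle action of Proposition \ref{circleX} sends the pluriharmonic connection $d^++\vartheta$ to $d^++\lambda\dmd\vartheta=d^++\lambda\theta+\lambda^{-1}\bar\theta$ for $\lambda\in S^1$, so the induced bundle automorphism $\alpha_\lambda$ of $\bE'$ fixes $\pd,\bar\pd$ and conjugates $\theta\mapsto\lambda\theta$, $\bar\theta\mapsto\lambda^{-1}\bar\theta$; the $\dmd$-action $\delta_\lambda$ on forms conjugates $\pd\mapsto\lambda\pd$, $\bar\pd\mapsto\lambda^{-1}\bar\pd$, $\theta\mapsto\lambda\theta$, $\bar\theta\mapsto\lambda^{-1}\bar\theta$; and the $S^1\subset S$ action on $C^*$ of Definition \ref{cdef} scales $w_+\mapsto\lambda^{-1}w_+$, $w_-\mapsto\lambda w_-$. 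I would then \emph{define} the circle action as the composite $\Psi_\lambda:=\alpha_{\lambda^{-2}}\circ\delta_\lambda$ on $A^\bt(X,\bE')$ together with the $S^1\subset S$ action on $\O_{C^*}$, continuity being inherited from Proposition \ref{circleX} and the algebraicity of $\delta$ and of the $C^*$-action.

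The verification of $\tD$-equivariance is then a short computation: under $\alpha_{\lambda^{-2}}\circ\delta_\lambda$ one finds $D'\mapsto\lambda D'$ and $D''\mapsto\lambda^{-1}D''$, and these are cancelled by the scalings $w_+\mapsto\lambda^{-1}w_+$, $w_-\mapsto\lambda w_-$, so $\Psi_\lambda$ commutes with $\tD$ and descends to $\tilde{A}^\bt(X,\bE')$. The factor $\lambda^{-2}$ on the bundle is exactly what makes each $D',D''$ scale uniformly: in $D'=\pd+\bar\theta$ the part $\pd$ is fixed by $\alpha$ while $\bar\theta$ is not, so $\delta_\lambda$ alone would scale the two summands by $\lambda$ and $\lambda^{-1}$, and $\alpha_{\lambda^{-2}}$ multiplies $\bar\theta$ by $\lambda^2$ to bring it into line. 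Property (2) then holds by construction, the $\O_{C^*}$-part of $\Psi$ being the $S^1\subset S$ action; and property (1) follows because $S$ is an abelian torus, so its subtori $\bG_m$ and $S^1$ act compatibly through $\dmd$ on forms and through the $S$-action on $C^*$, while $\bG_m$ acts trivially on the bundle and hence commutes with $\alpha$.

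The one point needing care, and which property (3) records, is the factor of two in the bundle parameter. For $\lambda=-1$ the bundle contribution $\alpha_{(-1)^{-2}}=\alpha_1$ is trivial, so $\Psi_{-1}$ reduces to $\delta_{-1}$ on forms together with the action of $-1$ on $C^*$; since $(-1)\dmd$ acts on $(p,q)$-forms by $(-1)^{p-q}=(-1)^{p+q}$, this is precisely the weight action of $-1\in\bG_m$ (scaling $A^n$ by $(-1)^n$ and $w_\pm$ by $-1$). I expect this evenness — equivalently, that the honest circle acting on the Higgs field double-covers the $S^1$ realised on $C^*$, mirroring the relation $(\bG_m\by S^1)/(-1,-1)\cong S$ from the preliminaries — to be the only genuinely delicate step; the remaining obstacle is merely to confirm that $\alpha_\lambda$ conjugates the de Rham operators exactly as the defining intertwining property of Proposition \ref{circleX} predicts.
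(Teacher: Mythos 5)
Your proposal is correct and takes essentially the same route as the paper: the paper defines the action by $t \boxast (a\ten v) := (t\dmd a)\ten(t^2\circledast v)$ --- the $\dmd$-action on form indices combined with a weight-two twist of the Proposition \ref{circleX} action on the coefficients $\bE'$ --- together with the $S^1\subset S$ action on $C^*$, with continuity likewise deduced from Proposition \ref{circleX}. The only differences are cosmetic: your exponent $\lambda^{-2}$ versus the paper's $t^2$ is a matter of convention for the action induced on the dual $\bE'$, and your explicit $D',D''$ computation spells out the equivariance check that the paper delegates to the cited proof in its companion reference.
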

\begin{proof}
Since $S^1$ acts on $E^J(X,x) $, it acts on $\bE'$, and we denote this action by $v \mapsto t \circledast v$, for $t \in S^1$. 
We may now adapt the proof of \cite[Theorem \ref{mhs2-mhsmal}]{mhs2}, %\cite[Theorem 6.12]{mhs2},
defining an $S^1$-action on 
$\sA^*(X, \R)\ten_{\R}\bE' $
by setting $t \boxast (a\ten v) := (t \dmd a) \ten (t^2 \circledast v)$ for $t \in S^1$ and $\dmd$ as in Definition \ref{dmd}. Passing to the completion $\sA^*(X, \bE')$ completes the proof, with continuity following from Proposition \ref{circleX}.
\end{proof}

\begin{remarks} 
If the circle action of Proposition \ref{redenrich} were algebraic, then by
\cite[Lemma \ref{mhs2-tfilenrich}]{mhs2} %\cite[Lemma 2.23]{mhs2}
it would correspond to a Hodge filtration on $A^{\bt}(X, \bE')$. Since finite-dimensional circle representations are algebraic, we may regard Proposition \ref{redenrich} as the natural structure of an infinite-dimensional Hodge filtration.

In Proposition \ref{redenrich}, note that we can of course replace $E^J(X,x)$ with any inverse system $B$ of $C^*$-algebra quotients of $E^J(X,x)$ to which the $S^1$-action descends, provided we replace $\bE$ with the local system associated to $B$.

As observed in \S \ref{Evee}, we may  substitute $\vv=\bE$ in Proposition \ref{finecoho} and Lemma \ref{formallemma2}. Note that the resulting isomorphisms are then equivariant with respect to the circle action of Proposition \ref{redenrich}.
\end{remarks}

\subsection{$\SU_2$}\label{SU2sn}

As we saw in Corollary \ref{harmcoho2}, in order to define the  adjoint operator $\tilde{D}^*$ to $\tilde{D}$, it is necessary to pull $\tilde{A}^{\bt}(X,\vv)$ back along the morphism $\row_1 \co \SL_2 \to C^*$. This gives us the complex
\[
 \row_1^*\tilde{A}^{\bt}(X,\vv)= (A^*(X, \vv) \ten O(\SL_2), \tD),
\]
where $\tD=uD+vD^c$, with  adjoint $\tD^*= yD^*-xD^{c*}$. 

This leads us to consider the $*$-structure on $O(\SL_2)$ determined by $u^*=y, v^*=-x$. This implies $x^*=-v, y^*=u$, so
\[
 \begin{pmatrix}u & v \\ x& y \end{pmatrix}^*= \begin{pmatrix}y & -x \\ -v & u \end{pmatrix},
\]
or $A^*= (A^{-1})^t$. 

\begin{lemma}
The real $C^*$-enveloping algebra $C^*(O(\SL_2))$ of the real  $*$-algebra $O(\SL_2)$ is the ring of continuous complex functions $f$ on $\SU_2$ for which
\[
 f(\bar{A}) = \overline{f(A)}.
\]
\end{lemma}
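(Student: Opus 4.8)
The plan is to exhibit $C^*(O(\SL_2))$ — which is commutative, since $O(\SL_2)$ is — as the algebra of continuous functions on its space of bounded $*$-characters, and to identify that space with $\SU_2$ carrying the involution $A\mapsto\bar A$. The decisive first step is the relation that forces every $*$-representation to be bounded: since $v^*=-x$ and $uy-vx=1$, commutativity gives
\[
u^*u+v^*v = yu-xv = uy-vx = 1.
\]
Hence for any $*$-representation $\pi$ on a Hilbert space one has $\pi(u)^*\pi(u)+\pi(v)^*\pi(v)=I$ with both summands positive, so $\|\pi(u)\|,\|\pi(v)\|\le 1$, and therefore $\|\pi(x)\|=\|\pi(v)\|\le 1$ and $\|\pi(y)\|=\|\pi(u)\|\le 1$. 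The universal $C^*$-seminorm is thus finite on each element, so $C^*(O(\SL_2))$ exists and is a commutative unital real $C^*$-algebra.

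Next I would compute the $*$-characters. A $\Cx$-valued $*$-character $\chi$ is an $\R$-algebra homomorphism with $\chi(a^*)=\overline{\chi(a)}$, hence determined by the point $A\in\SL_2(\Cx)$ whose entries are $\chi(u),\chi(v),\chi(x),\chi(y)$. The conditions $\chi(u^*)=\overline{\chi(u)}$ and $\chi(v^*)=\overline{\chi(v)}$ read $\chi(y)=\overline{\chi(u)}$ and $\chi(x)=-\overline{\chi(v)}$, so $A=\left(\begin{smallmatrix} a & b \\ -\bar b & \bar a\end{smallmatrix}\right)$ with $|a|^2+|b|^2=1$, i.e. $A\in\SU_2$; conversely each $A\in\SU_2$ gives such a $\chi_A$, which by the bound above extends to $C^*(O(\SL_2))$. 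Complex conjugation of characters $\chi\mapsto\bar\chi$ corresponds to $A\mapsto\bar A$, and since $O(\SL_2)$ is dense in its completion these are all the bounded $*$-characters.

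To assemble, I would use the evaluation $*$-homomorphism $p\mapsto\hat p$, $\hat p(A):=\chi_A(p)$, from $O(\SL_2)$ into $C(\SU_2,\Cx)$; its image lies in $R:=\{f:f(\bar A)=\overline{f(A)}\}$ because $\hat p(\bar A)=\chi_{\bar A}(p)=\overline{\chi_A(p)}=\overline{\hat p(A)}$. For a commutative $*$-algebra every $*$-representation decomposes into characters, so the universal seminorm is $\|a\|=\sup_{A\in\SU_2}|\chi_A(a)|=\|\hat a\|_{\infty}$, whence $p\mapsto\hat p$ is isometric and $C^*(O(\SL_2))$ is the closure of its image in $C(\SU_2,\Cx)$. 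The coordinates $\hat u,\hat v$ separate the points of $\SU_2$, and since $\overline{\hat u}=\hat y$ and $\overline{\hat v}=-\hat x$, the $\Cx$-span of $\hat u,\hat v,\hat x,\hat y$ is a unital conjugation-closed subalgebra; by Stone--Weierstrass it is dense in $C(\SU_2,\Cx)$, so the image of $O(\SL_2)$ is dense in $R$, and (as in the proof of Lemma \ref{ablemma}) Gelfand--Naimark identifies the completion with $R$, giving the claim.

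The step I expect to be the main obstacle is justifying that the universal seminorm is both finite and computed by the characters alone, so that the enveloping algebra is exactly $R$ and nothing larger. This is precisely what the identity $u^*u+v^*v=1$ secures: it bounds every generator in every $*$-representation, and commutativity then forces $C^*(O(\SL_2))$ to be the full algebra of functions on its character space $\SU_2$, with the reality condition descending from $A\mapsto\bar A$.
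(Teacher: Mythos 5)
Your proof is correct, and it reaches the same pivotal identification as the paper --- the $*$-characters of $O(\SL_2)$ are exactly the matrices $A$ with $\bar{A}=A^*=(A^{-1})^t$, i.e. $\SU_2$ --- but you assemble the result by a more self-contained route. The paper complexifies immediately: it invokes the Gelfand representation to get $C^*(O(\SL_2))\ten\Cx\cong C(\SU_2,\Cx)$, and then descends to the real form by the involution $\tau(f)(A)=\overline{f(\bar A)}$ together with Lemma \ref{GalCstar} (the equivalence between real $C^*$-algebras and complex ones with a conjugate-linear multiplicative involution). You instead stay on the real side throughout and supply two points the paper leaves implicit in the phrase ``the Gelfand representation gives'': first, the identity $u^*u+v^*v=uy-vx=1$, which bounds every generator in every $*$-representation and so guarantees the universal $C^*$-seminorm is finite (i.e. that the enveloping algebra exists at all); second, the Stone--Weierstrass density argument showing the image of $O(\SL_2)$ exhausts the fixed-point algebra $R=\{f: f(\bar A)=\overline{f(A)}\}$ rather than a proper subalgebra. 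What your approach buys is this explicitness and independence from Lemma \ref{GalCstar}; what the paper's buys is brevity, since the complex commutative Gelfand--Naimark theorem plus the already-established real/complex dictionary does the completeness and surjectivity bookkeeping in one stroke. The one step you should make sure is fully justified is the claim that for the commutative real $*$-algebra the universal seminorm is computed by complex characters alone: this follows because the closure of the image in any $*$-representation is a commutative real $C^*$-algebra, whose complexification is a $C(X,\Cx)$ by Gelfand--Naimark, so its norm is a supremum over complex characters, each of which pulls back to a point of $\SU_2$ --- in effect the same real-versus-complex comparison the paper delegates to Lemma \ref{GalCstar}.
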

\begin{proof}
A $*$-morphism $O(\SL_2) \to \Cx$ is a matrix $A \in \SL_2(\Cx)$ with $\bar{A}= A^*= (A^{-1})^t$, so $A \in \SU_2$. Thus the Gelfand representation gives $C^*(O(\SL_2))\ten \Cx \cong C(\SU_2, \Cx)$. 

Now, writing $\Gal(\Cx/\R) =\<\tau\>$, and taking  $f \in O(\SL_2)\ten \Cx$ and $A \in \SU_2$, we have $\tau(f)(A) = \overline{f(\bar{A})}$. This formula extends to give a $\Gal(\Cx/\R)$-action on $C(\SU_2, \Cx)$, and 
Lemma \ref{GalCstar} then gives 
\[
 C^*(O(\SL_2))= C(\SU_2, \Cx)^{\tau}.
\]
 \end{proof}

 Note that complex conjugation on $\SU_2$  is equivalent to conjugation by the matrix $\left(\begin{smallmatrix} 0 & 1\\ -1 & 0     \end{smallmatrix}\right)$.

\subsubsection{The Hopf fibration}

The action of $S(\Cx)$ on $\SL_2(\Cx)$ from Definition \ref{rowdef} does not preserve $\SU_2$. However, Lemma \ref{rowlemma} ensures that for the subgroup schemes $\bG_m,S^1 \subset S$, the groups $S^1 = S^1(\R) \subset S^1(\Cx)\cong \Cx^*$ and $S^1\subset \Cx^*\cong \bG_m(\Cx)$ both preserve $\SU_2$.

Thus in the $C^*$ setting, the $S$-action becomes an action of $(S^1\by S^1)/(-1,-1)$ on $\SU_2$, given by 
\[
 (s,t, A)\mapsto  \left(\begin{smallmatrix}  s^{-1} & 0  \\ 0 & s  \end{smallmatrix} \right)A  \left(
\begin{smallmatrix} \Re t & \Im t \\ -\Im t & \Re t \end{smallmatrix} \right)^{-1}.
\]

Moreover, there is a $\Gal(\Cx/\R)$-action on this copy of $S^1 \by S^1$, with the non-trivial automorphism $\tau$ given by $\tau(s,t)= (s^{-1},t)$. The action of $(S^1\by S^1)/(-1,-1)$ is then $\tau$-equivariant. Alternatively, we may characterise our group as $S^1 \by S^1 \subset \Cx^*\by \Cx^* \cong S(\Cx)$, by sending $(s,t)$ to $(st,st^{-1})$. On this group $S^1 \by S^1$, the $\Gal(\Cx/\R)$-action is then given by $\tau(w',w'')= (\overline{w''}, \overline{w'})$.

Now, consider the composition 
\[
 \SL_2 \xra{\row_1} C^* \to [C^*/\bG_m] \cong \bP^1.
\]
On taking Gelfand representations of $C^*$-enveloping algebras, this gives rise to the map
\[
 \SU_2 \to \bP^1(\Cx),
\]
which is just the Hopf fibration $p \co S^3 \to S^2$, corresponding to the quotient by the  action of $S^1 \subset \bG_m(\Cx)$ by diagonal matrices. The action of $\tau$ on $\SU_2$ and on $\bP^1(\Cx)$ is just given by complex conjugation.

\subsubsection{Smooth sections}\label{smoothsnsn}

If we write $\rho_n$ for the weight $n$ action of $\bG_m$ on $\bA^1$, then we may consider the topological vector bundle
\[
  \SU_2\by_{S^1, \rho_n}\Cx      
\]
on $\bP^1(\Cx)$, for the action of $S^1 \subset \bG_m(\Cx)$ above.

\begin{definition}
Define  $\sA^0_{\bP^1}\Cx(n)$ to be the sheaf of smooth sections of    $\SU_2\by_{S^1, \rho_n}\Cx  \to \bP^1(\Cx)$, and write $A^0(\bP^1, \Cx(n)):= \Gamma( \bP^1(\Cx),\sA^0_{\bP^1}\Cx(n))$. Beware that for $n\ne 0$, there is no local system generating $\sA^0_{\bP^1}\Cx(n)$.   
\end{definition}

For $U\subset \bP^1(\Cx)$, observe that $\Gamma( U,\sA^0_{\bP^1}\Cx(n)) $ consists of smooth maps
\[
 f\co p^{-1}(U) \to \Cx       
\]
satisfying $f(\left(\begin{smallmatrix}  s^{-1} & 0  \\ 0 & s  \end{smallmatrix} \right)A)= s^nf(A)$ for all $s \in S^1$.

For the quotient map $q\co C^* \to \bP^1$, 
we may characterise $\Gamma(U, \O_{\bP^1}^{\hol}(n))$ as the space of holomorphic maps
\[
f\co  q^{-1}(U) \to \Cx       
\]
satisfying $f(su,sv)= s^nf(u,v)$ for all $s \in \Cx^*$. The embedding $S^3 \subset C^*(\Cx)$ thus yields
\[
 \O_{\bP^1}^{\hol}(n) \subset \sA^0_{\bP^1}\Cx(n),       
\]
and indeed $\sA^0_{\bP^1}\Cx(n)=  \O_{\bP^1}^{\hol}(n)\ten_{\O_{\bP^1}^{\hol} }\sA^0_{\bP^1}\Cx$.

Now, for the conjugate sheaf $\overline{\O_{\bP^1}^{\hol}(n)}$, note that  $\Gamma(U, \overline{\O_{\bP^1}^{\hol}(n)})$ is the space of anti-holomorphic maps
\[
f\co  q^{-1}(U) \to \Cx       
\]
satisfying $f(su,sv)= \bar{s}^nf(u,v)$ for all $s \in \Cx^*$. Thus we have a canonical embedding 
\[
        \overline{\O_{\bP^1}^{\hol}(n)} \subset\subset \sA^0_{\bP^1}\Cx(-n),
\]
with 
$\sA^0_{\bP^1}\Cx(-n)=  \overline{\O_{\bP^1}^{\hol}(n)}\ten_{\overline{\O_{\bP^1}^{\hol} }}\sA^0_{\bP^1}\Cx$.

Note that the inclusion $O(\SL_2) \subset C(\SU_2, \Cx)$ gives
\[
 u,v \in  A^0(\bP^1, \Cx(1))^{\tau}, \quad   \bar{u},\bar{v} \in  A^0(\bP^1, \Cx(-1))^{\tau}    
\]
and
\[
 O(\SL_2) \subset \bigoplus_{n\in \Z} A^0(\bP^1, \Cx(n))^{\tau}.       
\]

\begin{definition}\label{Ndef}
By \cite[Definition \ref{mhs2-Ndef}]{mhs2}, %\cite[Definition 2.22]{mhs2},  
there is a  derivation $N$ on $O(\SL_2)$ given by
 $Nx=u, Ny=v, Nu=Nv=0$, for co-ordinates  $\left(\begin{smallmatrix} u &  v \\ x & y\end{smallmatrix}\right)$ on $\SL_2$. Since this annihilates $u,v$, it is equivalent to  the $O(\SL_2)$-linear map
\[
\Omega(\SL_2/C^*) \to O(\SL_2)        
\]
given by $dx\mapsto u, dy \mapsto v$.
\end{definition}

Note that $N$ has weight $2$, and extends (by completeness) to give $\tau$-equivariant differentials
\[
 N \co  \sA^0_{\bP^1} \Cx(n)\to   \sA^0_{\bP^1} \Cx(n+2).     
\]
Note that $N$ is the composition of the anti-holomorphic differential
\[
 \bar{\pd}_{\bP^1}\co \sA^0_{\bP^1}\Cx(n) \to  \sA^0_{\bP^1}\Cx(n)\ten_{ \overline{\O_{\bP^1}^{\hol} }} \overline{\Omega_{\bP^1}} 
\]
with the canonical isomorphism $\Omega_{\bP^1}\cong \O(-2)$.

\subsubsection{Splittings of the twistor structure}\label{splittingtwistor}

As in \cite[Remark \ref{mhs2-sltrivia}]{mhs2}, %\cite[Remark 2.16]{mhs2},
we can characterise the map $\row_1\co \SL_2 \to C^*$ as the quotient $C^*=[\SL_2/\bG_a]$, where $\bG_a$ acts on $\SL_2$ as left multiplication by $ \left(\begin{smallmatrix} 1 & 0  \\ \bG_a & 1 \end{smallmatrix} \right)$. Here, the $S$-action on $\bG_a$ has $\lambda$ acting as multiplication by $\lambda\bar{\lambda}$.

Therefore the map $q \circ \row_1 \co \SL_2 \to \bP^1$ is given by taking the quotient of $\SL_2$ by the Borel subgroup $B= \bG_m \ltimes \bG_a$, for the action above. The action of $\bG_m$ corresponds to weights, while the action of $\bG_a$ corresponds to the derivation $N$ above, which we regard as the Archimedean monodromy operator as in \cite[\S \ref{mhs2-analogies}]{mhs2}. %\cite[\S 3.4]{mhs2}.

\begin{definition}
 Given a pluriharmonic local system $\vv$, define  $\breve{A}^{\bt}(X, \vv)$ to be the sheaf of $\O_{\bP^1}^{\hol}$-modules associated to the $\bG_m$-equivariant sheaf $\tilde{A}^{\bt}(X, \bE^{\vee}) $ on $C^*$. Explicitly,
\[
 \breve{A}^{\bt}(X, \vv)=  (\bigoplus_n (q_*\tilde{A}^{\bt}(X, \vv))\ten_{\O_{\bP^1}^{\alg}}\O_{\bP^1}^{\hol}(n))^{\bG_m},
\]
so
\[
 \breve{A}^n(X, \vv)= A^n(X,\vv)\ten_{\R} \O_{\bP^1}^{\hol}(n),
\]
with differential $\breve{D}=uD+vD^c$, for $u,v \in \Gamma(\bP^1, \O_{\bP^1}(1))$.
\end{definition}

\begin{definition}
 Write $\mathring{\sA}^{\bt}(X, \vv):= \sA^0_{\bP^1}\ten_{\O_{\bP^1}^{\hol}} \breve{A}^*(X, \vv) $, and observe that this admits an operator
\[
 \breve{D}^c:=-\bar{v}D+uD^c\co \mathring{\sA}^n(X, \vv) \to \mathring{\sA}^{n+1}(X, \vv)(-2).
\]
\end{definition}

Now, applying the map $O(\SL_2) \into   \bigoplus_{n\in \Z} A^0(\bP^1, \Cx(n))^{\tau}$ to 
Lemmas \ref{p2t2},\ref{formallemma2} yields the following:

\begin{lemma}\label{formallemma3}
 As subspaces of   $\mathring{\sA}^n(X, \vv)$,
\[
\ker \breve{D} \cap \ker \breve{D}^c \cap (\overline{ \breve{D} \mathring{\sA}^{n-1}(X, \vv)  } + \overline{\breve{D}^c \mathring{\sA}^{n-1}(X, \vv)(2)}) =  \overline{\breve{D}\breve{D}^c(\mathring{\sA}^{n-2}(X, \vv)(2))}.
\]
Thus the morphisms
\[
 (\bar{\sH}_{\breve{D}^c}^*(\mathring{\sA}^*(X, \vv)),0) \la (\sZ_{\breve{D}^c}^*( \mathring{\sA}^*(X, \vv)), \breve{D}) \to  \mathring{\sA}^n(X, \vv)
\]
induce isomorphisms on reduced cohomology sheaves.
\end{lemma}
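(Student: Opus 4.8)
The plan is to deduce both assertions from the principle of two types (Lemma~\ref{p2t2}) and the formality statement (Lemma~\ref{formallemma2}) on $A^*(X,\vv)\ten O(\SL_2)$, transporting them along the embedding $O(\SL_2)\subset\bigoplus_{n\in\Z}A^0(\bP^1,\Cx(n))^{\tau}$ together with the smoothing $\sA^0_{\bP^1}\Cx(n)=\O_{\bP^1}^{\hol}(n)\ten_{\O_{\bP^1}^{\hol}}\sA^0_{\bP^1}$. First I would make the correspondence of differentials explicit. Writing $\mathring{\sA}^n(X,\vv)=A^n(X,\vv)\ten_{\R}\sA^0_{\bP^1}\Cx(n)$, and recalling that $u,v$ embed as holomorphic sections of $\O_{\bP^1}^{\hol}(1)$ while, by the $*$-structure $v^*=-x$, $u^*=y$, the coordinates $x=-\bar v$ and $y=\bar u$ embed as antiholomorphic sections lying in $\sA^0_{\bP^1}\Cx(-1)$, one identifies $\breve{D}=uD+vD^c$ with the image of $\tD$ and $\breve{D}^c$ with the image of $\tDc=xD+yD^c$. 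The shift to the $(-2)$-twist in the target of $\breve{D}^c$ is forced precisely because its coefficients $x,y$ have weight $-1$; this also pins down the $D^c$-coefficient of $\breve{D}^c$ as the weight $(-1)$ section $y=\bar u$.

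Granting this identification, the first displayed equality is obtained by applying the embedding to the identity of Lemma~\ref{p2t2} and reading off graded pieces. The $\bG_m$-weight grading of $A^*(X,\vv)\ten O(\SL_2)$ matches the twist grading of $\mathring{\sA}^*$, and the embedding is injective and continuous, so $\ker\breve{D}$, $\ker\breve{D}^c$, the images $\breve{D}\,\mathring{\sA}^{n-1}$, $\breve{D}^c\mathring{\sA}^{n-1}(2)$, $\breve{D}\breve{D}^c\mathring{\sA}^{n-2}(2)$ and their closures are exactly the smoothings of the corresponding subspaces of $A^*(X,\vv)\ten O(\SL_2)$ cut out by $\tD,\tDc$. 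The identity of Lemma~\ref{p2t2} then descends, twist by twist, to the asserted equality of subsheaves of $\mathring{\sA}^n(X,\vv)$.

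The formality statement is then formal. Since $\tD\tDc+\tDc\tD=(uy+vx)(DD^c+D^cD)=0$, one has $\breve{D}\breve{D}^c+\breve{D}^c\breve{D}=0$, so $\breve{D}$ preserves $\ker\breve{D}^c$ and the morphisms $(\bar{\sH}_{\breve{D}^c}^*(\mathring{\sA}^*(X,\vv)),0)\la(\sZ_{\breve{D}^c}^*(\mathring{\sA}^*(X,\vv)),\breve{D})\to(\mathring{\sA}^*(X,\vv),\breve{D})$ are well defined. The argument of \cite[Lemma~2.2]{Simpson} already used for Lemma~\ref{formallemma2} deduces from the principle of two types alone that each of these induces an isomorphism on reduced cohomology; I would carry it over verbatim with $D,D^c$ replaced by $\breve{D},\breve{D}^c$ and ordinary reduced cohomology replaced by reduced cohomology sheaves on $\bP^1(\Cx)$.

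The main obstacle is the topological bookkeeping in the smoothing step. One must check that tensoring over $\O_{\bP^1}^{\hol}$ with the smooth structure sheaf $\sA^0_{\bP^1}$ is exact enough to preserve the images and kernels of $\breve{D},\breve{D}^c$ \emph{together with} their norm closures, and that the weight grading is strict, so that the single identity of Lemma~\ref{p2t2} genuinely descends onto each graded sheaf rather than only onto global algebraic sections. Verifying that the closure operations $\overline{(\cdot)}$ commute with this smoothing --- the point at which the Hilbert-space completions of the earlier sections re-enter --- is where real care is needed; once it is in place, the remainder is a formal transcription of the $\SL_2$-statements.
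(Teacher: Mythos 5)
Your proposal follows the paper's own route exactly: the paper likewise obtains both statements in one stroke by applying the embedding $O(\SL_2)\subset\bigoplus_{n\in\Z}A^0(\bP^1,\Cx(n))^{\tau}$ to Lemmas \ref{p2t2} and \ref{formallemma2}, and your explicit weight bookkeeping identifying $\breve{D}$ with the image of $\tD$ and $\breve{D}^c$ with the image of $\tDc$ (so that the $D^c$-coefficient is the weight $-1$ section $y=\bar{u}$) is precisely the intended identification. The only blemish is the identity $\tD\tDc+\tDc\tD=(uy+vx)(DD^c+D^cD)$, which omits the terms $2ux\,D^2+2vy\,(D^c)^2$; these vanish anyway because $D$ and $D^c$ are both square-zero for a pluriharmonic bundle, so your conclusion $\breve{D}\breve{D}^c+\breve{D}^c\breve{D}=0$ and the rest of the argument stand.
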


Now, $\tilde{A}^{\bt}(X, \bE')$ can be recovered from $\row_1^*\tilde{A}^{\bt}(X, \bE')$ and its nilpotent monodromy operator $N$, and Lemma \ref{formallemma} says  that  $\row_1^*\tilde{A}^{\bt}(X, \bE')$ is equivalent to $\cH^*(X, \bE')\ten O(\SL_2)$ up to reduced quasi-isomorphism.

Under the base change above, we have $N= \bar{\pd}_{\bP^1}$ giving an exact sequence
\[
 0 \to \breve{A}^{\bt}(X, \vv) \to \mathring{\sA}^{\bt}(X, \vv) \xra{N} \mathring{\sA}^{\bt}(X, \vv)(2) \to 0.
\]
In other words, we can recover the topological DGA $\breve{A}^{\bt}(X, \bE')$ from the differential $\bar{\pd}=N$ on the topological DGA $\mathring{\sA}^{\bt}(X, \bE')$, and the latter is just $\bigoplus_n \sA_{\bP^1}( \cH^n(X, \bE')(n))$ up to reduced quasi-isomorphism.

Also note that when we substitute $\vv:=\bE'$ in Lemma \ref{formallemma3}, the morphisms all become equivariant with respect to the circle action of Proposition \ref{redenrich}. This action makes $ \mathring{\sA}^n(X, \bE')$ into an $S^1$-equivariant sheaf over $\bP^1$, where the action on $\bP^1$ is given by $t^2 \in S^1$ sending $(u:v)$ to $t\dmd (u:v) =(au-bv:av+bu)$ for $t=a+ib \in S^1$. 

\section{The twistor family of moduli functors}\label{twistorfamilysn}

\subsection{Pro-Banach algebras on projective space} 

On the complex manifold $\bP^1(\Cx)$, we have a sheaf $\O_{\bP^1}^{\hol}$ of holomorphic functions, which we may regard as a sheaf of pro-Banach algebras. As a topological space, $\bP^1(\Cx)$ is equipped with a $\Gal(\Cx/\R)$-action, the non-trivial element $\tau$ acting on points by complex conjugation. There is also an isomorphism
\[
 \tau^{\sharp}_{\O} \co \tau^{-1} \O_{\bP^1}^{\hol} \to \O_{\bP^1}^{\hol},      
\]
given by 
\[
 \tau^{\sharp}_{\O}(f)(z) = \overline{f(\bar{z})},       
\]
and satisfying
\[
\tau^{\sharp}_{\O}\circ \tau^{-1}(\tau^{\sharp}_{\O}) = \id \co    \O_{\bP^1}^{\hol} \to \O_{\bP^1}^{\hol}.    
\]

\begin{definition}
 Define $\Fr\Alg_{\bP^1,\Cx}$ to be the category of sheaves $\sF$ of unital multiplicatively convex Fr\'echet $\O_{\bP^1}^{\hol}$-algebras (i.e. countable  pro-Banach algebras equipped with a morphism from $\O_{\bP^1}^{\hol}$), quasi-coherent in the sense that the maps
\[
\sF(U)\ten^{\pi}_{\O_{\bP^1}^{\hol}(U)}\O_{\bP^1}^{\hol}(V)  \to \sF(V)    
\]
are isomorphisms for all open subspaces $V \subset U$, where $\ten^{\pi}$ here denotes projective tensor product.
 
Define $\Fr\Alg_{\bP^1,\R}$ to be the category of pairs  $(\sF, \tau^{\sharp}_{\sF})$ for $\sF \in \Fr\Alg_{\bP^1,\Cx}$ and 
\[
\tau^{\sharp}_{\sF} \co \tau^{-1} \sF \to \sF        
\]
an  $(\O_{\bP^1}^{\hol}, \tau^{\sharp}_{\O})$-linear isomorphism satisfying
\[
\tau^{\sharp}_{\sF}\circ \tau^{-1}(\tau^{\sharp}_{\sF}) = \id_{\sF}.        
\]
\end{definition}

Note that for any  m-convex Fr\'echet $k$-algebra $B$, the sheaf $B\ten^{\pi}_k \O_{\bP^1}^{\hol}$ lies in $ \Fr\Alg_{\bP^1,\Cx}$. 
When $k=\R$, the involution $\id_B\ten \tau^{\sharp}_{\O}$ makes $B\ten^{\pi}_k \O_{\bP^1}^{\hol}$ an object of $ \Fr\Alg_{\bP^1,\R}$.
 
The forgetful functor $\Fr\Alg_{\bP^1,\R} \to  \Fr\Alg_{\bP^1,\Cx}$ has a right adjoint, given by $\sF \mapsto \sF \oplus \tau^{-1}\sF $, with the involution $\tau^{\sharp}$ given by swapping summands, and the $\O_{\bP^1}^{\hol}$-structure on $\tau^{-1}\sF $ defined using $\tau_{\O}$.

\subsection{The twistor functors}

\begin{definition}
Define the groupoid-valued functor    $\cR^{\bT,\Cx}_X$ on $\Fr\Alg_{\bP^1,\Cx}$ by letting $\cR^{\bT,\Cx}_X(\sB) $ consist of pairs $(\sT, \tD)$, for $\sA^0_X(\pr_2^{-1}\sB^{\by})$-torsors $\sT$   on $X \by \bP^1(\Cx)$ with $x^*\sT$ trivial as a $\sB^{\by}$-torsor on $\bP^1(\Cx)$, and flat $ud+v\dc$-connections 
\[
 \tD\co \sT \to \sA^1_X\ten_{\sA^0_X}\ad \sT(1).
\]
Here $u,v$ are the basis of $\Gamma(\bP^1_{\R}, \O(1))$ given by the co-ordinates $u,v$ of $C^*$ and the canonical map $C^* \to \bP^1$.

Define the set-valued functor $\oR^{\bT,\Cx}_{X,x}$ on $\Fr\Alg_{\bP^1,\Cx}$ by letting $\oR^{\bT,\Cx}_{X,x}(\sB) $  be the groupoid of triples $(\sT, \tD,f)$, with $(\sT, \tD) \in \cR^{\bT,\Cx}_X(\sB)$ and framing $f \in \Gamma(\bP^1(\Cx), x^*\sT)$.
\end{definition}

\begin{definition}
Define the groupoid-valued functor    $\cR^{\bT}_X$ on $\Fr\Alg_{\bP^1,\R}$  by letting $\cR^{\bT}_X(\sB) $ consist of triples $(\sT, \tD, \tau_{\sT}^{\sharp})$, for $(\sT, \tD) \in \cR^{\bT,\Cx}_X(\sB)$ and isomorphism $\tau^{\sharp}_{\sT} \co (\id_X \by \tau)^{-1}\sT \to \sT$ satisfying the following conditions.  The isomorphism
\[
(\tau^{\sharp}_{\sB}, \tau^{\sharp}_{\sT})\co \sA^1_X(\pr_2^{-1}\sB^{\by}) \by_{\tau^{\sharp}_{\sB}, \sA^1_X(\pr_2^{-1}\tau^{-1}\sB^{\by})}(\id_X \by \tau)^{-1}\sT     \to \sT
\]
must be a morphism of $\sA^1_X(\pr_2^{-1}\sB^{\by})$-torsors, and the diagram
\[
\begin{CD}
 \tau^{-1}\sT @>{\tau^{-1}\tD}>> \sA^1_X\ten_{\sA^0_X}\tau^{-1}\sT(1)\\
@V{\tau^{\sharp}_{\sT}}VV @VV{\tau^{\sharp}_{\sT}}V\\
  \sT @>{\tD}>> \sA^1_X\ten_{\sA^0_X}\sT(1)
 \end{CD}      
\]
must commute.

 Define the  functor $\oR^{\bT}_{X,x}$ on $\Fr\Alg_{\bP^1,\R}$ by letting $\oR^{\bT}_{X,x}(\sB)$ be the groupoid of quadruples $(\sT, \tD, \tau_{\sT}^{\sharp},f)$, for $(\sT, \tD, \tau_{\sT}^{\sharp})$ in $\cR^{\bT}_X(\sB) $ and 
\[
 f \in  \Gamma(\bP^1(\Cx), x^*\sT)^{ \tau_{\sT}^{\sharp}}     
\]
a framing. 
\end{definition}

\begin{remark}
 Observe  that the groupoids $ \oR^{\bT,\Cx}_{X,x}(\sB),\oR^{\bT}_{X,x}(\sB)$ are equivalent to  discrete groupoids, so we will regard them as set-valued functors (given by isomorphism classes of objects).  Also note that $\cR^{\bT,\Cx}_X(\sB)$ and $\cR^{\bT}_X(\sB)$ are equivalent to the groupoid quotients $[\oR^{\bT,\Cx}_{X,x}(\sB)/\Gamma(\bP^1(\Cx), \sB^{\by})]$ and $[\oR^{\bT}_{X,x}(\sB)/\Gamma(\bP^1(\Cx), \sB^{\by})^{ \tau_{\sT}^{\sharp}}]$ respectively, with the action given by changing the framing.
\end{remark}

The following is straightforward:
\begin{lemma}
The functors $\oR^{\bT,\Cx}_{X,x}$ and $\cR^{\bT,\Cx}_{X}$ can be recovered from  $\oR^{\bT}_{X,x}$ and $\cR^{\bT}_{X}$, respectively, via isomorphisms
\[
 \oR^{\bT,\Cx}_{X,x}(\sB)\cong  \oR^{\bT}_{X,x}(\sB \oplus \tau^{-1}\sB)  \quad   \cR^{\bT,\Cx}_{X}(\sB)\cong  \cR^{\bT}_{X}(\sB \oplus \tau^{-1}\sB).  
\]
\end{lemma}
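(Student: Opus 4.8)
The plan is to exploit the right adjoint $\sF \mapsto \sF \oplus \tau^{-1}\sF$ to the forgetful functor $\Fr\Alg_{\bP^1,\R} \to \Fr\Alg_{\bP^1,\Cx}$ recalled just above the statement, and to trace how the torsor data decompose along the two summands. Writing $\sC := \sB \oplus \tau^{-1}\sB$ for the resulting real object, with $\tau^{\sharp}_{\sC}$ given by interchanging the two summands, I would first observe that the group of units splits as $\sC^{\by} = \sB^{\by} \by (\tau^{-1}\sB)^{\by}$, so that the structure group $\sA^1_X(\pr_2^{-1}\sC^{\by})$ is a direct product. Hence any object $(\sT, \tD, \tau^{\sharp}_{\sT}, f)$ of $\oR^{\bT}_{X,x}(\sC)$ has $\sT = \sT' \by \sT''$, a product of an $\sA^1_X(\pr_2^{-1}\sB^{\by})$-torsor and an $\sA^1_X(\pr_2^{-1}(\tau^{-1}\sB)^{\by})$-torsor, with $\tD = (\tD', \tD'')$ and $f = (f', f'')$ splitting correspondingly.

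The second step is to show that the $\tau^{\sharp}_{\sT}$-datum makes the second factor redundant. Since $\tau^{\sharp}_{\sC}$ interchanges the summands, the first displayed compatibility requirement on $\tau^{\sharp}_{\sT}$ forces it to restrict to an isomorphism $(\id_X \by \tau)^{-1}\sT' \to \sT''$ of torsors (under the appropriate identification of structure groups), together with the reverse isomorphism $(\id_X \by \tau)^{-1}\sT'' \to \sT'$; the involutivity condition $\tau^{\sharp}_{\sT} \circ \tau^{-1}(\tau^{\sharp}_{\sT}) = \id$ then says precisely that these two are mutually inverse after pullback. Consequently $\sT''$ is canonically identified with $(\id_X \by \tau)^{-1}\sT'$, the commuting square defining $\cR^{\bT}_X$ identifies $\tD''$ with $(\id_X \by \tau)^{-1}\tD'$, and the $\tau$-invariance of the framing identifies $f''$ with $(\id_X \by \tau)^{-1}f'$. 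Thus the whole quadruple is determined by $(\sT', \tD', f')$, which is exactly an object of $\oR^{\bT,\Cx}_{X,x}(\sB)$.

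For the inverse construction I would send $(\sT', \tD', f') \in \oR^{\bT,\Cx}_{X,x}(\sB)$ to the object with $\sT'' := (\id_X \by \tau)^{-1}\sT'$, pulled-back connection and framing, and $\tau^{\sharp}_{\sT}$ the tautological swap; one checks directly that the two conditions in the definition of $\cR^{\bT}_X$ hold and that the two assignments are mutually inverse, yielding the stated isomorphism $\oR^{\bT,\Cx}_{X,x}(\sB) \cong \oR^{\bT}_{X,x}(\sC)$. The identical argument, dropping the framing, gives $\cR^{\bT,\Cx}_X(\sB) \cong \cR^{\bT}_X(\sC)$.

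The only genuinely delicate point, and the one I would be most careful about, is the bookkeeping of the $\O_{\bP^1}^{\hol}$-module structures under $\tau$: the structure sheaf on $\tau^{-1}\sB$ is defined through $\tau^{\sharp}_{\O}$, so one must verify that the $ud + v\dc$-connection $\tD''$, after transport by the commuting square, is again a $ud + v\dc$-connection for the correctly conjugated coordinates $u, v$ on $\O_{\bP^1}(1)$, and that $\tau^{\sharp}_{\sT}$ is genuinely $\sA^1_X(\pr_2^{-1}\sC^{\by})$-linear in the sense demanded by the first displayed condition of $\cR^{\bT}_X$. Granting this compatibility, which is forced by the definitions, the argument is purely formal.
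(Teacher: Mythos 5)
Your proof is correct and is precisely the argument the paper intends: the lemma is stated there without proof (prefaced by ``The following is straightforward''), and your unwinding is the standard verification behind that claim. Splitting the torsor, connection and framing along $(\sB\oplus\tau^{-1}\sB)^{\by}=\sB^{\by}\by(\tau^{-1}\sB)^{\by}$, observing that the swap form of $\tau^{\sharp}_{\sC}$ forces $\tau^{\sharp}_{\sT}$ to consist of two cross maps which the involutivity condition makes mutually inverse, and hence identifying the second factor with $(\id_X\by\tau)^{-1}$ of the first, is exactly the adjunction-style bookkeeping the paper takes for granted.
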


Note that the proof of Lemma \ref{tensorstr0} carries over to give canonical comultiplication
\[
 \oR^{\bT}_{X,x}(\sB_1) \by \oR^{\bT}_{X,x}(\sB_2) \to \oR^{\bT}_{X,x}(\sB_1\ten^{\pi} \sB_2).
\]

\subsection{Universality and $\sigma$-invariant sections}

\begin{proposition}\label{twistoruniversal}
The functors $\cR^{\dR}_X$ and $\cR^{\Dol}_X$ can be recovered from $\cR^{\bT}_{X}$ and  $\cR^{\bT,\Cx}_{X}$, respectively. Likewise, $\oR^{\dR}_{X,x}$ and $\oR^{\Dol}_{X,x}$ can be recovered from $\oR^{\bT}_{X,x}$ and  $\oR^{\bT,\Cx}_{X,x}$. 
\end{proposition}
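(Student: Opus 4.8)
The plan is to realise the de Rham and Dolbeault functors as the \emph{fibres} of the twistor family over suitable points of $\bP^1$. The key structural observation is that, over a single fibre, the flatness condition on a $ud+vd^c$-connection $\tD$ imposes only one quadratic relation, whereas over the whole of $\bP^1$ (where $u,v$ vary) it would force the three relations $D^2=0$, $DD^c+D^cD=0$, $(D^c)^2=0$, i.e. pluriharmonicity. Thus restricting to a fibre is precisely what separates the de Rham (or Dolbeault) functor from the pluriharmonic functor $\oR^J_{X,x}$.

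First I would treat the complex statements. For a point $\lambda\in\bP^1(\Cx)$ and a complex Fr\'echet algebra $B$, introduce the skyscraper sheaf $\lambda_*B\in\Fr\Alg_{\bP^1,\Cx}$, with stalk $B$ at $\lambda$ and $\O_{\bP^1}^{\hol}$ acting through evaluation at $\lambda$; this is quasi-coherent because for $\lambda\in V\subset U$ the map $\O_{\bP^1}^{\hol}(U)\to\Cx$ is surjective with kernel containing a local coordinate, so $B\ten^{\pi}_{\O(U)}\O(V)\cong B$. Unwinding $\oR^{\bT,\Cx}_{X,x}(\lambda_*B)$: since $\pr_2^{-1}(\lambda_*B)$ is supported on $X\by\{\lambda\}$, a torsor $\sT$ is just an $\sA^0_X(B^{\by})$-torsor on $X$, the twist $(1)=\O(1)_\lambda$ is one-dimensional, and trivialising it by a section not vanishing at $\lambda$ turns $\tD$ into a genuine flat $(u(\lambda)d+v(\lambda)d^c)$-connection valued in $\sA^1_X\ten_{\sA^0_X}\ad\sT$, with $u(\lambda),v(\lambda)$ now scalars. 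At $\lambda_0=(1:0)$ (where $v=0$) this is a flat $d$-connection, i.e. exactly the data of $\oR^{\dR}_{X,x}(B)$; at the point $\lambda_-$ where $u+iv=0$ one gets $\tD\propto D''$, a flat $\bar{\pd}$-connection, which by the lemma identifying Higgs $B$-torsors with flat $\bar{\pd}$-connections is precisely $\oR^{\Dol}_{X,x}(B)$ (the conjugate point $u-iv=0$ giving the conjugate functor). This produces natural isomorphisms $\oR^{\bT,\Cx}_{X,x}(\lambda_{0,*}B)\cong\oR^{\dR}_{X,x}(B)$ and $\oR^{\bT,\Cx}_{X,x}(\lambda_{-,*}B)\cong\oR^{\Dol}_{X,x}(B)$.

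For the groupoid-valued functors I would pass to quotient groupoids, using $\Gamma(\bP^1,(\lambda_*B)^{\by})=B^{\by}$ together with the equivalences $\cR^{\bT,\Cx}_X\simeq[\oR^{\bT,\Cx}_{X,x}/\Gamma(\bP^1,\sB^{\by})]$, $\cR^{\dR}_X\simeq[\oR^{\dR}_{X,x}/B^{\by}]$ and $\cR^{\Dol}_X\simeq[\oR^{\Dol}_{X,x}/B^{\by}]$ already recorded; the framed isomorphisms above then descend to the unframed ones. For the real de Rham statement I would take the same point $\lambda_0\in\bP^1(\R)$, which is fixed by $\tau$, so that $\lambda_{0,*}B$ (for real $B$) carries a canonical $\tau^{\sharp}$ making it an object of $\Fr\Alg_{\bP^1,\R}$; the datum $\tau^{\sharp}_{\sT}$ of $\cR^{\bT}_X$ then restricts to a real structure on the fibre torsor, matching the real structure group $B^{\by}$ of de Rham torsors. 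Here I would also invoke the identity $\oR^{\bT,\Cx}_{X,x}(\sB)\cong\oR^{\bT}_{X,x}(\sB\oplus\tau^{-1}\sB)$ and the forgetful/right-adjoint pair to move cleanly between the real and complex versions. By contrast $\lambda_-$ is not $\tau$-fixed (indeed $\tau$ swaps $\lambda_-$ and $\lambda_+$), which is exactly why the Dolbeault functor is naturally complex and is recovered from $\cR^{\bT,\Cx}_X$ rather than $\cR^{\bT}_X$.

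The hard part will be verifying that restriction to a single fibre imposes \emph{only} the flatness (resp. Higgs) relation and not pluriharmonicity, and dually that \emph{every} flat (resp. Higgs) torsor with framing arises in this way, so that the constructed natural transformations are genuine isomorphisms rather than mere inclusions; this is precisely the contrast with evaluating the twistor functor on $B\ten_k\O_{\bP^1}^{\hol}$, which instead recovers $\oR^J_{X,x}$. Secondary care is needed to confirm that skyscraper sheaves are admissible objects of $\Fr\Alg_{\bP^1}$, that the $\O(1)$-twist trivialises at the chosen fibre (so $\lambda$ must avoid the zero of the trivialising section), and that the $\tau$-equivariance bookkeeping in the real case is consistent with the given adjunction.
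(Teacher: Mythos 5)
Your proposal is correct and follows essentially the same route as the paper: evaluate the twistor functors on skyscraper sheaves $\lambda_*B$ (complexified, with $\tau^{\sharp}$ given by conjugation, in the real case), obtaining $\oR^{\dR}_{X,x}$ at the $\tau$-fixed point $(1:0)$ and $\oR^{\Dol}_{X,x}$ at one of the points $\pm i$, the unframed groupoid statements following by quotienting by the framing action. The only divergence is an immaterial convention: the paper places the Dolbeault fibre at $(1:-i)$, where $ud+v\dc$ becomes $2\pd$, whereas you place it at the conjugate point $u+iv=0$ (giving the flat $\bar{\pd}$-connection $D''$); since these two fibres are exchanged by $\tau$ and carry the Dolbeault functor and its conjugate, nothing is affected.
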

\begin{proof}
Given a point $p \in \bP^1(\Cx)$ and a complex m-convex Fr\'echet algebra $B$, we may regard the skyscraper sheaf $p_*B$ as an object of $ \Fr\Alg_{\bP^1,\Cx}$. For a real m-convex Fr\'echet algebra $B$ and $p \in \bP^1(\R)$, we may regard $p_*B\ten \Cx$ as an object of  $\Fr\Alg_{\bP^1,\R}$, with $\tau^{\sharp}$ given by complex conjugation.

Now, just observe that on pulling back to $(1:0) \in \bP^1(\R)$, the differential $ud+v\dc$ is just $d$. At $(1:-i) \in \bP^1(\R)$, we have $ud+v\dc= d-i\dc= 2\pd$. Uncoiling the definitions, this gives
\[
 \oR^{\dR}_{X,x}(B) =  \oR^{\bT}_{X,x}((1:0)_*B),\quad  \oR^{\Dol}_{X,x}(B) =  \oR^{\bT,\Cx}_{X,x}((1:-i)_*B),    
\]
and similarly for $\cR$.
\end{proof}

\begin{lemma}\label{torsorsections}
For a real m-convex  Fr\'echet algebra $B$, the groupoid  $\cR^{\bT}_{X}(B\ten_{\R}^{\pi} \O_{\bP^1}^{\hol})$ is equivalent to the groupoid of triples $(\sP, D,E)$, where $\sP$ is an $\sA^0_X(B^{\by})$-torsor on $X$, $D,E\co \sP \to \sA^1_X\ten_{\sA^0_X}\ad \sP$ are flat $d$- and $\dc$-connections, respectively, and $DE+ED=0$.  
\end{lemma}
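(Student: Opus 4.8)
The plan is to produce the equivalence by writing down functors in both directions and checking they are mutually quasi-inverse. The conceptual point driving everything is that the coefficient sheaf $\sB = B\ten^{\pi}_{\R}\O_{\bP^1}^{\hol}$ is holomorphically constant along $\bP^1(\Cx)$ and that the twistor differential $\tD$ differentiates only in the $X$-directions, its $\bP^1$-dependence being confined to the $\O(1)$-twist. Since $u,v$ form a basis of $\Gamma(\bP^1,\O(1))$, a global $\O(1)$-valued operator $\tD$ on such a torsor is the same datum as an ordered pair of $\ad$-valued operators, and I would recover $(D,E)$ as the two components in the expansion $\tD = uD + vE$.

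First I would construct the functor from triples $(\sP,D,E)$. Given an $\sA^0_X(B^{\by})$-torsor $\sP$ on $X$, set $\sT := \pr_1^{-1}\sP\by_{\sA^0_X(B^{\by})}\sA^0_X(\pr_2^{-1}\sB^{\by})$, the extension of structure group along the inclusion of constants $B\hookrightarrow\sB$; its restriction $x^*\sT$ is the $\sB^{\by}$-extension of the (necessarily trivial) $B^{\by}$-torsor $\sP_x$, hence trivial over $\bP^1(\Cx)$. Define $\tD := uD + vE$, extended $\O_{\bP^1}^{\hol}$-linearly. The $ud+v\dc$-connection property then follows by matching the $u$- and $v$-coefficients of $\tD(pg) = \ad_g\tD(p) + g^{-1}(ud+v\dc)g$, which separate into the $d$-connection identity for $D$ and the $\dc$-connection identity for $E$ (noting that $ud+v\dc$ still only differentiates along $X$, so the match is coefficientwise for arbitrary $\O_{\bP^1}^{\hol}$-holomorphic $g$). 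Since $B$ is real, $\sP$ carries a real structure and $\id_B\ten\tau^{\sharp}_{\O}$ induces the required $\tau^{\sharp}_{\sT}$; as $\tau$ fixes $u$ and $v$, compatibility of $\tD$ with $\tau^{\sharp}_{\sT}$ reduces to reality of $D$ and $E$ separately, which holds automatically.

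The flatness and anticommutation conditions match up exactly: expanding $(\ad\tD)\circ\tD = u^2 D^2 + uv(DE+ED) + v^2 E^2$ and using that $u^2,uv,v^2$ are linearly independent in $\Gamma(\bP^1,\O(2))$ shows that $\tD$ is flat precisely when $D^2 = E^2 = 0$ and $DE+ED = 0$. For the reverse functor, once the descent below identifies $\sT$ with the structure-group extension of a torsor $\sP$ on $X$, the operator $\tD$ is automatically $\O_{\bP^1}^{\hol}$-linear and hence globally of the form $uD+vE$; reading off the $u$- and $v$-components returns the flat $d$-connection $D$ and the flat $\dc$-connection $E$, and $\tau^{\sharp}_{\sT}$ cuts these down to the real form. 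The same coefficient computation then reverses the translation of conditions, so the two constructions are mutually quasi-inverse.

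The main obstacle is the descent: showing that $\sT$ is, canonically and functorially, the structure-group extension of a single $\sA^0_X(B^{\by})$-torsor on $X$, rather than a genuinely $\bP^1$-varying family. The key inputs are the triviality of $x^*\sT$ and the holomorphic rigidity of $\sB$-torsors over the compact base $\bP^1$: because $B$-valued holomorphic functions on $\bP^1(\Cx)$ are constant (Liouville for Fr\'echet-space-valued holomorphic maps on the compact connected $\bP^1$), the flat transport supplied by $\tD$ together with the global framing witnessing $x^*\sT\cong\sB^{\by}$ rigidifies the family in the $\bP^1$-direction, forcing $\sT\cong\pr_1^{-1}\sP\by_{\sA^0_X(B^{\by})}\sA^0_X(\pr_2^{-1}\sB^{\by})$. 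I would carry out this step first on the framed functor $\oR^{\bT}_{X,x}$, where the framing $f\in\Gamma(\bP^1(\Cx),x^*\sT)^{\tau^{\sharp}_{\sT}}$ makes the trivialization explicit, and then pass to the groupoid quotient by $\Gamma(\bP^1(\Cx),\sB^{\by})^{\tau^{\sharp}_{\sT}}$ to obtain the stated equivalence of groupoids.
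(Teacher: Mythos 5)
Your coefficient-matching computation is exactly the paper's argument: since $\Gamma(\bP^1,\O_{\bP^1}^{\hol}(1))^{\tau^{\sharp}}=\R u\oplus\R v$, the connection decomposes as $\tD=uD+vE$ with $D,E$ real operators, and since $\Gamma(\bP^1,\O_{\bP^1}^{\hol}(2))^{\tau^{\sharp}}=\R u^2\oplus\R uv\oplus\R v^2$, flatness of $\tD$ splits coefficientwise into flatness of $D$, flatness of $E$, and $DE+ED=0$. That part, together with the forward construction by extension of structure group, is correct and coincides with the paper's proof.

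The gap is in your resolution of what you yourself flag as the main obstacle: the descent of $\sT$ to a torsor on $X$. You propose to rigidify the family in the $\bP^1$-direction using ``the flat transport supplied by $\tD$'' together with the framing. This mechanism fails: $\tD$ differentiates only along $X$, and, more fatally, parallel transport along $X$ does not exist at every fibre of $\bP^1$. Writing $ad+b\dc=(a+ib)\pd+(a-ib)\bar{\pd}$ on the slice over $(a:b)$, the operator degenerates to $2\pd$ at $(1:-i)$ and to $2\bar{\pd}$ at $(1:i)$ --- this degeneration is precisely how Proposition \ref{twistoruniversal} recovers the Dolbeault functor. A flat $\pd$- (or $\bar{\pd}$-) connection has no monodromy: its flat gauge transformations are arbitrary antiholomorphic (resp.\ holomorphic) $B^{\by}$-valued functions rather than locally constant ones, so transporting the framing from the basepoint breaks down exactly at the two Dolbeault fibres, and a rigidification obtained by transport away from those points does not automatically extend holomorphically across them. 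Note also that the framing only trivialises $x^*\sT$ at the single basepoint $x$; it cannot supply trivialisations of the other slices. The repair is to drop transport and framings from this step entirely: the definition of $\cR^{\bT}_X$ already \emph{requires} $y^*\sT$ to be trivial on $\bP^1(\Cx)$ for every $y\in X$, and this hypothesis, combined with your Liouville observation (which gives $\Gamma(\bP^1(\Cx),\sB^{\by})=B^{\by}$ and hence $\pr_{1*}\sA^0_X(\pr_2^{-1}\sB^{\by})=\sA^0_X(B^{\by})$) and the openness of triviality of $\sB^{\by}$-torsors over the compact fibre $\bP^1(\Cx)$ in smooth families, yields $\sT\cong\pr_1^{-1}\sP\by_{\sA^0_X(B^{\by})}\sA^0_X(\pr_2^{-1}\sB^{\by})$ with $\sP=\pr_{1*}\sT$. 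This is what the paper's one-line appeal to ``triviality of the torsors $x^*\sT$ for $x\in X$'' amounts to; no connection and no framing enter the descent.
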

\begin{proof}
Take an object $(\sT, \tD) \in  \cR^{\bT}_{X}(B\ten_{\R} \O_{\bP^1}^{\hol})$. Triviality of the $(B\ten_{\R}^{\pi} \O_{\bP^1}^{\hol})^{\by}$-torsors $x^*\sT$ for $x \in X$ ensures that $\sT$ must be of the form $\sA^0_X(B\ten_{\R}^{\pi}B)^{\by}\by_{\sA^0_X(B^{\by})}\sP$ for some $\sP$ as above.

Now, since $\Gamma(\bP^1,\O_{\bP^1}^{\hol}(1))^{\tau^{\sharp}}= \R u \oplus \R v$,  the connection $\tD$ can be regarded as a  $ud+v\dc$ connection
\[
 \tD\co \sP \to   (\sA^1_X\ten_{\sA^0_X}\ad \sP)\ten_{\R}(\R u \oplus \R v),    
\]
which we write as $uD+vE$.

Flatness of $\tD$ is now a statement about $\Gamma(\bP^1,\O_{\bP^1}^{\hol}(2))^{\tau^{\sharp}}= \R u^2 \oplus \R uv \oplus \R v^2 $, with the $u^2$ and $v^2$ terms giving flatness of $D$ and $E$, and the $uv$ term giving the condition $[D,E]=0$.
\end{proof}

\begin{definition}
Define the involution $\tau\sigma_{\bP}$ of the polarised scheme $(\bP^1, \O_{\bP^1}(1))$ to be the map induced by the action of $i \in S(\R)$ on $C^*$ from Definition \ref{cdef}. In particular, $\tau\sigma_{\bP}(u:v)= (v:-u)$.  
\end{definition}

\begin{remark} 
On \cite[p12]{MTS}, the co-ordinate system $(u+iv:u-iv)$ on $\bP^1(\Cx)$ is used, and antiholomorphic involutions $\sigma, \tau$ are defined. In our co-ordinates, these become $\sigma(u:v)= (\bar{v}, -\bar{u})$ and $\tau(u:v)= (\bar{u}, \bar{v})$. This justifies the notation $\tau\sigma$ used above. Also note that the $\bG_{m,\Cx}$-action on $\bP^1_{\Cx}$ given in \cite[p4]{MTS} is just the complex form of our circle action on $\bP^1_{\R}$ from \S\S \ref{analyticMHS} and \ref{splittingtwistor}.
\end{remark}

\begin{definition}
 Given a real Banach algebra $B$, define the involution $\tau\sigma'$ of $\cR^{\bT}_{X}(B\ten_{\R}^{\pi} \O_{\bP^1}^{\hol})$ by sending the pair $(\sT, \tD)$ to $(\tau\sigma_{\bP}^{-1}\sT, J\tau\sigma_{\bP}^{-1}\tD)$. Note that  this is well-defined because $\tau\sigma_{\bP}^{-1}\tD $ is a $\tau\sigma_{\bP}(ud+v\dc)= (vd-u\dc)$-connection, and $Jd=\dc, J\dc=-d$,  so $J\tau\sigma_{\bP}^{-1}\tD $ is a $(ud+v\dc)$-connection.
\end{definition}

\begin{definition}
Given a   $C^*$-algebra $B$, define the Cartan involution $C$ of $B^{\by}$ to be given by $C(g)= (g^{-1})^*$. 
 Note that this induces a Lie algebra involution  $\ad C\co b \mapsto -b^*$ on the tangent space $B$ of $B^{\by}$. 

If $B=A\ten \Cx$ for  a real $C^*$-algebra $A$, we write $\tau$ for complex conjugation, so $C\tau$ is the involution $C\tau(g)= (\bar{g}^{-1})^*$.  Note that $\ad C\tau$ is  the $\Cx$-linear extension of $\ad C$ on $A$.
\end{definition}

Since $\cR^{\bT}_{X}(\sB) $ only depends on the group of units $\sB^{\by}$ of $\sB$, and its tangent Lie algebra $\sB$, the Cartan involution  induces an involution $C\tau$ of $\cR^{\bT}_{X}(B\ten_{\R}^{\pi} \O_{\bP^1}^{\hol}) $.

\begin{definition}
 Given a real Banach algebra $B$, define the involution $\sigma$ of $\cR^{\bT}_{X}(B\ten_{\R}^{\pi} \O_{\bP^1}^{\hol})$ by $\sigma:= (C\tau)(\tau\sigma')$.
\end{definition}

\begin{proposition}\label{sigmasections}
For a real $C^*$-algebra $B$, there is a canonical isomorphism
\[
 \oR^{\bT}_{X,x}(\sO_{\bP^1}^{\hol}(B))^{\sigma} \cong \oR^J_{X,x}(B).
\]
\end{proposition}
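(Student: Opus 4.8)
The plan is to translate everything into the torsor-with-two-connections picture of Lemma~\ref{torsorsections} and then read off what $\sigma$-invariance imposes. Writing $\sB=\sO_{\bP^1}^{\hol}(B)=B\ten^{\pi}_{\R}\O_{\bP^1}^{\hol}$, Lemma~\ref{torsorsections} identifies an object of $\cR^{\bT}_X(\sB)$ with a triple $(\sP,D,E)$, where $\sP$ is an $\sA^0_X(B^{\by})$-torsor, $D$ and $E$ are flat $d$- and $\dc$-connections, and $DE+ED=0$; the framing $f\in\Gamma(\bP^1(\Cx),x^*\sT)$ becomes a distinguished element of $x^*\sP$. The target $\oR^J_{X,x}(B)$ consists of framed pluriharmonic torsors $(U(\sP),D,f)$, i.e. a reduction of $\sP$ to the unitary group $U(B)=\{g:g^*g=gg^*=1\}$ together with a $d$-connection $D$ that is flat and pluriharmonic. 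The goal is therefore to show that $\sigma$-invariance of $(\sP,D,E,f)$ is equivalent to the data of such a unitary reduction with $E=D^c$, for $D^c$ the operator attached to the reduction in Definition~\ref{pluritorsor}.

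First I would compute the action of the two constituent involutions at the level of $(\sP,D,E)$. Since $\tau\sigma_{\bP}$ sends $(u:v)$ to $(v:-u)$ and $J$ satisfies $Jd=\dc$ and $J\dc=-d$, unwinding the definition of $\tau\sigma'$ shows that it interchanges the two connections with a $J$-twist on their $\ad\sP$-valued parts, sending $(D,E)$ to $(E^{J},D^{J})$ (the torsor being carried to a pulled-back copy). The Cartan involution $C\tau$, induced by $g\mapsto(\bar g^{-1})^*$ with tangent action $b\mapsto -b^*$, supplies a conjugation of the structure group and transports the connections accordingly. Composing, $\sigma=(C\tau)(\tau\sigma')$ sends $(\sP,D,E)$ to $(\sP^{C},C\tau(E^{J}),C\tau(D^{J}))$, and $\sigma$ is again an involution, so a $\sigma$-invariant section amounts to a fixed point up to a framed isomorphism $\phi$.

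Next I would analyse the fixed-point datum. An isomorphism $\phi\co(\sP,D,E,f)\to\sigma(\sP,D,E,f)$ is an identification of $\sP$ with its conjugate torsor $\sP^{C}$, which is exactly a reduction of the structure group $B^{\by}$ to the fixed subgroup of the Cartan involution, namely $U(B)$; this produces the unitary form $U(\sP)\subset\sP$, i.e. the harmonic metric. Under this reduction the matching of the $\dc$-connections, $E=C\tau(D^{J})$ through $\phi$, is precisely the statement $E=D^c$ for $D^c$ as in Definitions~\ref{pluritorsor} and \ref{Ddecomp}; the matching of the $d$-connections is then automatic since $\sigma^2=\id$. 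Feeding $E=D^c$ into the twistor integrability, flatness of $D$ is the flatness of Definition~\ref{pluritorsor}(2), flatness of $E$ records $(D^c)^2=0$, and $DE+ED=0$ becomes exactly the pluriharmonic condition $(\ad D)\circ D^c+(\ad D^c)\circ D=0$. The reality datum $\tau^{\sharp}_{\sT}$ together with $\tau^{\sharp}_{\sT}$-invariance of $f$ guarantee that $B$, $U(\sP)$ and the framing are defined over $\R$, matching the real structure of $\oR^J_{X,x}(B)$. Conversely, given $(U(\sP),D,f)\in\oR^J_{X,x}(B)$ I would set $E:=D^c$, deduce the three twistor conditions from the pluriharmonic ones, and check that the resulting section is $\sigma$-invariant with the tautological $\phi$ coming from $U(\sP)\subset\sP$; these two constructions are visibly mutually inverse.

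The main obstacle is the identity $C\tau(D^{J})=D^c$: one must verify that the composite of the $J$-twist produced by $\tau\sigma'$ with the Cartan involution $C\tau$ reproduces exactly the operator $D^c=iD'-iD''$ (equivalently $i\dmd d^+-i\dmd\vartheta$) of Definition~\ref{pluritorsor}. This is where all the sign and normalisation conventions must be reconciled: the rules $Jd=\dc$ and $J\dc=-d$, the tangent Cartan action $b\mapsto -b^*$, the $S$-action $\dmd$, and the factors of $i$ arising from the complex structure and from the subgroup $S^1\subset S$. Once this identity is established, the equivalence of the integrability conditions and of the framings is routine, and the fixed-point analysis above yields the claimed canonical isomorphism.
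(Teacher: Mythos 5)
Your proposal takes essentially the same route as the paper's proof: identify objects of $\oR^{\bT}_{X,x}(\sO_{\bP^1}^{\hol}(B))$ via Lemma~\ref{torsorsections} as quadruples $(\sP,D,E,f)$, extract the unitary form $U(\sP)\subset\sP$ as the locus where the $\sigma$-fixed-point isomorphism agrees with the Cartan involution $C$ (the framing guaranteeing this locus is non-empty, hence an $\sA^0_X(U(B))$-torsor), and read off $E=D^c$ together with pluriharmonicity from the matching of connections and the integrability $DE+ED=0$. The sign computation you flag as the main obstacle is exactly the paper's short verification: $\tau\sigma'(uD+vE)=vJD-uJE$, so invariance forces $D|_{U(\sP)}=-JCE$ and $E|_{U(\sP)}=JCD$ (two equivalent conditions since $J^2=-1$ and $C^2=\id$), and since $C$ fixes the anti-self-adjoint part $d^+$ and negates $\vartheta$ while $J$ acts as $i\dmd$ on the one-form part, $JCD=i\dmd d^+-i\dmd\vartheta=D^c$ by Definition~\ref{pluritorsor}.
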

\begin{proof}
Since $\oR^{\bT}_{X,x} $ is the groupoid fibre of $\cR^{\bT}_X\to \cR^{\bT}_{\{x\}}$ over the trivial torsor, Lemma \ref{torsorsections} shows that an object $(\sT, \tD,f)$ of $\oR^{\bT}_{X,x}(\sO_{\bP^1}^{\hol}(B))$ is a quadruple $(\sP, D,E,f)$ for $(\sP,D,E)$ as in that lemma, and $f$ our framing. We therefore begin by describing the $\sigma$-action on such data. For $(\sP, D,E,f)$ to be $\sigma$-invariant, we must have an isomorphism $\alpha\co (\sP, D,E,f) \to \sigma(\sP, D,E,f) $.

The torsor $\sP$ maps under $\sigma$ to $C(\sP)$, with $f\in x^*\sP$ mapping to $C(f)$ (since $\sigma'$ and $\tau$ affect neither). The isomorphism $\alpha$ then gives $\alpha\co \sP \to C(\sP)$ such that $\alpha(f) = C(f) \in x^*C(\sP)$. Let $U(\sP) \subset \sP$ consist of sections $q$ with $\alpha(q)= C(q)$. This is non-empty (since its fibre at $x$ contains $f$), so it must be an $\sA^0_X(U(B))$-torsor, noting that $U(B)$ is the group of $C$-invariants in $B^{\by}$. Moreover $\sP=  \sA^0_X(B^{\by})\by_{\sA^0_X(U(B))}U(\sP)$.

Meanwhile, $\tau\sigma'(uD+vE) = vJD -uJE$, so $\tau\sigma'(D,E)= (-JE, JD)$. Thus the isomorphism $\alpha$ gives
\[
 D|_{\sQ}= -JCE, \quad E|_{\sQ}= JCD.       
\]
In other words, $E=D^c$ and $E^c=-D$ (which are equivalent conditions). 

Thus $\oR^{\bT}_{X,x}(\sO_{\bP^1}^{\hol}(B))^{\sigma}$ is equivalent to the groupoid  of triples $(U(\sP), D,f)$, with $D$ flat and $[D,D^c]=0$.
\end{proof}

\begin{remark}\label{sigmark}
When $B= \Mat_n(\Cx)$, this shows that framed pluriharmonic local systems correspond to framed $\sigma$-invariant sections of the twistor functor. Without the framings, this will not be true in general, since a $\sigma$-invariant section of the coarse moduli  space will give a non-degenerate bilinear form which need not be positive definite.  Note that for $U \subset \bP^1$, the set of isomorphism classes of $\cR^{\bT, \Cx}_{X}(\Mat_n(\sO_U^{\hol}))$ is the set of sections over $U$ of the twistor space $TW \to \bP^1$ of  \cite[\S 3]{simfil}.  
\end{remark}

\begin{remark}\label{twistordgrk}
Although we have seen that  $\cR^{\bT}_X$ together with its comultiplication encodes all the available information about twistor structures on moduli spaces of local systems, it does not carry information about higher homotopy and cohomology groups. There is, however, a natural extension of $\cR^{\bT}_X$ to differential graded m-convex Fr\'echet algebras, 
by analogy with \cite{htpy,mhs2}. This would involve taking $\tD$ to be a hyperconnection $\tD\co \cT_0 \to \prod_n \sA^{n+1}_X\ten_{\sA^0_X}\ad \sT_n(n+1)$. The structures of \S \ref{cohosn} can all be recovered from this functor.
\end{remark}

\subsection{Topological twistor representation spaces}

In this section, we will show that by considering continuous homomorphisms rather than $*$-homomorphisms, we can describe the entire semisimple locus  of the twistor family  from  $(E^J_{X,x})_{\PN}$, rather than just the $\sigma$-equivariant sections.

Given a point $(a: b) \in \bP^1(\Cx)$   and a complex Banach algebra $B$, we can generalise the construction of Proposition \ref{twistoruniversal} and consider the set  $\oR^{\bT,\Cx}_{X,x}((a:b)_*B)$. This consists of torsors with flat $(ad+b\dc)$-connections.

\begin{definition}
Define   $\oT_{X,x,n}:= \coprod_{(a:b) \in \bP^1(\Cx)} \oR^{\bT,\Cx}_{X,x}((a:b)_*\Mat_n(\Cx))$. 
\end{definition}

Note that $\oT_{X,x,n}$ inherits a $\Gal(\Cx/\R)$-action from $\oR^{\bT,\Cx}_{X,x} $. We can also  give $\oT_{X,x,n}$ a complex analytic structure, by saying that a map $f\co U \to \oT_{X,x}(B)$ from an analytic space $U$ consists of an analytic map $f\co U \to \bP^1(\Cx)$ together with an element of $\oR^{\bT,\Cx}_{X,x}(f_*(\Mat_n\O_U))$. We will now investigate the underlying topological structure.

\begin{remark}\label{cfdh}
The adjoint action of $\GL_n(\Cx)$ on  $\oT_{X,x,n}$ is continuous, and indeed compatible with the complex analytic structure. This allows us to consider the coarse quotient $\oT_{X,x,n}//\GL_n(\Cx) $, which is the Hausdorff completion of the topological quotient, equipped with a natural complex analytic structure over $\bP^1(\Cx)$. A straightforward calculation shows that 
this coarse moduli space   is precisely the Deligne-Hitchin twistor space, as constructed in \cite{hitchin} and described in \cite{simpsonwgt2} \S 3. 
\end{remark}

Now,  Proposition \ref{sigmasections} induces a map $\pi_{\bT}\co \oR^J_{X,x}(\Mat_n(\Cx)) \by \bP^1(\Cx) \to \oT_{X,x,n}$. For an explicit characterisation, note that an $(ad+b\dc)$-connection $\tD$ lies in $ \oR^J_{X,x}(\Mat_n(\Cx))$ if and only if 
\[
 [\tD, JC\tD]=0,        
\]
and that $JC\tD$ is a $(-\bar{b}d+\bar{a}\dc) = \sigma_{\bP}^{-1}(ad+b\dc)$-connection.

\begin{definition}
 Given a flat $(ad+b\dc)$-connection $\tD$ on a finite-dimensional $\C^{\infty}$ vector bundle $\sV$ on $X$ for $(a:b) \in \bP^1(\Cx)- \{\pm i\}$, we say that $(\sV,\tD)$ is \emph{semisimple} if the local system $\ker \tD$ is so. 
\end{definition}

\begin{definition}
Define  $\oT_{X,x,n}^{\st}\subset \oT_{X,x,n}$ by requiring that the fibre over any point of $\bP^1(\Cx)- \{\pm i\}$ consist of the semisimple objects, that the fibre over $i$ be $\oR^{\Dol,\st}_{X,x}(\Mat_n(\Cx))$ (\S \ref{Dolsn}), and that the fibre over $-i$ be its conjugate.

We give $\oT_{X,x,n}^{\st}$ the subspace topology, so a map $K \to \oT_{X,x,n}^{\st}$ is continuous if the projection $f \co K \to \bP^1(\Cx)$ is so, and the map lifts to an element of $ \oR^{\bT,\Cx}_{X,x}(f_*C(K, \Mat_n(\Cx)))$.
\end{definition} 

\begin{theorem}\label{HTtopthm}
For any positive integer $n$, there is a natural homeomorphism $\pi_{\bT,\st}$ over $\bP^1(\Cx)$ between the space 
$\Hom_{\pro(\Ban\Alg)}(E^J_{X,x},\Mat_n(\Cx)) \by \bP^1(\Cx)$ with the topology of pointwise convergence, and  the space  $\oT_{X,x,n}^{\st}$.   
\end{theorem}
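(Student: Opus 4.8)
The plan is to define $\pi_{\bT,\st}$ fibrewise over $\bP^1(\Cx)$ and then glue the fibrewise homeomorphisms supplied by Theorems \ref{Hsstopthm} and \ref{Hsttopthm}. First I would recall that, by Theorem \ref{Hsstopthm}, each $T \in \Hom_{\pro(\Ban\Alg)}(E^J_{X,x},\Mat_n(\Cx))$ corresponds to a semisimple framed flat bundle $\pi_{\dR}(T)$, which by uniqueness of the harmonic metric (Proposition \ref{uniquemetric}) carries a canonical framed pluriharmonic structure $(U(\sP),D,f)$ with $D=d^++\vartheta$. I then set $\pi_{\bT,\st}(T,(a:b))$ to be the flat $(ad+b\dc)$-connection $\tD=aD+bD^c$ of the associated twistor datum, i.e.\ the image of $(\pi_{\dR}(T),(a:b))$ under the map $\pi_{\bT}$ built from Proposition \ref{sigmasections}. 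Flatness of $\tD$ for every $(a:b)$ is immediate from the pluriharmonic identities $D^2=(D^c)^2=[D,D^c]=0$, so this lands in $\oT_{X,x,n}$, and one checks it lies in the stable locus $\oT_{X,x,n}^{\st}$.

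Next I would establish fibrewise bijectivity. Over a point $(a:b)\in\bP^1(\Cx)-\{\pm i\}$ the assignment $T\mapsto\tD$ is, after the reparametrisation identifying this fibre with the de Rham functor, exactly the homeomorphism of Theorem \ref{Hsstopthm}: injectivity holds because at such $(a:b)$ the harmonic metric together with the Cartan real structure recovers both $D$ and $D^c$ from $\tD$ (the pair $\tD$, $JC\tD$ is an invertible linear combination of $D,D^c$, of determinant $|a|^2+|b|^2$), and surjectivity onto the semisimple flat $(ad+b\dc)$-connections is the Corlette--Simpson correspondence at the parameter $(a:b)$. Over $(a:b)=\pm i$ the connection degenerates to $2\pd$ (resp.\ $2\bar\pd$) and the fibrewise bijection onto polystable Higgs bundles is precisely Theorem \ref{Hsttopthm} (resp.\ its conjugate).

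Continuity of $\pi_{\bT,\st}$ I would read off directly from the definition of the topology on $\oT_{X,x,n}^{\st}$: writing $Y:=\Hom_{\pro(\Ban\Alg)}(E^J_{X,x},\Mat_n(\Cx))$ and $q\co Y\by\bP^1(\Cx)\to\bP^1(\Cx)$ for the projection, the tautological framed pluriharmonic family over $Y$ produces a lift of $\pi_{\bT,\st}$ to $\oR^{\bT,\Cx}_{X,x}(q_*C(Y\by\bP^1(\Cx),\Mat_n(\Cx)))$, using that the harmonic decomposition $D\mapsto(d^+,\vartheta)$ is continuous (established in the proof of Theorem \ref{Hsstopthm}). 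For the inverse I would argue as in the proof of Theorem \ref{Hsttopthm}: since the fibrewise maps are homeomorphisms, it suffices to show that the recovered point of $Y$ varies continuously across the family; away from $\pm i$ this is Theorem \ref{Hsstopthm} applied uniformly, and the passage through the special fibres uses the continuity of the de Rham--Dolbeault comparison from \cite[Proposition 7.9]{Sim2}.

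The main obstacle will be exactly this continuity of the inverse across the special fibres $(a:b)=\pm i$, where the geometric meaning of $\oT_{X,x,n}^{\st}$ jumps from flat $(ad+b\dc)$-connections with semisimple monodromy to polystable Higgs bundles. One has to show that a continuous family of semisimple $\lambda$-connections limits, as $\lambda\to\pm i$, to the correct polystable Higgs bundle, and that the induced family of harmonic metrics remains continuous in the limit --- the same analytic input that underlies Simpson's gluing of the Deligne--Hitchin twistor space. I expect to control this by combining the uniform continuity of the harmonic decomposition from Theorem \ref{Hsstopthm} with \cite[Proposition 7.9]{Sim2}, exactly as the openness of $\pi_{\Dol,\st}$ was deduced in the proof of Theorem \ref{Hsttopthm}.
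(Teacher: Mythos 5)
Your proposal is correct and takes essentially the same route as the paper: the same fibrewise formula $\pi_{\bT,\st}(U(\sP),D,f)=(\sP,aD+bD^c,f)$, fibrewise homeomorphisms obtained by adapting Theorems \ref{Hsstopthm} and \ref{Hsttopthm}, continuity of the forward map via the continuity of the harmonic decomposition, and continuity of the inverse deduced by composing with $\pi_{\dR}$, invoking \cite[Proposition 7.9]{Sim2}, and concluding from the homeomorphism of Theorem \ref{Hsstopthm}. The ``main obstacle'' you flag at the fibres $\pm i$ is handled in the paper by exactly the mechanism you propose, just stated more tersely.
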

\begin{proof}
The homeomorphism is given on the fibre over $(a:b) \in \bP^1(\Cx)$ by $\pi_{\bT,\st}(U(\sP), D,f) = (\sP, aD+bD^c, f)$. The proofs of Theorems \ref{Hsstopthm} and \ref{Hsttopthm} adapt to show that $\pi_{\bT,\st}$ induces  homeomorphisms on fibres over $\bP^1(\Cx)$, and in particular is an isomorphism on points. The same arguments also show that $\pi_{\bT,\st} $ and $\pi_{\dR} \circ  \pi_{\bT,\st}^{-1}\co \pi_{\bT,\st}\to \oR^{\dR,\ss}_{X,x}(\Mat_n(\Cx))$ are continuous, so the result follows from  Theorem \ref{Hsstopthm}.    
\end{proof}

\begin{definition}\label{FDTcat}
 Let $\FD\oT_{X,x}^{\st}$ be the category of pairs $(V,p, (a:b))$ for $V\in \FD\Vect$ and  $(p,(a:b))  \in \oT_{X,x,n}^{\st}$ where $n=\dim V$. Morphisms are defined by adapting the formulae of Definition \ref{FDDRcat}.
Write  $\eta_x^{\bT,\st}\co \FD\oT^{\st}_{X,x} \to \FD\Vect\by \bP^1(\Cx)$ for the fibre functors $(V,p, (a:b)) \mapsto (V, (a:b))$.
\end{definition}

\begin{proposition}\label{PNTprop}
The $C^*$-algebra $ (E^J_{X,x})_{\PN}\hten_{\R} C(\bP^1(\Cx), \Cx)$ is isomorphic to the ring of continuous   endomorphisms of $\eta_x^{\bT, \st}$, and this isomorphism is $\Gal(\Cx/\R)$-equivariant.
\end{proposition}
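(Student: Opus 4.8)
The plan is to run the argument of Propositions \ref{PNssprop} and \ref{PNstprop}, but feeding in Theorem \ref{HTtopthm} in place of Theorems \ref{Hsstopthm} and \ref{Hsttopthm}. The first step is to promote the fibrewise homeomorphism $\pi_{\bT,\st}$ of Theorem \ref{HTtopthm} to an equivalence of topological categories fibred over $\FD\Vect\by\bP^1(\Cx)$,
\[
 \FD\oT^{\st}_{X,x}\simeq \FD\Rep(E^J_{X,x})\by\bP^1(\Cx),
\]
with constant second factor. Indeed, an object $(V,p,(a:b))$ corresponds under $\pi_{\bT,\st}$ to a triple $(V,T,(a:b))$ with $T\in\Hom_{\pro(\Ban\Alg)}(E^J_{X,x},\End(V))$, the point $(a:b)$ merely recording which connection $aD+bD^c$ is extracted from the underlying harmonic representation $T$; the morphisms of Definition \ref{FDTcat}, being transported from Definition \ref{FDDRcat}, match those of $\FD\Rep(E^J_{X,x})$ fibre by fibre. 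Under this equivalence (and Theorem \ref{Hsstopthm}) the fibre functor $\eta_x^{\bT,\st}$ becomes $\eta_x^{\dR,\ss}\by\id_{\bP^1(\Cx)}$.

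Next I would compute the endomorphism ring of this product functor. A continuous additive endomorphism of $\eta_x^{\dR,\ss}\by\id$ is exactly a rule assigning, continuously in $(a:b)\in\bP^1(\Cx)$, a continuous additive endomorphism of $\eta_x^{\dR,\ss}$. By Lemma \ref{PNlemma2} combined with Theorem \ref{HTtopthm} --- the same combination used in Propositions \ref{PNssprop} and \ref{PNstprop}, but now over $\Cx$ and hence without imposing $\Gal(\Cx/\R)$-equivariance --- the continuous additive endomorphisms of $\eta_x^{\dR,\ss}$ form the $C^*$-algebra $(E^J_{X,x})_{\PN}\ten\Cx$. Assembling the $\bP^1(\Cx)$-family then identifies the endomorphism ring with $C(\bP^1(\Cx),(E^J_{X,x})_{\PN}\ten\Cx)$. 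Finally, since $C(\bP^1(\Cx),\Cx)$ is commutative, hence nuclear, the maximal tensor product of Definition \ref{tensorCstar} satisfies
\[
 (E^J_{X,x})_{\PN}\hten_{\R}C(\bP^1(\Cx),\Cx)\cong C(\bP^1(\Cx),(E^J_{X,x})_{\PN}\ten\Cx),
\]
which yields the asserted isomorphism.

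For the $\Gal(\Cx/\R)$-equivariance I would track the conjugation action through each step. On $\oT^{\st}_{X,x,n}$ it is the action inherited from $\oR^{\bT,\Cx}_{X,x}$, covering complex conjugation on $\bP^1(\Cx)$; on the algebra side it combines complex conjugation on the $\Cx$-factor of $(E^J_{X,x})_{\PN}\ten\Cx$ with the involution of $C(\bP^1(\Cx),\Cx)$ induced by complex conjugation on $\bP^1(\Cx)$. Since $\pi_{\bT,\st}$ is defined over $\bP^1(\Cx)$ and assembled from the $\Gal(\Cx/\R)$-equivariant data of Theorem \ref{HTtopthm}, the isomorphism above intertwines these actions.

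The step I expect to be the main obstacle is the assembly of the fibrewise identifications into a single $\bP^1(\Cx)$-family. One must check that a transformation continuous on all of the twistor space $\oT^{\st}_{X,x,n}$ corresponds, under $\pi_{\bT,\st}$, to a genuinely norm-continuous section over $\bP^1(\Cx)$ with values in $(E^J_{X,x})_{\PN}\ten\Cx$, so that one lands in the (maximal, equivalently minimal) tensor product rather than in some larger space of separately continuous or merely bounded sections. This is precisely where the homeomorphism of Theorem \ref{HTtopthm} over all of $\bP^1(\Cx)$ --- including its good behaviour at the Dolbeault fibres $\pm i$, where the de Rham description degenerates --- together with nuclearity of $C(\bP^1(\Cx),\Cx)$, does the essential work.
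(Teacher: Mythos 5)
Your overall strategy --- feed Theorem \ref{HTtopthm} into the polynormal Tannakian machinery and then rewrite the answer as a tensor product --- is the same as the paper's in outline, but the implementations diverge at exactly the point you flag as "the main obstacle", and there your argument has a genuine gap. The paper's proof replaces Lemma \ref{PNlemma2} by the \emph{relative} Lemma \ref{PNlemma2base}: it applies the Tannakian duality to the single pro-$C^*$-algebra $(E^J_{X,x})_{\PN}\hten_{\R}C(\bP^1(\Cx),\Cx)$ (more precisely its real form) over the commutative base, whose finite-dimensional representations lying over points of the spectrum $\bP^1(\Cx)$ are identified with objects of $\FD\oT^{\st}_{X,x}$ via Theorem \ref{HTtopthm}; Proposition \ref{PNprop} (built into Lemma \ref{PNlemma2base}) then says the endomorphism ring of the fibre functor \emph{is} this algebra, since it is already pro-polynormal. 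Because the family is treated as one algebra and never disassembled into its fibres over $\bP^1(\Cx)$, the assembly problem never arises in the paper's route.

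In your route it does arise, and it is not resolved. You assert that a continuous additive endomorphism of $\eta_x^{\dR,\ss}\by\id$ is "exactly" an element of $C(\bP^1(\Cx),(E^J_{X,x})_{\PN}\ten\Cx)$; concretely, this requires showing that an endomorphism that is jointly continuous on $\oT^{\st}_{X,x,n}\cong \Hom_{\pro(\Ban\Alg)}(E^J_{X,x},\Mat_n(\Cx))\by\bP^1(\Cx)$ (where the first factor carries only the topology of pointwise convergence) yields a section $\bP^1(\Cx)\to (E^J_{X,x})_{\PN}\ten\Cx$ that is continuous for the pro-$C^*$ (uniform) topology, not merely pointwise. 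Neither of the ingredients you invoke does this: nuclearity of $C(\bP^1(\Cx),\Cx)$ only converts $C(\bP^1(\Cx),D)$ into $D\hten_{\R}C(\bP^1(\Cx),\Cx)$ \emph{after} you know the endomorphism ring equals $C(\bP^1(\Cx),D)$, and the good behaviour of $\pi_{\bT,\st}$ at $\pm i$ only gives you the product description you start from. What actually closes the gap is either the paper's move (Lemma \ref{PNlemma2base} plus Proposition \ref{PNprop} applied to the family algebra) or, within your formulation, a compactness argument: endomorphisms are determined by their values on $*$-representations (every continuous finite-dimensional representation is conjugate to a $*$-representation by Lemma \ref{KSPlemma}, as in the proof of Lemma \ref{PNlemma2}); each space $\Rep_n^*(E_i)$, for $E_i$ a $C^*$-algebra constituent of $E^J_{X,x}$, is compact in the topology of pointwise convergence, being a closed subspace of a product of norm balls; and a jointly continuous map on $(\text{compact})\by\bP^1(\Cx)$ defines a sup-norm-continuous map out of $\bP^1(\Cx)$. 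With that step inserted, your proof goes through and amounts to a hand-made proof of Lemma \ref{PNlemma2base}; without it, the endomorphism ring could a priori be a strictly larger algebra of sections continuous only for the weak topology, and the stated isomorphism would not follow.
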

\begin{proof}
The proof of Proposition \ref{PNssprop} carries over, replacing Theorem \ref{Hsstopthm}  with Theorem \ref{HTtopthm}, and Lemma \ref{PNlemma2} with Lemma \ref{PNlemma2base}.
\end{proof}

\begin{definition}\label{stTdef}
 Given a    $k$-normal real  $C^*$-algebra $B$ over $C(\bP^1(\Cx), \Cx)^{\Gal(\Cx/\R)}$, we may regard $B$ as an $\O_{\bP^1}^{\hol}$-algebra via the inclusion of holomorphic functions in continuous functions. 
Then define $ \oR^{\bT,\st}_{X,x}(B)\subset \oR^{\bT}_{X,x}(B)$  to  be the subspace consisting  of those $p$ for which
\[
 (\psi(p),(a:b)) \in \oT_{X,x,k}^{\st}
\]
for all $(a:b) \in \bP^1(\Cx)$ and  $\psi \co B \to \Mat_k(\Cx) $ with $\psi|_{C(\bP^1(\Cx), \Cx)^{\Gal(\Cx/\R)}}= \ev_{(a:b)}\id$.  
\end{definition}

\begin{corollary}\label{PNrepT}
For any $k$-normal real $C^*$-algebra $B$ over $C(\bP^1(\Cx), \Cx)^{\Gal(\Cx/\R)}$, there is a natural isomorphism between $ \oR^{\bT,\st}_{X,x}(B)^{\Gal(\Cx/\R)}$ and the set of continuous algebra homomorphisms $E^J_{X,x} \to B$.
\end{corollary}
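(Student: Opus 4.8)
The plan is to follow the template of Corollary \ref{PNrepss}, now feeding in the twistor versions of its two inputs: Proposition \ref{PNTprop} in place of Proposition \ref{PNssprop}, and the relative fibre-functor formalism of Lemma \ref{PNlemma2base} in place of Lemma \ref{PNlemma2}. Throughout set $R := C(\bP^1(\Cx),\Cx)^{\Gal(\Cx/\R)}$, so that by Lemma \ref{GalCstar} we have $R\ten_\R\Cx = C(\bP^1(\Cx),\Cx)$, and $B$ is a $k$-normal $R$-algebra.

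First I would reduce to the polynormal completion. Since $B$ is $k$-normal, any continuous algebra homomorphism $E^J_{X,x}\to B$ factors uniquely through $(E^J_{X,x})_{\PN}$, exactly as in the proof of Corollary \ref{PNrepss}. Because the $R$-algebra structure sends $R$ into the centre of $B$, the images of $E^J_{X,x}$ and of $R$ commute, so by the universal property of the maximal tensor product (Definition \ref{tensorCstar}) such a homomorphism is the same datum as a continuous $R$-algebra homomorphism $(E^J_{X,x})_{\PN}\hten_\R R \to B$.

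Next I would apply Proposition \ref{PNTprop}. Complexifying, $((E^J_{X,x})_{\PN}\hten_\R R)\ten_\R\Cx = (E^J_{X,x})_{\PN}\hten_\R C(\bP^1(\Cx),\Cx)$, which by Proposition \ref{PNTprop} is the ring of continuous additive endomorphisms of $\eta_x^{\bT,\st}$, $\Gal(\Cx/\R)$-equivariantly; by Lemma \ref{GalCstar} taking $\Gal(\Cx/\R)$-invariants recovers the real form $(E^J_{X,x})_{\PN}\hten_\R R$. Arguing as in Lemma \ref{PNlemma2base}, a continuous $R$-algebra homomorphism $(E^J_{X,x})_{\PN}\hten_\R R \to B$ is then the same as a continuous additive $\Gal(\Cx/\R)$-equivariant functor $p^*\co \FD\Rep_{\hat R}(B)\to \FD\oT^{\st}_{X,x}$ of topological categories fibred over $\FD\Vect\by\bP^1(\Cx)$, the point of $\bP^1(\Cx)$ attached to a representation $\rho\co B\to\End(V)$ being the character $f\in\hat R$ with $\rho|_R = f\,\id$. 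Finally I would identify which such functors arise from $\oR^{\bT,\st}_{X,x}(B)$: an element $p\in\oR^{\bT}_{X,x}(B)$ assigns to each $\psi\co B\to\Mat_k(\Cx)$ lying over $(a:b)$ the datum $\psi(p)\in\oT_{X,x,k}$, and the requirement of Definition \ref{stTdef} that $(\psi(p),(a:b))\in\oT^{\st}_{X,x,k}$ for all such $\psi$ is precisely the condition that $p^*$ land in the full subcategory $\FD\oT^{\st}_{X,x}\subset\FD\oT_{X,x}$, while the passage to $\oR^{\bT,\st}_{X,x}(B)^{\Gal(\Cx/\R)}$ matches the $\Gal(\Cx/\R)$-equivariance needed to descend the functor from $C(\bP^1(\Cx),\Cx)$ to its real form $R$. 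This yields the asserted natural bijection.

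The main obstacle is the bookkeeping of the two commuting $\Gal(\Cx/\R)$-actions: complex conjugation on the matrix coefficients and complex conjugation on the base $\bP^1(\Cx)$. One must check that the combined action whose fixed points define $\oR^{\bT,\st}_{X,x}(B)^{\Gal(\Cx/\R)}$ is exactly the action under which invariance cuts the complexified endomorphism ring of Proposition \ref{PNTprop} down to the real $R$-algebra $(E^J_{X,x})_{\PN}\hten_\R R$, so that genuinely real (rather than $*$-) homomorphisms out of $E^J_{X,x}$ are recovered. Once this compatibility is verified, everything else is a transcription of the arguments already made for Corollary \ref{PNrepss} and Lemma \ref{PNlemma2base}.
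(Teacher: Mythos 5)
Your proposal is correct and matches the paper's own argument, which is literally to rerun the proof of Corollary \ref{PNrepss} with Proposition \ref{PNTprop} in place of Proposition \ref{PNssprop} and with Lemma \ref{PNlemma2base} supplying the relative fibre-functor formalism over $\FD\Vect\by\bP^1(\Cx)$; your explicit passage through $(E^J_{X,x})_{\PN}\hten_{\R}R$ is just an unpacking of the map on endomorphism rings that the paper leaves implicit. The only caveat is that the universal property of $\hten$ (Definition \ref{tensorCstar}) is stated for commuting $*$-homomorphisms, whereas your homomorphism $E^J_{X,x}\to B$ need not be involutive, so you should either justify the extension directly (using that $R$ is commutative, hence nuclear, and central in $B$, together with complete boundedness of maps into $k$-normal algebras) or sidestep it by inducing the map $(E^J_{X,x})_{\PN}\hten_{\R}R\to B$ from the functor $p^*$ via Lemma \ref{PNlemma2base}, as the paper's route effectively does.
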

\begin{proof}
The proof of Corollary \ref{PNrepss} carries over, replacing Proposition \ref{PNssprop} with  Proposition \ref{PNTprop}.
 \end{proof}

\bibliographystyle{alphanum}
\addcontentsline{toc}{section}{Bibliography}
\bibliography{references.bib}

\newcommand{\etalchar}[1]{$^{#1}$}
\def\cprime{$'$}
\begin{thebibliography}{DMOS}

\bibitem[ABC{\etalchar{+}}]{Am}
J.~Amor{\'o}s, M.~Burger, K.~Corlette, D.~Kotschick, and D.~Toledo.
\newblock {\em Fundamental groups of compact {K}{\"a}hler manifolds}, volume~44
  of {\em Mathematical Surveys and Monographs}.
\newblock American Mathematical Society, Providence, RI, 1996.

\bibitem[AS]{AkemannShultz}
Charles~A. Akemann and Frederic~W. Shultz.
\newblock Perfect {$C^\ast$}-algebras.
\newblock {\em Mem. Amer. Math. Soc.}, 55(326):xiii+117, 1985.

\bibitem[BD]{BunceDeddens}
John~W. Bunce and James~A. Deddens.
\newblock {$C^*$}-algebras with {H}ausdorff spectrum.
\newblock {\em Trans. Amer. Math. Soc.}, 212:199--217, 1975.

\bibitem[Bic]{bichteler}
Klaus Bichteler.
\newblock A generalization to the non-separable case of {T}akesaki's duality
  theorem for {$C^* $}-algebras.
\newblock {\em Invent. Math.}, 9:89--98, 1969/1970.

\bibitem[Cor]{corlette}
Kevin Corlette.
\newblock Flat {$G$}-bundles with canonical metrics.
\newblock {\em J. Differential Geom.}, 28(3):361--382, 1988.

\bibitem[Del]{Hodge2}
Pierre Deligne.
\newblock Th{\'e}orie de {H}odge. {II}.
\newblock {\em Inst. Hautes {\'E}tudes Sci. Publ. Math.}, (40):5--57, 1971.

\bibitem[DMOS]{tannaka}
Pierre Deligne, James~S. Milne, Arthur Ogus, and {Kuang-yen} Shih.
\newblock {\em Hodge cycles, motives, and {S}himura varieties}, volume 900 of
  {\em Lecture Notes in Mathematics}.
\newblock Springer-Verlag, Berlin, 1982.

\bibitem[Dod]{dodziuk}
Jozef Dodziuk.
\newblock de {R}ham-{H}odge theory for {$L^{2}$}-cohomology of infinite
  coverings.
\newblock {\em Topology}, 16(2):157--165, 1977.

\bibitem[Gar]{gardner}
L.~Terrell Gardner.
\newblock On isomorphisms of {$C^{\ast} $}-algebras.
\newblock {\em Amer. J. Math.}, 87:384--396, 1965.

\bibitem[GH]{GriffithsHarris}
Phillip Griffiths and Joseph Harris.
\newblock {\em Principles of algebraic geometry}.
\newblock Wiley Classics Library. John Wiley \& Sons Inc., New York, 1994.
\newblock Reprint of the 1978 original.

\bibitem[GM]{GM}
William~M. Goldman and John~J. Millson.
\newblock The deformation theory of representations of fundamental groups of
  compact {K}{\"a}hler manifolds.
\newblock {\em Inst. Hautes {\'E}tudes Sci. Publ. Math.}, (67):43--96, 1988.

\bibitem[Goo]{goodearl}
K.~R. Goodearl.
\newblock {\em Notes on real and complex {$C^{\ast} $}-algebras}, volume~5 of
  {\em Shiva Mathematics Series}.
\newblock Shiva Publishing Ltd., Nantwich, 1982.

\bibitem[Gro]{descent}
Alexander Grothendieck.
\newblock Technique de descente et th{\'e}or{\`e}mes d'existence en
  g{\'e}om{\'e}trie alg{\'e}brique. {II}. {L}e th{\'e}or{\`e}me d'existence en
  th{\'e}orie formelle des modules.
\newblock In {\em S{\'e}minaire Bourbaki, Vol.\ 5}, pages Exp.\ No.\ 195,
  369--390. Soc. Math. France, Paris, 1995.

\bibitem[Hit]{hitchin}
N.~J. Hitchin.
\newblock The self-duality equations on a {R}iemann surface.
\newblock {\em Proc. London Math. Soc. (3)}, 55(1):59--126, 1987.

\bibitem[KN]{KN1}
Shoshichi Kobayashi and Katsumi Nomizu.
\newblock {\em Foundations of differential geometry. {V}ol {I}}.
\newblock Interscience Publishers, a division of John Wiley \& Sons, New
  York-London, 1963.

\bibitem[Pau]{paulsen}
Vern~I. Paulsen.
\newblock Completely bounded homomorphisms of operator algebras.
\newblock {\em Proc. Amer. Math. Soc.}, 92(2):225--228, 1984.

\bibitem[Pea]{pearcy}
Carl Pearcy.
\newblock A complete set of unitary invariants for operators generating finite
  {$W^*$}-algebras of type {I}.
\newblock {\em Pacific J. Math.}, 12:1405--1416, 1962.

\bibitem[Phi]{phillips}
N.~Christopher Phillips.
\newblock Inverse limits of {$C^*$}-algebras.
\newblock {\em J. Operator Theory}, 19(1):159--195, 1988.

\bibitem[Pri1]{htpy}
J.~P. Pridham.
\newblock Pro-algebraic homotopy types.
\newblock {\em Proc. London Math. Soc.}, 97(2):273--338, 2008.
\newblock arXiv math.AT/0606107 v8.

\bibitem[Pri2]{mhs2}
J.~P. Pridham.
\newblock Real non-abelian mixed {H}odge structures for quasi-projective
  varieties: formality and splitting.
\newblock {\em Mem. Amer. Math. Soc.}, 243(1150):vi+178 pp., 2016.
\newblock arXiv: 1104.1409v3 [math.AG].

\bibitem[Rei]{reidEpimorphisms}
G.~A. Reid.
\newblock Epimorphisms and surjectivity.
\newblock {\em Invent. Math.}, 9:295--307, 1969/1970.

\bibitem[Seg]{segalIdeals}
I.~E. Segal.
\newblock Two-sided ideals in operator algebras.
\newblock {\em Ann. of Math. (2)}, 50:856--865, 1949.

\bibitem[Sim1]{simfil}
Carlos Simpson.
\newblock The {H}odge filtration on nonabelian cohomology.
\newblock In {\em Algebraic geometry---Santa Cruz 1995}, volume~62 of {\em
  Proc. Sympos. Pure Math.}, pages 217--281. Amer. Math. Soc., Providence, RI,
  1997.

\bibitem[Sim2]{MTS}
Carlos Simpson.
\newblock Mixed twistor structures.
\newblock arXiv:alg-geom/9705006v1, 1997.

\bibitem[Sim3]{simpsonwgt2}
Carlos Simpson.
\newblock A weight two phenomenon for the moduli of rank one local systems on
  open varieties.
\newblock In {\em From {H}odge theory to integrability and {TQFT}
  tt*-geometry}, volume~78 of {\em Proc. Sympos. Pure Math.}, pages 175--214.
  Amer. Math. Soc., Providence, RI, 2008.
\newblock arXiv:0710.2800v1 [math.AG].

\bibitem[Sim4]{Simpson}
Carlos~T. Simpson.
\newblock Higgs bundles and local systems.
\newblock {\em Inst. Hautes {\'E}tudes Sci. Publ. Math.}, (75):5--95, 1992.

\bibitem[Sim5]{Sim1}
Carlos~T. Simpson.
\newblock Moduli of representations of the fundamental group of a smooth
  projective variety. {I}.
\newblock {\em Inst. Hautes {\'E}tudes Sci. Publ. Math.}, (79):47--129, 1994.

\bibitem[Sim6]{Sim2}
Carlos~T. Simpson.
\newblock Moduli of representations of the fundamental group of a smooth
  projective variety. {II}.
\newblock {\em Inst. Hautes {\'E}tudes Sci. Publ. Math.}, (80):5--79 (1995),
  1994.

\bibitem[Sul]{Sullivan}
Dennis Sullivan.
\newblock Infinitesimal computations in topology.
\newblock {\em Inst. Hautes {\'E}tudes Sci. Publ. Math.}, (47):269--331 (1978),
  1977.

\bibitem[Tak]{takesakiTOA}
M.~Takesaki.
\newblock {\em Theory of operator algebras. {I}}, volume 124 of {\em
  Encyclopaedia of Mathematical Sciences}.
\newblock Springer-Verlag, Berlin, 2002.
\newblock Reprint of the first (1979) edition, Operator Algebras and
  Non-commutative Geometry, 5.

\bibitem[Wil]{williamsXprod}
Dana~P. Williams.
\newblock {\em Crossed products of {$C{^\ast}$}-algebras}, volume 134 of {\em
  Mathematical Surveys and Monographs}.
\newblock American Mathematical Society, Providence, RI, 2007.

\end{thebibliography}
\end{document}